\numberwithin{equation}{section}
\newcommand{\be}{\begin{eqnarray}}
\newcommand{\ee}{\end{eqnarray}}
\newcommand{\ce}{\begin{eqnarray*}}
\newcommand{\de}{\end{eqnarray*}}
\newtheorem{theorem}{Theorem}[section]
\newtheorem{lemma}[theorem]{Lemma}
\newtheorem{remark}[theorem]{Remark}
\newtheorem{definition}[theorem]{Definition}
\newtheorem{proposition}[theorem]{Proposition}
\newtheorem{Examples}[theorem]{Example}
\newtheorem{corollary}[theorem]{Corollary}
\newtheorem{assumption}[theorem]{Assumption}
\newenvironment{proof of theorem 1.2}{{\it Proof of Theorem 1.2}.}{{\hfill 	
$\square$\hskip - \parfillskip}}
\newenvironment{proof of theorem 1.3}{{\it Proof of Theorem 1.3}.}{{\hfill 	
		$\square$\hskip - \parfillskip}}
\newenvironment{proof of theorem 1.5}{{\it Proof of Theorem 1.5}.}{{\hfill 	
		$\square$\hskip - \parfillskip}}
\newenvironment{proof of theorem 1.4}{{\it Proof of Theorem 1.4}.}{{\hfill 	
		$\square$\hskip - \parfillskip}}
\newenvironment{proof of theorem 1.6}{{\it Proof of Theorem 1.6}.}{{\hfill 	
		$\square$\hskip - \parfillskip}}
\newenvironment{proof of theorem 5.3}{{\it Proof of Theorem 5.3}.}{{\hfill 	
		$\square$\hskip - \parfillskip}}
\newenvironment{proof of (1.3)}{{\it Proof of (1.3)}.}{{\hfill 	
		$\square$\hskip - \parfillskip}}
\newenvironment{proof of theorem 1.8}{{\it Proof of Theorem 1.8}.}{{\hfill 	
			$\square$\hskip - \parfillskip}}
\newenvironment{proof of corollary 1.6 and 1.7}{{\it Proof of Corollary 1.6 and 1.7}.}{{\hfill 	
		$\square$\hskip - \parfillskip}}
\newcommand{\rmnum}[1]{\romannumeral #1}
\newcommand{\Rmnum}[1]{\expandafter\@slowromancap\romannumeral #1@}
\def\eps{\varepsilon}
\def\e{\mathrm{e}}
\def\a{\alpha}
\def\om{\omega}
\def\Om{\Omega}
\def\p{\partial}
\def\g{\gamma}
\def\l{\lambda}
\def\[{{\Big[}}
\def\]{{\Big]}}
\def\<{{\langle}}
\def\>{{\rangle}}
\def\({{\Big(}}
\def\){{\Big)}}
\def\bx{{\mathbf{x}}}
\def\osc{{\rm osc}}
\def\min{{\mathord{{\rm min}}}}
\def\Vol{\mathord{{\rm Vol}}}
\def\={&\!\!=\!\!&}
\def\cJ{{\mathcal J}}
\def\cK{{\mathcal K}}
\def\cL{{\mathcal L}}
\def\mR{{\mathbb R}}
\def\mS{{\mathbb S}}
\def\1{{\mathbf{1}}}
\def\sA{{\mathscr A}}
\def\sF{{\mathscr F}}
\def\geq{\geqslant}
\def\leq{\leqslant}
\def\ge{\geqslant}
\def\le{\leqslant}
\def\k{\kappa}
\def\eps{\varepsilon}
\def\e{\mathrm{e}}
\def\a{\alpha}
\def\om{\omega}
\def\Om{\Omega}
\def\p{\partial}
\def\g{\gamma}
\def\l{\lambda}
\def\[{{\Big[}}
\def\]{{\Big]}}
\def\<{{\langle}}
\def\>{{\rangle}}
\def\({{\Big(}}
\def\){{\Big)}}
\def\bx{{\mathbf{x}}}
\def\osc{{\rm osc}}
\def\min{{\mathord{{\rm min}}}}
\def\Vol{\mathord{{\rm Vol}}}
\def\={&\!\!=\!\!&}
\def\bt{\begin{theorem}}
\def\et{\end{theorem}}
\def\bl{\begin{lemma}}
\def\el{\end{lemma}}
\def\br{\begin{remark}}
\def\er{\end{remark}}
\def\bx{\begin{Examples}}
\def\ex{\end{Examples}}
\def\bd{\begin{definition}}
\def\ed{\end{definition}}
\def\bp{\begin{proposition}}
\def\ep{\end{proposition}}
\def\bc{\begin{corollary}}
\def\ec{\end{corollary}}
\def\geq{\geqslant}
\def\leq{\leqslant}
\def\ge{\geqslant}
\def\le{\leqslant}
 \def\nn{\nabla}
\def\<{\langle} \def\>{\rangle}
\def\bpf{\begin{proof}}
\def\epf{\end{proof}}
\begin{document}
	
\title{a class of inverse curvature flows and $L^p$ dual Christoffel-Minkowski problem}\thanks{\it {This research was partially supported by NSFC (No. 11871053).}}
\author{Shanwei Ding and Guanghan Li}

\thanks{{\it 2010 Mathematics Subject Classification: 53C44, 35K55.}}
\thanks{{\it Keywords: expanding flow, blow up, support function, radial function, $L^p$ dual Christoffel-Minkowski problem}}

\address{School of Mathematics and Statistics, Wuhan University, Wuhan 430072, China.
}

\begin{abstract}
In this paper, we consider a large class of expanding flows of closed, smooth, star-shaped hypersurface in Euclidean space $\mathbb{R}^{n+1}$ with speed $\psi u^\alpha\rho^\delta f^{-\beta}$, where $\psi$ is a smooth positive function on unit sphere, $u$ is the support function of the hypersurface, $\rho$ is the radial function, $f$ is a smooth, symmetric, homogenous of degree one, positive function of the principal curvatures of the hypersurface on a convex cone. When $\psi=1$, we prove that the flow exists for all time and converges to infinity if $\a+\delta+\beta\le1$, and $\a\le0<\beta$, while in case $\a+\delta+\beta>1$, $\a,\delta\le0<\beta$, the flow blows up in finite time, and where we assume the initial hypersurface to be strictly convex. In both cases the properly rescaled flows converge to a sphere centered at the origin. In particular, the results of Gerhardt \cite{GC,GC3} and Urbas \cite{UJ2} can be recovered by putting $\a=\delta=0$. Our previous works \cite{DL,DL2} and Hu, Mao, Tu and Wu \cite{HM} can be recovered by putting $\delta=0$ and $\a=0$ respectively. By the convergence of these flows, we can give a new proof of uniqueness theorems for solutions to $L^p$-Minkowski problem and $L^p$-Christoffel-Minkowski problem with constant prescribed data. Similarly, we consider the $L^p$ dual Christoffel-Minkowski problem and prove a uniqueness theorem for solutions to $L^p$ dual Minkowski problem and $L^p$ dual Christoffel-Minkowski problem with constant prescribed data. At last, we focus on the longtime existence and convergence of
 a class of anisotropic flows (i.e. for general function $\psi$).
The final result not only gives a new proof of many previously known solutions to $L^p$ dual Minkowski problem, $L^p$-Christoffel-Minkowski problem, etc. by such anisotropic flows, but also provides solutions to $L^p$ dual Christoffel-Minkowski problem with some conditions.

\end{abstract}

\maketitle
\setcounter{tocdepth}{2}
\tableofcontents

\section{Introduction}
Flows of convex hypersurfaces  by a class of speed functions which are homogenous and symmetric in principal curvatures have been extensively studied in the past four decades. Well-known examples include the mean curvature flow \cite{HG}, and the Gauss curvature flow \cite{BS,FWJ}. In \cite{HG} Huisken showed that the flow has a unique smooth solution and the hypersurface converges to a round sphere if the initial hypersurface is closed and convex. Later, a range of flows with the speed of homogenous of degree one in principal curvatures were established,  see \cite{B0,B1,CB1,CB2} and references therein.

For star-shaped hypersurface $M_0$, Gerhardt \cite{GC3,GC} and Urbas \cite{UJ2} studied the flow with concave curvature function $f$ which satisfies $f\vert_{\Gamma}>0$ and $f\vert_{\p\Gamma}=0$ for an open convex symmetric cone $\Gamma$ containing the positive cone $\Gamma_+$, and proved a similar convergence result. Scheuer \cite{SJ} improved the asymptotical behavior of the flow considered in \cite{GC} by showing that the flow becomes close to a flow of a sphere.

The inverse curvature flow has also been studied in other ambient spaces, in particular in the hyperbolic space and in sphere, See \cite{GC4,SJ2,GC5,LW,SJ3} etc..

Flow with speed depending not only on the curvatures has recently begun to be considered. For example, flows that deform hypersurfaces by their curvature and support function were studied in \cite{IM,SWM,SJ4,GL} etc..

For a certain range of $\alpha,\beta$, the limit of flows with speed $u^\alpha f^\beta$ can be an ellipsoid. For example, Andrews \cite{A9} proved that the solution will converge in $C^\infty$ to an ellipsoid along the contracting flow with the speed of $\frac{1}{n+2}$-power of the Gauss-Knonecker curvature after scaling. In \cite{IM2,IM3}, the authors studied flows of the convex hypersurfaces at the speeds of $-u^\alpha K^\beta$ and $\psi u^{2-m} K^{-1}$ respectively 
 and the solutions converge to an ellipsoid.

A class of curvature flows were introduced by \cite{IM,LSW}, where the speed of the flow depends on an anisotropic factor, support function or radial function, and a curvature function. These flows can solve the $L_p$-Christoffel-Minkowski problems or dual Minkowski problems. Whether these flows can be extended is an interesting problem. In the present works \cite{DL,DL2} we considered this kind of flow,$$\frac{\p X}{\p t}=u^\a f^{-\beta}\nu,$$
in the Euclidean space $\mathbb{R}^{n+1}$, $n\geq2$. When $f=(\frac{\sigma_n}{\sigma_k})^{\frac{1}{n-k}}$, the flow has been studied by Sheng-Yi in \cite{SWM}. In \cite{DL,DL2} we got a solution which exists for all time if $\a+\beta\le1$. Then there is a natural problem how the hypersurface will evolve along this flow with speed $\psi u^\alpha\rho^\delta f^{-\beta}$. This flow can be used to solve the $L_p$-Christoffel-Minkowski problem naturally.

 Let $M_0$ be a closed, smooth and star-shaped hypersurface in $\mathbb{R}^{n+1}$ ($n\geq2$), and $M_0$ encloses the origin. In this paper, we study the following expanding flow
 \begin{equation}
 	\label{x1.1}
 	\begin{cases}
 		&\frac{\partial X}{\partial t}(x,t)=\psi u^\alpha\rho^\delta f^{-\beta}(x,t) \nu(x,t),\\
 		&X(\cdot,0)=X_0,
 	\end{cases}
 \end{equation}
 where $f(x,t)=f(\kappa(x,t))$ is a suitable curvature function of the hypersurface $M_t$ parameterized by $X(\cdot,t): M^n\times[0,T^*)\to \mR^{n+1}$, $\kappa=(\kappa_1,...,\kappa_n)$ are the principal curvatures of the  hypersurface $M_t$, $\beta>0$, $\psi>0$ is a positive function defined in $\mS^n$, $u$ is the support function, $\rho$ is the radial function and $\nu(\cdot,t)$ is the outer unit normal vector field to $M_t$.


We obtain convergence results for a large class of speeds if $\psi\equiv1$ and therefore make the following assumption.
\begin{assumption}\label{a1.1}
	Let $\Gamma\subseteq\mathbb{R}^n$ be a symmetric, convex, open cone containing
\begin{equation*}
	\Gamma_+=\{(\k_i)\in\mathbb{R}^n:\k_i>0\},
\end{equation*}
and suppose that $f$ is positive in $\Gamma$, homogeneous of degree $1$, and concave with
\begin{equation*}
	\frac{\p f}{\p\k_i}>0,\quad f\vert_{\p\Gamma}=0,\quad f^{-\beta}(1,\cdots,1)=\eta.
\end{equation*}
\end{assumption}

We first prove the long time existence and convergence of the flow (\ref{x1.1}) with $\psi\equiv1$, i.e.
\begin{equation}
	\label{1.1}
	\begin{cases}
		&\frac{\partial X}{\partial t}(x,t)=u^\alpha\rho^\delta f^{-\beta}(\kappa) \nu(x,t),\\
		&X(\cdot,0)=X_0.
	\end{cases}
\end{equation}
\begin{theorem}\label{t1.2}
	Assume $\alpha, \delta, \beta\in \mR$ satisfying $\alpha \le 0<\beta\le 1-\alpha-\delta$. Let $f\in C^2(\Gamma)\cap C^0(\p\Gamma)$ satisfy Assumption \ref{a1.1}, and let $X_0(M)$ be the embedding of a closed $n$-dimensional manifold $M^n$ in $\mathbb{R}^{n+1}$ such that $X_0(M)$ is a graph over $\mathbb{S}^n$, and such that $\k\in\Gamma$ for all n-tuples of principal curvatures along $X_0(M)$. Then the flow (\ref{1.1}) has a unique smooth solution $M_t$ for all time $t>0$. For each $t\in[0,\infty)$, $X(\cdot,t)$ is a parameterization of a smooth, closed, star-shaped hypersurface $M_t$ in $\mR^{n+1}$ by $X(\cdot,t)$: $M^n\to \mR^{n+1}$. After a proper rescaling $X\to \varphi^{-1}(t)X$, where
	\begin{equation}\label{1.2}
		\begin{cases}
			\varphi(t)=e^{\eta t} &\text{ if }\alpha=1-\delta-\beta,\\
			\varphi(t)=(1+(1-\beta-\delta-\alpha)\eta t)^{\frac{1}{1-\beta-\delta-\alpha}} &\text{ if }\alpha\not=1-\delta-\beta,
		\end{cases}
	\end{equation}
	the hypersurface $\widetilde M_t=\varphi^{-1}M_t$ converges exponentially to a round sphere centered at the origin in the $C^\infty$-topology.
\end{theorem}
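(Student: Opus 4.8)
The plan is to convert \eqref{1.1} into a scalar parabolic equation for the radial function and then run the standard \emph{a priori} estimate scheme for star-shaped expanding flows. Since $M_0$ encloses the origin and $\kappa\in\Gamma$ holds initially, short-time existence and uniqueness theory produces, for a maximal time $T^*>0$, a smooth family of star-shaped hypersurfaces which we write as graphs $X(z,t)=\rho(z,t)\,z$ over $\mathbb S^n$. Recording only the normal motion, the flow becomes $\partial_t\rho=\mathcal G(z,\rho,\bar\nabla\rho,\bar\nabla^2\rho)$, where $u$, $\nu$ and the principal curvatures $\kappa$ are the usual explicit functions of $\rho,\bar\nabla\rho,\bar\nabla^2\rho$ (equivalently one may work with $\log\rho$). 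Since $f^{-\beta}$ is strictly decreasing in the second fundamental form by Assumption~\ref{a1.1}, this equation is (degenerate) parabolic on the admissible set $\{\kappa\in\Gamma\}$, with principal part convex in $\bar\nabla^2\rho$ because $f$ is concave and $\beta>0$. A direct computation shows that the origin-centered spheres $\rho\equiv R(t)$ solve the equation exactly when $\dot R=\eta R^{\alpha+\delta+\beta}$, whose solutions are, up to the choice of initial radius, precisely the functions $\varphi$ of \eqref{1.2}; since $\alpha+\delta+\beta\le1$ these model solutions exist for all $t>0$, which both explains the rescaling $X\mapsto\varphi^{-1}X$ and provides the barriers used below.

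Next I would establish uniform estimates on $[0,T^*)$ for the rescaled radial function $\widetilde\rho=\varphi^{-1}\rho$. For the $C^0$ bound, choose two origin-centered sphere solutions sandwiching $M_0$ at $t=0$ and apply the comparison principle for the scalar equation to get $0<c_1\le\widetilde\rho\le c_2$, with $c_1,c_2$ depending only on $\min_{M_0}\rho$ and $\max_{M_0}\rho$. For the gradient bound, apply the maximum principle to $v=\sqrt{1+|\bar\nabla\log\rho|^2}$ along the flow; using $\alpha\le0$ and the $C^0$ bounds to give the lower-order terms the correct sign, one obtains $v\le C$, so the $\widetilde M_t$ are uniformly star-shaped, and in fact $v\to1$, whence $u=\rho/v$ is bounded above and below. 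For the curvature bound, one bounds the largest principal curvature of $\widetilde M_t$ from above by a maximum principle argument applied to a suitable auxiliary function built from the Weingarten map (for instance its largest eigenvalue with lower-order corrections), where the concavity of $f$ absorbs the dangerous second-order terms and the conditions $\alpha\le0$, $\alpha+\delta+\beta\le1$ control the $u^\alpha$ and $\rho^\delta$ factors; combined with the previous estimates this also yields a two-sided bound on $f^{-\beta}$, hence confines $\kappa$ to a fixed compact subset of $\Gamma$.

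I expect the curvature estimate to be the main obstacle, as in \cite{GC,UJ2} and in our earlier works \cite{DL,DL2}: the factors $u^\alpha\rho^\delta$ modify the evolution equation of the second fundamental form, and the auxiliary function together with the use of concavity must be arranged so that, under exactly the stated sign conditions, the resulting differential inequality closes. It is also here, and in the existence of the barrier spheres, that the borderline value $\alpha+\delta+\beta=1$ enters, separating the present theorem from the finite-time blow-up regime $\alpha+\delta+\beta>1$.

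Granting the $C^0$, $C^1$ and $C^2$ estimates, the scalar equation for $\widetilde\rho$ is uniformly parabolic with principal part convex in the Hessian, so the Evans--Krylov and Schauder estimates bootstrap to uniform $C^\infty$ bounds on $[0,T^*)$; a standard continuation argument then forces $T^*=\infty$, giving long-time existence and smoothness. For convergence I would apply Hamilton's trick to $\osc\,\widetilde\rho(\cdot,t)=\max_{\mathbb S^n}\widetilde\rho-\min_{\mathbb S^n}\widetilde\rho$: the uniform estimates upgrade the evolution inequalities at the spatial maximum and minimum of $\widetilde\rho$ to a strict differential inequality $\frac{d}{dt}\,\osc\,\widetilde\rho\le-c\,\osc\,\widetilde\rho$ for some $c>0$, so $\widetilde\rho\to\rho_\infty\equiv\mathrm{const}$ exponentially in $C^0$. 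Interpolating the uniform higher-order bounds against this exponential decay upgrades the convergence to $C^\infty$ and identifies the limit as the sphere of radius $\rho_\infty$ centered at the origin, which completes the proof.
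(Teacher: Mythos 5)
Your overall scheme — scalar graph equation, $C^0$ by sphere comparison, $C^1$ by a maximum principle on $\sqrt{1+|D\log\rho|^2}$, $C^2$ by a maximum principle exploiting the concavity of $f$ under the sign conditions $\alpha\le0$, $\alpha+\delta+\beta\le1$, then Krylov--Evans/Schauder bootstrapping and continuation — is exactly the route of Sections 3 and 4 of the paper, and those steps are fine as outlined.

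The genuine gap is in the convergence step. You propose to run Hamilton's trick on $\osc\widetilde\rho$ and claim a strict inequality $\frac{d}{dt}\osc\widetilde\rho\le-c\,\osc\widetilde\rho$. At the spatial maximum of $\widetilde\rho$ one has $\omega=1$, $u=\rho$ and $h^i_j\ge\rho^{-1}\delta^i_j$, so the normalized speed satisfies only $\partial_t\widetilde\rho\le\eta\widetilde\rho\,(\widetilde\rho^{\,\alpha+\delta+\beta-1}-1)$, with the reverse inequality at the minimum. In the subcritical case $\alpha+\delta+\beta<1$ this two-sided ODE comparison does drive $\widetilde\rho_{\max}$ and $\widetilde\rho_{\min}$ to $1$, but in the critical case $\alpha=1-\delta-\beta$ (which the theorem includes) the right-hand side is identically zero, so you only get that the oscillation is non-increasing; there is no source of strict decay from the zeroth-order terms, and the second-order information at the extremal points is only a sign, not a quantitative gap. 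The paper closes this by a different mechanism: it returns to the evolution inequality for $O=\frac12|D\gamma|^2$ (its (3.8)) and observes that, once the $C^2$ estimates confine $\kappa$ to a compact subset of $\Gamma$, the term $-\beta F^{-\beta-1}\bigl(\sum_iF^{ii}-\max_iF^{ii}\bigr)|D\gamma|^2$ is bounded above by $-C_0 O$ uniformly, yielding $\partial_tO_{\max}\le-C_0O_{\max}$ and hence exponential decay of $|D\rho|$ \emph{including} the critical case; interpolation then gives $C^\infty$ convergence to a constant. You should replace the oscillation argument by this gradient-decay argument (or supply an independent strict decay estimate valid when $\alpha+\delta+\beta=1$). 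Relatedly, your parenthetical ``in fact $v\to1$'' in the gradient paragraph is premature: at that stage of the argument the maximum principle gives only boundedness of $v$; the decay requires the curvature bounds established afterwards.
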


We remark that there are only a few results for hypersurface blowing up in finite time along some flows. In this paragraph, we mention these cases with respect to flow (\ref{1.1}). In dimension $n=2$, let $\a=\delta=0$ and $f=\sigma_n^\frac{1}{n}$, where $\sigma_n$ is defined in Page 4, then Schn$\ddot{u}$rer \cite{SO} considered the case $\beta=2=n$ and Li \cite{LQ} the case $1<\beta\le2=n$. The restriction to two dimensions is due  to the method of proof for the crucial curvature estimates which relies on the monotonicity of a certain rather artificial expression depending on the principal curvatures of the flow hypersurfaces. In \cite{CL}, Chen-Li proved that the flow blows up in finite time if $\a=0$, $\delta\le0$, $f=\sigma_n^\frac{1}{n}$ and $\delta+\beta>1$. In \cite{BIS}, Bryan-Ivaki-Scheuer proved that the flow $\frac{\p X}{\p t}=\psi u^\a\sigma_n^{-1}\nu$ blows up in finite time if $\psi\in C^\infty(\mS^n)$ is even and $\a+n>1$. In \cite{GC}, Gerhardt has proved that the flow blows up in finite time if $\a=\delta=0$ and $\beta>1$. In \cite{KS}, a similar result was proved if the initial hypersurface satisfies the pinching condition $\vert\vert A\vert\vert^2-H^2<c_0H^2$ if $\a=\delta=0$ and $\beta>1$, where $0<c_0=c_0(f,n,\beta)<\frac{1}{n(n-1)}$ is sufficiently small. In the second part of this paper, we shall show how the hypersurface will evolve along these flows with $\a+\delta+\beta>1$. The results in \cite{CL,GC,SO,LQ} are covered by Theorem \ref{t1.3} and the result in \cite{BIS} is covered by Theorem \ref{xt1.5}.

\begin{theorem}\label{t1.3}
	Assume $\alpha, \delta, \beta\in \mR$ satisfying $\alpha,\delta\le 0,\beta>0,\a+\delta+\beta>1$. Let $f\in C^2(\Gamma_+)\cap C^0(\p\Gamma_+)$ satisfy Assumption \ref{a1.1} with $\Gamma=\Gamma_+$, and let $X_0(M)$ be the embedding of a closed, smooth, strictly convex $n$-dimensional manifold $M^n$ in $\mathbb{R}^{n+1}$ enclosing the origin. Then the flow (\ref{1.1}) has a unique smooth solution $X_t$ which is defined on a maximal finite interval $[0,T^*)$. Moreover, the leaves $X(t)$ can be written as graphs of a function $\rho$ over $\mS^n$ such that
\begin{equation}\label{1.3}
\lim_{t\rightarrow T^*}\inf_{\mS^n}\rho(\cdot,t)=\infty.
\end{equation}
After a proper rescaling $X\to \varphi^{-1}(t)X$, where
	\begin{equation}\label{1.4}
	\varphi(t)=\((\alpha+\delta+\beta-1)\eta(T^*-t)\)^{\frac{1}{1-\delta-\beta-\alpha}},
	\end{equation}
the hypersurface $\widetilde M_t=\varphi^{-1}M_t$ converges exponentially to the unit sphere centered at the origin in the $C^\infty$-topology.
\end{theorem}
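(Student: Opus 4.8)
The plan is to parametrize the flow (\ref{1.1}) as a scalar parabolic PDE for the radial function $\rho$ (equivalently the support function $u$) over $\mS^n$, establish a priori estimates up to the maximal time $T^*$, and then analyze the rescaled flow. Since in this regime $X_0(M)$ is strictly convex, the natural choice is to work with the support function $u(\xi,t)$ on $\mS^n$: strict convexity should be preserved (this is a standard consequence of the concavity of $f$ and the structure of the speed, via the evolution equation of the second fundamental form together with a tensor maximum principle), so the hypersurface stays a graph over $\mS^n$ and the inverse Gauss map parametrization is legitimate for all $t<T^*$. Short-time existence and uniqueness follow because the equation is parabolic wherever $\k\in\Gamma_+$ and $\beta>0$ (so $f^{-\beta}$ is increasing in each $\k_i$); let $[0,T^*)$ be the maximal interval.

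First I would derive the evolution of the "expanding barriers": since $\alpha,\delta\le 0$ and $\beta>0$, the speed $u^\alpha\rho^\delta f^{-\beta}$ is \emph{decreasing} in $u,\rho$ and \emph{increasing} in a suitable sense in the curvature, so a comparison with the explicitly solvable spherical solutions $\rho = r(t)$ (where $r' = r^{\alpha+\delta} \eta^{-1}\cdot\eta = r^{\alpha+\delta-\beta}\cdot(\text{const})$ — more precisely $r' = \eta r^{\alpha+\delta-\beta}\cdot r^{\beta}$-type ODE whose solution reproduces (\ref{1.4})) gives both a lower bound $\rho\ge c(t)>0$ and, crucially, shows that when $\alpha+\delta+\beta>1$ the comparison sphere radius blows up at a finite time $T^*<\infty$, forcing (\ref{1.3}); conversely $\rho$ cannot blow up before the inner barrier sphere does, pinning down the blow-up time. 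The $C^0$ estimate on the rescaled solution $\tilde\rho = \varphi^{-1}\rho$ follows from sandwiching $\rho$ between two spherical solutions with the same asymptotic rate $\varphi(t)$.

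Next come the gradient and curvature estimates. For the gradient bound I would use the standard trick for star-shaped flows: the quantity $v = \sqrt{1+\rho^{-2}|\nabla\rho|^2}$ (or $|\nabla u|$-type quantities) satisfies a parabolic inequality whose zeroth-order terms have a favorable sign under $\alpha,\delta\le 0$, giving a uniform bound on the rescaled flow. The curvature estimates are the main obstacle: I expect to need an upper bound on the principal curvatures of the rescaled hypersurface, and then a lower bound keeping $\k$ away from $\p\Gamma_+$. For the upper bound one typically applies the maximum principle to $\frac{h^{ij}\xi_i\xi_j}{\langle X,X\rangle}$ or to $\log(\text{largest principal curvature}) - A\log u + B|X|^2$ for well-chosen constants, using the concavity of $f$ to control the bad gradient terms in the evolution of the Weingarten map; because the flow is expanding and the ambient space is flat, the "artificial" monotone-quantity difficulties that forced $n=2$ in \cite{SO,LQ} are avoided precisely by the convexity assumption here. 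The lower curvature bound (preservation of a uniform $\k_i\ge c>0$ after rescaling) should follow from the evolution of the support function's Hessian $u_{ij}+u\delta_{ij} = b_{ij}$ (the inverse Weingarten matrix) together with a similar maximum-principle argument; this is where $\Gamma=\Gamma_+$ and strict convexity are essential.

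Finally, with uniform $C^2$ bounds on the rescaled flow and $f^{-\beta}$ uniformly elliptic and concave, Krylov--Safonov and Schauder theory give uniform $C^\infty$ bounds, so $T^*$ is indeed the blow-up time and the rescaled flow $\widetilde M_t=\varphi^{-1}M_t$ subconverges. To upgrade to full exponential convergence to the unit sphere I would introduce a monotone functional — for instance the ratio $\rho_{\max}/\rho_{\min}$ of the rescaled flow, or an entropy/isoperimetric-type quantity — show it is monotone and its derivative controls the oscillation, deduce that $\widetilde M_t$ converges to \emph{some} sphere, identify the limiting radius as $1$ from the normalization in (\ref{1.4}) and the limit ODE, and then linearize the rescaled equation at the unit sphere to obtain the exponential rate (the linearized operator on $\mS^n$ has a spectral gap after modding out translations, which are here killed because the origin is enclosed and fixed by the argument). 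The genuinely delicate point throughout is the curvature pinching estimate uniform up to $T^*$; everything else is an adaptation of the techniques already developed in \cite{DL,DL2,GC,GC3} and the blow-up analysis in \cite{CL,GC}.
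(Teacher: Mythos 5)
Your outline correctly identifies the general architecture (scalar parabolic formulation, a priori estimates up to $T^*$, analysis of the rescaled flow), and your sketch of the curvature estimates is broadly consistent with what the paper does in Section 5 and Lemmas \ref{l6.3}--\ref{l6.4}. But there is a genuine gap at the heart of the argument: the $C^0$ estimate for the rescaled flow, the proof of (\ref{1.3}), and the identification of the limit as the \emph{unit} sphere. You propose to get these by ``sandwiching $\rho$ between two spherical solutions with the same asymptotic rate $\varphi(t)$.'' This cannot work: two spherical solutions with initial radii $r_1<\rho_0(x)<r_2$ blow up at \emph{different} times $T^*(r_2)<T^*\le T^*(r_1)$ (since $T^*(r)=r^{1-\a-\delta-\beta}/((\a+\delta+\beta-1)\eta)$ is strictly decreasing in $r$), so the outer barrier ceases to exist strictly before $T^*$ and the comparison (\ref{5.4}) gives no control on $\rho$ near the actual blow-up time. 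In particular it does not show $\inf_{\mS^n}\rho\to\infty$ (only $\sup$), nor that $\varphi^{-1}\rho\to1$.

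The missing idea is the Aleksandrov reflection principle (Lemma \ref{l5.7}): using $\a,\delta\le0$ and strict convexity, the paper compares $u$ with the support function of the reflected-and-translated body across every hyperplane to obtain a \emph{time-independent} bound $\osc_{\mS^n}u\le C$, hence $\osc\rho\le C$. Since $\rho$ attains the exact spherical value $\Theta(\rho_0,t)$ at some point, this forces $|\rho-\Theta(\rho_0,t)|\le C$ uniformly, which simultaneously proves (\ref{1.3}), pins down $\varphi(t)=\Theta(\rho_0,t)$, gives $\widetilde\rho\to1$, and — via $\om-1\le C\Theta^{-1}$ in Lemma \ref{l6.2} — yields the \emph{exponential} decay of $|D\widetilde\rho|$ in the rescaled time, which is what drives the $C^\infty$ exponential convergence after interpolation. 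Your fallback suggestions (a monotone functional such as $\rho_{\max}/\rho_{\min}$, or linearization at the unit sphere with a spectral gap) are not substantiated and are not needed once the reflection argument is in place; in particular the linearization step presupposes convergence to a sphere, which is exactly the point at issue.
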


Flow (\ref{1.1}) can be described by an ODE of the radial function if $\beta=0$. So we do not state that result here. We apply the Aleksandrov reflection principle to derive the $C^0$ estimates in the proof of Theorem \ref{t1.3}. Thus, $\a,\delta\le0$ and convexity are essential in Theorem \ref{t1.3}. However, we can find the convexity and $\delta\le0$ are not essential in the proof of $C^2$ estimates. Thus, if initial hypersurface is not convex or $\delta>0$, whether we can derive similar results is a natural question. When $\a=\delta=0$, the reflection principle for (\ref{1.1}) has been studied by Chow and Gulliver \cite{CG}. In \cite{GC,KS},  the Aleksandrov reflection principle was also used to derive $C^0$ estimates for $\a=\delta=0$. In \cite{CL}, Chen-Li follow the ideas in \cite{GC,KS} to solve the case $\a=0$, $f=K^\frac{1}{n}$, where $K$ is Gauss curvature.

When the initial hypersurface is a sphere,  (\ref{1.1}) is equivalent to an ODE, since the leaves of the flow will then be spheres too, and the spherical flow will develop a blow up in finite time if $\a+\beta+\delta>1$. For $\a+\beta+\delta\le1$ the spherical flow will exist for all time and converge to infinity.

The $k$th elementary symmetric function $\sigma_k$ is defined by
$$\sigma_k(\k_1,...,\k_n)=\sum_{1\leq i_1<\cdot\cdot\cdot<i_k\leq n}\k_{i_1}\cdot\cdot\cdot\k_{i_k},$$
and let $\sigma_0=1$.

Let us make some remarks about our conditions. The convex cone $\Gamma$ that contains the positive cone in Assumption \ref{a1.1} is decided by $f$, and $\frac{\p f}{\p\k_i}>0$ ensures that this equation is parabolic.  Star-shaped initial hypersurface means it can be written as a graph over  $\mathbb{S}^n$. Examples of functions $f$ include the curvature quotients $f=(\frac{\sigma_l}{\sigma_k})^{\frac{1}{l-k}}$ with $0\leq k< l\leq n$ and the power means  $f=(\sum_{i=1}^n\k_i^{k})^{\frac{1}{k}}$ with $k<0$. More examples can be constructed as following: if $f_1,\cdots,f_k$ satisfy our conditions, then $f=\prod_{i=1}^kf_i^{\alpha_i}$ also satisfies our conditions, where $\alpha_i\ge 0$ and $\sum_{i=1}^k\alpha_i=1$. The details can be seen in \cite{DL2} and more example can be found in \cite{B3,B4}.





The study of the asymptotic behaviour of the flow (\ref{1.1}) is equivalent to the long time behaviour of the normalized flows. Let $\widetilde X(\cdot,\tau)=\varphi^{-1}(t)X(\cdot,t)$, where
\begin{equation}\label{1.5}
	\tau=\begin{cases}
		t &\text{  if }\alpha+\delta+\beta=1,\\
		\frac{\log((1-\alpha-\delta-\beta)\eta t+1)}{(1-\alpha-\delta-\beta)\eta} &\text{  if }\alpha+\delta+\beta<1,\\
	\frac{\log(\frac{T^*-t}{T^*})}{(1-\alpha-\delta-\beta)\eta} &\text{  if }\alpha+\delta+\beta>1.
	\end{cases}
\end{equation}
Obviously, $\frac{\p\tau}{\p t}=\varphi^{\a+\delta+\beta-1}$ and $\tau(0)=0$. Then $\widetilde X(\cdot,\tau)$ satisfies the following normalized flow,
\begin{equation}\label{1.6}
	\begin{cases}
		\frac{\partial \widetilde X}{\p \tau}(x,\tau)=\widetilde u^\alpha\widetilde\rho^\delta\widetilde f^{-\beta}(\widetilde\k)\nu-\eta \widetilde X,\\[3pt]
		\widetilde X(\cdot,0)=\widetilde X_0.
	\end{cases}
\end{equation}
For convenience we still use $t$ instead of $\tau$ to denote the time variable and omit the ``tilde'' if no confusions arise and we mention the scaled flow or normalized flow. We can find that the flow (\ref{1.6}) is equivalent (up to an isomorphism) to
\begin{equation}\label{1.7}
	\begin{cases}
		&\frac{\partial X}{\partial t}=(u^\alpha\rho^\delta f^{-\beta}(\k)-\eta u) \nu(x,t),\\
		&X(\cdot,0)=X_0.
	\end{cases}
\end{equation}

In order to prove Theorem \ref{t1.2}, we shall establish the a priori estimates for the normalized flow (\ref{1.7}), and show that if $X(\cdot,t)$ solves (\ref{1.7}), then the radial function $\rho$ converges exponentially to a constant as $t\to\infty$.

The proof of Theorem \ref{t1.3} is different from Theorem \ref{t1.2}. We first need to prove the existence of $T^*$ in the proof of Theorem \ref{t1.3}. Therefore in order to prove Theorem \ref{t1.3}, we shall establish the a priori estimates for the original flow (\ref{1.1}) if $M_t\subset B_R(0)$ first. This allows to continue the solution $M_t$ smoothly past $t=T^*$. After that we establish the a priori estimates for the normalized flow (\ref{1.7}) and show that if $X(\cdot,t)$ solves (\ref{1.7}), then the radial function $\rho$ converges to $1$ as $t\to\infty$.

By the convergence of flow (\ref{1.7}) we can derive the existence and uniqueness results of the $L^p$-Minkowski problem and $L^p$-Christoffel-Minkowski problem mentioned below. As applications, anisotropic flows \cite{B5,CL,IM,IM3,LSW} etc.. usually provided alternative proofs and smooth category approach of the existence of solutions to elliptic PDEs arising in convex body geometry. In these references, the authors usually consider expanding or contracting flows of the convex hypersurfaces at the speeds of $\psi u^\a\sigma_{k}^\beta(\l)$ or $-\psi \rho^\a\sigma_{n}^\beta(\k)$ respectively, where $\psi$ is a smooth positive function on $\mS^n$, $\k$ are the principal curvature of the hypersurface, $\l=\frac{1}{\k}$ are the principal curvature radii of the hypersurface.
 One advantage of this method is that there is no need to employ the constant rank theorem. For example, if $f^{-1}=\sigma_{k}^\frac{1}{k}(\l)$, by the bound of $f^{-1}=\sigma_{k}^\frac{1}{k}(\l)=(\frac{\sigma_{n-k}}{\sigma_{n}})^\frac{1}{k}(\k)$ one has that the hypersurfaces naturally preserve convexity. As a natural extension, in the last part of this paper, we let $f(\k)=(\frac{\sigma_{n}(\k)}{\sigma_{n-k}(\k)})^\frac{1}{k}=\sigma_{k}^{-\frac{1}{k}}(\l)$ in flow (\ref{x1.1}) and consider the anisotropic expanding flow 
 \begin{equation}\label{x1.8}
 	\begin{cases}
 	&\frac{\partial X}{\partial t}=\psi u^{\alpha}\rho^\delta \sigma_{k}^{\frac{\beta}{k}}(\l) \nu(x,t),\\
 	&X(\cdot,0)=X_0,
 \end{cases}
 \end{equation}
where $k$ is an integer and $1\le k\le n$. Usually, we need to get at least two integral monotone quantities to derive the convergence of anisotropic flows. If $k=n$, we can define a functional (\ref{x1.16}) to derive the convergence of anisotropic flows. However, if $k<n$ and $\delta\ne0$, so far there is no way to handle this situation. 
One reason is that if we consider $\int_{\mS^n}\sF\rho(\xi) d\xi$, there will appear Gauss curvature $K$ by (\ref{2.28}) that we don't want. If we consider $\int_{\mS^n}\sF\rho(\xi(x))dx=\int_{\mS^n}\sF\sqrt{u^2+|Du|^2}dx$, the evolution equation of it is hard to handle along (\ref{x1.8}), where we let $\xi:\mS^n\rightarrow M$ and $x:\mS^n\rightarrow M$ be the radial map and inverse Gauss map respectively. Inspired by Theorem \ref{t1.3}, we study the normalized flow  after a proper rescaling.

 For $1\le k\le n-1$, $\a+\delta+\beta<1$ and $\a\le0<\beta$, we prove that the solution to this flow exists for all time and converges smoothly after normalization to a soliton which is a solution of $\psi u^{\a-1}\rho^\delta \sigma_{k}^{\frac{\beta}{k}}=c$ if $\psi$ is a smooth positive function on $\mS^n$ and satisfies the condition that $(D_iD_j\psi^{\frac{1}{1+\beta-\a}}+\delta_{ij}\psi^{\frac{1}{1+\beta-\a}})$ is positive definite, where  $D$ is the Levi-Civita connection of the Riemannian metric $e$ of $\mathbb S^n$. If $1\le k\le n-1$, $\a+\delta+\beta<1$ and $\a>1+\beta>1$, we have the same result with $(D_iD_j\psi^{\frac{1}{1+\beta-\a}}+\delta_{ij}\psi^{\frac{1}{1+\beta-\a}})<0$.
We also have the same result for $k=n$ when $u$ and $\psi$ are even or $\a+\delta+\beta\le1$ without any constraint on positive smooth function $\psi$, which recovers the results in  \cite{CW,IM,HZ,CHZ} and partial results in \cite{CL}. When $\delta=0,\beta=k$, the flow has been studied by Ivaki \cite{IM}. If $\a=0,k=n$, the flow has been studied by Chen and Li \cite{CL}. If $\beta=k$, in \cite{LJL} they studied this flow with $k+1<\delta<1-k-\a$ and specific initial hypersurface. In this paper we give a wider range of coefficients and derive wider conclusions. 

For general function $\psi$, under the flow (\ref{x1.8}), by \cite{DL} Section 2 the support function $u$ satisfies
$$\frac{\partial u}{\partial t}=\psi u^\alpha\rho^\delta \sigma_{k}^{\frac{\beta}{k}}.$$
We mention that $\sigma_{k}=\sigma_{k}(\l)$ in the anisotropic flow, where $\l$ are the principal curvature radii of hypersurface. By Section 2.5, $\l$ are also the eigenvalues of matrix $[D^2u+u\Rmnum{1}]$. Similarly to Theorem \ref{t1.3}, let $\widetilde u(\cdot,\tau)=\varphi^{-1}(t)u(\cdot,t)$, where
\begin{equation*}
	\tau=\frac{\log((1-\alpha-\delta-\beta)\eta t+1)}{(1-\alpha-\delta-\beta)\eta},
\end{equation*}
\begin{equation*}
	\varphi(t)=(C_0+(1-\beta-\delta-\alpha)\eta t)^{\frac{1}{1-\beta-\delta-\alpha}}.
\end{equation*}
We let $C_0$ sufficiently large to make 
\begin{equation*}
\psi\widetilde u^{\alpha-1}\widetilde\rho^\delta\widetilde\sigma_{k}^{\frac{\beta}{k}}\vert_{M_0}>\eta.
\end{equation*}
In fact, $\psi\widetilde u^{\alpha-1}\widetilde\rho^\delta\widetilde\sigma_{k}^{\frac{\beta}{k}}\vert_{M_0}=C_0\psi u^{\alpha-1}\rho^\delta \sigma_{k}^{\frac{\beta}{k}}\vert_{M_0}>\eta$. 

Obviously, $\frac{\p\tau}{\p t}=\varphi^{\a+\delta+\beta-1}$ and $\tau(0)=0$. Then $\widetilde u(\cdot,\tau)$ satisfies the following normalized flow,
\begin{equation}\label{x1.9}
	\begin{cases}
		\frac{\partial \widetilde u}{\p \tau}(x,\tau)=\psi\widetilde u^\alpha\widetilde\rho^\delta\widetilde \sigma_{k}^{\frac{\beta}{k}}(\widetilde\l)-\eta\widetilde u,\\[3pt]
		\widetilde u(\cdot,0)=\widetilde u_0,
	\end{cases}
\end{equation}
where $\eta=(C_n^k)^\frac{\beta}{k}$.
For convenience we still use $t$ instead of $\tau$ to denote the time variable and omit the ``tilde'' if no confusions arise and we mention the scaled flow or normalized flow.

 Next, we prove the following theorem about the asymptotic behavior of the flow.
\begin{theorem}\label{xt1.4}
Let $M_0$ be a smooth, closed and uniformly convex hypersurface in $\mR^{n+1}$, $n\ge2$, enclosing the origin. Suppose $\a,\delta,\beta\in \mR^1$, $\beta>0$ and $\a+\delta+\beta<1$, $k$ is an integer, $\psi$ is a smooth positive function on $\mS^n$. If

(\rmnum{1}) $1\le k<n$, $\a\le0$ and $(D_iD_j\psi^{\frac{1}{1+\beta-\a}}+\delta_{ij}\psi^{\frac{1}{1+\beta-\a}})$ is positive definite;

or (\rmnum{2}) $1\le k<n$, $1+\beta<\a$ and $(D_iD_j\psi^{\frac{1}{1+\beta-\a}}+\delta_{ij}\psi^{\frac{1}{1+\beta-\a}})$ is negative definite;

 or (\rmnum{3}) $k=n$.

 Then the flow (\ref{x1.8}) has a unique smooth and uniformly convex solution $M_t$ for all time $t>0$. After normalization, the rescaled hypersurfaces $\widetilde{M_t}$ converge smoothly to a smooth solution of $\psi u^{\a-1}\rho^\delta\sigma_{k}^\frac{\beta}{k}=\eta$.
\end{theorem}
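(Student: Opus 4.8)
\emph{Plan of proof.} The plan is to run the standard long-time-existence-plus-convergence scheme for parabolic flows of convex hypersurfaces entirely at the level of the scalar equation for the (normalized) support function. By the computation quoted above, along \eqref{x1.8} the support function solves $\partial_t u=\psi u^{\alpha}\rho^{\delta}\sigma_k^{\beta/k}(\lambda)$ with $\lambda$ the eigenvalues of $W:=D^2u+uI$, so \eqref{x1.9} is a fully nonlinear equation on $\mS^n$ which is parabolic wherever the body stays uniformly convex (since $\partial\sigma_k/\partial\lambda_i>0$ on $\Gamma_+$); short-time existence and uniqueness are then standard. The first step is the $C^0$ estimate. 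At a spatial maximum of $u$ one has $D^2u\le0$, hence $\lambda_i\le u$ and $\sigma_k^{\beta/k}(\lambda)\le(C_n^k)^{\beta/k}u^{\beta}=\eta u^{\beta}$, while $Du=0$ forces $\rho=u$ there; therefore $\partial_t u\le\eta u(\max_{\mS^n}\psi\cdot u^{\alpha+\delta+\beta-1}-1)$, which is negative once $u$ is large because $\alpha+\delta+\beta<1$. The symmetric estimate at a spatial minimum of $u$ (where $\lambda_i\ge u$ and again $\rho=u$) gives $\partial_t u\ge\eta u(\min_{\mS^n}\psi\cdot u^{\alpha+\delta+\beta-1}-1)>0$ once $u$ is small. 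Together with the choice of $C_0$ this yields time-uniform bounds $0<c\le u\le C$; since $\min_{\mS^n}u\le\rho\le\max_{\mS^n}u$ for a convex body containing the origin, the same bounds hold for $\rho$, and then $|Du|^2=\rho^2-u^2$ gives the $C^1$ bound for free.

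The decisive estimate is the $C^2$ bound, i.e.\ that the flow preserves uniform convexity and keeps the principal radii in a fixed compact subset of $\Gamma_+$. I would bound the largest and the smallest eigenvalues of $W$ by the maximum principle applied to suitable test functions built from these eigenvalues: differentiating the evolution equation of $W$ and using the concavity of $\sigma_k^{1/k}$ to absorb the bad second-order terms, one is left with gradient terms and zeroth-order terms carrying the first two derivatives of $\psi$. This is exactly where the hypothesis on $\psi$ enters: writing the speed as $(\psi^{1/(1+\beta-\alpha)})^{1+\beta-\alpha}u^{\alpha}\rho^{\delta}\sigma_k^{\beta/k}$ and invoking that $(D_iD_j\psi^{1/(1+\beta-\alpha)}+\delta_{ij}\psi^{1/(1+\beta-\alpha)})$ is positive definite when $\alpha\le0$ (so $1+\beta-\alpha>0$), resp.\ negative definite when $1+\beta<\alpha$ (so $1+\beta-\alpha<0$ and the sign flips), forces the remaining terms to have the sign that keeps the smallest principal radius bounded away from $0$ and the largest bounded above; this follows the computations in \cite{DL,DL2,IM}. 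When $k=n$ the speed is the Monge--Ampère quantity $\psi u^{\alpha}\rho^{\delta}(\det W)^{\beta/n}$, and no sign condition on $\psi$ is needed because, as in the logarithmic-Gauss-curvature-flow literature, the $C^0$/$C^1$ bounds together with a lower bound on $\det W$ from the maximum principle already force uniform convexity. Once the equation is uniformly parabolic, the concavity of $\sigma_k^{1/k}$ (with a routine reformulation when $\beta>1$) together with Krylov--Safonov and Schauder theory gives time-uniform $C^{\infty}$ bounds, hence all-time existence of the flow and of \eqref{x1.9}.

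It remains to prove convergence after normalization. For $k=n$ I would use the entropy method: the functional \eqref{x1.16} is monotone and bounded along \eqref{x1.9}, so $\int_{\mS^n}(\partial_t u)^2\to0$ along some time sequence; a subsequential limit then solves $\psi u^{\alpha-1}\rho^{\delta}\sigma_n^{\beta/n}=\eta$, and the uniform estimates upgrade this to full smooth, in fact exponential, convergence; when $u$ and $\psi$ are even, or when $\alpha+\delta+\beta\le1$, the same functional is coercive without any condition on $\psi$. For $1\le k<n$ with $\delta\ne0$ the usual integral functionals break down — as the authors note, $\int_{\mS^n}\sF\rho\,d\xi$ produces the unwanted Gauss-curvature term from \eqref{2.28} and $\int_{\mS^n}\sF\sqrt{u^2+|Du|^2}\,dx$ has an intractable evolution — so, following the idea of Theorem~\ref{t1.3}, I would argue directly with the maximum principle: show that the spatial oscillation of the normalized speed $\psi u^{\alpha-1}\rho^{\delta}\sigma_k^{\beta/k}$ is non-increasing and tends to $0$, and that its spatially constant limit must equal $\eta$, since otherwise $u$ would grow or decay exponentially, contradicting the uniform $C^0$ bound; hence $\partial_t u/u\to0$ uniformly, $u(\cdot,t)$ converges, and interpolating against the uniform higher-order bounds yields $C^{\infty}$ exponential convergence to a solution of the soliton equation $\psi u^{\alpha-1}\rho^{\delta}\sigma_k^{\beta/k}=\eta$.

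The main obstacle is the $C^2$/convexity estimate for a general function $\psi$ in cases (\rmnum{1})--(\rmnum{2}): the entire purpose of the definiteness hypothesis on $\psi^{1/(1+\beta-\alpha)}$ is to make that tensor maximum principle close, and I expect this to be the most delicate computation. A close second is isolating the right decaying quantity for the convergence when $k<n$ and $\delta\ne0$, precisely because, as emphasized in the text, no monotone integral functional is available in that range and one must instead mimic the direct argument used for Theorem~\ref{t1.3}.
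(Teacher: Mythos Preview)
Your $C^0$/$C^1$ outline matches the paper (Lemma~\ref{l7.5}), and your description of the $C^2$ step is in the right spirit; the paper's test function is $\theta=\log h^{11}-\log u$ (Lemma~\ref{l7.7}), and the computation leading to (\ref{7.28}) is precisely where the sign hypothesis on $D^2\psi^{1/(1+\beta-\alpha)}+\psi^{1/(1+\beta-\alpha)}I$ closes the inequality. One point you gloss over: the upper bound on the $\lambda_i$ is not obtained from a second test function but from the two-sided bound on $Q=\psi u^{\alpha-1}\rho^{\delta}\sigma_k^{\beta/k}$ proved in Lemma~\ref{l7.3} (maximum principle on the evolution equation (\ref{x7.1}) of $Q$), which together with the lower bound on $\lambda_i$ forces all eigenvalues into a compact set.

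Where your proposal diverges substantially is the convergence step. The paper's argument is much shorter than what you sketch and works uniformly for all $k$ (including $k=n$): the constant $C_0$ in the rescaling is chosen precisely so that $Q>\eta$ on the initial hypersurface, and (\ref{x7.1}) shows this condition is preserved for all time (strong maximum principle; the zero-order term $(\alpha+\delta+\beta-1)(Q-\eta)Q$ has the right sign at a first contact with $\eta$). Hence $\partial_t u=(Q-\eta)u>0$ everywhere, $u$ is pointwise monotone and bounded, so it converges; the uniform $C^\infty$ bounds upgrade this to smooth convergence. No entropy and no oscillation estimate are used. Your suggestion to invoke the functional (\ref{x1.16}) for $k=n$ is misplaced here: $\mathcal J_{p,q}$ is monotone along the \emph{other} normalized flow (\ref{x1.13}), not along (\ref{x1.9}), and the paper reserves that mechanism for Theorem~\ref{xt1.5}. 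Your alternative for $k<n$---showing the oscillation of $Q$ tends to $0$---can in fact be made to work, since at a spatial maximum (resp.\ minimum) one gets $\partial_t Q_{\max}\le(\alpha+\delta+\beta-1)(Q_{\max}-\eta)Q_{\max}$ (resp.\ the reverse), and ODE comparison with $\dot y=(\alpha+\delta+\beta-1)(y-\eta)y$ forces both $Q_{\max}$ and $Q_{\min}$ to $\eta$ exponentially; but ``non-increasing'' is not what this gives, and in any case the paper's monotonicity-of-$u$ trick is cleaner and is the intended role of $C_0$, which you currently treat as relevant only to the $C^0$ bounds.
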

\textbf{Remark}: (\rmnum{1}) If $\a+\delta+\beta=1$, there may  be no hypersurface satisfying the condition $\widetilde u^{\alpha-1}\widetilde\rho^\delta\widetilde\sigma_{k}^{\frac{\beta}{k}}\vert_{M_0}>\eta$. The $C^0$ estimates or convergence are hard to derive in this case. We give a uniqueness result in Section 7 to show that the proof of this theorem can't be used to deal with this case.

(\rmnum{2})  The condition that $(D_iD_j\psi^{\frac{1}{1+\beta-\a}}+\delta_{ij}\psi^{\frac{1}{1+\beta-\a}})$ is positive definite can be seen in some papers, such as \cite{GX,HMS,IM,SWM}.  In their results, the condition $(D_iD_j\psi^{\frac{1}{1+\beta-\a}}+\delta_{ij}\psi^{\frac{1}{1+\beta-\a}})$ is positive definite or semi-positive definite is  essential if we want to solve the $L^p$ Christoffel-Minkowski problem. In \cite{IM}, for $1\le k<n$, $\delta=0$, $\a<k+1$, $\beta=k$, Ivaki showed the existence of a rotationally symmetric $\psi$ with $((\psi^\frac{1}{k+1-\a})_{\theta\theta}+\psi^\frac{1}{k+1-\a})|_{\theta=0}<0$ and a smooth, closed, strictly convex initial hypersurface for which the solution to the flow (\ref{x1.8}) with $k<n$ will lose smoothness. Therefore $(D_iD_j\psi^{\frac{1}{1+\beta-\a}}+\delta_{ij}\psi^{\frac{1}{1+\beta-\a}})\ge0$ is essential to ensure the smoothness of the solution is preserved if $\a\le \beta+1$. Up to now, all results by use of anisotropic flows to solve Christoffel-Minkowski problem require the condition $(D_iD_j\psi^{\frac{1}{1+\beta-\a}}+\delta_{ij}\psi^{\frac{1}{1+\beta-\a}})>0$. More details can be seen in \cite{BIS2,IM}.

For $k=n$, Gauss curvature has pretty good properties therefore we can wide the range of coefficients. We give another scaling to solve this flow better. Let $\Om_{t}$ be the convex body enclosed by $M_t$. Firstly we mention that the definitions of the support function $u=u(x):\mS^n\rightarrow\mR$ and radial function $\rho=\rho(\xi):\mS^n\rightarrow\mR$ of  a convex body which contains the origin are equal to the definitions of the support function and radial function of the hypersurface enclosing the convex body respectively. Let $\vec{\rho}(\xi):=\rho(\xi)\xi$.
We then introduce two set-valued mappings, the radial Gauss mapping $\mathscr{A}=\sA_\Om$ and the reverse radial Gauss mapping $\sA^*=\sA^*_\Om$, which are given by, for any Borel set $\om\subset\mS^n$,
\begin{align*}
\sA(\om)=&\{\nu(\vec{\rho}(\xi)):\xi\in\om\},\\
\sA^*(\om)=&\{\xi\in\mS^n:\nu(\vec{\rho}(\xi))\in\om\}.
\end{align*}
Note that $\sA(\xi)$ (resp. $\sA^*(x)$) is a unique vector for almost all $\xi\in\mS^n$ (resp. for almost all $x\in\mS^n$) \cite{SR}.

 Let $\widetilde{X}(\cdot,\tau)=e^{-\tau}X(\cdot,t(\tau))$, where $t(\tau)$ is the inverse function of $\tau(t)$ given by
\begin{equation}\label{x1.12}
	\tau=\tau(t)=\begin{cases}
		\frac{1}{q}\log{\int\hspace{-0.9em}-}_{\mS^n} \rho^q_{\Om_{t}}d\mu_{\mS^n}-\frac{1}{q}\log{\int\hspace{-0.9em}-}_{\mS^n} \rho^q_{\Om_{0}}d\mu_{\mS^n},&\quad q\ne0,\\[3pt]
		{\int\hspace{-0.9em}-}_{\mS^n} \log\rho_{\Om_{t}}d\mu_{\mS^n}-{\int\hspace{-0.9em}-}_{\mS^n} \log\rho_{\Om_{0}}d\mu_{\mS^n},&\quad q=0,
	\end{cases}
\end{equation}
while $q=n+1+\frac{n\delta}{\beta}$, and for any measure $d\mu$ and integrable function $g$, we use the convention throughout of the paper
\begin{equation*}
{\int\hspace{-1em}-}_{\mS^n}gd\mu=\frac{\int_{\mS^n}gd\mu}{\int_{\mS^n}d\mu}.
\end{equation*}
It can be seen in Section 8 later on that $\widetilde{X}(\cdot,\tau)$ satisfies the following normalized flow
\begin{equation}\label{x1.13}
	\begin{cases}
		&\frac{\partial X}{\partial t}(x,t)=\frac{\phi(t)\psi u^\alpha\rho^\delta}{K^\frac{\beta}{n}}\nu-X,\\[4pt]
		&X(x,0)=X_0(x),
	\end{cases}
\end{equation}
where $K$ is Gauss curvature,
\begin{equation}\label{x1.14}
\phi(t)=\[\int_{\mS^n}\rho^q(\xi,t)d\xi\]\[\int_{\mS^n}\frac{\psi u^\alpha\rho^{\delta+n\delta/\beta}}{K^{\frac{\beta}{n}+1}}dx\]^{-1}.
\end{equation}

In what follows we use the notations for convenience
\begin{equation*}
dx=d\mu_{\mS^n}(x) \text{ and }d\xi=d\mu_{\mS^n}(\xi).
\end{equation*}
For convenience we still use $t$ instead of $\tau$ to denote the time variable and omit the “tilde” if no confusion arises. Let
\begin{equation}\label{x1.15}
\begin{split}
q&=n+1+\frac{n\delta}{\beta},\qquad
p=1+\frac{n(1-\a)}{\beta},\\[2pt]
q^*&=\begin{cases}
\dfrac{q}{q-n} \quad \text{  if } q>n+1,\\[2pt]
n+1 \quad \text{  if } q=n+1,\\[2pt]
\dfrac{nq}{q-1} \quad \text{  if } 1<q<n+1,\\[2pt]
+\infty \qquad \text{  if } 0<q\le1.
\end{cases}
\end{split}
\end{equation}
 Then we study the normalized flow (\ref{x1.13}) and prove the following theorem.

\begin{theorem}\label{xt1.5}
Let $M_0$ be a smooth, closed and uniformly convex hypersurface in $\mR^{n+1}$, $n\ge2$, enclosing the origin and $\psi$ be a smooth positive function on $\mS^n$. Suppose $\a,\delta,\beta\in \mR^1$, $\beta>0$.

(\rmnum{1}) If $\a+\delta+\beta<1$ or $\a=1-\delta-\beta\ne\frac{\beta}{n}+1$, then the normalized flow (\ref{x1.13}) has a unique smooth solution $X$ for all time $t\ge0$. For each $t\in[0,\infty)$, $M_t=X(\mS^n,t)$ is a closed, smooth, uniformly convex hypersurface and $u(\cdot,t)$, the support function of $M_t=X(\mS^n,t)$, converges smoothly as $t\rightarrow\infty$ to the unique positive, smooth and uniformly convex solution of (\ref{1.11}) with $\psi$ replaced by $c_0\psi^{-n/\beta}$ for some $c_0>0$;

(\rmnum{2}) If $\a=\frac{\beta}{n}+1$, $\delta=\frac{-(n+1)\beta}{n}$, $\int_{\mS^n}  c_0\psi^{-\frac{n}{\beta}}=\vert\mS^n\vert$ and $\int_\om  c_0\psi^{-\frac{n}{\beta}}<\vert\mS^n\vert-\vert\om^*\vert$ for some positive constant $c_0$, then the normalized flow (\ref{x1.13}) has a unique smooth solution $X$ for all time $t\ge0$. For each $t\in[0,\infty)$, $M_t=X(\mS^n,t)$ is a closed, smooth, uniformly convex hypersurface and $u(\cdot,t)$, the support function of $M_t=X(\mS^n,t)$, converges smoothly as $t\rightarrow\infty$ to the unique positive, smooth and uniformly convex solution of (\ref{1.11}) with $\psi$ replaced by $c_0\psi^{-n/\beta}$ for some $c_0>0$;

 (\rmnum{3}) If (1) $\a\le\frac{\beta}{n}+1$ (i.e. $p\ge0$), or (2) $\delta\le\frac{-(n+1)\beta}{n}$ (i.e. $q\le0$), or (3) $\delta>\frac{-(n+1)\beta}{n}$(i.e. $q>0$) and $-q^*<p<0$, where $q^*$ is defined in (\ref{x1.15}). Let $\psi$ be in addition an even function and the initial hypersurface $M_0$ is origin-symmetric, then the normalized flow (\ref{x1.13}) has a unique smooth solution $X$ for all existing time $[0,T^*)$. For each $t\in[0,T^*)$, $M_t=X(\mS^n,t)$ is a closed, smooth, uniformly convex and origin-symmetric hypersurface, and $u(\cdot,t)$, the support function of $M_t=X(\mS^n,t)$, converges smoothly for a sequence of times to a positive, smooth, uniformly convex and even solution of (\ref{1.11}) with $\psi$ replaced by $c_0\psi^{-n/\beta}$ for some $c_0>0$.
\end{theorem}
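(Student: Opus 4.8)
The plan is to analyze the normalized flow (\ref{x1.13}) through a priori estimates, exploiting the variational structure behind the rescaling. First I would reduce everything to a scalar parabolic equation on $\mS^n$: since the principal curvature radii $\l$ are the eigenvalues of $[\nabla^2 u+u\,\mathrm{I}]$ and $K^{-1}=\det[\nabla^2 u+u\,\mathrm{I}]$, the flow (\ref{x1.13}) is equivalent to
\begin{equation*}
\p_t u=\phi(t)\,\psi\,u^{\a}\rho^{\delta}\big(\det[\nabla^2 u+u\,\mathrm{I}]\big)^{\beta/n}-u,\qquad \rho=\sqrt{u^2+|\nabla u|^2},
\end{equation*}
with $\phi(t)$ the nonlocal factor (\ref{x1.14}). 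Short-time existence and short-time preservation of uniform convexity are standard, since the right-hand side is uniformly elliptic in $\nabla^2 u$ as long as $[\nabla^2 u+u\,\mathrm{I}]>0$. A first key point is that the rescaling is designed so that $\mathcal{J}_1(t):=\int_{\mS^n}\rho_{\Om_t}^q\,d\xi$ (the $q$-th dual quermassintegral up to a constant) stays constant along the flow, which controls $\phi(t)$ in terms of the geometry and will be used repeatedly.

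Next I would exhibit a monotone functional. Alongside the conserved $\mathcal{J}_1$, set
\begin{equation*}
\mathcal{J}_2(t)=\frac1p\int_{\mS^n}\psi^{-n/\beta}u^p\,dx\quad(p\ne0),\qquad \mathcal{J}_2(t)=\int_{\mS^n}\psi^{-n/\beta}\log u\,dx\quad(p=0),
\end{equation*}
and verify by direct computation that $\mathcal{J}_2$ is monotone along (\ref{x1.13}), stationary exactly at solutions of $\psi u^{\a-1}\rho^\delta\sigma_n^{\beta/n}(\l)=\eta$, i.e. of (\ref{1.11}) with $\psi$ replaced by $c_0\psi^{-n/\beta}$. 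A H\"older/Jensen inequality relating $\mathcal{J}_1$ and $\mathcal{J}_2$ --- here the sign of $p-q$, equivalently whether $\a+\delta+\beta<1$, $=1$ or $>1$, is decisive --- then bounds $\mathcal{J}_2$ using the conserved $\mathcal{J}_1$; in the critical case $p=q$ the equality case of this inequality is exactly what forces the Minkowski-type normalization of $\psi$ in part (\rmnum{2}).

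The heart of the proof is the $C^0$ estimate $0<c\le u\le C$. Evaluating the scalar equation at a spatial maximum of $u$ (where $\nabla^2 u\le0$, so $\det[\nabla^2 u+u\,\mathrm{I}]\le u^n$ and $\rho=u$) and at a spatial minimum yields differential inequalities for $\max_{\mS^n}u$ and $\min_{\mS^n}u$; combining these with the bounds on $\phi$, the monotonicity of $\mathcal{J}_2$ and the conservation of $\mathcal{J}_1$ gives two-sided bounds. In part (\rmnum{1}) ($p>q$, or $p=q\ne0$) this closes without any symmetry hypothesis; in part (\rmnum{2}) the assumptions $\int_{\mS^n}c_0\psi^{-n/\beta}=|\mS^n|$ and $\int_\om c_0\psi^{-n/\beta}<|\mS^n|-|\om^*|$ are precisely the classical Minkowski-type solvability conditions preventing the body from collapsing onto a lower-dimensional set; in part (\rmnum{3}) ($p<q$, with the sign restrictions on $p$ and the auxiliary exponent $q^*$) one additionally uses that evenness of $\psi$ and origin-symmetry of $M_0$ are preserved by the flow, together with a reflection/symmetry argument (of Blaschke--Santal\'o type) to bound $(\min u)(\max u)$. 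I expect this $C^0$ estimate --- especially the symmetric case (\rmnum{3}) and the borderline exponents governed by $q^*$ --- to be the main obstacle.

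Once $C^0$ bounds are in hand the rest is routine. Then $\rho$ is comparable to $u$ ($C^1$ bounds), a maximum-principle argument on the largest eigenvalue of $[\nabla^2 u+u\,\mathrm{I}]$ (using concavity of $\sigma_n^{\beta/n}$ for $\beta\le n$, and the one-homogeneity structure otherwise) yields an upper bound for the curvature radii, and the evolution of $\det[\nabla^2 u+u\,\mathrm{I}]$ preserves uniform convexity, so the flow stays uniformly parabolic with bounded geometry. Krylov--Safonov and Schauder estimates then give uniform $C^\infty$ bounds and hence all-time existence ($T^*=\infty$ in parts (\rmnum{1})--(\rmnum{2})). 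Finally, monotonicity and boundedness of $\mathcal{J}_2$ force $\tfrac{d}{dt}\mathcal{J}_2\to0$ along some time sequence, and the uniform estimates extract a smooth limit, which is necessarily stationary, i.e. solves (\ref{1.11}) with $\psi$ replaced by $c_0\psi^{-n/\beta}$. In parts (\rmnum{1}) and (\rmnum{2}), uniqueness of such solutions upgrades subconvergence to full smooth convergence; in part (\rmnum{3}) only convergence along a sequence of times is claimed, matching the statement.
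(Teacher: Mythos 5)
Your overall architecture matches the paper's: conservation of the dual quermassintegral $V_q$ under the chosen rescaling, monotonicity of the functional $\cJ_{p,q}=U_p-V_q$ via H\"older's inequality, the Blaschke--Santal\'o-type inequality for the origin-symmetric case (\rmnum{3}), and the final subsequential-convergence-plus-uniqueness argument. However, there are three places where your sketch either would not close as written or diverges from what actually works.

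First, your $C^0$ estimate is circular. You propose to evaluate the normalized equation at spatial extrema of $u$ and combine the resulting differential inequalities ``with the bounds on $\phi$''; but the two-sided bound on $\phi(t)$ in (\ref{x1.14}) requires control of $\int\psi u^\a\rho^{\delta+n\delta/\beta}K^{-\beta/n-1}\,dx$, i.e.\ $C^0$ \emph{and} curvature bounds, which is exactly what you are trying to prove (in the paper these come only in Lemma 8.9, after the $C^0$ and determinant estimates). The paper avoids this by first proving the scaling-invariant gradient estimate $|Du|/u\le C$ on the \emph{original} flow (\ref{x1.8}) (Lemmas 7.4 and 8.4) --- this is precisely where the hypothesis $\a+\delta+\beta<1$ or $\a=1-\delta-\beta\ne\frac{\beta}{n}+1$ of case (\rmnum{1}) enters --- which yields $\max u/\min u\le C$; the scale is then pinned down by the conserved $V_q$ and the bounded monotone functional $U_p$, with the various sign cases of $p$ and $q$ treated separately. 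Your sketch omits this gradient estimate entirely, so the oscillation control in case (\rmnum{1}) is missing.

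Second, in case (\rmnum{2}) ($p=q=0$) the normalization conditions on $\psi$ are not used through the equality case of H\"older. The paper uses them as the solvability conditions for the classical Aleksandrov problem: one takes the Aleksandrov solution $\cN$ of $\psi u^{\a-1}\rho^\delta\sigma_n^{\beta/n}=c$, observes that dilates $s_0\cN$ and $s_1\cN$ are stationary barriers for the normalized flow, and applies the comparison principle to sandwich $M_t$. Without such barriers the $C^0$ estimate in the critical case does not follow from the integral quantities alone (both $U_0$ and $V_0$ are invariant under dilation). Third, your $C^2$ step via a maximum principle on the largest eigenvalue of $[D^2u+u\,\Rmnum{1}]$ using concavity of $\sigma_n^{\beta/n}$ only applies for $\beta\le n$; the paper instead obtains the upper bound on $\det(D^2u+u\,\Rmnum{1})$ by passing to the polar dual flow (\ref{8.16}) and applying Lemma \ref{l8.7}, and the lower bound on the eigenvalues from Lemma \ref{l7.6}, both valid for all $\beta>0$. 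These gaps are fixable, but as written the a priori estimates --- the heart of the proof --- do not close.
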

\textbf{Remark}: (\rmnum{1}) $\a+\delta+\beta\le1$, $\a\le\frac{\beta}{n}+1$, $\delta\le\frac{-(n+1)\beta}{n}$ means $p\ge q$, $p\ge0$, $q\le0$ respectively. If $\a=\frac{\beta}{n}+1$ and $\delta=\frac{-(n+1)\beta}{n}$, we have $p=q=0$. We can always derive the ranges of $p,q$ by the ranges of $\a,\delta,\beta$.

 (\rmnum{2}) In (\rmnum{3}) of Theorem \ref{xt1.5}, $T^*$ is a finite number if $p<q$; $T^*=\infty$ if $p\ge q$. In fact, if $p<q$, by comparing with suitable inner balls, the flow (\ref{x1.8}) with $k=n$ exists only on a finite-time interval. Consider an origin-centered ball $B_{r(0)}$ such that $B_{r(0)}\subset \Om_0$, where $\Om_0$ is a convex body enclosed by $M_0$. Then $B_{r(t)}\subset \Om_t$, where $r(t)=(r(0)^{\frac{\beta}{n}(p-q)}+t\frac{\beta}{n}(p-q)\min\psi)^\frac{n}{\beta(p-q)}$ and $B_{r(t)}$ expands to infinity as $t$ approaches a finite number $T^*$. If $p\ge q$, by comparing with suitable outer balls, the flow exists on $[0,\infty)$. This result covers the main result in \cite{BIS}.

(\rmnum{3}) If $\int_{\mS^n}\psi'=\vert\mS^n\vert$ and $\int_\om\psi'<\vert\mS^n\vert-\vert\om^*\vert$, the existence of Alexsandrov's problem $\det(D^2u+ug_{\mS^n})=(u^2+|Du|^2)^\frac{n+1}{2}u^{-1}\psi'(x)$ had been proved in \cite{AA}.
We point out that $\int_{\mS^n}\psi'=\vert\mS^n\vert$ and $\int_\om\psi'<\vert\mS^n\vert-\vert\om^*\vert$ are necessary for Alexsandrov's problem \cite{AA}.  In this paper we use the generalized solution to the Alexsandrov problem with $\psi'$ replaced by $c_0\psi^{-n/\beta}$ to establish the uniform estimate for the corresponding Gauss curvature flow. Therefore we need to let $\int_{\mS^n}  c_0\psi^{-\frac{n}{\beta}}=\vert\mS^n\vert$ and $\int_\om  c_0\psi^{-\frac{n}{\beta}}<\vert\mS^n\vert-\vert\om^*\vert$. Case (\rmnum{2}) gives a proof for the classical Alexsandrov problem in the smooth category by a curvature flow approach, i.e., it provides the smooth convergence of the flow and an alternative proof for the regularity of the solution.


To prove this theorem, a key observation is that the functional $\cJ_{p,q}$ below is monotone along the normalized flow (\ref{x1.13}),
\begin{equation}\label{x1.16}
\cJ_{p,q}(X(\cdot,t))=U_p(\Om_t)-V_q(\Om_{t}),
\end{equation}
where
\begin{equation*}
U_p(\Om_t)=\begin{cases}
		\frac{1}{p}{\int\hspace{-0.9em}-}_{\mS^n} u^p(x,t)d\mu_{\psi,\beta}(x),&\quad p\ne0,\\[2pt]
{\int\hspace{-0.9em}-}_{\mS^n} \log u(x,t)d\mu_{\psi,\beta}(x),&\quad p=0,
\end{cases}
\end{equation*}
\begin{equation*}
V_q(\Om_{t})=\begin{cases}
		\frac{1}{q}{\int\hspace{-0.9em}-}_{\mS^n} \rho^q(\xi,t)d\xi,&\quad q\ne0,\\[2pt]
		{\int\hspace{-0.9em}-}_{\mS^n} \log\rho(\xi,t)d\xi,&\quad q=0,
	\end{cases}
\end{equation*}
$p,q$ are given by (\ref{x1.15}) and $d\mu_{\psi,\beta}$ is a spherical measure given by
$$d\mu_{\psi,\beta}=\psi^{-n/\beta}d\mu_{\mS^n}(x).$$

Convex geometry plays important role in the development of fully nonlinear partial differential equations. The classical Minkowski problem, the Christoffel-Minkowski problem, the $L^p$ Minkowski problem, the $L^p$ Christoffel-Minkowski problem in general, are beautiful examples of such interactions (e.g., \cite{BC,FWJ2,GM,FWJ3,LE,LO}).

By the theory of convex bodies, to solve the $L^p$ Minkowski problem is equivalent to solve the following PDE:
\begin{equation}\label{1.8}
\sigma_{n}(D^2u+ug_{\mS^n})=u^{p-1}\psi \text{ on } \mS^n,
\end{equation}
where $\psi$ is a function defined on $\mS^n$. After the development of $L^p$-Minkowski problem, it is natural to consider the $L^p$ Christoffel-Minkowski problem, i.e., the problem of prescribing the $k$-th $p$-area measure for general $1\le k\le n-1$ and $p\ge 1$. As before, this problem can be reduced to the following nonlinear PDE:
\begin{equation}\label{1.9}
	\sigma_{k}(D^2u+ug_{\mS^n})=u^{p-1}\psi \text{ on } \mS^n.
\end{equation}

Notice that the solution to $u^{\alpha-1}\rho^\delta f^{-\beta}=\eta$ remains invariant under flow (\ref{1.7}). In summary, Theorem \ref{t1.2} and \ref{t1.3} imply the following uniqueness results of the $L^p$ Minkowski problem and $L^p$ Christoffel-Minkowski problem.
\begin{corollary}\label{t1.4}
Assume $u$ is a positive, smooth, strictly convex solution to (\ref{1.8}) or (\ref{1.9}), then $u\equiv constant$ for $p>1$ with $\psi=1$. Here the strict convexity of a solution, $u$, means that the matrix $[u_{ij}+u\delta_{ij}]$ is positive definite on $\mS^n$.
\end{corollary}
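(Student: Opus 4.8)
The plan is to realize a solution $u$ of \eqref{1.8} (resp.\ \eqref{1.9}) with $\psi\equiv1$ as a homothetically self-similar solution of one of the flows \eqref{1.1} that Theorem~\ref{t1.2} or Theorem~\ref{t1.3} handles, for a carefully chosen quadruple $(f,\alpha,\delta,\beta)$. Once this is set up, the rescaled flow through that hypersurface is (asymptotically) a fixed positive dilation of the hypersurface itself, so its $C^\infty$-convergence to a round sphere forces the hypersurface to be a round sphere centred at the origin, i.e.\ $u$ constant. All the analytic content (long-time or maximal-time existence, a priori estimates, exponential convergence after rescaling) is supplied by Theorems~\ref{t1.2}--\ref{t1.3}, used as black boxes; the work here is purely the algebraic matching of the elliptic PDE with the soliton equation $u^{\alpha-1}\rho^{\delta}f^{-\beta}(\kappa)=\mathrm{const}$.

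So let $u>0$ be a smooth, strictly convex solution of $\sigma_k(D^2u+ug_{\mathbb{S}^n})=u^{p-1}$ with $1\le k\le n$, $p>1$, and let $M$ be the smooth, strictly convex, origin-enclosing, star-shaped hypersurface with support function $u$. If $\lambda=(\lambda_1,\dots,\lambda_n)$ denotes the principal radii of $M$ (the eigenvalues of $[D^2u+ug_{\mathbb{S}^n}]$) and $\kappa=(\lambda_1^{-1},\dots,\lambda_n^{-1})$ the principal curvatures, then, using $\sigma_k(\lambda)=\sigma_{n-k}(\kappa)/\sigma_n(\kappa)$ (with $\sigma_0\equiv1$; for $k=n$ this is $1/\sigma_n(\kappa)=1/K$), the equation becomes $\sigma_{n-k}(\kappa)/\sigma_n(\kappa)=u^{p-1}$ along $M$. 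I would therefore take the curvature quotient
\begin{equation*}
f(\kappa)=\left(\frac{\sigma_n(\kappa)}{\sigma_{n-k}(\kappa)}\right)^{1/k}\quad(\text{so }f=\sigma_n^{1/n}\text{ if }k=n),\qquad\delta=0,\qquad\alpha=1-\frac{(p-1)\beta}{k},
\end{equation*}
with $\beta>0$ still free. Then $f$ satisfies Assumption~\ref{a1.1} with $\Gamma=\Gamma_+$ (positive, homogeneous of degree one, strictly monotone, concave on $\Gamma_+$, vanishing on $\partial\Gamma_+$), and $\kappa\in\Gamma_+$ everywhere on $M$ by strict convexity, so $M$ is an admissible initial hypersurface for both theorems. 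Along $M$ one has $f^{-k}(\kappa)=u^{p-1}$, hence $f^{-\beta}(\kappa)=u^{(p-1)\beta/k}$ and
\begin{equation*}
u^{\alpha-1}\rho^{\delta}f^{-\beta}(\kappa)=u^{\alpha-1+(p-1)\beta/k}=1\qquad\text{along }M .
\end{equation*}
Since the homothety $\varphi_*(t)M$ with $\varphi_*'=\varphi_*^{\,\alpha+\delta+\beta}$, $\varphi_*(0)=1$, solves \eqref{1.1} and the solution is unique, the solution of \eqref{1.1} issuing from $M$ is $M_t=\varphi_*(t)M$.

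Next I would fix $\beta$ so that $(\alpha,\delta,\beta)$ lands in the hypotheses of a theorem. With $m:=\alpha+\delta+\beta=1+\beta\,\frac{k-p+1}{k}$: if $1<p<k+1$ (so $m>1$), any $\beta\ge k/(p-1)$ gives $\alpha\le0$, $\delta=0\le 0$, $\beta>0$, $m>1$, which are exactly the hypotheses of Theorem~\ref{t1.3} with $\Gamma=\Gamma_+$; if $p\ge k+1$ (so $m\le1$), the choice $\beta=k/(p-1)\le1$ gives $\alpha=0$, $\delta=0$, $0<\beta\le1=1-\alpha-\delta$, the hypotheses of Theorem~\ref{t1.2}. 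Either way the theorem rescales by a factor $\varphi$ (given by \eqref{1.2} or \eqref{1.4}) solving $\varphi'=\eta\varphi^{\,m}$ with $\eta=f^{-\beta}(1,\dots,1)=\binom{n}{k}^{\beta/k}$, and $\widetilde M_t=\varphi^{-1}(t)M_t=(\varphi_*/\varphi)(t)\,M$ converges in $C^\infty$ to a round sphere centred at the origin. Comparing the two ODEs gives $\varphi_*/\varphi\equiv\eta^{1/(m-1)}$ on $[0,T^*)$ when $m>1$, and $\varphi_*/\varphi\to\eta^{1/(m-1)}\in(0,\infty)$ as $t\to\infty$ when $m<1$; since $(\varphi_*/\varphi)\,M$ then converges, after a fixed positive dilation, to a round sphere centred at the origin, $M$ itself must be a round sphere centred at the origin, i.e.\ $[u_{ij}+u\delta_{ij}]$ has all eigenvalues equal, i.e.\ $u$ is constant.

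The step that requires care is not a hard estimate --- those all live in Theorems~\ref{t1.2}--\ref{t1.3} --- but the parameter bookkeeping: checking that the curvature quotient $f$ above genuinely satisfies Assumption~\ref{a1.1} on $\Gamma_+$, and that the induced $(\alpha,\delta,\beta)$ lies in the admissible range for every $p>1$. The one delicate point is the borderline $m=1$, i.e.\ $p=k+1$: for $k=n$ (so $p=n+1$) it is harmless since $\eta=1$, whence $\varphi_*/\varphi\equiv1$ and the argument goes through verbatim; for $1\le k\le n-1$ one has $\eta=\binom{n}{k}^{\beta/k}>1$, and the same reasoning would give $\widetilde M_t=e^{(1-\eta)t}M\to\{0\}$, contradicting Theorem~\ref{t1.2} --- so in this case $\sigma_k(D^2u+ug_{\mathbb{S}^n})=u^k$ has no strictly convex solution at all (consistently, $\binom{n}{k}c^k\neq c^k$ rules out even a constant one), and the assertion holds vacuously.
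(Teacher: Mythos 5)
Your proposal is correct and follows essentially the same route as the paper's own proof: both realize a solution of (\ref{1.8}) or (\ref{1.9}) with $\psi\equiv1$ as a self-similar (equivalently, after a constant dilation, stationary) solution of the flow (\ref{1.1})/(\ref{1.7}) with $f=(\sigma_n/\sigma_{n-k})^{1/k}$, $\delta=0$, $\alpha=1-(p-1)\beta/k$, and then invoke the convergence of the properly rescaled flow to a round sphere centred at the origin from Theorems \ref{t1.2} and \ref{t1.3}. Your explicit handling of the borderline case $p=k+1$ with $k<n$ --- where $\alpha+\delta+\beta=1$ but $\eta=\binom{n}{k}^{\beta/k}>1$, so the soliton cannot be rescaled to satisfy the stationary equation $u^{\alpha-1}\rho^{\delta}f^{-\beta}=\eta$ --- is a point the paper's proof passes over in silence, and your conclusion that (\ref{1.9}) is then unsolvable is right (it also follows directly by evaluating $\sigma_k(D^2u+ug_{\mS^n})\ge\binom{n}{k}u^k>u^k$ at a minimum point of $u$).
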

\textbf{Remark}: If $u\equiv constant$, the convex body $\Om$ is ball. At this time the constant can be calculated. Similarly, if $\psi=constant$ and $k\neq p-1$, We can still get the result. For simplicity, we will not describe in detail here. All the theorems below are the same.

The $L^p$-Minkowski problem $(\ref{1.8})$ has been extensively studied during the last twenty years, see \cite{CW,LE,LE2,LO} and see also \cite{SR} for the most comprehensive list of results. When $p\ge1$, the existence and uniqueness of solutions have been proved. For $k<n$, $L^p$-Christoffel-Minkowski problem is difficult to deal with, since the admissible
solution to equation (\ref{1.9}) is not necessary a geometric solution to
$L^p$-Christoffel-Minkowski problem if $k<n$. So, one needs to deal with the convexity of
the solutions of (\ref{1.9}). Under a sufficient condition on the prescribed function, Guan-Ma \cite{GM} proved the existence of a unique convex solution. The key tool to handle the convexity is the constant rank theorem for fully nonlinear partial differential equations. Later, the equation (\ref{1.9}) has been studied by Hu-Ma-Shen \cite{HMS} for $p\ge k+1$ and Guan-Xia \cite{GX} for $1<p<k+1$ and even prescribed data, by using the constant rank theorem. In this paper we give a new proof of the $L^p$-Minkowski problem and $L^p$-Christoffel-Minkowski problem. We choose the appropriate curvature function so that we do not need to use constant rank theorem. We only need the convergence of flows (\ref{1.7}), (\ref{x1.9}) and (\ref{x1.13}) to prove it.

Next we study a characterization problem for the $L^p$ dual curvature measures
introduced recently by Lutwak, Yang and Zhang \cite{LYZ2}. This problem has been proved partially in \cite{LYZ2,CL,HZ,CCL,CHZ,BF} etc..
This problem is reduced to solving a Monge-Ampère type equation
\begin{equation}\label{1.11}
\det(D^2u+ug_{\mS^n})=(u^2+|Du|^2)^\frac{n+1-q}{2}u^{p-1}\psi(x)\quad\text{ on } \mS^n.
\end{equation}

 We shall also consider the $L^p$ dual Christoffel-Minkowski problem. As before, this problem can be reduced to the following nonlinear PDE:
\begin{equation}\label{1.12}
\sigma_{k}(D^2u+ug_{\mS^n})=(u^2+|Du|^2)^\frac{k+1-q}{2}u^{p-1}\psi(x)\quad\text{ on } \mS^n
\end{equation}
with $1\le k\le n-1$, and we want to derive the existence and uniqueness of the convex solution $u$ of (\ref{1.12}). By Theorem \ref{t1.2} and \ref{t1.3}, we have the following existence and uniqueness results of the $L^p$ dual Minkowski problem and $L^p$ dual Christoffel-Minkowski problem.
\begin{corollary}\label{t1.5}
(1) Assume $u$ is a positive, smooth, strictly convex solution to (\ref{1.11}), then $u\equiv constant$ with $\psi=1$ if (\rmnum{1}) $p\ge q$;
or (\rmnum{2}) $1<p<q\le n+1$.

(2) Assume $u$ is a positive, smooth, strictly convex solution to (\ref{1.12}), then $u\equiv constant$ with $\psi=1$ if (\rmnum{1}) $p>1$, $q\le k+1$; or (\rmnum{2}) $p>1$, $q\le p$.
\end{corollary}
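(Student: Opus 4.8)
To prove Corollary \ref{t1.5} the plan is to recognize \eqref{1.11} and \eqref{1.12} (with $\psi\equiv1$) as the elliptic soliton equations of the normalized flow \eqref{1.7}, for a suitable curvature function $f$ and suitable exponents $\alpha,\delta,\beta$, and then to invoke Theorem \ref{t1.2} or Theorem \ref{t1.3}. Write $\lambda=(\lambda_1,\dots,\lambda_n)$ for the principal radii, i.e.\ the eigenvalues of $[D^2u+ug_{\mS^n}]$; then $\det(D^2u+ug_{\mS^n})=\sigma_n(\lambda)=1/K$, $\sigma_k(D^2u+ug_{\mS^n})=\sigma_k(\lambda)=\sigma_{n-k}(\kappa)/\sigma_n(\kappa)$, and $u^2+|Du|^2=\rho^2$, where $K=\sigma_n(\kappa)$ is the Gauss curvature. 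Hence, taking $f=\sigma_n^{1/n}(\kappa)$ (so $f^{-\beta}=K^{-\beta/n}$ and $\eta=f^{-\beta}(1,\dots,1)=1$), equation \eqref{1.11} with $\psi=1$ is, for any $\beta>0$, equivalent to the soliton equation
\[
u^{\alpha-1}\rho^{\delta}f^{-\beta}(\kappa)=\eta,\qquad \alpha=1-\tfrac{\beta}{n}(p-1),\qquad \delta=\tfrac{\beta}{n}(q-n-1),
\]
with $p,q$ as in \eqref{x1.15}. Taking instead $f=(\sigma_n/\sigma_{n-k})^{1/k}(\kappa)=\sigma_k^{-1/k}(\lambda)$ --- which satisfies Assumption \ref{a1.1} with $\Gamma=\Gamma_+$ (it is positive, $1$-homogeneous, increasing and concave on $\Gamma_+$, and vanishes on $\partial\Gamma_+$), with $\eta=(C_n^k)^{\beta/k}$ --- equation \eqref{1.12} with $\psi=1$ becomes, after absorbing the positive multiplicative constant by a rescaling $u\mapsto cu$, the same soliton equation $u^{\alpha-1}\rho^{\delta}f^{-\beta}(\kappa)=\eta$, now with $\alpha=1-\tfrac{\beta}{k}(p-1)$ and $\delta=\tfrac{\beta}{k}(q-k-1)$.

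The one conceptual point is that a positive, smooth, strictly convex solution $u$ of such a soliton equation is exactly a stationary point of \eqref{1.7}: the hypersurface $M$ with support function $u$ is smooth, closed, uniformly convex, encloses the origin, has all principal curvatures in $\Gamma_+\subseteq\Gamma$, and satisfies $u^{\alpha}\rho^{\delta}f^{-\beta}(\kappa)=\eta u$ on $\mS^n$, so the (unique) solution of \eqref{1.7} with initial hypersurface $M$ is the constant one $M_t\equiv M$. Since \eqref{1.7} coincides with \eqref{1.6} up to a tangential diffeomorphism, and \eqref{1.6} is the rescaling of \eqref{1.1} normalized so that $\varphi(0)=1$, the convergence conclusions of Theorems \ref{t1.2} and \ref{t1.3} apply to this very initial hypersurface: provided $(\alpha,\delta,\beta)$ lies in the admissible range of one of those theorems, $M_t$ converges as $t\to\infty$ to a sphere centered at the origin. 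But $M_t\equiv M$, so $M$ is already such a sphere, i.e.\ $u$ is constant --- and, just as in the remark after Corollary \ref{t1.4}, when $k\ne p-1$ (resp.\ $q\ne p$) the value of that constant is determined by the equation.

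It then remains to check that the parameters land correctly. A short computation gives $\alpha+\delta+\beta=1+\tfrac{\beta}{m}(q-p)$ and $1-\alpha-\delta=\beta+\tfrac{\beta}{m}(p-q)$, with $m=n$ for \eqref{1.11} and $m=k$ for \eqref{1.12}; also $\delta\le0$ precisely when $q\le m+1$, and for $p>1$ one may take $\beta\ge m/(p-1)$, which forces $\alpha\le0$. Thus, when $p\ge q$ --- this is case (1)(i) with $m=n$, case (2)(ii) with $m=k$, and the sub-case of (2)(i) with $q\le p$ --- one has $\alpha\le0<\beta\le1-\alpha-\delta$ and Theorem \ref{t1.2} applies; when $1<p<q\le m+1$ --- case (1)(ii) with $m=n$, and the sub-case of (2)(i) with $p<q$ --- one has $\alpha,\delta\le0<\beta$ with $\alpha+\delta+\beta>1$, and Theorem \ref{t1.3} applies. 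This covers every case listed in the corollary.

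Since the analytic work is done entirely by Theorems \ref{t1.2}--\ref{t1.3}, the main obstacle is purely bookkeeping: one has to verify that $(\sigma_n/\sigma_{n-k})^{1/k}$ genuinely obeys Assumption \ref{a1.1} --- its concavity follows from the standard duality $\Phi(\kappa)\mapsto1/\Phi(1/\kappa)$ applied to the concave $\sigma_k^{1/k}$, and the boundary behaviour $f|_{\partial\Gamma_+}=0$ requires a short estimate near the lower-dimensional faces of $\partial\Gamma_+$ where $\sigma_n$ and $\sigma_{n-k}$ both vanish --- and one has to make sure the $(p,q)\leftrightarrow(\alpha,\delta,\beta)$ dictionary really lands inside the admissible ranges of the two theorems for exactly the stated cases, and that the rescaling $u\mapsto cu$ normalizing $\eta$ is legitimate (which it is once $p\ne q$; the borderline $p=q$ can occur only for \eqref{1.12}, where $\eta=(C_n^k)^{\beta/k}\ne1$, and must be treated separately).
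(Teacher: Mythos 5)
Your overall strategy is exactly the paper's: identify (\ref{1.11}) and (\ref{1.12}) with $\psi=1$ as the soliton equation $u^{\alpha-1}\rho^{\delta}f^{-\beta}=\eta$ for $f=\sigma_n^{1/n}$ resp.\ $f=(\sigma_n/\sigma_{n-k})^{1/k}$, note that such a solution is stationary for the normalized flow (\ref{1.7}), and conclude from the convergence theorems that it must be a round sphere. Your dictionary $p-1=\tfrac{m(1-\alpha)}{\beta}$, $m+1-q=-\tfrac{m\delta}{\beta}$ and the case analysis for (1)(\rmnum{2}), (2)(\rmnum{1}) and (2)(\rmnum{2}) agree with the paper.

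There is, however, a genuine gap in case (1)(\rmnum{1}). That case asserts uniqueness for \emph{all} $p\ge q$, with no lower bound on $p$, but your reduction to Theorem \ref{t1.2} requires $\alpha\le 0$, which you obtain only by choosing $\beta\ge n/(p-1)$ --- an option available solely when $p>1$. If $p\le 1$ then $\alpha=1-\tfrac{\beta}{n}(p-1)\ge 1>0$ for \emph{every} $\beta>0$, so no choice of $\beta$ places $(\alpha,\delta,\beta)$ in the hypotheses of Theorem \ref{t1.2}; and Theorem \ref{t1.3} is unavailable since $p\ge q$ forces $\alpha+\delta+\beta\le 1$. Your sentence ``when $p\ge q$ \dots one has $\alpha\le0<\beta\le1-\alpha-\delta$'' is therefore false for $p\le1$, and e.g.\ the sub-case $p=q=0$ (the Aleksandrov-type case) or $0\ge p>q$ is not covered by your argument. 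The paper closes exactly this hole with an additional observation: when $f=\sigma_n^{1/n}$ the hypothesis $\alpha\le0$ in Theorem \ref{t1.2} is used only in the curvature ($C^2$) estimate, and for the Gauss-curvature speed that estimate can instead be obtained from Lemma \ref{l7.6} (the Chen--Li lower bound for $D^2u+u\mathrm{I}$ for Monge--Amp\`ere type flows), which needs no sign condition on $\alpha$. Without this (or an appeal to Theorem \ref{t1.8}(\rmnum{1})--(\rmnum{2}), which you do not make), case (1)(\rmnum{1}) remains unproved for $p\le1$. A second, smaller issue you flag but do not resolve is the normalizing constant $\eta=(C_n^k)^{\beta/k}$ in (2) when $p=q$, where the rescaling $u\mapsto cu$ cannot absorb it; the paper is equally silent on this point, so I only note it.
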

\textbf{Remark}: The case  (1) covers Theorem 1.4 in  Chen and Li \cite{CL}. The case $p>q$ of (1) has been proved in Huang and Zhao \cite{HZ}. When $p<q$ and $k=n$, the uniqueness is difficult, and some non-uniqueness results were obtained in \cite{CCL}. In this paper, we give a new proof of the uniqueness with $\psi=1$ for $1<p<q\le n+1$.

In above, we let $\psi=1$ and derive the existence and uniqueness results of the $L^p$ dual Minkowski problem and $L^p$ dual Christoffel-Minkowski problem. More generally, by Theorem \ref{xt1.4} and \ref{xt1.5}, we prove the following existence and uniqueness results of the $L^p$ dual Minkowski problem and $L^p$ dual Christoffel-Minkowski problem with general $\psi$ by the asymptotic behavior of the anisotropic curvature flow.
\begin{theorem}\label{t1.8}
Let $\psi$ be a smooth and positive function on the sphere $\mS^n$.

(\rmnum{1}) If $p> q$ or $p=q\ne0$, there is a unique, positive, smooth, uniformly convex solution to (\ref{1.11});

(\rmnum{2}) If $p=q=0$, $\int_{\mS^n}\psi=\vert\mS^n\vert$ and $\int_\om\psi<\vert\mS^n\vert-\vert\om^*\vert$, there is a unique, positive, smooth, uniformly convex solution to (\ref{1.11});

(\rmnum{3}) If (1) $p\ge0$, or (2) $q\le0$, or (3) $q>0$ and $-q^*<p<0$, where $q^*$ is defined in (\ref{x1.15}). Let $\psi$ be in addition even, then there is an even, positive, smooth and uniformly convex solution to (\ref{1.11});

(\rmnum{4}) If $p> q$, $p>1$, and $(D_iD_j\psi^{\frac{-1}{p+k-1}}+\delta_{ij}\psi^{\frac{-1}{p+k-1}})$ is positive definite, there is a unique smooth, uniformly convex solution to (\ref{1.12}).

(\rmnum{5}) If $q<p<0$, and $(D_iD_j\psi^{\frac{-1}{p+k-1}}+\delta_{ij}\psi^{\frac{-1}{p+k-1}})$ is negative definite, there is a unique smooth, uniformly convex solution to (\ref{1.12}).
\end{theorem}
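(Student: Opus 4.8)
The plan is to read off existence from the convergence of the anisotropic flows in Theorems~\ref{xt1.4} and~\ref{xt1.5}, and to obtain uniqueness from a direct maximum-principle comparison applied to \eqref{1.11}--\eqref{1.12}. The bridge, already isolated in the discussion preceding Theorem~\ref{xt1.4}, is that the stationary (soliton) equation of the normalized flow is precisely the elliptic equation we want, after replacing the prescribed function by a suitable power of itself. Writing $u$ for the support function of the limit, the soliton equation $\psi u^{\a-1}\rho^\delta\sigma_k^{\beta/k}(\l)=\eta$ of \eqref{x1.9} (resp.\ $\phi(t)\,\psi u^{\a}\rho^{\delta}K^{-\beta/n}=u$ extracted from \eqref{x1.13}) becomes, on using that the $\l_i$ are the eigenvalues of $D^2u+ug_{\mS^n}$, that $\rho^2=u^2+|Du|^2$, and the $1$-homogeneity of $\sigma_k^{1/k}$,
\[
\sigma_k(D^2u+ug_{\mS^n})=c_0\,(u^2+|Du|^2)^{\frac{k+1-q}{2}}\,u^{p-1}\,\psi^{-k/\beta},\qquad c_0>0,
\]
provided we set $\a=1-\tfrac{\beta}{k}(p-1)$ and $\delta=\tfrac{\beta}{k}(q-k-1)$ (for $k=n$ one reads $\sigma_n=\det$ and $\psi^{-n/\beta}$, with $p,q$ as in \eqref{x1.15}). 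Here $\beta>0$ is still free, and one has the elementary dictionary $\a+\delta+\beta<1\Leftrightarrow p>q$, $\a+\delta+\beta=1\Leftrightarrow p=q$, $\a\le0\Leftrightarrow\beta\ge\tfrac{k}{p-1}$ (when $p>1$), $1+\beta<\a\Leftrightarrow p+k-1<0$, together with $1+\beta-\a=\tfrac{\beta}{k}(p+k-1)$, so that $\psi^{1/(1+\beta-\a)}$ equals, up to a positive constant, the $\bigl(-\tfrac{1}{p+k-1}\bigr)$-th power of the \emph{prescribed} function. This translates the coefficient hypotheses of Theorems~\ref{xt1.4}--\ref{xt1.5} into those of (i)--(v).

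\emph{Existence.} Given a smooth positive prescribed function — still denoted $\psi$ — one picks, for (i)--(iii) (the case $k=n$ of \eqref{1.11}), an arbitrary $\beta>0$; for (iv) a $\beta\ge k/(p-1)$, so that $\a\le0$; and for (v) a $\beta>0$ with $1+\beta<\a$. With $\a,\delta$ as above and the flow anisotropy taken to be a fixed positive multiple of $\psi^{-\beta/k}$ (so that the $c_0$-factor in the display above is normalized away — explicitly so for $k<n$, where $c_0=\eta^{k/\beta}$), run the corresponding normalized flow — \eqref{x1.13} for $k=n$, or \eqref{x1.8} rescaled as in \eqref{x1.9} for $k<n$ — from any smooth, closed, uniformly convex hypersurface enclosing the origin (origin-symmetric in case (iii)). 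Theorem~\ref{xt1.5} (resp.\ Theorem~\ref{xt1.4}) then produces a smooth, uniformly convex limit whose support function solves \eqref{1.11} (resp.\ \eqref{1.12}) with $\psi$ replaced by $c_0\psi$ for some $c_0>0$; in case (iii) the convergence is only along a time sequence, which is all that is asserted. When $p\ne q$ the residual constant is eliminated by the dilation $u\mapsto\mu u$, which multiplies the right-hand datum by $\mu^{q-p}$, so the choice $\mu=c_0^{1/(p-q)}$ yields a solution for the prescribed datum. For $p=q\ne0$ in (i) the limit is the solution of the $c_0$-version furnished (and asserted unique up to dilation) by Theorem~\ref{xt1.5}(i), consistently with the scale invariance of \eqref{1.11}; for $p=q=0$ in (ii) the equation is of Aleksandrov type, and the hypotheses $\int_{\mS^n}\psi=|\mS^n|$, $\int_\om\psi<|\mS^n|-|\om^*|$ are exactly what forces $c_0=1$, cf.\ \cite{AA} and the remark after Theorem~\ref{xt1.5}.

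\emph{Uniqueness.} Let $u_1,u_2$ be two positive, smooth, uniformly convex solutions of \eqref{1.12} (the case \eqref{1.11} is $k=n$) and put $\lambda_0=\max_{\mS^n}(u_1/u_2)$, attained at $x_0$. At $x_0$ one has $u_1=\lambda_0u_2$, $Du_1=\lambda_0Du_2$ and $D^2u_1\le\lambda_0D^2u_2$, hence $W_1\le\lambda_0W_2$ for $W_i:=D^2u_i+u_ig_{\mS^n}>0$; monotonicity and $1$-homogeneity of $\sigma_k^{1/k}$ give $\sigma_k(W_1)\le\lambda_0^{\,k}\sigma_k(W_2)$ at $x_0$. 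Since also $u_1^2+|Du_1|^2=\lambda_0^{\,2}(u_2^2+|Du_2|^2)$ there, equation \eqref{1.12} (with the two $\psi$'s cancelling) gives $\sigma_k(W_1)=\lambda_0^{\,k+p-q}\sigma_k(W_2)$. Comparing, $\lambda_0^{\,p-q}\le1$; when $p>q$ this forces $\lambda_0\le1$, i.e.\ $u_1\le u_2$ on $\mS^n$, and exchanging the roles of $u_1,u_2$ gives $u_1\equiv u_2$. This covers (iv), (v) and the $p>q$ part of (i) (and, vacuously, of (ii)); the $p=q\ne0$ case of (i) is uniqueness up to dilation as above, and $p=q=0$ of (ii) is the classical uniqueness for Aleksandrov's problem.

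\emph{Main obstacle.} The substantial analysis — long-time existence, preservation of uniform convexity, and smooth (or subsequential) convergence of the anisotropic flows — is exactly the content of Theorems~\ref{xt1.4} and~\ref{xt1.5}, which may be invoked here, so what remains is essentially bookkeeping whose one delicate point is the coefficient dictionary. Concretely, one must verify that for each admissible $(p,q)$ in (i)--(v) some $\beta>0$ realizes \emph{all} hypotheses of the relevant part of Theorem~\ref{xt1.4}/\ref{xt1.5} simultaneously; in particular that the sign required for $(D_iD_j\psi^{1/(1+\beta-\a)}+\delta_{ij}\psi^{1/(1+\beta-\a)})$ on the flow side corresponds, through the substitution, to the sign imposed in (iv)--(v) for $(D_iD_j\psi^{-1/(p+k-1)}+\delta_{ij}\psi^{-1/(p+k-1)})$ — the apparent reversal being accounted for by the sign of $1+\beta-\a=\tfrac{\beta}{k}(p+k-1)$ — and that removal of the right-hand constant by a dilation is available only for $p\ne q$, which is why the $p=q$ statements are uniqueness only up to dilation.
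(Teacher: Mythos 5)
Your proposal is correct and follows essentially the same route as the paper: existence is read off from the convergence results of Theorems \ref{xt1.4} and \ref{xt1.5} via exactly the dictionary $p-1=\tfrac{k(1-\a)}{\beta}$, $k+1-q=-\tfrac{k\delta}{\beta}$, $\psi'=(c\psi)^{-k/\beta}$ that the paper uses in Section 9, and your max-of-the-ratio uniqueness argument is the same comparison the paper carries out (in the $\a,\delta,\beta$ variables) at the end of the proof of Theorem \ref{xt1.4}. Your treatment is in fact somewhat more explicit than the paper's on the two bookkeeping points it leaves implicit, namely the removal of the residual constant $c_0$ by the dilation $u\mapsto c_0^{1/(p-q)}u$ when $p\ne q$ and the sign bookkeeping for $1+\beta-\a=\tfrac{\beta}{k}(p+k-1)$ in cases (iv)--(v).
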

\textbf{Remark}: (\rmnum{1}) If $p=q$, the solution of (\ref{1.11}) is unique up to dilation.

(\rmnum{2}) Smooth solutions to (\ref{1.11}) was obtained in \cite{HZ} for $p>q$ by elliptic method, both in \cite{CL} for $p=q\ne0$, and in \cite{CHZ} for even $\psi$ and $pq\ge0$ by anisotropic curvature flows. For $q>0$ and $-q^*<p<0$, it was obtained in \cite{CCL} by elliptic method.  In our theorem,  we give a new proof by anisotropic inverse curvature flows.

(\rmnum{3}) If we let $p$ or $q$ be a special value, the $L^p$ dual Minkowski problem and $L^p$ dual Christoffel-Minkowski problem will be converted into $L^p$ Minkowski problem, dual Minkowski problem, $L^p$ Christoffel-Minkowski problem etc.. In fact, if $q=0$ in (\ref{1.11}), there is a unique smooth, uniformly convex solution to $L^p$ Aleksandrov problem $\det(D^2u+ug_{\mS^n})=(u^2+|Du|^2)^\frac{n+1}{2}u^{p-1}\psi(x)$ with $p\ge0$ or $p<0$, $\psi$ is even, which was introduced in \cite{HLY}. If $q=n+1$  in (\ref{1.11}), there is a unique smooth, uniformly convex solution to $L^p$ Minkowski problem $\det(D^2u+ug_{\mS^n})=u^{p-1}\psi(x)$ with $p\ge n+1$ or $p>-(n+1)$, $\psi$ is even.  If $p=0$  in (\ref{1.11}), there is a unique smooth, uniformly convex solution to dual Minkowski problem $\det(D^2u+ug_{\mS^n})=(u^2+|Du|^2)^\frac{n+1-q}{2}u^{-1}\psi(x)$ with $q\le0$ or $\forall q$, $\psi$ is even, which covers Theorem 1.6 in \cite{LSW}. If $q=k+1$  in (\ref{1.12}), there is a unique smooth, uniformly convex solution to $L^p$ Christoffel-Minkowski problem $\sigma_{k}(D^2u+ug_{\mS^n})=u^{p-1}\psi(x)$ with $p>k+1$, which covers the results in \cite{IM}, and it also covers the results in \cite{HMS} if $(D_iD_j\psi^{\frac{-1}{p+k-1}}+\delta_{ij}\psi^{\frac{-1}{p+k-1}})$ is positive definite. In \cite{HMS}, they provided a elliptic proof which only needs $(D_iD_j\psi^{\frac{-1}{p+k-1}}+\delta_{ij}\psi^{\frac{-1}{p+k-1}})\ge0$. Our argument provides a parabolic proof in the smooth category for this problem.

The rest of the paper is organized as follows. We first recall some notations and known results in Section 2 for later use. In Section 3, we establish the a priori estimates for the normalized flow (\ref{1.7}) when $\a+\beta+\delta\le1$, which ensure the long time existence of this flow. In Section 4, we show the convergence of the flow (\ref{1.7}) when $\a+\beta+\delta\le1$, and complete the proof of Theorem \ref{t1.2}. In section 5, we establish the a priori estimates for the original flow (\ref{1.1}) if $M_t\subset B_R(0)$ first. And then we prove (\ref{1.3}).  In Section 6, we show the convergence of the flows (\ref{1.7}) when $\a+\beta+\delta>1$, and complete the proof of Theorem \ref{t1.3}. We prove Theorem \ref{xt1.4} and \ref{xt1.5} in Section 7 and 8 respectively. Finally, in Section 9, we complete the proof of Corollary \ref{t1.4}, \ref{t1.5} and Theorem \ref{t1.8}.

\section{Preliminary}
\subsection{Intrinsic curvature}
We now state some general facts about hypersurfaces, especially those that can be written as graphs. The geometric quantities of ambient spaces will be denoted by $(\bar{g}_{\alpha\beta})$, $(\bar{R}_{\alpha\beta\gamma\delta})$ etc., where Greek indices range from $0$ to $n$. Quantities for $M$ will be denoted by $(g_{ij})$, $(R_{ijkl})$ etc., where Latin indices range from $1$ to $n$.

Let $\nabla$, $\bar\nabla$ and $D$ be the Levi-Civita connection of $g$, $\bar g$ and the Riemannian metric $e$ of $\mathbb S^n$  respectively. All indices appearing after the semicolon indicate covariant derivatives. The $(1,3)$-type Riemannian curvature tensor is defined by
\begin{equation}\label{2.1}
	R(U,Y)Z=\nabla_U\nabla_YZ-\nabla_Y\nabla_UZ-\nabla_{[U,Y]}Z,
\end{equation}
or with respect to a local frame $\{e_i\}$,
\begin{equation}\label{2.2}
	R(e_i,e_j)e_k={R_{ijk}}^{l}e_l,
\end{equation}
where we use the summation convention (and will henceforth do so). The coordinate expression of (\ref{2.1}), the so-called Ricci identities, read
\begin{equation}\label{2.3}
	Y_{;ij}^k-Y_{;ji}^k=-{R_{ijm}}^kY^m
\end{equation}
for all vector fields $Y=(Y^k)$. We also denote the $(0,4)$ version of the curvature tensor by $R$,
\begin{equation}\label{2.4}
	R(W,U,Y,Z)=g(R(W,U)Y,Z).
\end{equation}
\subsection{Extrinsic curvature}
The induced geometry of $M$ is governed by the following relations. The second fundamental form $h=(h_{ij})$ is given by the Gaussian formula
\begin{equation}\label{2.5}
	\bar\nabla_ZY=\nabla_ZY-h(Z,Y)\nu,
\end{equation}
where $\nu$ is a local outer unit normal field. Note that here (and in the rest of the paper) we will abuse notation by disregarding the necessity to distinguish between a vector $Y\in T_pM$ and its push-forward $X_*Y\in T_p\mathbb{R}^{n+1}$. The Weingarten endomorphism $A=(h_j^i)$ is given by $h_j^i=g^{ki}h_{kj}$, and the Weingarten equation
\begin{equation}\label{2.6}
	\bar\nabla_Y\nu=A(Y),
\end{equation}
holds there, or in coordinates
\begin{equation}\label{2.7}
	\nu_{;i}^\alpha=h_i^kX_{;k}^\alpha.
\end{equation}
We also have the Codazzi equation in $\mathbb{R}^{n+1}$
\begin{equation}\label{2.8}
	\nabla_Wh(Y,Z)-\nabla_Zh(Y,W)=-\bar{R}(\nu,Y,Z,W)=0
\end{equation}
or
\begin{equation}\label{2.9}
	h_{ij;k}-h_{ik;j}=-\bar R_{\alpha\beta\gamma\delta}\nu^\alpha X_{;i}^\beta X_{;j}^\gamma X_{;k}^\delta=0,
\end{equation}
and the Gauss equation
\begin{equation}\label{2.10}
	R(W,U,Y,Z)=\bar{R}(W,U,Y,Z)+h(W,Z)h(U,Y)-h(W,Y)h(U,Z)
\end{equation}
or
\begin{equation}\label{2.11}
	R_{ijkl}=\bar{R}_{\alpha\beta\gamma\delta}X_{;i}^\alpha X_{;j}^\beta X_{;k}^\gamma X_{;l}^\delta+h_{il}h_{jk}-h_{ik}h_{jl},
\end{equation}
where
\begin{equation}\label{2.12}
	\bar{R}_{\alpha\beta\gamma\delta}=0.
\end{equation}
\subsection{Hypersurface in $\mathbb{R}^{n+1}$}
It is known that the space form $\mathbb{K}^{n+1}$ can be viewed as Euclidean space $\mathbb{R}^{n+1}$ equipped with a metric tensor, i.e., $\mathbb{K}^{n+1}=(\mathbb{R}^{n+1},ds^2)$ with proper choice $ds^2$. More specifically, let $\mathbb S^n$ be the unit sphere in Euclidean space $\mathbb{R}^{n+1}$ with standard induced metric $dz^2$, then
\begin{equation*}
	\bar{g}:=ds^2=d\rho^2+\phi^2(\rho)dz^2,
	\end{equation*}
where
\begin{equation*}
\phi(\rho)=\rho\text{   in  } \mR^{n+1},
\end{equation*}
 $\rho\in[0,\infty)$.  By \cite{GL}, we have the following lemma.
\begin{lemma}\label{l2.1}
Arbitrary vector field $Y$ satisfies $\bar{\nabla}_YX=Y$.
\end{lemma}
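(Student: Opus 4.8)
The plan is to verify the identity by a direct computation using the warped-product form of the Euclidean metric $\bar g = d\rho^2 + \phi^2(\rho)\,dz^2$ with $\phi(\rho)=\rho$ recorded just above, under which the position vector field is the radial field $X=\rho\,\partial_\rho$. Everything then reduces to understanding how $\bar\nabla$ acts on $\partial_\rho$.

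First I would recall the connection formulas for a warped product $\bar g = d\rho^2+\phi^2(\rho)e$, where $e$ is the round metric of $\mathbb{S}^n$: one has $\bar\nabla_{\partial_\rho}\partial_\rho=0$, and for any vector field $Z$ tangent to the slices $\{\rho=\mathrm{const}\}$,
$$\bar\nabla_{\partial_\rho}Z=\bar\nabla_Z\partial_\rho=\frac{\phi'(\rho)}{\phi(\rho)}\,Z .$$
Since $\phi(\rho)=\rho$ in $\mathbb{R}^{n+1}$ we have $\phi'(\rho)=1$, hence $\bar\nabla_Z\partial_\rho=\tfrac{1}{\rho}\,Z$.

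Next, decompose an arbitrary vector field as $Y=a\,\partial_\rho+Z$, where $a=\bar g(Y,\partial_\rho)=d\rho(Y)$ and $Z$ is the component tangent to the spheres $\{\rho=\mathrm{const}\}$. Using $X=\rho\,\partial_\rho$, the Leibniz rule, the fact that $Y\rho=d\rho(Y)=a$, and $\bar\nabla_{\partial_\rho}\partial_\rho=0$ (so $\bar\nabla_Y\partial_\rho=\bar\nabla_Z\partial_\rho$),
$$\bar\nabla_Y X=\bar\nabla_Y(\rho\,\partial_\rho)=(Y\rho)\,\partial_\rho+\rho\,\bar\nabla_Y\partial_\rho=a\,\partial_\rho+\rho\,\bar\nabla_Z\partial_\rho=a\,\partial_\rho+\rho\cdot\tfrac{1}{\rho}\,Z=a\,\partial_\rho+Z=Y,$$
which is exactly the claimed identity.

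I do not anticipate any genuine obstacle here; the only points requiring care are the normalization conventions in the warped-product connection formulas and the identification of the Euclidean position vector $X$ with $\rho\,\partial_\rho$ under $\mathbb{K}^{n+1}=(\mathbb{R}^{n+1},ds^2)$. As a sanity check one may note that in Cartesian coordinates the Christoffel symbols vanish and $X=x^\alpha\partial_\alpha$, so $\bar\nabla_Y X=(Yx^\alpha)\partial_\alpha=Y^\alpha\partial_\alpha=Y$ immediately; the warped-product computation above is just the coordinate-free repackaging of this fact that will be convenient in the subsequent evolution-equation computations for the flow.
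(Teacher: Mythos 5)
Your proof is correct. The paper itself gives no argument for this lemma --- it simply cites Guan--Li \cite{GL}, where the statement appears in the general space-form setting as the conformal Killing property $\bar\nabla_Y(\phi(\rho)\partial_\rho)=\phi'(\rho)Y$; your warped-product computation is exactly the specialization of that standard argument to $\phi(\rho)=\rho$, $\phi'\equiv 1$, and your Cartesian-coordinate remark is the quickest self-contained verification in the Euclidean case.
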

We call the inner product $u:=<X,\nu>$ to be the support function of a hypersurface in $\mathbb{R}^{n+1}$, where $<\cdot,\cdot>=\bar{g}(\cdot,\cdot)$. Then we can derive the gradient and hessian of the support function $u$ under the induced metric $g$ on $M$.
\begin{lemma}\label{l2.2}
	The support function $u$ satisfies
	\begin{equation}\label{2.13}
		\begin{split}
		\nabla_iu=&g^{kl}h_{ik}\nabla_l\Phi,  \\
		 \nabla_i\nabla_ju=&g^{kl}\nabla_kh_{ij}\nabla_l\Phi+\phi'h_{ij}-(h^2)_{ij}u,
		 \end{split}
	\end{equation}
where $(h^2)_{ij}=g^{kl}h_{ik}h_{jl},$ and
\begin{equation*}
	\Phi(\rho)=\int_{0}^{\rho}\phi(r)dr=\frac{1}{2}\rho^2.
\end{equation*}
\end{lemma}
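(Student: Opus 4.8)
The plan is to obtain both identities in (\ref{2.13}) by differentiating the definition $u=\langle X,\nu\rangle$ along a local coordinate frame $\{X_{;i}\}$ of $TM$ and substituting the structure equations collected in Section 2: Lemma \ref{l2.1} (which gives $\bar\nabla_Y X=Y$ in $\mR^{n+1}$), the Weingarten equation (\ref{2.7}), the Gaussian formula (\ref{2.5}), and the Codazzi equation (\ref{2.9}). The only auxiliary fact beyond these is the intrinsic Hessian on $M$ of the potential $\Phi=\tfrac12\rho^2=\tfrac12|X|^2$, namely $\nabla_i\nabla_j\Phi=\phi'\,g_{ij}-u\,h_{ij}$ with $\phi'\equiv1$; this is recorded in \cite{GL}, but I would also give the short re-derivation below. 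The geometric point that couples $u$ to $\Phi$ is that $\bar\nabla\Phi=X$ in $\mR^{n+1}$, so $\nabla\Phi$ is the tangential part of the position vector, i.e. $\nabla_l\Phi=\bar g(X,X_{;l})$.

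First I would prove the gradient formula. Differentiating $u=\bar g(X,\nu)$ in the direction $X_{;i}$ and using metric compatibility of $\bar\nabla$ gives $\nabla_i u=\bar g(\bar\nabla_{X_{;i}}X,\nu)+\bar g(X,\bar\nabla_{X_{;i}}\nu)$. By Lemma \ref{l2.1}, $\bar\nabla_{X_{;i}}X=X_{;i}\in TM$ is orthogonal to $\nu$, so the first term vanishes; by the Weingarten equation (\ref{2.7}), $\bar\nabla_{X_{;i}}\nu=h_i^k X_{;k}$, so the second term equals $h_i^k\,\bar g(X,X_{;k})=h_i^k\nabla_k\Phi$, and writing $h_i^k=g^{kl}h_{il}$ and relabelling yields exactly $\nabla_i u=g^{kl}h_{ik}\nabla_l\Phi$.

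Next I would differentiate this once more. Since $\nabla g=0$, one gets $\nabla_i\nabla_j u=g^{kl}(\nabla_i h_{jk})\nabla_l\Phi+g^{kl}h_{jk}\,\nabla_i\nabla_l\Phi$. In the first term the Codazzi equation (\ref{2.9}) gives $\nabla_i h_{jk}=\nabla_k h_{ij}$, producing $g^{kl}(\nabla_k h_{ij})\nabla_l\Phi$; in the second term I substitute the Hessian identity for $\Phi$, obtaining $g^{kl}h_{jk}(\phi'g_{il}-u\,h_{il})=\phi'h_{ij}-u\,(h^2)_{ij}$ with $(h^2)_{ij}=g^{kl}h_{ik}h_{jl}$. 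Adding the two contributions gives the second line of (\ref{2.13}). For the Hessian identity for $\Phi$ I would write $\nabla_l\Phi=\bar g(X,X_{;l})$, differentiate along $X_{;i}$, and expand $\bar\nabla_{X_{;i}}X_{;l}=\Gamma_{il}^k X_{;k}-h_{il}\nu$ by the Gaussian formula (\ref{2.5}): the Christoffel term cancels the connection term of $\nabla_i(\nabla_l\Phi)$, the contribution $\bar\nabla_{X_{;i}}X=X_{;i}$ from Lemma \ref{l2.1} yields $\bar g(X_{;i},X_{;l})=g_{il}=\phi'g_{il}$, and the normal part yields $-h_{il}\bar g(X,\nu)=-u\,h_{il}$.

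This is a direct computation and I do not anticipate a real obstacle; the only thing requiring care is consistency with the conventions fixed in Section 2 — the outer unit normal, the Gaussian formula in the form $\bar\nabla_Z Y=\nabla_Z Y-h(Z,Y)\nu$ and the matching Weingarten equation (\ref{2.7}), and the index raising/lowering — together with the specialization $\phi(\rho)=\rho$, $\phi'(\rho)\equiv1$ to $\mR^{n+1}$, which is precisely Lemma \ref{l2.1}.
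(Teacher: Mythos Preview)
Your proposal is correct; the computation you outline is exactly the standard one, and each step (the use of Lemma \ref{l2.1}, the Weingarten equation (\ref{2.7}), the Gaussian formula (\ref{2.5}), the Codazzi equation (\ref{2.9}), and the auxiliary identity $\nabla_i\nabla_j\Phi=\phi'g_{ij}-u\,h_{ij}$) is valid with the sign conventions fixed in Section 2. The paper itself does not spell out a proof of Lemma \ref{l2.2} but simply refers to \cite{GL,BLO,JL}; what you have written is precisely the direct computation one finds in those references, so there is no difference in approach to compare.
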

The proof of Lemma \ref{l2.2} can be seen in \cite{GL,BLO,JL}.
\subsection{Graphs in $\mathbb{R}^{n+1}$}
Let $(M,g)$ be a hypersurface in $\mathbb{R}^{n+1}$ with induced metric $g$. We now give the local expressions of the induced metric, second fundamental form, Weingarten curvatures etc., when $M$ is a graph of a smooth and positive function $\rho(z)$ on $\mathbb{S}^n$. Let $\p_1,\cdots,\p_n$ be a local frame along $M$ and $\p_\rho$ be the vector field along radial direction. Then the support function, induced metric, inverse metric matrix, second fundamental form can be expressed as follows (\cite{GL}).
\begin{align*}
	u &= \frac{\rho^2}{\sqrt{\rho^2+|D\rho|^2}},\;\; \nu=\frac{1}{\sqrt{1+\rho^{-2}|D\rho|^2}}(\frac{\p}{\p\rho}-\rho^{-2}\rho_i\frac{\p}{\p x_i}),   \\
	g_{ij} &= \rho^2e_{ij}+\rho_i\rho_j,  \;\;   g^{ij}=\frac{1}{\rho^2}(e^{ij}-\frac{\rho^i\rho^j}{\rho^2+|D\rho|^2}),\\
	h_{ij} &=\(\sqrt{\rho^2+|D\rho|^2}\)^{-1}(-\rho D_iD_j\rho+2\rho_i\rho_j+\rho^2e_{ij}),\\
	h^i_j &=\frac{1}{\rho^2\sqrt{\rho^2+|D\rho|^2}}(e^{ik}-\frac{\rho^i\rho^k}{\rho^2+|D\rho|^2})(-\rho D_kD_j\rho+2\rho_k\rho_j+\rho^2e_{kj}),
	\end{align*}
where $e_{ij}$ is the standard spherical metric. It will be convenient if we introduce a new variable $\gamma$ satisfying $$\frac{d\gamma}{d\rho}=\frac{1}{\rho}.$$
Let $\omega:=\sqrt{1+|D\gamma|^2}$, one can compute the unit outward normal $$\nu=\frac{1}{\omega}(1,-\frac{\gamma_1}{\rho},\cdots,-\frac{\gamma_n}{\rho})$$ and the general support function $u=<X,\nu>=\frac{\rho}{\omega}$. Moreover,
\begin{align}\label{2.14}
	g_{ij} &=\rho^2(e_{ij}+\gamma_i\gamma_j), \;\; g^{ij}=\frac{1}{\rho^2}(e^{ij}-\frac{\gamma^i\gamma^j}{\omega^2}),\notag\\
	h_{ij} &=\frac{\rho}{\omega}(-\gamma_{ij}+\gamma_i\gamma_j+e_{ij}),\notag\\
	h^i_j &=\frac{1}{\rho\omega}(e^{ik}-\frac{\gamma^i\gamma^k}{\omega^2})(-\gamma_{kj}+\gamma_k\gamma_j+e_{kj})\notag\\
	&=\frac{1}{\rho\omega}(\delta^i_j-(e^{ik}-\frac{\gamma^i\gamma^k}{\omega^2})\gamma_{kj}).
	\end{align}
Covariant differentiation with respect to the spherical metric is denoted by indices.

There is also a relation between the second fundamental form and the radial function on the hypersurface. Let $\widetilde{h}=\rho e$. Then
\begin{equation}\label{2.15}
	\omega^{-1}h=-\nn^2\rho+\widetilde{h}
\end{equation}
holds; cf. \cite{GC2}. Since the induced metric is given by
$$g_{ij}=\rho^2 e_{ij}+\rho_i\rho_j, $$
we obtain
\begin{equation}\label{2.16}
	\omega^{-1}h_{ij}=-\rho_{;ij}+\frac{1}{\rho}g_{ij}-\frac{1}{\rho}\rho_{i}\rho_{j}.
\end{equation}

We now consider these flow equations (\ref{1.1}) and (\ref{1.7}) of radial graphs over $\mathbb{S}^n$ in $\mathbb{R}^{n+1}$. It is known (\cite{GC2}) if a closed hypersurface which is a radial graph and satisfies
$$\p_tX=\mathscr{F}\nu,$$
then the evolution of the scalar function $\rho=\rho(X(z,t),t)$ satisfies
\begin{equation}\label{2.17}
\p_t\rho=\mathscr{F}\omega.
\end{equation}
 Equivalently, the equation for $\gamma$ satisfies
 $$\p_t\g=\mathscr{F}\frac{\omega}{\rho}.$$
  Thus we only need to consider these following parabolic initial value problems on $\mathbb{S}^n$,
\begin{equation}\label{2.18}
	\begin{cases}
		\p_t\rho&=u^\alpha\rho^\delta f^{-\beta}\omega, \;\;(z,t)\in\mathbb{S}^n\times [0,\infty),\\
		\rho(\cdot,0)&=\rho_0,
	\end{cases}
\end{equation}
\begin{equation}\label{2.19}
	\begin{cases}
		\p_t\g&=u^\alpha\rho^\delta f^{-\beta}\frac{\omega}{\rho}, \;\;(z,t)\in\mathbb{S}^n\times [0,\infty),\\
		\g(\cdot,0)&=\g_0,
	\end{cases}
\end{equation}
\begin{equation}\label{2.20}
     \begin{cases}
     	\p_t\rho&=(u^\alpha\rho^\delta f^{-\beta}-\eta u)\omega, \;\;(z,t)\in\mathbb{S}^n\times [0,\infty),\\
     	 \rho(\cdot,0)&=\rho_0,
     	\end{cases}
\end{equation}
\begin{equation}\label{2.21}
	\begin{cases}
		\p_t\g&=(u^\alpha\rho^\delta f^{-\beta}-\eta u)\frac{\omega}{\rho}, \;\;(z,t)\in\mathbb{S}^n\times [0,\infty),\\
		\g(\cdot,0)&=\g_0,
	\end{cases}
\end{equation}
   where $\rho_0$ is the radial function of the initial hypersurface.

Next, we can derive a connection between $\vert\nn\rho\vert$ and $\vert D\gamma\vert$.
\begin{lemma}\label{l2.3}\cite{DL2}
If $M$ is a star-shaped hypersurface, we can derive that $\vert\nn\rho\vert^2=1-\frac{1}{\omega^2}$.
\end{lemma}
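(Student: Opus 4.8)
The plan is to establish the identity by a direct computation from the graph formulas of Section~2.4 together with the definition of $\gamma$. \textbf{Step 1.} Since $\tfrac{d\gamma}{d\rho}=\tfrac1\rho$, the chain rule gives $D_i\gamma=\rho_i/\rho$, so $|D\gamma|^2=|D\rho|^2/\rho^2$ and hence
\[
\omega^2=1+|D\gamma|^2=\frac{\rho^2+|D\rho|^2}{\rho^2},\qquad 1-\frac1{\omega^2}=\frac{|D\rho|^2}{\rho^2+|D\rho|^2}.
\]
It therefore suffices to show that $|\nn\rho|^2=g^{ij}\rho_i\rho_j$ equals the last expression, where the indices are raised with the standard spherical metric $e$ and $\rho_i=D_i\rho$ (the derivative of $\rho$ along $M$ coincides with its spherical derivative since $M$ is a radial graph over $\mS^n$).

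\textbf{Step 2.} Using $g^{ij}=\tfrac1{\rho^2}\bigl(e^{ij}-\tfrac{\rho^i\rho^j}{\rho^2+|D\rho|^2}\bigr)$ from Section~2.4 and writing $\rho^i=e^{ij}\rho_j$, $|D\rho|^2=e^{ij}\rho_i\rho_j$, one computes
\[
g^{ij}\rho_i\rho_j=\frac1{\rho^2}\Bigl(|D\rho|^2-\frac{|D\rho|^4}{\rho^2+|D\rho|^2}\Bigr)=\frac1{\rho^2}\cdot\frac{\rho^2\,|D\rho|^2}{\rho^2+|D\rho|^2}=\frac{|D\rho|^2}{\rho^2+|D\rho|^2},
\]
which matches the display in Step~1 and completes the proof. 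The argument is purely mechanical; the only point requiring care is keeping the index raising consistent with $e$ rather than with $g$.

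\textbf{An alternative, coordinate-free route.} One may instead observe that in the ambient metric $\bar g=d\rho^2+\rho^2\,dz^2$ the radial coordinate obeys $|\bar\nabla\rho|^2_{\bar g}=\bar g^{\rho\rho}=1$. Decomposing along $M$, $\bar\nabla\rho=\nn\rho+\langle\bar\nabla\rho,\nu\rangle\nu$, gives $|\nn\rho|^2=1-\langle\bar\nabla\rho,\nu\rangle^2$. Since $\langle\bar\nabla\rho,\nu\rangle$ is exactly the $\partial_\rho$-component of $\nu$, which by the formula $\nu=\tfrac1\omega\bigl(1,-\tfrac{\gamma_1}{\rho},\dots,-\tfrac{\gamma_n}{\rho}\bigr)$ of Section~2.4 equals $1/\omega$, we again obtain $|\nn\rho|^2=1-1/\omega^2$. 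I would present the computation of Steps~1--2 as the proof, and add the second viewpoint as a remark. Since everything reduces to elementary algebra, there is no genuine obstacle here; this lemma is a bookkeeping identity that will be used repeatedly to convert between the metrics $g$ and $e$ in the later estimates.
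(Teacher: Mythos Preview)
Your proof is correct. The paper itself does not give a proof of this lemma but simply cites \cite{DL2}; your direct computation from the graph formulas in Section~2.4 (and the alternative coordinate-free argument via the decomposition of $\bar\nabla\rho$) are both valid and exactly the kind of elementary verification one expects for this bookkeeping identity.
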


\subsection{Convex hypersurface parametrized by the inverse Gauss map}
Let $M$ be a smooth, closed and uniformly convex hypersurface in $\mR^{n+1}$. Assume that $M$ is parametrized by the inverse Gauss map $X: \mS^n\to M\subset \mR^{n+1}$ and encloses origin. The support function $u: \mS^n\to \mR^1$ of $M$ is defined by $$u(x)=\sup_{y\in M}<x,y>.$$ The supremum is attained at a point $y=X(x)$ because of convexity, $x$ is the outer normal of $M$ at $y$. Hence $u(x)=<x,X(x)>$. Then we have
\begin{equation*}
X=Du+ux\quad \text{    and    } \quad\rho=\sqrt{u^2+|Du|^2}.
\end{equation*}

Let $e_1,\cdot\cdot\cdot,e_n$ be a smooth local orthonormal frame field on $\mS^n$, and $D$ be the covariant derivative with respect to the standard metric $e_{ij}$ on $\mS^n$. Denote by $g_{ij}, g^{ij}, h_{ij}$ the induced metric, the inverse of the induced metric, and the second fundamental form of $M$, respectively. Then the second fundamental form of $M$ is given by
$$h_{ij}=D_iD_ju+ue_{ij},$$
and the proof can be seen in Urbas \cite{UJ}. To compute the metric $g_{ij}$ of $M$ we use the Gauss-Weingarten relations $D_ix=h_{ik}g^{kl}D_lX$, from which we obtain$$e_{ij}=<D_ix,D_jx>=h_{ik}g^{kl}h_{jm}g^{ms}<D_lx,D_sx>=h_{ik}h_{jl}g^{kl}.$$
Since $M$ is uniformly convex, $h_{ij}$ is invertible and the inverse is denoted by $h^{ij}$, hence $g_{ij}=h_{ik}h_{jk}$. The principal radii of curvature are the eigenvalues of the matrix
\begin{equation}
	b_{ij}=h^{ik}g_{jk}=h_{ij}=D_{ij}^2u+u\delta_{ij}.
\end{equation}
 We see therefore that the support function before scaling satisfies the initial value problem by \cite{DL} Section 2,
\begin{equation}\label{2.24}
	\begin{cases}
		&\frac{\p u}{\p t}=u^\alpha\rho^\delta F^\beta([D^2u+u\uppercase\expandafter{\romannumeral1}])\; \text{ on } \mS^n\times[0,\infty),\\
		&u(\cdot,0)=u_0,
	\end{cases}
\end{equation}
where $\uppercase\expandafter{\romannumeral1}$ is the identity matrix, $u_0$ is the support function of $M_0$, and
\begin{equation}
	F([a_{j}^i])=f(\mu_1,\cdots,\mu_n),
\end{equation}
where $\mu_1,\cdots,\mu_n$ are the eigenvalues of matrix $[a^i_{j}]$. It is not difficult to see that the eigenvalues of $[F^{j}_i]=[\frac{\p F}{\p a^i_{j}}]$ are $\frac{\p f}{\p\mu_1},\cdots,\frac{\p f}{\p\mu_n}$. Thus from Assumption \ref{a1.1} we obtain
\begin{equation}
	[F^{j}_i]>0 \text{ on } \Gamma^+,
\end{equation}
which yields that the equation (\ref{2.24}) is parabolic for admissible solutions. By \cite{DL} Section 2 and (\ref{2.18}), we can find this formula
\begin{equation}\label{2.26}
\frac{\p_t\rho}{\rho}(\xi)=\frac{\p_tu}{u}(x).
\end{equation}
This formula can be also found in \cite{LSW,CL}.

\subsection{Convex body and mixed volume}
Knowledge in this subsection is used to solve (\ref{x1.9}) and (\ref{x1.13}) and we let $M_0$ be a smooth, closed and uniformly convex hypersurface. Let $\sigma_{k}(A)$ be the $k$-th elementary symmetric function defined on the set $\mathcal{M}_n$ of $n\times n$ matrices and $\sigma_{k}(A_1,\cdots,A_k)$ be the complete polarization of $\sigma_{k}$ for $A_i\in\mathcal{M}_n$, $i=1,\cdots,k$, i.e.
$$\sigma_{k}(A_1,\cdots,A_k)=\frac{1}{k!}\sum\tiny_{\begin{array}{c}i_1,\cdots,i_k=1\\j_1,\cdots,j_k=1\end{array}}^{n}\delta^{i_1,\cdots,i_k}_{j_1,\cdots,j_k}(A_1)_{i_1j_1}\cdots(A_k)_{i_kj_k}.$$
If $A_1=\cdots=A_k=A,$ $\sigma_{k}(A_1,\cdots,A_k)=\sigma_{k}(A)$, where $\sigma_{k}(A)$ has been defined in Section 1. Let $\Gamma_k$ be Garding's cone
$$\Gamma_k=\{A\in\mathcal{M}_n:\sigma_{i}(A)>0,i=1,\cdots,k\}.$$

It is well-known that the determinants of the Jacobian of radial Gauss mapping $\sA$ and reverse radial Gauss mapping $\sA^*$ of $\Om$ are given by, see e.g. \cite{HLY2,CCL,LSW},
\begin{equation}\label{2.27}
\vert Jac\sA\vert(\xi)=\vert\frac{dx}{d\xi}\vert=\frac{\rho^{n+1}(\xi)K(\vec{\rho}(\xi))}{u(\sA(\xi))},
\end{equation}
and
\begin{equation}\label{2.28}
\vert Jac\sA^*\vert(x)=\vert\frac{d\xi}{dx}\vert=\dfrac{u(x)}{\rho^{n+1}(\sA^*(x))K(\nu^{-1}_\Om(x))}.
\end{equation}
Before closing this section, we give the following basic properties for any given $\Om$ contained the origin.
\begin{lemma}\cite{CL}\label{l2.5}
Let $\Om$ contain the origin. Let $u$ and $\rho$ be the support function and radial function of $\Om$, and $x_{\max}$ and $\xi_{\min}$ be two points such that $u(x_{\max})=\max_{\mS^n}u$ and $\rho(\xi_{\min})=\min_{\mS^n}\rho$. Then
\begin{align}
\max_{\mS^n}u=\max_{\mS^n}\rho\quad \text{    and    }\quad \min_{\mS^n}u=\min_{\mS^n}\rho,\label{2.29}\\
u(x)\ge x\cdot x_{\max}u(x_{\max}),\quad \forall x\in\mS^n,\label{2.30}\\
\rho(\xi)\xi\cdot\xi_{\min}\le\rho(\xi_{\min}),\quad \forall \xi\in\mS^n.\label{2.31}
\end{align}
\end{lemma}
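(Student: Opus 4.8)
The plan is to reduce the whole lemma to two elementary facts about a convex body $\Om$ with the origin in its interior (consistent with the standing assumption that the hypersurfaces enclose the origin, so that $u,\rho>0$): the pointwise bound $\rho\le u$ on $\mS^n$, and the duality identity $u(x_0)=\langle x_0,p\rangle$ whenever $x_0$ is the outer unit normal of a supporting hyperplane of $\Om$ at a boundary point $p$.

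First I would record $\rho\le u$: since $\rho(\xi)\xi\in\Om$, the definition of the support function gives $u(\xi)\ge\langle\xi,\rho(\xi)\xi\rangle=\rho(\xi)$, whence $\max_{\mS^n}\rho\le\max_{\mS^n}u$ and $\min_{\mS^n}\rho\le\min_{\mS^n}u$. For the reverse inequality at the maximum I would use compactness to write $u(x_{\max})=\langle x_{\max},y_0\rangle$ for some $y_0\in\p\Om$ (with $y_0\ne0$ since $u(x_{\max})>0$), and then chase the chain $u(x_{\max})=\langle x_{\max},y_0\rangle\le|y_0|\le\rho(y_0/|y_0|)\le\max_{\mS^n}\rho\le\max_{\mS^n}u=u(x_{\max})$. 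All inequalities are therefore equalities; equality in Cauchy--Schwarz forces $y_0=|y_0|x_{\max}=u(x_{\max})x_{\max}$, so $q:=u(x_{\max})x_{\max}\in\Om$. This proves $\max u=\max\rho$ and simultaneously gives (\ref{2.30}): for any $x\in\mS^n$, $u(x)=\sup_{y\in\Om}\langle x,y\rangle\ge\langle x,q\rangle=(x\cdot x_{\max})u(x_{\max})$.

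For the minimum and (\ref{2.31}) I would start from the boundary point $p:=\rho(\xi_{\min})\xi_{\min}\in\p\Om$ and take an outer unit normal $x_0$ of a supporting hyperplane of $\Om$ at $p$. Then $u(x_0)=\langle x_0,p\rangle=\rho(\xi_{\min})\langle x_0,\xi_{\min}\rangle\le\rho(\xi_{\min})=\min_{\mS^n}\rho$, so $\min_{\mS^n}u\le u(x_0)\le\min_{\mS^n}\rho$; together with the reverse inequality from the first step this yields $\min u=\min\rho$, completing (\ref{2.29}). Moreover the string $\rho(\xi_{\min})\le\min_{\mS^n}u\le u(x_0)=\rho(\xi_{\min})\langle x_0,\xi_{\min}\rangle\le\rho(\xi_{\min})$ collapses, so $\langle x_0,\xi_{\min}\rangle=1$, i.e. $x_0=\xi_{\min}$; hence the supporting hyperplane at $p$ has outer normal $\xi_{\min}$ and $\Om\subseteq\{y:\langle\xi_{\min},y\rangle\le\rho(\xi_{\min})\}$. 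Applying this inclusion to $\rho(\xi)\xi\in\Om$ gives (\ref{2.31}): $\rho(\xi)\,\xi\cdot\xi_{\min}\le\rho(\xi_{\min})$ for every $\xi\in\mS^n$.

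There is no genuine obstacle here: the lemma is a bookkeeping exercise built on the duality $u(x_0)=\langle x_0,p\rangle$ at boundary points and on the equality cases of Cauchy--Schwarz. The only point that needs a word of care is the requirement that the origin lie in the interior of $\Om$ (if $0\in\p\Om$ one can have $\rho(\xi_{\min})=0$, and then the deduction $\langle x_0,\xi_{\min}\rangle=1$ breaks down and (\ref{2.31}) may actually fail); this is why I would state the result, as the paper does, for bodies enclosing the origin, so that $u$ and $\rho$ are strictly positive throughout.
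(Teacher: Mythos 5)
The paper imports this lemma from \cite{CL} and supplies no proof of its own, so there is no in-paper argument to compare against. Your proof is correct and complete: the reduction to $\rho\le u$, the identity $u(x_0)=\langle x_0,p\rangle$ at supporting points, and the equality case of Cauchy--Schwarz give exactly (\ref{2.29})--(\ref{2.31}), and this is essentially the standard argument one finds for this fact. Your closing caveat that the origin must lie in the interior of $\Om$ (so that $u,\rho>0$ and the division by $\rho(\xi_{\min})$ is legitimate) is the right point of care and matches the paper's standing hypothesis that the hypersurfaces enclose the origin.
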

Let $\Om_{t}$ be the convex body whose support function is $u(\cdot,t)$, $\Om_{t}^*$ be its polar body
$$\Om_{t}^*=\{z\in\mR^{n+1}:z\cdot y\le1 \text{ for all } y\in\Om_{t}\},$$
and $u^*(\cdot,t)$ be the support function of $\Om_{t}^*$. If $\rho(\cdot,t)$ is the radial function of $\Om_{t}$, then
\begin{equation}\label{2.32}
u^*(\xi,t)=\frac{1}{\rho(\xi,t)}.
\end{equation}
It is well-known that $\sA_{\Om_t^*}=\sA_{\Om_t}^*$, see e.g. \cite{HLY2}. This implies $1=\vert Jac\sA_{\Om_t^*}\vert\vert Jac\sA_{\Om_t}\vert$, and so by (\ref{2.27}) and (\ref{2.28}),
\begin{equation}\label{2.33}
\dfrac{u^{n+2}(\sA_{\Om_t}(\xi),t)(u^*(\xi,t))^{n+2}}{K(\vec{\rho}_{\Om_{t}}(\xi))K^*(\nu^{-1}_{\Om_{t}^*}(\xi))}=1, \quad\forall \xi\in\mS^n.
\end{equation}
By (\ref{2.26}) and (\ref{2.32}), we have
\begin{equation}\label{2.34}
\frac{\p_tu^*}{u^*}(\xi,t)=-\frac{\p_tu}{u}(\sA_{\Om_t(\xi)},t).
\end{equation}


\section{The Priori Estimates of the normalized flow with $\a+\delta+\beta\le1$}
In this section, we establish the priori estimates with $\a+\delta+\beta\le1$ and show that the flow exists for long time.
For convenience, we denote that $\Psi=u^\a\rho^\delta,G=f^{-\beta}$, then the equation (\ref{1.1}) can be written in the following form
$$\frac{\p X}{\p t}=u^\alpha\rho^\delta f^{-\beta}(x,t) \nu(x,t)=\Psi G\nu.$$

We first show the $C^0$-estiamte of the solution to (\ref{1.7}) or (\ref{2.20}).
\begin{lemma}\label{l3.1}
	Let $\rho(x,t)$, $t\in[0,T)$, be a smooth, star-shaped solution to (\ref{2.20}). If $\a+\delta+\beta\le1$, then there is a positive constant $C_1$ depending only on $\alpha,\delta,\beta$ and the lower and upper bounds of $\rho(\cdot,0)$ such that
	\begin{equation*}
		\frac{1}{C_1}\leq \rho(\cdot,t)\leq C_1.
	\end{equation*}
\end{lemma}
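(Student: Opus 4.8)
The plan is to obtain the $C^0$ bound directly from the scalar parabolic equation \eqref{2.20} on $\mathbb{S}^n$, using a maximum-principle/comparison argument at the spatial extrema of $\rho(\cdot,t)$. First I would recall that at a point $x_0$ where $\rho(\cdot,t)$ attains its spatial maximum one has $D\rho(x_0,t)=0$, hence by Lemma \ref{l2.3} (or directly from the formulas in Section 2.4) $\omega=\sqrt{1+|D\gamma|^2}=1$ and $u=\rho/\omega=\rho$ at that point; moreover $D^2\rho(x_0,t)\le 0$, so from \eqref{2.16} one controls the curvature: $\omega^{-1}h_{ij}\ge \frac{1}{\rho}g_{ij}$ there, which (together with homogeneity of degree one of $f$ and Assumption \ref{a1.1}) gives $f(\kappa)(x_0,t)\ge \rho^{-1}f(1,\dots,1) = \eta^{-1/\beta}\rho^{-1}$, i.e. $f^{-\beta}\le \eta\,\rho^{\beta}$ at the maximum point. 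Plugging $u=\rho$, $\omega=1$ into \eqref{2.20} at $x_0$ yields, for $\rho_{\max}(t):=\max_{\mathbb{S}^n}\rho(\cdot,t)$,
\[
\frac{d}{dt}\rho_{\max}\le \big(\rho_{\max}^{\alpha}\rho_{\max}^{\delta}\cdot \eta\,\rho_{\max}^{\beta}-\eta\,\rho_{\max}\big)=\eta\big(\rho_{\max}^{\alpha+\delta+\beta}-\rho_{\max}\big),
\]
understood in the sense of the upper Dini derivative (Hamilton's trick / a Lipschitz-in-time argument justifies differentiating the max). Since $\alpha+\delta+\beta\le 1$, the right-hand side is negative whenever $\rho_{\max}>1$, so $\rho_{\max}(t)\le\max\{\rho_{\max}(0),1\}$ for all $t$.

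Symmetrically, at a spatial minimum point one again has $D\rho=0$, $\omega=1$, $u=\rho$, and now $D^2\rho\ge 0$, giving $\omega^{-1}h_{ij}\le \frac1\rho g_{ij}$ and hence $f^{-\beta}\ge \eta\,\rho^{\beta}$ at that point. Therefore, with $\rho_{\min}(t):=\min_{\mathbb{S}^n}\rho(\cdot,t)$,
\[
\frac{d}{dt}\rho_{\min}\ge \eta\big(\rho_{\min}^{\alpha+\delta+\beta}-\rho_{\min}\big),
\]
and again using $\alpha+\delta+\beta\le 1$ the right-hand side is positive whenever $\rho_{\min}<1$, so $\rho_{\min}(t)\ge\min\{\rho_{\min}(0),1\}$ for all $t$. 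Combining the two estimates gives $\rho(\cdot,t)\in[C_1^{-1},C_1]$ with $C_1=\max\{\rho_{\max}(0),\rho_{\min}(0)^{-1},1\}$, which depends only on the initial data (and the relation $\alpha+\delta+\beta\le 1$), as claimed. Note the bound is uniform in $T$, which is exactly what is needed for long-time existence.

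The main technical point to get right is the curvature comparison at the extrema — i.e. justifying that the ordered eigenvalue inequality $h_{ij}\le (\text{resp.}\ge)\ \rho^{-1}g_{ij}$ at a min (resp. max) of $\rho$, combined with monotonicity $\partial f/\partial\kappa_i>0$ and degree-one homogeneity, really yields $f^{-\beta}\ge \eta\rho^\beta$ (resp. $\le$), including checking that the principal curvatures stay in the cone $\Gamma$ so that $f$ is defined there (this is where star-shapedness of the solution, assumed in the statement, is used, and where one should be slightly careful that at an interior extremum only a one-sided Hessian bound is available, which nonetheless suffices because $f$ is increasing in each $\kappa_i$). A secondary, routine point is the justification of differentiating $\rho_{\max}(t)$ and $\rho_{\min}(t)$ in $t$; this is standard (the extrema are Lipschitz in $t$ and one uses the evolution equation at a point realizing the extremum, or integrates the differential inequality in the viscosity/Dini sense). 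Everything else is a direct substitution into \eqref{2.20} and an ODE comparison with the scalar equation $\dot\sigma=\eta(\sigma^{\alpha+\delta+\beta}-\sigma)$, whose constant solution $\sigma\equiv1$ is globally attracting from both sides under the hypothesis $\alpha+\delta+\beta\le1$.
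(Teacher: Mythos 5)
Your proposal is correct and follows essentially the same route as the paper: evaluate \eqref{2.20} at the spatial extrema of $\rho$, where $D\rho=0$, $\omega=1$, $u=\rho$, and the one-sided Hessian bound combined with monotonicity and degree-one homogeneity of $f$ gives $f^{-\beta}\le\eta\rho^{\beta}$ at a maximum (resp. $\ge$ at a minimum), yielding the ODE comparison $\frac{d}{dt}\rho_{\max}\le\eta\rho(\rho^{\alpha+\delta+\beta-1}-1)$ and its counterpart for $\rho_{\min}$, from which the uniform two-sided bound follows under $\alpha+\delta+\beta\le1$. The additional care you flag about the cone $\Gamma$ and about differentiating the extrema is sensible but does not change the argument, which matches the paper's proof.
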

\begin{proof}
	Let $\rho_{\max}(t)=\max_{z\in \mS^n}\rho(\cdot,t)=\rho(z_t,t)$. For fixed time $t$, at the point $z_t$, we have $$D_i\rho=0 \text{ and } D^2_{ij}\rho\leq0.$$
	Note that $\omega=1$, $u=\frac{\rho}{\omega}=\rho$ and
	\begin{equation}\label{3.1}
		\begin{split}
			h^i_j &=\frac{1}{\rho^2\sqrt{\rho^2+|D\rho|^2}}(e^{ik}-\frac{\rho^i\rho^k}{\rho^2+|D\rho|^2})(-\rho D_kD_j\rho+2\rho_k\rho_j+\rho^2e_{kj})\\
			&=-\rho^{-2}\rho^i_{j}+\rho^{-1}\delta^i_{j}.
		\end{split}
	\end{equation}
At the point $z_t$, we have $F^{-\beta}(h^i_j)\leq\eta(\frac{1}{\rho})^{-\beta}$. Then
$$\frac{d}{dt}\rho_{\max}\leq\eta\rho(\rho^{\alpha+\delta+\beta-1}-1),$$
hence $\rho_{\max}\leq \max\{C_0,\rho_{\max}(0)\}$.
Similarly, $$\frac{d}{dt}\rho_{\min}\geq\eta\rho(\rho^{\alpha+\delta+\beta-1}-1).$$
By $\alpha+\delta+\beta\le1$ we have $\rho_{\min}\geq \min\{\frac{1}{C_0},\rho_{\min}(0)\}$.
\end{proof}

Let $M(t)$ be a smooth family of closed hypersurfaces in $\mR^{n+1}$. Let $X(\cdot,t)$ denote a point on $M(t)$. In general, we have the following evolution property.
\begin{lemma}\label{l3.2}
	Let $M(t)$ be a smooth family of closed hypersurfaces in $\mR^{n+1}$ evolving along the flow$$\p_tX=\mathscr{F}\nu,$$
	where $\nu$ is the unit outward normal vector field and $\mathscr{F}$ is a function defined on $M(t)$. Then we have the following evolution equations.
	\begin{equation}\label{3.2}
		\begin{split}
			\p_tg_{ij}&=2\mathscr{F}h_{ij},\\
			\p_t\nu&=-\nn\mathscr{F},\\
			\p_td\mu_g&=\mathscr{F}Hd\mu_g,\\
			\p_th_{ij}&=-\nn_i\nn_j\mathscr{F}+\mathscr{F}(h^2)_{ij},\\
			\p_tu&=\mathscr{F}-<\nn \Phi,\nn\mathscr{F}>,
		\end{split}
	\end{equation}
	where $d\mu_g$ is the volume element of the metric $g(t)$, $(h^2)_{ij}=h_i^kh_{kj}$.
\end{lemma}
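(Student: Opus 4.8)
\textbf{Proof proposal for Lemma \ref{l3.2}.} The plan is a direct computation in the flat ambient space $\mR^{n+1}$, using that along this evolution $\p_t$ and the spatial derivatives $\p_i=X_{;i}$ commute, and that the ambient connection $\bar\nabla$ is ordinary coordinate differentiation. I would fix a local frame and treat $X$, $\nu$ as $\mR^{n+1}$-valued functions, and repeatedly use the Weingarten relation (\ref{2.7}), $\nu_{;i}^\alpha=h_i^kX_{;k}^\alpha$, together with $\<X_{;i},\nu\>=0$ and $\<\nu,\nu\>=1$.

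First, for the metric: differentiating $g_{ij}=\<X_{;i},X_{;j}\>$ and using $\p_tX_{;i}=\p_i(\mathscr{F}\nu)=\mathscr{F}_{;i}\nu+\mathscr{F}\nu_{;i}=\mathscr{F}_{;i}\nu+\mathscr{F}h_i^kX_{;k}$ gives $\p_tg_{ij}=2\mathscr{F}h_{ij}$, the two $\mathscr{F}_{;i}$-terms dropping out by orthogonality. Next, for the normal: $\<\p_t\nu,\nu\>=\tfrac12\p_t\<\nu,\nu\>=0$ shows $\p_t\nu$ is tangential, and differentiating $\<\nu,X_{;i}\>=0$ yields $\<\p_t\nu,X_{;i}\>=-\<\nu,\p_tX_{;i}\>=-\mathscr{F}_{;i}$; hence $\p_t\nu=-g^{ij}\mathscr{F}_{;i}X_{;j}=-\nn\mathscr{F}$. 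The volume element then follows from Jacobi's formula $\p_td\mu_g=\tfrac12 g^{ij}\p_tg_{ij}\,d\mu_g=\mathscr{F}g^{ij}h_{ij}\,d\mu_g=\mathscr{F}Hd\mu_g$.

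For the second fundamental form I would start from $h_{ij}=\<\nu_{;i},X_{;j}\>$, differentiate, and substitute $\p_t\nu_{;i}=\p_i(\p_t\nu)=-\p_i(\nn\mathscr{F})$ and $\p_tX_{;j}=\mathscr{F}_{;j}\nu+\mathscr{F}h_j^kX_{;k}$. The term $\<\nu_{;i},\p_tX_{;j}\>$ contributes $\mathscr{F}h_i^kh_j^lg_{kl}=\mathscr{F}(h^2)_{ij}$ after using orthogonality. For the remaining term, writing the tangent field $\nn\mathscr{F}=g^{kl}\mathscr{F}_{;k}X_{;l}$ and applying the Gaussian formula (\ref{2.5}), $\bar\nabla_i(\nn\mathscr{F})=\nn_i(\nn\mathscr{F})-h(\p_i,\nn\mathscr{F})\nu$, projecting onto $X_{;j}$ isolates exactly the Hessian $\nn_i\nn_j\mathscr{F}$; this gives $\p_th_{ij}=-\nn_i\nn_j\mathscr{F}+\mathscr{F}(h^2)_{ij}$. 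Finally, for the support function $u=\<X,\nu\>$: differentiating gives $\p_tu=\<\mathscr{F}\nu,\nu\>+\<X,\p_t\nu\>=\mathscr{F}-\<X,\nn\mathscr{F}\>$, and since $\nn\mathscr{F}$ is tangential only the tangential part $X^T$ of the position vector matters; because $\Phi=\tfrac12\rho^2=\tfrac12|X|^2$ in $\mR^{n+1}$ one has $\nn_i\Phi=\<X,X_{;i}\>$, i.e. $\nn\Phi=X^T$, so $\p_tu=\mathscr{F}-\<\nn\Phi,\nn\mathscr{F}\>$.

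I expect the only delicate point to be the $\p_th_{ij}$ computation: one must commute $\p_t$ with $\p_i$, keep careful track of which terms are normal and which tangential, and correctly recognize that projecting $\bar\nabla_i(\nn\mathscr{F})$ onto the tangent space produces the intrinsic Hessian rather than merely $\p_i\p_j\mathscr{F}$. Everything else is routine bookkeeping with the Gauss--Weingarten relations already recorded in Section \ref{sec:preliminary} (equations (\ref{2.5})--(\ref{2.7})); the identification $\nn\Phi=X^T$ is immediate from $\Phi=\tfrac12|X|^2$ and is consistent with Lemma \ref{l2.2}.
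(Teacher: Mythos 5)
Your computation is correct and is exactly the standard argument that the paper itself does not reproduce, deferring instead to Huisken \cite{HG}; each of the five identities is verified by the usual differentiation of $g_{ij}=\<X_{;i},X_{;j}\>$, $\<\nu,\nu\>=1$, $h_{ij}=\<\nu_{;i},X_{;j}\>$ and $u=\<X,\nu\>$ together with the Gauss--Weingarten relations (\ref{2.5})--(\ref{2.7}) and $\nn\Phi=X^T$. No gaps.
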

\begin{proof}
	Proof is standard, see for example, \cite{HG}.
\end{proof}

\begin{lemma}\label{l3.3}
	Let $0<\beta\leq1-\alpha-\delta$, and $X(\cdot,t)$ be the solution to the flow (\ref{1.7}) which encloses the origin for $t\in[0,T)$. Then there is a positive constant $C_2$ depending on the initial hypersurface and $\alpha,\delta,\beta$,  such that
	$$\frac{1}{C_2}\leq u^{\alpha-1}\rho^\delta F^{-\beta}\leq C_2.$$
\end{lemma}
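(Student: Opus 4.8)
The plan is to bound the quantity $u^{\alpha-1}\rho^\delta F^{-\beta}$ along the normalized flow (\ref{1.7}) by applying the parabolic maximum principle to its evolution equation. First I would record the evolution equation for the speed of the normalized flow. Writing $\mathscr{F}=u^\alpha\rho^\delta f^{-\beta}-\eta u=\Psi G-\eta u$, the relevant scalar is $\Theta:=u^{\alpha-1}\rho^\delta F^{-\beta}$, which is the "multiplicative part" of the speed (note $\mathscr{F}=u\Theta-\eta u=u(\Theta-\eta)$). The natural approach is to differentiate $\Theta$ in time, using Lemma \ref{l3.2} for $\p_t u$ and $\p_t h_{ij}$ together with the homogeneity of $F$ (degree one, so $F^{ij}h_{ij}=F$, hence $F^{-\beta}$ evolves with a term $-\beta F^{-\beta-1}F^{ij}\p_t h_{ij}$), and also the evolution of $\rho$ from (\ref{2.20}) (equivalently $\p_t\rho = \mathscr{F}\omega$). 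The key structural fact is that the highest-order term in $\p_t\Theta$ is $\Theta' \cdot F^{ij}\nabla_i\nabla_j(\cdot)$ applied linearly to the relevant quantities, so one gets a parabolic operator $\cL = \p_t - a^{ij}\nabla_i\nabla_j - (\text{lower order})$ for which $\Theta$ is a subsolution/supersolution once the zeroth-order terms are controlled by the $C^0$ bound $1/C_1 \le \rho \le C_1$ from Lemma \ref{l3.1}.

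More concretely, the second step is to compute $\cL\Theta$ and check that at a spatial maximum of $\Theta$ (for fixed $t$) all gradient terms drop out, and the remaining zeroth-order terms take the form $\Theta \cdot (\text{bounded coefficients})\cdot(\eta - \Theta^{?})$ type expressions — i.e., a logistic-type ODE differential inequality $\frac{d}{dt}\Theta_{\max} \le C(1 - c\,\Theta_{\max}^{\lambda})$ or similar, with exponents determined by $\alpha,\delta,\beta$ and $C,c$ depending only on $C_1$ and the dimension. The crucial sign/homogeneity bookkeeping: because $F$ is homogeneous of degree one and concave, and because $\alpha\le 0 < \beta \le 1-\alpha-\delta$, the signs of the coefficients multiplying $\Theta$ and its powers work out so that large values of $\Theta_{\max}$ are decreased and small values of $\Theta_{\min}$ are increased. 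A symmetric argument at the spatial minimum gives the lower bound. Combining both with the already-established uniform bounds on $\rho$ (hence on $u$, since $u=\rho/\omega \le \rho$ and $u \ge \rho_{\min}/\omega$ needs the gradient bound — so I would either use Lemma \ref{l2.5}-type comparisons or defer to a gradient estimate; here likely the argument only needs $\rho$ bounds because $\Theta$ is written intrinsically).

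Alternatively — and this may be the cleaner route — one can avoid evolving $\Theta$ directly and instead use the auxiliary function $\Theta = \mathscr{F}/u + \eta = u^{\alpha-1}\rho^\delta F^{-\beta}$ and note that on the normalized flow the quantity $w := \log \Theta$ or the speed ratio satisfies a maximum principle essentially because the flow (\ref{1.7}) is the rescaling of (\ref{1.1}): the bound should follow from the scaling-invariant nature of the estimate together with the $C^0$ bound. In either presentation, the heart of the matter is the same computation.

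The main obstacle I anticipate is the careful computation of the evolution equation for $F^{-\beta}$ and the sign analysis of the resulting zeroth-order terms: one must use $\nabla_i\nabla_j u = g^{kl}\nabla_k h_{ij}\nabla_l\Phi + \phi' h_{ij} - (h^2)_{ij}u$ from Lemma \ref{l2.2} to express things in terms of $h_{ij}$ and its covariant derivatives, then use concavity of $F$ (for the term $F^{ij}\nabla_i\nabla_j$ acting on the $(h^2)_{ij}$-type pieces) and the precise inequality $\alpha + \delta + \beta \le 1$ to close the differential inequality with the correct sign. The constraint $\alpha \le 0$ enters precisely in controlling the sign of the term involving $\p_t u$ scaled against $u^{\alpha-1}$. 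Once $\cL\Theta$ is in hand, the maximum principle application is routine.
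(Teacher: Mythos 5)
Your overall strategy --- bound $\Theta=u^{\alpha-1}\rho^\delta F^{-\beta}$ by a maximum principle applied to its evolution equation, with the hypothesis $\alpha+\delta+\beta\le 1$ supplying the sign of the zeroth-order term --- is the right one, and your observation that $\mathscr F=u(\Theta-\eta)$ is exactly the identity the paper exploits. However, your primary route (computing $\partial_t\Theta$ from the extrinsic evolution equations of Lemma \ref{l3.2}) has a genuine gap: it is not true that ``at a spatial maximum of $\Theta$ all gradient terms drop out.'' The evolution $\partial_t u=\mathscr F-\langle\nn\Phi,\nn\mathscr F\rangle$ and the identity $\nn_i u=g^{kl}h_{ik}\nn_l\Phi$ from Lemma \ref{l2.2} produce, after the computation, a surviving term of the schematic form $\langle X,\nn u\rangle=\rho^2h_{ik}\rho^{;i}\rho^{;k}$ (compare the term $-(\alpha-1)\frac{Q^2}{u}\langle X,\nn u\rangle$ in the paper's own equation (\ref{5.7})). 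This term involves the second fundamental form, which at the stage of Lemma \ref{l3.3} is not yet bounded (the curvature estimate is Lemma \ref{l3.6}, which is proved \emph{after} and \emph{uses} Lemma \ref{l3.3}), and without a convexity assumption it has no sign; in the paper this difficulty is only handled later, for the unnormalized flow, via the auxiliary function $\log Q+N\rho$ \emph{and} strict convexity (Lemma \ref{l5.6}). So your proposed computation would not close here.

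The paper's actual proof avoids this entirely by working with the scalar graph equation (\ref{2.21}) for $\gamma=\log\rho$ on $\mathbb S^n$: since $u=\rho/\omega$ and $f(\kappa)=\frac{1}{\rho\omega}F(a^i_j)$, one has $\gamma_t=u^{\alpha-1}\rho^\delta f^{-\beta}-\eta=\Theta-\eta$, so it suffices to bound $\gamma_t$. Differentiating the parabolic PDE (\ref{3.3}) in $t$ yields the \emph{linear} equation (\ref{f}) for $\gamma_t$, in which every first-order term carries a factor of $D\gamma_t$ (note $\partial_t\omega=\langle D\gamma,D\gamma_t\rangle/\omega$) and hence vanishes at a spatial extremum, the second-order term $F^j_i(\delta^{ik}-\omega^{-2}\gamma^i\gamma^k)(\gamma_t)_{kj}$ has the good sign by parabolicity, and the zeroth-order coefficient is exactly $(\alpha+\delta+\beta-1)\Theta\le 0$. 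This is the step your proposal is missing: the quantity must be evolved as the time derivative of the scalar solution of the graph PDE, not as an extrinsic function on the moving hypersurface. Your ``alternative'' paragraph gestures at this but does not identify the mechanism.
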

\begin{proof}
	We rewrite the flow equation (\ref{2.21}),
\begin{equation}\label{3.3}
\begin{split}
\p_t\gamma=&\(u^\a\rho^\delta F^{-\beta}(\frac{1}{\rho\omega}(\delta^i_j-\gamma^i_{j}+\frac{1}{\om^2}\g^i(\frac{|D\g|^2}{2})_j))-\eta u\)\frac{\omega}{\rho}\\
=&\frac{\rho^{\a+\delta+\beta-1}}{\om^{\a-\beta-1}} F^{-\beta}(a_j^i)-\eta,
\end{split}
\end{equation}
where $a^i_j=\delta^i_j-\gamma^i_{j}+\frac{1}{\om^2}\g^i(\frac{|D\g|^2}{2})_j$. For simplicity, we denote by $F=F(\delta^i_j-\gamma^i_{j}+\frac{1}{\om^2}\g^i(\frac{|D\g|^2}{2})_j)=F(a^i_j)$ and $F^j_i$ is the derivative of $F$ with respect to its argument. At the spatial extremum point $P$ of $\g_t$, we have
$$D\g_t=0,\qquad \p_t\om=0.$$
We then have
$$\p_t\g_t=\frac{\rho^{\a+\delta+\beta-1}}{\om^{\a-\beta-1}}F^{-\beta-1}\((\a+\delta+\beta-1)F\g_t-\beta\p_tF\).$$
	At $P$,
	\begin{align*}
		\p_t F=&F^j_i\p_t[\delta^i_j-\gamma^i_{j}+\frac{1}{\om^2}\g^i(\frac{|D\g|^2}{2})_j]\\
		=&F^j_i[-(\gamma_t)^i_{j}+\frac{1}{\om^2}\g^i\g^k(\g_t)_{kj}]\\
		=&-F^j_i(\delta^{ik}-\frac{1}{\om^2}\g^i\g^k)(\g_t)_{kj}.
	\end{align*}
	Put the above together, we have
	\begin{align}\label{f}
		&\p_t\g_t-\frac{\beta\rho^{\a+\delta+\beta-1}}{\om^{\a-\beta-1}}F^{-\beta-1}F^j_i(\delta^{ik}-\frac{1}{\om^2}\g^i\g^k)(\g_t)_{kj}\notag\\
		=&\frac{\rho^{\a+\delta+\beta-1}}{\om^{\a-\beta-1}}F^{-\beta}(\a+\delta+\beta-1)\g_t.
	\end{align}
	Since we already have uniform $C^0$ bound and $\a+\delta+\beta-1\le0$, then apply maximal principle, $\g_t$ can't blow up to infinity or become negative.  Thus $\g_t$ and $u^{\a-1}\rho^\delta F^{-\beta}=\g_t+\eta$ have uniform upper and lower bounds.
\end{proof}

We would like to get the upper bound of $|D\gamma|$ to show that $u$ has a lower bound.

\begin{lemma}\label{l3.4}
	Let $0<\beta\leq1-\alpha-\delta$, and $X(\cdot,t)$ be the solution to the flow (\ref{1.7}) which encloses the origin for $t\in[0,T)$. Then there is a positive constant $C_3$ depending on the initial hypersurface and $\alpha,\delta,\beta$,  such that
	$$\vert D \g\vert\leq C_3.$$
\end{lemma}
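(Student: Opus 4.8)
The plan is to obtain an a priori bound on $|D\gamma|$ by applying the maximum principle to a suitably chosen auxiliary function. Since $\omega^2 = 1 + |D\gamma|^2$, controlling $|D\gamma|$ is equivalent to controlling $\omega$, so I would study the evolution of $\omega$ (or of a power of $\omega$, or of $\log\omega$) along the normalized flow $(\ref{2.21})$. First I would compute $\partial_t\omega$ in terms of spatial derivatives of $\gamma$ using $(\ref{2.21})$, differentiating the relation $\omega^2 = 1 + e^{ij}\gamma_i\gamma_j$ and commuting derivatives on $\mathbb{S}^n$ (which produces curvature terms, here just the standard terms from the Ricci identity on the round sphere). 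The key structural fact I would exploit is that the flow equation for $\gamma$, written as in $(\ref{3.3})$, is parabolic with linearization governed by $F^j_i(\delta^{ik} - \tfrac{1}{\omega^2}\gamma^i\gamma^k)$, which is positive definite by Assumption~\ref{a1.1}; so the second-order operator acting on the auxiliary quantity has the right sign for the maximum principle.

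The key steps in order: (1) Set $\varphi = \tfrac12|D\gamma|^2$ (or work directly with $\omega$), and derive its evolution equation under $(\ref{2.21})$, i.e. $\partial_t\varphi = \gamma^k (\partial_t\gamma)_k$, then substitute the expression for $\partial_t\gamma$ and differentiate. (2) Collect the resulting terms: a ``good'' second-order term of the form $-\mathcal{L}\varphi$ where $\mathcal{L} = \tfrac{\beta\rho^{\a+\delta+\beta-1}}{\omega^{\a-\beta-1}}F^{-\beta-1}F^j_i(\delta^{ik}-\tfrac{1}{\omega^2}\gamma^i\gamma^k)\nabla_k\nabla_j$ acting on $\varphi$, plus a gradient term, plus zeroth- and first-order terms coming from the factors $\rho^{\a+\delta+\beta-1}$, $\omega^{\a-\beta-1}$, $F^{-\beta}$ and the curvature of $\mathbb{S}^n$. (3) Estimate these lower-order terms using the already-established bounds: the $C^0$ bound on $\rho$ (Lemma~\ref{l3.1}) and the two-sided bound on $u^{\a-1}\rho^\delta F^{-\beta}$ (Lemma~\ref{l3.3}), the latter being crucial since it pins down the size of the speed. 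At the maximum point of $\varphi$ (equivalently of $\omega$) in space, $D\varphi = 0$ and $D^2\varphi \le 0$, so the second-order term is $\le 0$; if one can show the remaining terms force $\tfrac{d}{dt}\varphi_{\max} \le C - c\,\varphi_{\max}$ for positive constants, or more crudely that $\varphi$ cannot exceed a value determined by the initial data and the constants $C_1, C_2$, we are done.

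The main obstacle I anticipate is extracting a genuinely negative (dissipative) contribution, or at worst a controlled one, from the terms where the second-order operator $F^j_i(\delta^{ik}-\tfrac{1}{\omega^2}\gamma^i\gamma^k)$ hits the sphere-curvature terms hidden inside $\varphi$ — concretely, commuting $\gamma_{;ij}$ and $\gamma_{;ji}$ produces a term proportional to $F^j_i(\delta^{ik}-\tfrac{1}{\omega^2}\gamma^i\gamma^k)\gamma_k\gamma_j = \gamma^k(\partial_t\gamma)_k$-type feedback, plus a term like $F^j_i e_{ij}|D\gamma|^2$ which must be handled carefully because $\mathrm{tr}(F^j_i)$ is bounded (by homogeneity and concavity of $f$, $\sum \partial f/\partial\mu_i \ge$ something, but it could also be large). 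The standard trick is that the ``bad'' first-order terms involving $D\omega$ can be absorbed into the good second-order term via $D\varphi=0$ at the maximum, and the term $\rho^{\a+\delta+\beta-1} F^{-\beta}$ multiplying $|D\gamma|^2$ actually enters with a sign (using $\a-\beta-1 < 0$ or the range of exponents) that makes it a damping term rather than a growth term — this is exactly where the hypothesis $0<\beta\le 1-\a-\delta$ together with the sign conditions on $\a,\delta$ gets used. Once the lower-order terms are shown to be dominated, the maximum principle yields $|D\gamma| \le C_3$ depending only on the initial hypersurface and $\a,\delta,\beta$ as claimed.
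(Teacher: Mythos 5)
Your proposal follows essentially the same route as the paper: apply the maximum principle to $O=\tfrac12|D\gamma|^2$ along $(\ref{2.21})$, use the Ricci identity on $\mathbb{S}^n$ to commute the third derivatives of $\gamma$, absorb the resulting $F^j_i\gamma^i\gamma_j$ term into $-\beta|D\gamma|^2F^{-\beta-1}\sum_iF^{ii}$ via $F^j_i\gamma^i\gamma_j\le\max_iF^{ii}|D\gamma|^2$, and conclude $\partial_tO_{\max}\le0$ from $\alpha+\delta+\beta-1\le0$ and the positive definiteness of $[F^j_i]$. The only (harmless) overstatement is that Lemma~\ref{l3.3} is "crucial" here: in the paper's argument the positive prefactor $\rho^{\alpha+\delta+\beta-1}\omega^{1+\beta-\alpha}$ multiplies an already nonpositive bracket, so no quantitative bound on the speed is actually needed.
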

\begin{proof}
	Consider the auxiliary function $O=\frac{1}{2}\vert D\g\vert^2$. At the point where $O$ attains its spatial maximum, we have
	\begin{gather*}
		D\omega=0,\\
		0=D_iO=\sum_{l}\g_{li}\g^{l},\\
		0\geq D_{ij}^2O=\sum_{l}\g^l_{i}\g_{lj}+\sum_{l}\g^{l}\g_{lij},
	\end{gather*}
where $\g^l=\g_ke^{kl},\g_{i}^l=\g_{ik}e^{kl}$. We mention that $\g_{ij}$ is symmetric.	By (\ref{2.14}) and (\ref{2.21}), we deduce
	\begin{equation}\label{3.5}
		\begin{split}
			\p_t\gamma=&(u^\alpha\rho^\delta F^{-\beta}-\eta u)\frac{\omega}{\rho}\\
			=&\frac{\rho^{\a+\delta-1}}{\omega^{\a-1}}F^{-\beta}\(\frac{1}{\rho\omega}(\delta_{j}^i-(e^{ik}-\frac{\gamma^i\gamma^k}{\omega^2})\gamma_{kj})\)-\eta\\
			=&\frac{\rho^{\a+\delta+\beta-1}}{\omega^{\a-\beta-1}}G-\eta.
		\end{split}
	\end{equation}
	We remark that here $G=F^{-\beta}([\delta_{j}^i-(e^{ik}-\frac{\gamma^i\gamma^k}{\omega^2})\gamma_{kj}])$ and $$G^{j}_i=G^{j}_i([\delta_{j}^i-(e^{ik}-\frac{\gamma^i\gamma^k}{\omega^2})\gamma_{kj}]).$$
	Due to (\ref{3.5}), we have
	\begin{align}\label{3.6}
		\p_tO_{\max}=&\sum\g^l\g_{tl}\notag\\
		=&\g^l\(\frac{(\a+\delta+\beta-1)\rho^{\a+\delta+\beta-1}\g_l}{\omega^{\a-\beta-1}}G\\\notag
		&-\frac{\beta\rho^{\a+\delta+\beta-1}F^{-\beta-1}}{\omega^{\a-\beta-1}}F^{j}_i(-\g_{kjl}(e^{ik}-\frac{\gamma^i\gamma^k}{\omega^2}))\),
	\end{align}
	where we have used $\sum_{l}\g_{li}\g^{l}=0$. By the Ricci identity,$$D_l\g_{ij}=D_j\g_{li}+\e_{il}\g_j-\e_{ij}\g_l,$$
	we get
	\begin{align}\label{3.7}
\beta F^{-\beta-1}F^{j}_i\g^l\g_{kjl}(e^{ik}-\frac{\gamma^i\gamma^k}{\omega^2})=&\beta F^{-\beta-1}F^{j}_i\g^l(\g_{lkj}+\g_j\e_{lk}-\g_l\e_{kj})(e^{ik}-\frac{\gamma^i\gamma^k}{\omega^2})\notag\\
		\leq&\beta F^{-\beta-1}F^{j}_i(-\g^l_{k}\g_{lj}+\g^l\g_je_{lk}-\vert D\g\vert^2e_{kj})(e^{ik}-\frac{\gamma^i\gamma^k}{\omega^2})\notag\\
		\leq&\beta F^{-\beta-1}F^{j}_i(-\g^{il}\g_{lj}+\g^i\g_j-\vert D\g\vert^2\delta^i_{j}).
	\end{align}
	Note that $[F^{j}_i]$ is positive definite. According to (\ref{3.6}) and (\ref{3.7}), we can derive
	\begin{align}\label{3.8}
		\p_tO_{\max}\leq&\frac{\rho^{\a+\delta+\beta-1}}{\omega^{\a-\beta-1}}\((\a+\delta+\beta-1)\vert D\g\vert^2G-\beta\vert D\g\vert^2F^{-\beta-1}\sum_{i=1}^{n}F^{ii}\notag\\
		&-\beta F^{-\beta-1}F^{j}_i\g^{li}\g_{lj}+\beta F^{-\beta-1}F^{j}_i\g^i\g_j\).
	\end{align}
	In terms of the positive definition of the symmetric matrix $[F^{j}_i]$, we have $F^{j}_i\g^{li}\g_{lj}\geq0$ and $F^{j}_i\g^i\g_j\leq\max_iF^{ii}\vert D\g\vert^2$. Thus  by $\a+\delta+\beta-1\leq0$, we have
	$$\p_tO_{\max}\leq0.$$
	Then we have $\vert D\g(\cdot,t)\vert\leq C_3$ for a positive constant $C_3$.
\end{proof}
By the above Lemmas, we can get the bounds of $u$ and $f$.
\begin{corollary}\label{c3.5}
	Let $0<\beta\leq1-\alpha-\delta$, and $X(\cdot,t)$ be the solution to the flow (\ref{1.7}) which encloses the origin for $t\in[0,T)$. Then there are positive constants $C_4$ and $C_5$ depending on the initial hypersurface and $\alpha,\delta,\beta$,  such that
	$$\frac{1}{C_4}\leq u\leq C_4\quad\text{and}\quad\frac{1}{C_5}\leq f\leq C_5.$$
\end{corollary}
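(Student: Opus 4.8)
The plan is to read off Corollary \ref{c3.5} from the three preceding lemmas; no further estimate is needed. First I would establish the two-sided bound on $u$. Lemma \ref{l3.4} gives $|D\gamma|\le C_3$, hence $1\le\omega=\sqrt{1+|D\gamma|^2}\le\sqrt{1+C_3^2}$; combining this with the $C^0$ bound $C_1^{-1}\le\rho\le C_1$ of Lemma \ref{l3.1} and the graph identity $u=\rho/\omega$ gives
\begin{equation*}
\frac{1}{C_1\sqrt{1+C_3^2}}\le u=\frac{\rho}{\omega}\le\rho\le C_1,
\end{equation*}
so that $C_4^{-1}\le u\le C_4$ for a constant $C_4$ depending only on $C_1,C_3$, that is, only on the initial hypersurface and $\alpha,\delta,\beta$.

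Next I would invoke Lemma \ref{l3.3}, which bounds the quantity $u^{\alpha-1}\rho^\delta F^{-\beta}=\gamma_t+\eta$ above and below by positive constants. Since $u$ and $\rho$ are now pinched between positive constants and $\alpha-1,\delta$ are fixed real exponents, the prefactor $u^{\alpha-1}\rho^\delta$ also lies between two positive constants; dividing it out shows that $F^{-\beta}$, equivalently $f^{-\beta}(\kappa)$, is bounded above and below by positive constants. As $\beta>0$, raising to the power $-1/\beta$ produces the claimed two-sided bound $C_5^{-1}\le f\le C_5$.

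There is essentially nothing hard here: once Lemmas \ref{l3.1}, \ref{l3.3} and \ref{l3.4} are available the corollary is pure bookkeeping. What is worth emphasizing is the role it plays. The lower bound $f\ge C_5^{-1}>0$ says precisely that the principal curvatures remain uniformly inside the cone $\Gamma$, bounded away from $\partial\Gamma$ (recall $f|_{\partial\Gamma}=0$); together with the upper bounds this is exactly what keeps the flow (\ref{1.7}) uniformly parabolic on $[0,T)$, so this corollary is the bridge between the $C^0$- and $C^1$-type estimates already proved and the $C^2$- and higher-order estimates carried out in the remainder of the section.
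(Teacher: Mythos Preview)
Your proof is correct and follows essentially the same route as the paper: bound $\omega$ via Lemma \ref{l3.4}, combine with the $\rho$-bounds of Lemma \ref{l3.1} and the identity $u=\rho/\omega$ to control $u$, then divide out the factor $u^{\alpha-1}\rho^{\delta}$ in the quantity controlled by Lemma \ref{l3.3} to obtain two-sided bounds on $f$. The paper records this in two sentences; your version is simply a more explicit unpacking of the same argument.
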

\begin{proof}
	By Lemma \ref{l3.4} and $u=\frac{\rho}{\omega}\leq\rho_{\max}\leq C_4$, we can derive the bounds of $u$. By Lemma \ref{l3.1}, Lemma \ref{l3.3} and the bounds of $u$, we can get the bounds of $f$.
\end{proof}
The next step in our proof is the derivation of the principal curvature boundary.
\begin{lemma}\label{l3.6}
	Let $\alpha\leq0<\beta\leq1-\alpha-\delta$, and $X(\cdot,t)$ be a smooth, closed and star-shaped solution to the flow (\ref{1.7}) which encloses the origin for $t\in[0,T)$. Then there is a positive constant $C_6$ depending on the initial hypersurface and $\alpha,\delta,\beta$,  such that the principal curvatures of $X(\cdot,t)$ are uniformly bounded from above $$\k_i(\cdot,t)\leq C_6 \text{ \qquad } \forall1\le i\le n,$$
	and hence, are compactly contained in $\Gamma$, in view of Corollary \ref{c3.5}.
\end{lemma}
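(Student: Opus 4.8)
The plan is to bound the largest principal curvature $\kappa_{\max}(\cdot,t)$ by the parabolic maximum principle, the two sources of the needed good signs being the expanding term $-\eta u\nu$ in (\ref{1.7}) and the hypothesis $\a\le 0$. Set $\mathscr F=\Psi G-\eta u$ with $\Psi=u^{\a}\rho^{\delta}$ and $G=f^{-\beta}$. By Lemma \ref{l3.2} together with $\p_tg^{ij}=-2\mathscr F h^{ij}$, the Weingarten operator of the solution of (\ref{1.7}) satisfies
\begin{equation*}
\p_th^i_j=-\nn^i\nn_j\mathscr F-\mathscr F(h^2)^i_j .
\end{equation*}
Expanding $\nn^i\nn_j(\Psi G)$ with $G=G(h_{kl})$ and commuting covariant derivatives by Codazzi and the Gauss equation (there are no ambient curvature contributions since $\mR^{n+1}$ is flat), this rewrites in the parabolic form
\begin{equation*}
\begin{split}
\p_th^i_j={}&-\Psi G^{kl}\nn_k\nn_lh^i_j-\Psi\ddot G^{kl,rs}\nn^ih_{kl}\nn_jh_{rs}-\Psi G(h^2)^i_j\\
&+\eta u(h^2)^i_j+\eta\nn^i\nn_ju+\mathcal E^i_j ,
\end{split}
\end{equation*}
where $-\Psi G^{kl}\nn_k\nn_l$ is genuinely elliptic (because $G^{kl}=-\beta f^{-\beta-1}f^{kl}$ is negative definite), and $\mathcal E^i_j$ collects the remaining terms: those from $G\,\nn^i\nn_j\Psi$ and from the cross term $\nn\Psi\cdot\nn G$, the cubic–in–$h$ Ricci/Gauss commutator terms, and terms linear in $h$ or in $\nn h$ whose coefficients are controlled by the $C^0$, $C^1$, speed and $f$ bounds from Lemmas \ref{l3.1}, \ref{l3.3}, \ref{l3.4} and Corollary \ref{c3.5}. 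I would then apply Hamilton's trick to $\kappa_{\max}=\sup\{h_{ij}\xi^i\xi^j/|\xi|_g^2\}$ at an interior space--time maximum, in normal coordinates in which $h^i_j=\mathrm{diag}(\kappa_1,\dots,\kappa_n)$ with $\kappa_n=\kappa_{\max}$ (multiplying, if it helps, by a power of $u$ to produce an extra negative term).

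The key cancellation is the following: by Lemma \ref{l2.2}, $\nn^i\nn_ju=g^{kl}\nn_kh^i_j\,\nn_l\Phi+\phi'h^i_j-(h^2)^i_ju$ (with $\phi'\equiv1$ in $\mR^{n+1}$ and $\nn\Phi=\rho\nn\rho$ bounded by the $C^1$ estimate), so $\eta\nn^i\nn_ju$ contributes exactly $-\eta u(h^2)^i_j$ up to terms linear in $h,\nn h$; this cancels the dangerous term $+\eta u(h^2)^i_j$ coming from $-\mathscr F(h^2)^i_j$, leaving of the quadratic part only $-\Psi G(h^2)^i_j=-u^{\a}\rho^{\delta}f^{-\beta}(h^2)^i_j$, whose coefficient is bounded below by a positive constant by Lemma \ref{l3.3} and Corollary \ref{c3.5}; at the maximal direction this is a strictly good term $\le-c_0\kappa_n^2$. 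Inside $G\,\nn^i\nn_j\Psi$, the dangerous pieces are $\a u^{\a-1}\rho^{\delta}G\,\nn^i\nn_ju$ and $\a(\a-1)u^{\a-2}\rho^{\delta}G\,\nn^iu\,\nn_ju$, whose quadratic parts at the maximal direction are $-\a\Psi G\kappa_n^2$ and $-\a(\a-1)u^{\a-2}\rho^{\delta}G(\nn_n\Phi)^2\kappa_n^2$; since $\a\le 0$ (so $\a\le0$ and $\a(\a-1)\ge0$) both have the harmless sign $\le 0$. The $\rho$–derivatives enter $\mathcal E^i_j$ only linearly in $h$ through (\ref{2.16}), so $\delta$ needs no sign restriction in this lemma. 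Finally, the concavity of $f$ makes $G=f^{-\beta}$ convex for $\beta>0$, hence $\ddot G^{kl,rs}$ positive semidefinite and $-\Psi\ddot G^{kl,rs}\nn_nh_{kl}\nn_nh_{rs}\le 0$; this negative term absorbs the bad gradient terms coming from $\nn\Psi\cdot\nn G$ and from the inverse–concavity terms $2\Psi\sum_{k<n}\frac{G^{nn}-G^{kk}}{\kappa_n-\kappa_k}(\nn_nh_{nk})^2$ of the Simons-type identity, after $\nn\kappa_n=0$ is used to eliminate $\nn_nh_{nn}$.

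The main obstacle I expect is precisely the estimate of $\mathcal E^i_j$ at the maximal direction: the cubic–in–$h$ commutator terms and the off-diagonal gradient terms. The cubic terms must be reorganized using the homogeneity identities $G^{kl}h_{kl}=-\beta G$ and $G^{kl}(h^2)_{kl}\le 0$ (the former, at the maximal direction, giving an additional good term $-\beta\Psi G\kappa_n^2$, the latter a controlled bad term), together with $|\kappa_i|\le\kappa_n$ once $\kappa_n$ is large, so that their total is at most $C\kappa_n^2$ with $C$ depending only on $\a,\delta,\beta$ and the constants of Lemmas \ref{l3.1}--\ref{l3.4} and Corollary \ref{c3.5}; here both $\a\le 0$ and the positive lower bound of $u^{\a}\rho^{\delta}f^{-\beta}$ are essential. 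This yields at the maximum $\tfrac{d}{dt}\max_{\mS^n}\kappa_n\le C-c_0(\max_{\mS^n}\kappa_n)^2$, whence $\kappa_i(\cdot,t)\le C_6$ uniformly in $t$. Combined with Corollary \ref{c3.5}, which keeps $f$ --- and therefore $\kappa$ --- away from $\p\Gamma$, this confines the principal curvatures to a fixed compact subset of $\Gamma$, as asserted.
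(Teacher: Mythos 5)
Your framework is the right one, and several of the key structural points are correctly identified: the cancellation of $\eta u(h^2)^i_j$ against the quadratic part of $\eta\nn^i\nn_ju$ via Lemma \ref{l2.2}, the sign $\a\le 0$ making the quadratic contributions of $\nn^i\nn_j\Psi$ harmless, the convexity of $G=f^{-\beta}$, and the fact that $\delta$ enters only through bounded first–order quantities. But there is a genuine gap at exactly the point you flag as "the main obstacle". Among the Simons/Gauss commutator terms is
$-\Psi G^{kl}(h^2)_{kl}\,h^i_j=\beta\Psi f^{-\beta-1}\bigl(\textstyle\sum_kf^{kk}\k_k^2\bigr)h^i_j$,
which is \emph{positive} at the maximal direction. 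For a merely star-shaped hypersurface with $\k\in\Gamma\supsetneq\Gamma_+$ this is not "at most $C\k_n^2$" with an absorbable constant: one only knows $\sum_kf^{kk}\k_k=f\le C$, and when some $\k_k$ are large and negative, $\sum_kf^{kk}\k_k^2$ can be of order $\k_n^2\sum_kf^{kk}$ with $\sum_kf^{kk}$ unbounded (e.g.\ $f=\sigma_2^{1/2}$), so this term competes at or above the order of your good term $-(1+\beta-\a)\Psi G\k_n^2$. Even granting the bound $C\k_n^2$, your concluding inequality $\frac{d}{dt}\k_{\max}\le C-c_0\k_{\max}^2$ would require $C<c_0$, which nothing in the sketch provides. (Only in the uniformly convex case does $\sum_kf^{kk}\k_k^2\le\k_{\max}f$ hold, which is why no extra device is needed there.)

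The paper closes this gap with machinery your proposal omits: the test function is $\theta=\log\k_{\max}+p(u)+N\rho$ with $p(u)=-\log(u-\tfrac12\min u)$, whose essential feature $1+p'u<0$ flips the sign of precisely the term above, turning $\beta(1+p'u)\Psi f^{-\beta-1}f^{ij}(h^2)_{ij}$ into a good term; this is then exploited through a dichotomy ($\k_n<-\eps_1\k_1$, where $f^{ij}(h^2)_{ij}\ge\tfrac1n\eps_1^2\k_1^2f^{ij}g_{ij}$ makes the flipped term dominant, versus $\k_n\ge-\eps_1\k_1$, where the refined concavity inequality (\ref{3.24})--(\ref{3.25}) produces the extra negative terms needed to absorb the gradient terms $f^{ij}\nn_i(\log h_1^1)\nn_j(\log h_1^1)$ that the auxiliary function itself creates). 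Your parenthetical "(multiplying, if it helps, by a power of $u$\dots)" points in this direction but is not carried out, and without identifying \emph{which} term forces it and how the resulting gradient terms are controlled, the argument does not close. (A minor further point: the quadratic contribution of $\a u^{\a-1}\rho^\delta G\,\nn^i\nn_ju$ is $+\a\Psi G\k_n^2$, not $-\a\Psi G\k_n^2$; with $\a\le0$ it is indeed nonpositive, but the sign as written is inconsistent with the conclusion you draw.)
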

\begin{proof}
	
We shall prove that $\k_i$  is bounded from above by a positive constant. The principal curvatures of $M_t$ are the eigenvalues of $\{h_{il}g^{lj}\}$.

First, we need the evolution equation of $h_j^i$. Note that $$F^{j}_i=\frac{\p F}{\p h^i_{j}},\quad (F^j_{i})^n_{m}=\frac{\p^2F}{\p h^i_{j}\p h^m_{n}},\quad G^{j}_i=\frac{\p G}{\p h^i_{j}},\quad (G^j_{i})^n_{m}=\frac{\p^2G}{\p h^i_{j}\p h^m_{n}}.$$
For convenience, we denote $F^{j}_i,(F^j_{i})^n_{m}$ by $f^{j}_i,(f^j_{i})^n_{m}$ respectively. From Lemma \ref{l3.2} we can get
\begin{align}\label{h}
	\frac{\p}{\p t}h_j^i=&\frac{\p(g^{ik}h_{kj})}{\p t}\notag\\
	=&g^{ik}\frac{\p h_{kj}}{\p t}+h_{kj}\frac{\p g^{ik}}{\p t}\notag\\
	=&g^{ik}(-\nn_j\nn_k(\Psi G-\eta u)+(\Psi G-\eta u)h^2_{kj})-2(\Psi G-\eta u)(h^2)_{j}^i\notag\\
	=&-g^{ik}(\Psi_{;kj}G+\Psi_{;k}G_{;j}+\Psi_{;j}G_{;k}+\Psi G_{;kj})+\eta g^{ik}u_{;kj}-(\Psi G-\eta u)(h^2)_{j}^i\notag\\
=&-g^{ik}G\(\a\frac{\Psi}{u}u_{;kj}+\a(\a-1)\Psi(\log u)_{;k}(\log u)_{;j}+\delta\frac{\Psi}{\rho}\rho_{;kj}\notag\\
&+\delta(\delta-1)\Psi(\log \rho)_{;k}(\log \rho)_{;j}+\a\delta\Psi((\log u)_{;k}(\log\rho)_{;j}+(\log u)_{;j}(\log\rho)_{;k})\)\notag\\
&+\beta g^{ik}G\(\a\Psi((\log u)_{;k}(\log F)_{;j}+(\log F)_{;k}(\log u)_{;j})+\delta\Psi((\log \rho)_{;k}(\log F)_{;j}\notag\\
&+(\log F)_{;k}(\log \rho)_{;j})\)-\Psi g^{ik}(G^{n}_mh^m_{n;kj}+(G^n_{m})^s_{r}h^m_{n;k}h^r_{s;j})+\eta g^{ik}u_{;kj}\notag\\
&-(\Psi G-\eta u)(h^2)_{j}^i.
\end{align}
Since $(G^n_{m})^s_{r}=-\beta f^{-\beta-1}(f^n_{m})^s_{r}+\beta(\beta+1)f^{-\beta-2}f^n_{m}f^s_{r}$, and by (\ref{2.11}), (\ref{2.12}) and Ricci identity, we have
\begin{equation*}
	\begin{split}
		\nn_j\nn_kh^m_{n}=&g^{ml}\nn_j\nn_kh_{ln}\\
		=&g^{ml}\(h_{kj;ln}+{R_{njk}}^ah_{al}+{R_{njl}}^ah_{ak}\)\\
		=&g^{ml}\(h_{kj;ln}+(h^2)_{ln}h_{jk}-(h^2)_{jl}h_{nk}+(h^2)_{kn}h_{jl}-(h^2)_{kj}h_{ln} \).
	\end{split}
\end{equation*}
Thus,
\begin{equation}\label{x1}
	\begin{split}
\frac{\p h_j^i}{\p t}=&g^{ik}(\eta-\a\frac{ \Psi G}{u})u_{;kj}-\a(\a-1)\Psi G(\log u)^{;i}(\log u)_{;j}-\delta\frac{\Psi G}{\rho}g^{ik}\rho_{;kj}\\
&-\delta(\delta-1)\Psi G(\log \rho)^{;i}(\log \rho)_{;j}-\a\delta\Psi G((\log u)^{;i}(\log\rho)_{;j}+(\log u)_{;j}(\log\rho)^{;i})\\
&+\a\beta\Psi G((\log u)^{;i}(\log F)_{;j}+(\log F)^{;i}(\log u)_{;j})+\delta\beta\Psi G((\log \rho)^{;i}(\log F)_{;j}\\
&+(\log F)^{;i}(\log \rho)_{;j})-\Psi g^{ik}G^n_{m}g^{ml}\(h_{kj;ln}+(h^2)_{ln}h_{jk}-(h^2)_{jl}h_{nk}\\
&+(h^2)_{kn}h_{jl}-(h^2)_{kj}h_{ln}\)-(\Psi G-\eta u)(h^2)_{j}^i\\
&-\Psi g^{ik}\(-\beta f^{-\beta-1}(f^n_{m})^s_{r}+\beta(\beta+1)f^{-\beta-2}f^n_{m}f^s_{r}\)h^m_{n;k}h^r_{s;j}.
	\end{split}
\end{equation}

Next, we define the functions
\begin{gather}\label{3.10}
	W(x,t)=\max\{h_{ij}(x,t)\zeta^i\zeta^j: g_{ij}(x)\zeta^i\zeta^j=1\},\\
	p(u)=-\log(u-\frac{1}{2}\min u),
\end{gather}
and
\begin{equation}\label{3.12}
	\theta=\log W+p(u)+N\rho,
\end{equation}
where $N$ will be chosen later. Note that
\begin{equation}\label{3.13}
	1+p'u=\frac{-\frac{1}{2}\min u}{u-\frac{1}{2}\min u}<0.
\end{equation}
In fact, the auxiliary function in this proof doesn't need to be as complicated as above. We need this calculation in the later lemma, so we construct this auxiliary function here in advance. We hope to bound $\theta$ from above. Thus, suppose $\theta$ attains a maximal value at $(x_0,t_0)\in M\times(0,T_0]$, $T_0<T^*$. Choose Riemannian normal coordinates at $(x_0,t_0)$, such that at this point we have
\begin{equation}\label{3.14}
	g_{ij}=\delta_{ij},\quad h^i_j=h_{ij}=\k_i\delta_{ij},\quad\k_1\geq\cdots\geq\k_n.
\end{equation}
Since $W$ is only continuous in general, we need to find a differentiable version instead. Take a vector $\tilde{\zeta}=(\tilde{\zeta}^i)=(1,0,\cdots,0)$ at $(x_0, t_0)$, and extend it to a parallel vector field in a neighborhood of $x_0$ independent of $t$, still denoted by $\tilde{\zeta}$. Set
$$\widetilde{W}=\frac{h_{ij}\tilde{\zeta}^i\tilde{\zeta}^j}{g_{ij}\tilde{\zeta}^i\tilde{\zeta}^j}.$$

At $(x_0,t_0)$, there holds
\begin{equation}\label{3.15}
	h_{11}=h^1_1=\k_1=W=\widetilde{W}.
\end{equation}
By a simple calculation, we find
\begin{equation*}
	\frac{\p}{\p t}\widetilde{W}=\frac{(\frac{\p}{\p t}h_{ij})\tilde{\zeta}^i\tilde{\zeta}^j}{g_{ij}\tilde{\zeta}^i\tilde{\zeta}^j}-\frac{h_{ij}\tilde{\zeta}^i\tilde{\zeta}^j}{(g_{ij}\tilde{\zeta}^i\tilde{\zeta}^j)^2}\(\frac{\p}{\p t}g_{ij}\)\tilde{\zeta}^i\tilde{\zeta}^j,
\end{equation*}
\begin{equation*}
	\frac{\p}{\p t}h_1^1=\frac{\p}{\p t}(h_{1k}g^{k1})=(\frac{\p}{\p t}h_{1k})g^{k1}-g^{ki}(\frac{\p}{\p t}g_{ij})g^{j1}h_{1k},
\end{equation*}
\begin{equation*}
	\nn_l\widetilde{W}=\frac{(\nn_lh_{ij})\tilde{\zeta}^i\tilde{\zeta}^j}{g_{ij}\tilde{\zeta}^i\tilde{\zeta}^j}
\end{equation*}
and
\begin{equation*}
	\nn_lh_1^1=(\nn_lh_{1k})g^{k1}.
\end{equation*}
By the choice of $\tilde{\zeta}$, we find that at $(x_0,t_0)$
$$\frac{\p\widetilde{W}}{\p t}=\frac{\p h_1^1}{\p t},\quad \nn_l\widetilde{W}=\nn_lh_1^1$$
and the second order spatial derivatives also coincide. In a neighborhood of $(x_0,t_0)$ there holds
$$\widetilde{W}\leq W.$$
This implies $\widetilde{W}$ satisfies the same evolution as $h_1^1$ ($i=j=1$ in (\ref{x1})).
Replacing $\theta$ by $\widetilde{\theta}=\log \widetilde{W}+p(u)+N\rho$, we can assume $\widetilde{W}(x_0,t_0)$ is large enough so that $\widetilde{\theta}$ attains a same maximal value as $\theta$ still at $(x_0,t_0)$, where $\widetilde{W}$ satisfies the same differential equation at this point as $h_1^1$. Thus, without loss of generality, we may treat $h_1^1$ like a scalar and $\theta$ to be given by
\begin{equation}\label{3.16}
	\theta=\log h_1^1+p(u)+N\rho.
\end{equation}
In order to calculate the evolution equations of $\theta$, we should deduce the evolution equations of $h_1^1,$ $u$ and $\rho$. We claim that there is no need to distinguish between the upper and lower indexes in the following calculations (except when one takes $\p_t$ derivative). In fact, at $(x_0,t_0)$, we have
$$h_j^i=g^{ki}h_{kj}=h_{ij},\quad \nn_lh_j^i=g^{ki}\nn_lh_{kj}=\nn_lh_{ij},\quad \nn^i=g^{ik}\nn_k=\nn_i.$$
The second order spatial derivatives also coincide. For ease of reading, we denote $G_i^j$, $(G_i^j)_m^n$, $f_i^j$, $(f_i^j)_m^n$ by $G^{ij}$, $G^{ij,mn}$, $f^{ij}$, $f^{ij,mn}$ respectively. We mention that $(\log u)_{;1}^2=((\log u)_{;1})^2$, $(\log\rho)_{;1}^2=((\log \rho)_{;1})^2$, $(\log F)_{;1}^2=((\log F)_{;1})^2$ in this paper.

By (\ref{x1}), we have
\begin{equation*}
	\begin{split}
		\frac{\p h_1^1}{\p t}=&(\eta-\a\frac{ \Psi G}{u})u_{;11}-\a(\a-1)\Psi G(\log u)_{;1}^2-\delta\frac{\Psi G}{\rho}\rho_{;11}\\
		&-\delta(\delta-1)\Psi G(\log \rho)_{;1}^2-2\a\delta\Psi G(\log u)_{;1}(\log\rho)_{;1}\\
		&+2\a\beta\Psi G(\log u)_{;1}(\log F)_{;1}+2\delta\beta\Psi G(\log \rho)_{;1}(\log F)_{;1}\\
		&+\frac{\beta\Psi G}{F}F^{nm}\(h_{11;mn}+(h^2)_{mn}h_{11}-(h^2)_{11}h_{mn}\)-(\Psi G-\eta u)(h^2)_{11}\\
		&+\beta\frac{\Psi G}{F}(f^n_{m})^s_{r}h^m_{n;1}h^r_{s;1}-\beta(\beta+1)\Psi G(\log F)_{;1}^2.
	\end{split}
\end{equation*}
Due to (\ref{2.13}) and (\ref{2.16}), we have
\begin{equation*}
	\begin{split}
		\frac{\p h_1^1}{\p t}=&\beta\Psi f^{-\beta-1}f^{ij}h_{11,ij}+\rho(\eta-\a \frac{ \Psi G}{u})\rho_{;k}{h_{11;}}^k+(\eta-\a \frac{ \Psi G}{u})h_{11}\\
		&+(\a-\beta-1)\Psi Gh_{11}^2-\a(\a-1)\Psi G(\log u)_{;1}^2-\delta(\delta-2)\Psi G(\log \rho)_{;1}^2\\
&-\delta\frac{\Psi G}{\rho}(\frac{1}{\rho}-\frac{u}{\rho}h_{11})-2\a\delta\Psi G(\log u)_{;1}(\log\rho)_{;1}+2\a\beta\Psi G(\log u)_{;1}(\log F)_{;1}\\
&+2\delta\beta\Psi G(\log \rho)_{;1}(\log F)_{;1}+\frac{\beta\Psi G}{F}F^{ij}(h^2)_{ij}h_{11}+\beta\frac{\Psi G}{F}(f^n_{m})^s_{r}h^m_{n;1}h^r_{s;1}\\
&-\beta(\beta+1)\Psi G(\log F)_{;1}^2.
	\end{split}
\end{equation*}
Define the operator $\mathcal{L}$ by
\begin{equation}\label{3.17}
	\mathcal{L}=\p_t-\beta\Psi f^{-\beta-1}f^{ij}\nn_{ij}^2-\rho(\eta-\a u^{\a-1}\rho^\delta G)\rho_{;k}\nn^k.
\end{equation}
First, we shall estimate the terms involving the first derivative. Note that $(\log\rho)_{;1}$ has uniform bound by Lemma \ref{l2.3} and Lemma \ref{l3.1}. Therefore it is a good choice to convert all the terms involving the first derivative into $(\log\rho)_{;1}$.
We denote
\begin{align*}
 \Rmnum{1}=&-\a(\a-1)\Psi G(\log u)_{;1}^2-\delta(\delta-2)\Psi G(\log \rho)_{;1}^2-2\a\delta\Psi G(\log u)_{;1}(\log\rho)_{;1}\\
 &+2\a\beta\Psi G(\log u)_{;1}(\log F)_{;1}+2\delta\beta\Psi G(\log \rho)_{;1}(\log F)_{;1}-\beta(\beta+1)\Psi G(\log F)_{;1}^2.
\end{align*}
We claim that $\Rmnum{1}\le C(\a,\beta,\delta)\Psi G(\log \rho)_{;1}^2.$ In fact,

(\rmnum{1}) If $\a=0$, by the Cauchy-Schwarz inequality, we have
\begin{align*}
\Rmnum{1}=&-\delta(\delta-2)\Psi G(\log \rho)_{;1}^2+2\delta\beta\Psi G(\log \rho)_{;1}(\log F)_{;1}-\beta(\beta+1)\Psi G(\log F)_{;1}^2\\
\le&\frac{\delta(2\beta+2-\delta)}{\beta+1}\Psi G(\log \rho)_{;1}^2=C(\a,\beta,\delta)\Psi G(\log \rho)_{;1}^2.
\end{align*}
(\rmnum{2}) If $\a<0$, we have $\a(\a-1)>0$. Therefore by the Cauchy-Schwarz inequality, we have
\begin{align*}
2\a\beta\Psi G(\log u)_{;1}(\log F)_{;1}&\le(\a(\a-1)-\epsilon)\Psi G(\log u)_{;1}^2+\frac{\a^2\beta^2}{\a(\a-1)-\epsilon}\Psi G(\log F)_{;1}^2,\\
-2\a\delta\Psi G(\log u)_{;1}(\log\rho)_{;1}&\le\epsilon\Psi G(\log u)_{;1}^2+\frac{\a^2\delta^2}{\epsilon}\Psi G(\log \rho)_{;1}^2,\\
2\delta\beta\Psi G(\log \rho)_{;1}(\log F)_{;1}&\le\epsilon'\Psi G(\log F)_{;1}^2+\frac{\beta^2\delta^2}{\epsilon'}\Psi G(\log \rho)_{;1}^2,
\end{align*}
where $$\epsilon'=\beta(\beta+1)-\frac{\a^2\beta^2}{\a(\a-1)-\epsilon}=\frac{\beta(\a(\a-\beta-1)-\epsilon(\beta+1))}{\a(\a-1)-\epsilon},$$
$0<\epsilon<\a(\a-1)$. Moreover, we can let $\epsilon<\frac{\a(\a-\beta-1)}{\beta+1}$ to ensure $\epsilon'>0$. Note that $\epsilon$ and $\epsilon'$ only depend on $\a,\beta,\delta$. By above, we have
$$\Rmnum{1}\le C(\a,\beta,\delta)\Psi G(\log \rho)_{;1}^2.$$
In summary, for $\forall\a\le0$, we have
\begin{equation}\label{r1}
\Rmnum{1}\le C(\a,\beta,\delta)\Psi G(\log \rho)_{;1}^2.
\end{equation}
Therefore
\begin{equation}\label{3.18}
	\begin{split}
		\cL h_1^1\leq&(\eta-\a \frac{ \Psi G}{u}+\frac{\delta u\Psi G}{\rho^2})h_{11}+(\a-\beta-1)\Psi Gh_{11}^2+(C-\delta)\frac{\Psi G}{\rho^2}\\
		&+\frac{\beta\Psi G}{F}F^{ij}(h^2)_{ij}h_{11}+\beta\frac{\Psi G}{F}(f^n_{m})^s_{r}h^m_{n;1}h^r_{s;1},
	\end{split}
\end{equation}
where we used $\rho_{;1}^2\le|\nn\rho|^2\le1.$
In addition,
\begin{equation*}
	\begin{split}
		\frac{\p u}{\p t}=&\frac{\p}{\p t}<X,\nu>\\
		=&\Psi G-\eta u+(\eta-\a\frac{\Psi G}{u})<X,\nn u>+\beta\frac{\Psi G}{F}<X,\nn F>\\
		&-\delta\Psi G|\nn\rho|^2.
	\end{split}
\end{equation*}
By (\ref{2.13}), we deduce
\begin{equation}\label{3.21}
	\begin{split}
		\cL u=(1-\beta)\Psi G-\eta u+\beta \frac{u\Psi G}{F}F^{ij}(h^2)_{ij}-\delta\Psi G|\nn\rho|^2.
	\end{split}
\end{equation}
By (\ref{2.16}) and (\ref{2.20}), we have
\begin{equation}\label{3.22}
	\begin{split}
		\cL\rho=&(\Psi G-\eta u)\omega-\beta\frac{1}{\rho}\Psi f^{-\beta-1}f^{ij}g_{ij}-\rho(\eta-\a \frac{\Psi G}{u})\vert\nn\rho\vert^2\\
		&+\beta\frac{1}{\rho}\Psi f^{-\beta-1}f^{ij}\rho_{;i}\rho_{;j}+\frac{\beta\Psi G}{\omega}\\
		\le&(\Psi G-\eta u)\omega-\beta\frac{1}{\rho}\Psi f^{-\beta-1}f^{ij}g_{ij}\frac{1}{\om^2}-\rho(\eta-\a \frac{\Psi G}{u})\vert\nn\rho\vert^2+\frac{\beta\Psi G}{\omega},
	\end{split}
\end{equation}
where we used $f^{ij}\rho_{;i}\rho_{;j}\le f^{ij}g_{ij}\vert\nn\rho\vert^2=f^{ij}g_{ij}(1-\frac{1}{\om^2})$.
Note that $\a\leq0$, $\beta>0$, $\1-\a+\beta>2\beta>0$. If $\k_1$ is sufficiently large, the combination of (\ref{3.18}), (\ref{3.21}) and (\ref{3.22}) gives
\begin{equation}\label{3.23}
	\begin{split}
		\cL\theta=&\frac{\cL h_1^1}{h_1^1}+\beta\Psi f^{-\beta-1}f^{ij}\nn_i(\log h_1^1)\nn_j(\log h_1^1)+p'\cL u-p''\beta\Psi f^{-\beta-1}f^{ij}\nn_iu\nn_ju\\
		&+N\cL\rho\\
		\leq&c+(\a-\beta-1)\Psi Gh_{11}+\frac{\beta\Psi G}{F}F^{ij}(h^2)_{ij}+\beta\frac{\Psi G}{F\k_1}(f^n_{m})^s_{r}h^m_{n;1}h^r_{s;1}\\
		&+\beta\Psi f^{-\beta-1}f^{ij}(\nn_i(\log h_1^1)\nn_j(\log h_1^1)-p''\nn_iu\nn_ju)+p'\((1-\beta)\Psi G-\eta u\\
		&+\beta \frac{u\Psi G}{F}F^{ij}(h^2)_{ij}-\delta\Psi G|\nn\rho|^2\)+N\((\Psi G-\eta u)\omega-\beta\frac{1}{\rho}\Psi f^{-\beta-1}f^{ij}g_{ij}\frac{1}{\om^2}\\
		&-\rho(\eta-\a \frac{\Psi G}{u})\vert\nn\rho\vert^2+\frac{\beta\Psi G}{\omega}\)\\
		\leq&c+(\a-\beta-1)\Psi Gh_{11}+\beta(1+p'u)\Psi f^{-\beta-1} f^{ij}(h^2)_{ij}-N\beta\frac{1}{\rho}\Psi f^{-\beta-1}f^{ij}g_{ij}\frac{1}{\om^2}\\
		&+\frac{\beta\Psi f^{-\beta-1}f^{ij,mn}h_{ij;1}h_{mn;1}}{h_{11}}+\beta\Psi f^{-\beta-1}f^{ij}(\nn_i(\log h_1^1)\nn_j(\log h_1^1)-p''\nn_iu\nn_ju),
	\end{split}
\end{equation}
where we used Lemma \ref{l2.3}, Lemma \ref{l3.1} and Corollary \ref{c3.5}. Due to the concavity of $f$ it holds that
\begin{equation}\label{3.24}
	f^{kl,rs}\xi_{kl}\xi_{rs}\leq\sum_{k\neq l}\frac{f^{kk}-f^{ll}}{\k_k-\k_l}\xi_{kl}^2\leq\frac{2}{\k_1-\k_n}\sum_{k=1}^{n}(f^{11}-f^{kk})\xi_{1k}^2
\end{equation}
for all symmetric matrices $(\xi_{kl})$; cf. \cite{GC2}. Furthermore, we have
\begin{equation}\label{3.25}
	f^{11}\leq\cdots\leq f^{nn};
\end{equation}
cf. \cite{EH}. In order to estimate (\ref{3.23}), we distinguish between two cases.

Case 1: $\k_n<-\eps_1\k_1$, $0<\eps_1<\frac{1}{2}$. Then
\begin{equation}\label{3.26}
	f^{ij}(h^2)_{ij}\geq f^{nn}\k_n^2\geq\frac{1}{n}f^{ij}g_{ij}\k_n^2\geq\frac{1}{n}f^{ij}g_{ij}\eps_1^2\k_1^2.
\end{equation}
We use $\nn\theta=0$ to obtain
\begin{equation}\label{3.27}
	f^{ij}\nn_i(\log h_1^1)\nn_j(\log h_1^1)=p'^2f^{ij}u_{;i}u_{;j}+2Np'f^{ij}u_{;i}\rho_{;j}+N^2f^{ij}\rho_{;i}\rho_{;j}.
\end{equation}
In this case, the concavity of $f$ implies that
\begin{equation}\label{3.28}
	\frac{\beta\Psi f^{-\beta-1}f^{ij,mn}h_{ij;1}h_{mn;1}}{h_{11}}\leq0.
\end{equation}
By (\ref{2.13}) and note that $p'<0$, we have
\begin{equation}\label{3.29}
	\begin{split}
		2N\beta p'\Psi f^{-\beta-1}f^{ij}u_{;i}\rho_{;j}=&2N\beta p'\rho\Psi f^{-\beta-1}f^{ij}\k_i\rho_{;i}\rho_{;j}.	
	\end{split}
\end{equation}
By \cite{UJ} Lemma 3.3, we know $\frac{H}{n}\geq \frac{f}{\eta}\geq C_7$, where $H=\sum_{i}^{n}\k_i$ is the mean curvature. Thus $(n-1)\k_1+\k_n\ge H\ge C_7$. We can derive $\k_i\ge\k_n\ge C_7-(n-1)\k_1$. For fixed $i$, if $\k_i\geq0$, we derive $$2N\beta p'\rho\Psi f^{-\beta-1}f^{ij}\k_i\rho_{;i}\rho_{;j}\leq0.$$ If $\k_i<0$, we have
$$ 2N\beta p'\rho\Psi f^{-\beta-1}f^{ij}\k_i\rho_{;i}\rho_{;j}\leq2N\beta p'\rho\Psi f^{-\beta-1}\k_if^{ij}g_{ij} \leq C_{8}(C_7-(n-1)\k_1)f^{ij}g_{ij},$$
where we have used $f^{ij}\rho_{;i}\rho_{;j}\leq f^{ij}g_{ij}\vert\nn\rho\vert^2\leq f^{ij}g_{ij}$.

Without loss of generality, we can assume that $\k_k\ge0$ and $\k_{k+1}\le0$, then
\begin{equation}\label{3.30}
	2N\beta p'\rho\Psi f^{-\beta-1}f^{ij}\k_i\rho_{;i}\rho_{;j}\leq(n-k)C_{8}(C_7-(n-1)\k_1)f^{ij}g_{ij}.
\end{equation}

Since $p'^2=p''$ and $1+p'u<0$, by the combination of Lemma \ref{l2.3}, (\ref{3.23}), (\ref{3.26}), (\ref{3.27}), (\ref{3.28}) and (\ref{3.30}), in this case (\ref{3.23}) becomes
\begin{equation}\label{3.31}
	\begin{split}
		\cL\theta\leq\beta f^{-\beta-1}\Psi f^{ij}g_{ij}\(\frac{1}{n}\eps_1^2\k_1^2(1+p'u)+C_{9}\k_1+C_{10}\)+(\a-\beta-1)\Psi G\k_1+C_{11},
	\end{split}
\end{equation}
which is negative for large $\k_1$. We also use $\a-\beta-1<-2\beta<0$ here.

Case 2: $\k_n\geq-\eps_1\k_1$. Then
$$\frac{2}{\k_1-\k_n}\sum_{k=1}^{n}(f^{11}-f^{kk})(h_{11;k})^2k_1^{-1}\leq\frac{2}{1+\eps_1}\sum_{k=1}^{n}(f^{11}-f^{kk})(h_{11;k})^2k_1^{-2}.$$
We deduce further
\begin{equation}\label{3.32}
	\begin{split}
		f^{ij}&\nn_i(\log h_1^1)\nn_j(\log h_1^1)+\frac{2}{\k_1-\k_n}\sum_{k=1}^{n}(f^{11}-f^{kk})(h_{11;k})^2k_1^{-1}\\
		\leq&\frac{2}{1+\eps_1}\sum_{k=1}^{n}f^{11}(\log h_1^1)_{;k}^2-\frac{1-\eps_1}{1+\eps_1}\sum_{k=1}^{n}f^{kk}(\log h_1^1)_{;k}^2\\
		\leq&\sum_{k=1}^{n}f^{11}(\log h_1^1)_{;k}^2\\
		\leq&f^{11}(p'^2\vert\nn u\vert^2+2Np'<\nn u,\nn\rho>+N^2\vert\nn\rho\vert^2),
	\end{split}
\end{equation}
where we have used $f^{kk}\geq f^{11}$ in the second inequality.
Note that
\begin{gather}
	\beta(1+p'u)\Psi f^{-\beta-1}f^{ij}(h^2)_{ij}\leq\beta(1+p'u)\Psi f^{-\beta-1}f^{11}\k_1^2,\label{3.33}\\
	2Np'f^{11}<\nn u,\nn\rho>=2Np'\rho f^{11}\k_i\rho_{;i}^2\leq-2\eps_1Np'\rho f^{11}\rho_{;i}^2\k_1,\label{3.34}\\
	-\beta\Psi f^{-\beta-1}p''f^{ij}\nn_iu\nn_ju+\beta\Psi f^{-\beta-1}p''f^{11}\vert\nn u\vert^2\leq0\label{3.35}.
\end{gather}
By the combination of (\ref{3.32})---(\ref{3.35}), (\ref{3.23}) becomes
\begin{equation}\label{3.38}
	\begin{split}
		\cL\theta\leq& c+(\a-\beta-1)\Psi G\k_1+\beta\Psi f^{-\beta-1}f^{11}\((1+p'u)k_1^2+C_{12}\k_1+C_{13}\)\\
		&-\beta f^{-\beta-1}f^{ij}g_{ij}\Psi N\frac{1}{\rho\omega^2},
	\end{split}
\end{equation}
which is negative for large $\k_1$.

In conclusion, $\k_i\leq C_6$, where $C_6$ depends on the initial hypersurface, $\a$, $\delta$ and $\beta$. Since $f$ is uniformly continuous on the convex cone $\overline{\Gamma}$, and $f$ is bounded from below by a positive constant, Corollary \ref{c3.5} and Assumption \ref{a1.1} imply that $\k_i$ remains in a fixed compact subset of $\Gamma$, which is independent of $t$.
\end{proof}

The estimates obtained in Lemma \ref{l3.1}, \ref{l3.4}, \ref{l3.6} and Corollary \ref{c3.5} depend on $\a$, $\delta$, $\beta$ and the geometry of the initial data $M_0$. They are independent of $T$. By Lemma \ref{l3.1}, \ref{l3.4}, \ref{l3.6} and Corollary \ref{c3.5}, we conclude that the equation (\ref{2.20}) is uniformly parabolic. By the $C^0$ estimate (Lemma \ref{l3.1}), the gradient estimate (Lemma \ref{l3.4}), the $C^2$ estimate (Lemma \ref{l3.6}) and the Krylov's and Nirenberg's theory \cite{KNV,LN}, we get the H$\ddot{o}$lder continuity of $D^2\rho$ and $\rho_t$. Then we can get higher order derivative estimates by the regularity theory of the uniformly parabolic equations. Hence we obtain the long time existence and $C^\infty$-smoothness of solutions for the normalized flow (\ref{1.7}). The uniqueness of smooth solutions also follows from the parabolic theory. In summary, we have proved the following theorem.
\begin{theorem}\label{t3.7}
Let $M_0$ be a smooth, closed and star-shaped hypersurface in $\mathbb{R}^{n+1}$, $n\geq2$, which encloses the origin. If $\alpha\leq0<\beta\le1-\alpha-\delta$, the normalized flow (\ref{1.7}) has a unique smooth, closed and star-shaped solution $M_t$ for all time $t\geq0$. Moreover, the radial function of $M_t$ satisfies the a priori estimates
$$\parallel\rho\parallel_{C^{k,\beta}(\mS^n\times[0,\infty))}\leq C,$$
where the constant $C>0$ depends only on $k,\a,\beta$ and the geometry of $M_0$.
\end{theorem}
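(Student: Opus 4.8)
The plan is to feed the a priori estimates of Lemma \ref{l3.1}, Lemma \ref{l3.4}, Lemma \ref{l3.6} and Corollary \ref{c3.5} into the standard short-time existence and continuation machinery for quasilinear parabolic equations on $\mS^n$. Since $X_0(M)$ is a star-shaped graph with $\k\in\Gamma$ and $\frac{\p f}{\p\k_i}>0$ on $\Gamma$, the scalar equation (\ref{2.20}) for the radial function $\rho$ is strictly parabolic at $t=0$, so it has a unique smooth solution on a maximal interval $[0,T^*)$; equivalently the normalized flow (\ref{1.7}) admits a unique smooth, closed, star-shaped solution $M_t$ on $[0,T^*)$. It then remains to prove $T^*=\infty$ together with estimates that are uniform in $t$.

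On $[0,T^*)$, Lemma \ref{l3.1} gives $C_1^{-1}\le\rho\le C_1$; Lemma \ref{l3.4} gives $|D\g|\le C_3$, hence by Lemma \ref{l2.3} a bound on $\omega$ and on $|D\rho|$; Corollary \ref{c3.5} gives $C_4^{-1}\le u\le C_4$ and $C_5^{-1}\le f\le C_5$; and Lemma \ref{l3.6} gives $\k_i\le C_6$. Combining $\k_i\le C_6$ with the lower mean-curvature bound $H\ge nC_7$ (which follows from $f\ge\eta C_7$, as in Lemma \ref{l3.6}) forces $\k_i\ge nC_7-(n-1)C_6$, so $\kappa$ stays in a fixed compact subcone $\cK\Subset\Gamma$, on which $\frac{\p f}{\p\k_i}$ is bounded between two positive constants. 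Thus (\ref{2.20}) is uniformly parabolic on $\mS^n\times[0,T^*)$ with uniformly bounded coefficients.

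Writing (\ref{2.20}) as $\p_t\rho=\cF(D^2\rho,D\rho,\rho,x)$, the right-hand side $\cF$ is concave in $D^2\rho$ because $f$ is concave and the remaining dependence involves only the already-controlled quantities $u,\rho,\omega$. The Krylov--Safonov estimate together with the Krylov--Evans $C^{2,\th}$ estimate for concave fully nonlinear parabolic equations then yield $\|\rho\|_{C^{2,\th}(\mS^n\times[0,T^*))}\le C$ for some $\th\in(0,1)$, after which parabolic Schauder theory bootstraps to $\|\rho\|_{C^{k,\th}(\mS^n\times[0,T^*))}\le C(k,\a,\beta,\delta,M_0)$ for every $k$, uniformly in $t$. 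Since these bounds are independent of $T^*$, if $T^*<\infty$ the solution would extend to a larger interval, contradicting maximality; hence $T^*=\infty$, and the stated a priori estimate holds on $\mS^n\times[0,\infty)$. Uniqueness is the usual uniqueness for uniformly parabolic quasilinear equations, obtained by applying the maximum principle to the difference of two solutions.

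The genuinely hard input is the curvature (i.e.\ $C^2$) estimate, which is precisely Lemma \ref{l3.6} and has already been established; the only residual subtlety is the observation just used, namely that Corollary \ref{c3.5} combined with the mean-curvature lower bound confines $\kappa$ to a \emph{compact} subcone of $\Gamma$, so that the linearization is uniformly — not merely strictly — elliptic, which is what legitimizes the Krylov--Safonov and Schauder steps. After that the argument is entirely routine; note also that, $\mS^n$ being compact without boundary, the interior Krylov estimates apply globally and no boundary conditions enter.
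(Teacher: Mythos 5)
Your proposal is correct and follows essentially the same route as the paper: feed the a priori estimates of Lemmas \ref{l3.1}, \ref{l3.4}, \ref{l3.6} and Corollary \ref{c3.5} into uniform parabolicity of (\ref{2.20}), apply Krylov--Safonov/Evans--Krylov (the paper cites Krylov and Nirenberg) to get $C^{2,\th}$ in space-time, bootstrap by Schauder theory, and conclude long-time existence by continuation and uniqueness by the parabolic maximum principle. One small caveat: the two-sided bound on the $\k_i$ that you extract from $\k_i\le C_6$ and the mean-curvature lower bound does not by itself confine $\kappa$ to a compact subset of $\Gamma$, since a bounded set of $n$-tuples can still accumulate on $\p\Gamma$; the correct justification is that $f\ge C_5^{-1}>0$ together with $f\vert_{\p\Gamma}=0$ and the upper bound $\k_i\le C_6$ forces $\kappa$ into a compact subset of $\Gamma$, which is precisely the concluding assertion of Lemma \ref{l3.6}, so you may simply cite it.
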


\section{Proof of Theorem \ref{t1.2}}

In this section, we prove the asymptotical convergence of solutions to the normalized flow (\ref{1.7}). By Theorem \ref{t3.7} it is known that the flow (\ref{1.7}) exists for all time $t>0$ and remains smooth and star-shaped, provided $M_0$ is smooth, star-shaped and encloses the origin. In Section 3, we have the bound of $\rho$, $\vert D\g\vert$, $\k_i$ and $f$. It then follows by (\ref{3.8}) that $\p_tO_{\max}\leq-C_0O_{\max}$ for some positive constant $C_0$, where $O=\frac{1}{2}\vert D\g\vert^2$. This proves
\begin{equation}\label{4.1}
	\max_{\mS^n}\vert D\g\vert^2\leq Ce^{-C_0t},\;\;\forall t>0,
\end{equation}
for both $C$ and $C_0$ are positive constants. Meanwhile, according to the bound of $\rho$, we can derive
\begin{equation}\label{4.2}
	\max_{\mS^n}\vert D\rho\vert^2\leq C'e^{-C_0t},\;\;\forall t>0.
\end{equation}

\begin{proof of theorem 1.2}
	
	By (\ref{4.2}), we have that $\vert D\rho\vert\to0$ exponentially as $t\to\infty$. Hence by the interpolation and the a priori estimates, we can get that $\rho$ converges exponentially to a constant in the $C^\infty$ topology as $t\to\infty$.
	
\end{proof of theorem 1.2}

\section{The Priori Estimates of original flows and proof of (\ref{1.3})}
In this section, we establish the priori estimates of original flow (\ref{1.1}) when $\a+\delta+\beta>1$ and show that (\ref{1.3}) is correct.

We first look at the flow of geodesic spheres. Spheres centered at the origin with radius $\rho$ are umbilical, their second fundamental form is given by $\bar{h}_{ij}=\frac{1}{\rho}\bar{g}_{ij}$ and $D\rho=0$. Hence, the flow equation (\ref{1.1}) can be reduced to
\begin{equation}\label{5.1}
\p_t\rho=\rho^{\a+\delta+\beta}\eta.
\end{equation}
Thus, in case $\a+\beta+\delta\neq1$,
\begin{equation}\label{5.2}
\rho=\((1-\a-\delta-\beta)\eta t+\rho_0^{1-\a-\delta-\beta}\)^{\frac{1}{1-\a-\delta-\beta}},
\end{equation}
where $\rho(0)=\rho_0,$ and we conclude
\begin{remark}\label{r5.1}
If the initial hypersurface is a sphere centered at the origin, the flow (\ref{5.1}) converges to infinity in case $\a+\delta+\beta>1$. The flow blows up in finite time
\begin{equation}\label{5.3}
T^*=\frac{\rho_0^{1-\a-\delta-\beta}}{(\a+\delta+\beta-1)\eta}.
\end{equation}
\end{remark}
As a corollary we obtain:
\begin{corollary}\label{c5.2}
Let $M_0=\text{graph}$ $\rho_0$ be star-shaped and let $X=X(x,t)$ be a solution of flow (\ref{1.1}), and define $\rho(x,t)$ by $M(t)=\text{graph}$ $\rho(t)$. Let $r_1,r_2$ be positive constants such that
\begin{equation*}
r_1<\rho_0(x)<r_2 \qquad \forall x\in\mS^n,
\end{equation*}
then $\rho(t)$ satisfies the estimates
\begin{equation}\label{5.4}
\Theta(r_1,t)<\rho(x,t)<\Theta(r_2,t) \quad \forall0\le t<\min\{T^*,T^*(r_1),T^*(r_2)\},
\end{equation}
where $$\Theta(r,t)=\((1-\a-\delta-\beta)\eta t+r^{1-\a-\delta-\beta}\)^{\frac{1}{1-\a-\delta-\beta}}$$ and where $T^*(r_i)$ indicates the maximal time for which the spherical flow with initial sphere of radius $r_i$ will exist.
\end{corollary}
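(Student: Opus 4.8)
The plan is to prove Corollary~\ref{c5.2} by the parabolic comparison principle, with the centred spheres $\Theta(r_1,t)$ and $\Theta(r_2,t)$ playing the role of sub- and super-solutions. First I would recall that for a star-shaped hypersurface the flow (\ref{1.1}) is equivalent to the scalar parabolic problem (\ref{2.18}), $\p_t\rho=u^\alpha\rho^\delta f^{-\beta}\omega$ on $\mathbb{S}^n$, and that a sphere of radius $r$ centred at the origin stays centred with radius $\Theta(r,t)$ solving the autonomous ODE (\ref{5.1}). The key structural observation is that the spatially constant function $x\mapsto\Theta(r,t)$ is itself an admissible solution of (\ref{2.18}): at a constant radial graph one has $\omega=1$, $u=\rho$, and all principal curvatures equal to $1/\rho$, so that $u^\alpha\rho^\delta f^{-\beta}\omega=\rho^{\alpha+\delta+\beta}\eta$, which is exactly the right-hand side of (\ref{5.1}). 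Thus $\Theta(r_1,\cdot)$ and $\Theta(r_2,\cdot)$ are genuine solutions of the same equation as $\rho$, on their maximal intervals $[0,T^*(r_i))$; see (\ref{5.1})--(\ref{5.2}) and Remark~\ref{r5.1}.

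For the upper bound I would argue by contradiction. Set $w=\rho-\Theta(r_2,t)$, so $w(\cdot,0)<0$ on $\mathbb{S}^n$, and suppose $t_0>0$ is the first time $\max_{\mathbb{S}^n}w(\cdot,t_0)=0$, attained at $x_0$. At $(x_0,t_0)$ we have $D\rho=0$, hence $D\gamma=\gamma'(\rho)D\rho=0$, so $\omega=1$ and $u=\rho$ there; and $D^2\rho\le0$ because $\Theta$ is spatially constant. Inserting $D\rho=0$, $\omega=1$ and $g_{ij}=\rho^2e_{ij}$ into (\ref{2.16}) gives $h^i_j=\tfrac1\rho\delta^i_j-\tfrac1{\rho^2}e^{ik}\rho_{;kj}$, whose eigenvalues therefore satisfy $\kappa_i\ge 1/\rho=1/\Theta(r_2,t_0)$. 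Since $\Gamma$ is convex and, by Assumption~\ref{a1.1}, $f$ is monotone in each $\kappa_i$, homogeneous of degree one with $f(1,\dots,1)=\eta^{-1/\beta}$, this forces $f^{-\beta}(\kappa)\le\rho^\beta\eta$ at $(x_0,t_0)$, whence $\p_t\rho(x_0,t_0)=\rho^{\alpha+\delta}f^{-\beta}\le\rho^{\alpha+\delta+\beta}\eta=\p_t\Theta(r_2,t_0)$, i.e.\ $\p_t w(x_0,t_0)\le0$. On the other hand, $w\le0$ on $[0,t_0]$ with first equality at $(x_0,t_0)$, so the usual maximum-principle (Hamilton-type) argument gives $\p_t w(x_0,t_0)\ge0$; replacing $\Theta(r_2,\cdot)$ by $\Theta(r_2,\cdot)+\varepsilon$ and letting $\varepsilon\downarrow0$ — equivalently, applying the strong maximum principle to the linear equation solved by $w$ after linearising the two solutions, which is uniformly parabolic on compact time subintervals by Assumption~\ref{a1.1} — turns this into a genuine contradiction. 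The lower bound $\rho>\Theta(r_1,t)$ is symmetric: at a first interior zero of $\Theta(r_1,t)-\rho$ one has $D^2\rho\ge0$, hence $\kappa_i\le1/\rho$, hence $f^{-\beta}(\kappa)\ge\rho^\beta\eta$ and $\p_t\rho\ge\p_t\Theta(r_1,\cdot)$. Since the three flows exist on $[0,T^*)$, $[0,T^*(r_1))$ and $[0,T^*(r_2))$ respectively, the comparison is valid precisely on $[0,\min\{T^*,T^*(r_1),T^*(r_2)\})$.

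The reduction to (\ref{2.18}) and the ODE analysis of the spherical flow are routine. The step I would write out most carefully — and the only mildly delicate one — is the contact-point computation: that at a spatial critical point of $\rho$ the principal curvatures are pinched against $1/\rho$ purely through (\ref{2.16}), and that the ensuing comparison between $f^{-\beta}(\kappa)$ and the spherical speed $\rho^\beta\eta$ uses \emph{only} Assumption~\ref{a1.1} and the convexity of $\Gamma$ (no convexity of $M_t$ is available here), together with the normalisation $f^{-\beta}(1,\dots,1)=\eta$. A secondary technical point worth making explicit is that on any compact $[0,T]$ inside the common existence interval the equation (\ref{2.18}) is uniformly parabolic along the solution, which is what legitimises the linearised strong-maximum-principle argument used to upgrade the soft inequalities at the contact point to strict ones.
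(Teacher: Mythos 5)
Your proposal is correct and follows essentially the same route as the paper: the paper's proof simply observes that the spherical solutions $\Theta(r_i,t)$ and the flow hypersurfaces both solve the scalar parabolic equation (\ref{2.18}) and invokes the maximum principle. You merely spell out the comparison argument (contact-point computation via (\ref{3.1})/(\ref{2.16}) and the strict-inequality upgrade) that the paper leaves implicit.
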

\begin{proof}
The spheres with radii $\Theta(r_i,t)$ are the spherical solutions of the flow (\ref{1.1}) with initial spheres of radius $r_i$.

The flows $X=X(x,t)$ also satisfy the same scalar flow equations by (\ref{2.18}). Hence, the result is due to the maximum principle, since these are parabolic equations.
\end{proof}

In view of the estimates (\ref{5.4}) the maximal time $T^*$ has to be finite. We shall prove uniform estimates for the second fundamental form and the maximal time $T^*$ is indeed characterized by a blow up of the flows.

Let us start with the latter result in this section.
\begin{theorem}\label{t5.3}
Let $\a+\beta+\delta>1$ and let the initial hypersurface $M_0$ be strictly convex, then the solution of the curvature flow (\ref{1.1}) exists for $[0,T^*)$. The flow (\ref{1.1}) satisfies uniform estimates as long as it stays in a compact subset $\bar{\Om}\subset\mR^{n+1}$. Moreover, the principal curvatures are strictly positive
$$0<C_{14}\le\k_i\le C_{15}\qquad \forall1\le i\le n,$$
where the constants $C_{14},C_{15}$ depends on $\bar{\Om}$, $\a,\beta,\delta$ and $M_0$. Hence the singularity $T^*$ is characterized to be the blow up time of the flow. Then
$$\lim_{t\rightarrow T^*}\sup_{M_t}|X|=\infty.$$
\end{theorem}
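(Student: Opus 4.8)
The plan is a continuation argument. Short-time existence and the maximal time $T^*$ will come from parabolic theory applied to the scalar equation $(\ref{2.18})$ (or, in the convex picture, to $(\ref{2.24})$), which is parabolic since $\partial f/\partial\kappa_i>0$; strict convexity of $M_0$, being an open condition, persists for a short time. By Corollary~\ref{c5.2} the radial function is squeezed, $\Theta(r_1,t)<\rho(x,t)<\Theta(r_2,t)$, and $\Theta(r_1,t)\uparrow\infty$ as $t\uparrow T^*(r_1)<\infty$, so $T^*\le T^*(r_1)<\infty$. The statement I want to establish is the implication: \emph{if $M_t\subset\bar\Omega\subset B_R(0)$ for all $t\in[0,T)$ with $T<T^*$, then $X(\cdot,t)$ has uniform $C^\infty$-estimates on $[0,T)$}. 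Granting this, the flow extends smoothly past $T$, so it cannot remain in any compact set up to $T^*$; since $\sup_{M_t}|X|=\max_{\mS^n}\rho(\cdot,t)$ is non-decreasing in $t$ (as $\partial_t\rho=u^\alpha\rho^\delta f^{-\beta}\omega>0$), this gives $\lim_{t\to T^*}\sup_{M_t}|X|=\infty$, which is the claim.

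To prove the implication I would argue as follows. First, the $C^0$-bound: positivity of the speed makes the star-shaped domains $\Omega_t$ increasing, so the origin stays enclosed, $\rho$ is pointwise non-decreasing, and $r_1:=\min_{\mS^n}\rho_0\le\rho(\cdot,t)\le R$. Next — the crucial step — I would show strict convexity is preserved with uniform bounds by a tensor maximum principle applied to the evolution of the Weingarten map, i.e.\ the analogue of $(\ref{x1})$ for the unnormalized flow (without the $\eta$-terms): this gives $\kappa_i\ge C_{14}>0$ with $C_{14}$ depending only on $M_0,\bar\Omega,\alpha,\delta,\beta$. Equivalently, in the support-function picture $(\ref{2.24})$ one bounds the largest eigenvalue of $[D^2u+uI]$ (the largest principal radius) from above. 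Here $\alpha\le0$ and $\delta\le0$ control the lower-order reaction terms, the concavity of $f$ disposes of the gradient-of-curvature terms (cf.\ $(\ref{3.24})$--$(\ref{3.25})$), and the degeneracy $f^{-\beta}\to\infty$ on $\partial\Gamma_+$ penalizes any $\kappa_i\to0$. Once $\kappa_i\ge C_{14}>0$ holds, the gradient estimate is automatic: $\Omega_t\supseteq B_{r_1}(0)$ forces $u\ge r_1$, hence $\omega=\rho/u\le R/r_1$ and $|\nabla\rho|^2=1-\omega^{-2}$ (Lemma~\ref{l2.3}) is bounded, and $f$ is pinched between positive constants as in Corollary~\ref{c3.5}.

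It remains to bound $\kappa_i$ from above. For that I would run, with $N$ large, the auxiliary function $\theta=\log W+p(u)+N\rho$ of Lemma~\ref{l3.6} — which was introduced there precisely for reuse here — splitting into the cases $\kappa_n<-\varepsilon_1\kappa_1$ and $\kappa_n\ge-\varepsilon_1\kappa_1$ and invoking the concavity inequalities $(\ref{3.24})$--$(\ref{3.25})$; all constants now also depend on $\bar\Omega$. That argument uses only $\alpha\le0<\beta$ (whence $\alpha-\beta-1<0$) together with the $C^0$-, gradient- and lower-curvature bounds already obtained, none of which involves the sign of $\alpha+\delta+\beta-1$, so the leading term $\tfrac1n\varepsilon_1^2\kappa_1^2(1+p'u)$ with $1+p'u<0$, plus $(\alpha-\beta-1)\Psi G\kappa_1$, still makes $\mathcal L\theta<0$ for $\kappa_1$ large. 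This yields $\kappa_i\le C_{15}$, so $\kappa_i$ stays in a fixed compact subset of $\Gamma_+$; the equation is then uniformly parabolic with bounded coefficients, Krylov--Nirenberg theory gives uniform $C^{2,\gamma}$-estimates, and Schauder bootstrapping gives uniform $C^\infty$-estimates on $[0,T)$, completing the continuation argument.

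The hard part will be the preservation of strict convexity with a \emph{uniform} lower bound on $\kappa_i$. Unlike in the regime $\alpha+\delta+\beta\le1$ of Section~3, here the zeroth-order terms in the evolution of the second fundamental form no longer have an obviously good sign, so the maximum principle has to be closed using the full structure of the problem (concavity and $1$-homogeneity of $f$, the signs $\alpha,\delta\le0$, the degeneracy of $f^{-\beta}$ at $\partial\Gamma_+$); this is exactly why $M_0$ is assumed strictly convex. I also expect the convexity-preservation and gradient estimates to be proved simultaneously by a short bootstrap in time, since the gradient bound above relies on convexity. Everything else — the $C^0$-bound from confinement, the curvature upper bound from Lemma~\ref{l3.6}, the regularity bootstrap — should be routine once convexity is in hand.
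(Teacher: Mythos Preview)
Your continuation strategy and the $\kappa_i$-upper-bound argument (reusing the auxiliary function $\theta=\log W+p(u)+N\rho$ of Lemma~\ref{l3.6}) are correct and match the paper. Where you go astray is in the ordering of the remaining estimates, and in particular in singling out the quantitative lower curvature bound as ``the crucial step'' to be proved first by a tensor maximum principle.

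The paper's order is exactly the reverse, and this dissolves the difficulty. (i) The gradient bound $\omega\le C(r_1,r_2)$ needs only \emph{qualitative} strict convexity of $M_t$ (Lemma~\ref{l5.4}, quoted from \cite{GC2}); your argument $B_{r_1}(0)\subset\Omega_t\Rightarrow u\ge r_1$ is essentially this, but observe it does not require any quantitative bound $\kappa_i\ge C_{14}$ --- only that $M_t$ bounds a convex body, which holds on the whole existence interval by continuity since $\Gamma=\Gamma_+$. (ii) With $C^0$ and gradient bounds in hand, the paper obtains two-sided $F$-bounds by separate devices: Lemma~\ref{l5.5} gives $F\le C$ from $\gamma_t\ge c$, using the sign $\alpha+\delta+\beta-1>0$; Lemma~\ref{l5.6} gives $F\ge c$ via the auxiliary function $\log(u^{\alpha-1}\rho^\delta F^{-\beta})+N\rho$, where strict convexity makes the cross-term $-\tfrac{2\beta NQ}{F}F^j_i\nabla^i\rho\,\nabla_ju$ nonpositive. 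Your appeal to Corollary~\ref{c3.5} is not available here, since that corollary rests on Lemma~\ref{l3.3}, whose proof uses $\alpha+\delta+\beta\le1$. (iii) Then the $\kappa_i$-upper bound runs as you describe. (iv) Finally --- and this is the point --- the lower bound $\kappa_i\ge C_{14}$ comes \emph{for free}: since $f\ge c>0$, $\kappa_i\le C_{15}$, and $f|_{\partial\Gamma_+}=0$ with $f$ continuous on $\overline{\Gamma}_+$, the curvature vector must lie in a fixed compact subset of $\Gamma_+$.

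So no direct tensor maximum principle for $h^i_j$ (or for the principal radii) is ever needed, and the bootstrap you anticipate between convexity-preservation and the gradient bound does not arise. The ``hard part'' you identify is in fact the cheapest step once the $F$-bounds are in place.
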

For the proof we need several lemmas.
\begin{lemma}\label{l5.4}
Let $M$ be a closed, strictly convex hypersurface which is represented as the graph of $\rho$ over $\mS^{n}$. Assume that $\rho$ is bounded by
$$0<r_1\le\rho\le r_2,$$
then
$$\om\le C(r_1,r_2).$$
\end{lemma}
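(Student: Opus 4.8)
The plan is to bound $\omega$ directly in terms of the support function rather than through any curvature computation. Recall from Section 2.4 the relation $u=\langle X,\nu\rangle=\rho/\omega$ for a radial graph, so that $\omega=\rho/u$. Since $\rho\le r_2$ by hypothesis, it suffices to prove the pointwise lower bound $u\ge r_1$, and then $\omega=\rho/u\le r_2/r_1=:C(r_1,r_2)$.

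To get $u\ge r_1$, I would use convexity via the supporting hyperplane property. Since $M$ is a graph over $\mS^n$ with $\rho\ge r_1>0$, it encloses the origin, and the convex body $\Om$ it bounds is star-shaped, namely $\Om=\{t\xi:\xi\in\mS^n,\ 0\le t\le\rho(\xi)\}$. Fix a point $X(\xi_0)=\rho(\xi_0)\xi_0\in M$ with outer unit normal $\nu_0=\nu(\xi_0)$. By convexity the tangent hyperplane at this point supports $\Om$, so $\langle z,\nu_0\rangle\le\langle X(\xi_0),\nu_0\rangle=u(\xi_0)$ for every $z\in\Om$. The point $z:=r_1\nu_0$ satisfies $|z|=r_1\le\rho(z/|z|)$, hence $z\in\Om$, and therefore $u(\xi_0)\ge\langle r_1\nu_0,\nu_0\rangle=r_1$. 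This is exactly the identity $\min_{\mS^n}u=\min_{\mS^n}\rho$ of \eqref{2.29} in Lemma \ref{l2.5} combined with the hypothesis $\rho\ge r_1$.

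Combining the two steps gives $\omega=\rho/u\le r_2/r_1$ at every point, which proves the lemma with $C(r_1,r_2)=r_2/r_1$. There is essentially no obstacle in this argument; the only point worth stressing is that convexity is used in an essential way, through the supporting hyperplane inequality, so that — in contrast to the gradient estimates for \eqref{1.1} in the merely star-shaped regime, which relied on the evolution equation — here a purely static estimate suffices.
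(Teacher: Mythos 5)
Your proof is correct. Note that the paper does not actually prove this lemma; it simply cites \cite{GC2}, Theorem 2.7.10. Your argument is a clean, self-contained replacement: the identity $u=\rho/\omega$ reduces the gradient bound to a positive lower bound on the support function, and the supporting-hyperplane inequality $\langle z,\nu_0\rangle\le u(\xi_0)$ for all $z\in\Om$, applied to $z=r_1\nu_0$ (which lies in $\Om$ because $\rho\ge r_1$ forces $\overline{B_{r_1}(0)}\subset\Om$), gives $u\ge r_1$ pointwise and hence $\omega\le r_2/r_1$. This is exactly the content of $\min_{\mS^n}u=\min_{\mS^n}\rho$ in Lemma \ref{l2.5}, and you correctly flag that convexity is the essential hypothesis — for a merely star-shaped graph no such static bound holds, which is why the flows in the star-shaped regime need the parabolic gradient estimate of Lemma \ref{l3.4} instead. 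The only cosmetic caveat is that the constant $r_2/r_1$ is sharp for this argument but any $C(r_1,r_2)$ suffices for the lemma as stated.
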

The lemma was proved in \cite{GC2} Theorem 2.7.10.
\begin{lemma}\label{l5.5}
 Let $\a+\beta+\delta>1$ and let $M(t)$ be a solution of the flow (\ref{1.1}). Let $0<T<T^*$ be arbitrary and assume
 $$0<r_1\le\rho\le r_2 \qquad \forall0\le t\le T,$$
 then there exists a constant $C_{16}$ such that
 $$F\le C_{16}\qquad \forall0\le t\le T,$$
 where $C_{16}=C(r_1,r_2,\a,\beta,\delta,M_0)$ is independent of $T$.
\end{lemma}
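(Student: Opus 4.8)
The plan is to obtain the bound $F=f\le C_{16}$ from a positive, $T$-independent lower bound on the speed $\mathscr F:=\Psi G=u^{\a}\rho^{\delta}f^{-\beta}$ of the flow (\ref{1.1}). Since $M_t$ is convex and encloses the origin (the flow is expanding), Lemma \ref{l2.5} turns the hypothesis $r_1\le\rho\le r_2$ into $r_1\le u\le r_2$ as well; as $\a,\delta\le0$, a bound $\mathscr F\ge c_0>0$ is then equivalent to $f\le C_{16}$ for a suitable constant. I also record that $T^{*}<\infty$: by Corollary \ref{c5.2} one has $\rho>\Theta(r_1,t)$, and $\Theta(r_1,\cdot)$ becomes infinite at the finite time $T^{*}(r_1)=r_1^{1-\a-\delta-\beta}\big((\a+\delta+\beta-1)\eta\big)^{-1}$, so $T^{*}\le T^{*}(r_1)$.

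The heart of the proof is the evolution equation of the speed along (\ref{1.1}). Using Lemma \ref{l3.2} (notably $\p_tg_{ij}=2\mathscr F h_{ij}$, $\p_th_{ij}=-\nn_i\nn_j\mathscr F+\mathscr F(h^2)_{ij}$, and $\p_tu=\mathscr F-\<\nn\Phi,\nn\mathscr F\>$) together with the material derivative $\p_t\rho=u\mathscr F\rho^{-1}$, one computes
\begin{equation*}
\p_t\mathscr F=\beta\Psi f^{-\beta-1}f^{ij}\nn_i\nn_j\mathscr F+\beta\frac{\mathscr F^{2}}{f}f^{ij}(h^2)_{ij}+\Big(\frac{\a}{u}+\frac{\delta u}{\rho^{2}}\Big)\mathscr F^{2}-\frac{\a}{u}\mathscr F\<\nn\Phi,\nn\mathscr F\>,
\end{equation*}
where $f^{ij}=\p f/\p h_{ij}$ is positive definite since $\k\in\Gamma_{+}$. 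The crucial observation is that the zeroth order reaction term $\beta\mathscr F^{2}f^{-1}f^{ij}(h^2)_{ij}$ enters with the favorable (nonnegative) sign: the variation of the metric in $\p_tf$ contributes $-2\mathscr F f^{ij}(h^2)_{ij}$, which outweighs the $+\mathscr F f^{ij}(h^2)_{ij}$ coming from $\p_th_{ij}$.

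Then I run the maximum principle on $y(t):=\min_{M_t}\mathscr F$. At a spatial minimum of $\mathscr F(\cdot,t)$ one has $\nn\mathscr F=0$, so the gradient term drops and $f^{ij}\nn_i\nn_j\mathscr F\ge0$; combining this with $f^{ij}(h^2)_{ij}\ge0$ and the $C^{0}$ bounds $r_1\le u,\rho\le r_2$ (and $\a,\delta\le0$) gives $y'\ge-C_{*}y^{2}$ in the barrier sense, with $C_{*}=-\a r_1^{-1}-\delta r_2 r_1^{-2}\ge0$. Integrating, $(1/y)'\le C_{*}$, hence $y(t)\ge\big(y(0)^{-1}+C_{*}t\big)^{-1}\ge\big(y(0)^{-1}+C_{*}T^{*}(r_1)\big)^{-1}=:c_0>0$ for all $t<T^{*}$, a bound independent of $T$. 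Since $u^{\a}\rho^{\delta}\le r_1^{\a+\delta}$, this yields $f^{-\beta}\ge c_0 r_1^{-\a-\delta}$, i.e. $f\le\big(c_0 r_1^{-\a-\delta}\big)^{-1/\beta}=:C_{16}=C_{16}(r_1,r_2,\a,\beta,\delta,M_0)$.

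The only genuinely delicate step is the sign bookkeeping in the evolution equation, and in particular that $f^{ij}(h^2)_{ij}$ occurs with a plus; were it negative one could not close the estimate without first bounding the principal curvatures. The argument also relies on $T^{*}<\infty$ (this is precisely why the lower bound on $\mathscr F$ is uniform in $T$) and on the preservation of convexity, so that $f^{ij}>0$; both hold in the present setting.
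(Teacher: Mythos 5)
Your proof is correct, and it reaches the bound by a route that differs from the paper's in a meaningful way. The paper (see the proof of Lemma \ref{l5.5}, which refers back to Lemma \ref{l3.3}) also runs a maximum principle on the speed, but on the normalized quantity $\g_t=u^{\a-1}\rho^{\delta}F^{-\beta}$ in the scalar graph formulation on $\mS^n$: differentiating (\ref{3.3}) (with the $-\eta$ omitted) in time, the zeroth-order reaction term is exactly $(\a+\delta+\beta-1)\g_t^{2}\ge0$, so $\min\g_t$ is nondecreasing and the lower bound is simply its initial minimum --- no ODE integration, no appeal to $T^*<\infty$, and no use of the $C^0$ bounds or of $\a,\delta\le0$ at that stage. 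Your choice of the un-normalized speed $\mathscr F=u\,\g_t$ redistributes that quadratic term as $\big(\tfrac{\a}{u}+\tfrac{\delta u}{\rho^{2}}\big)\mathscr F^{2}+\beta\mathscr F^{2}f^{-1}f^{ij}(h^2)_{ij}$; the first piece is sign-indefinite, which is why you must invoke the $C^0$ bounds, $\a,\delta\le0$, and the finiteness of $T^*\le T^*(r_1)$ to close the estimate through $y'\ge-C_*y^{2}$. All of these ingredients are legitimately available here, and your evolution equation for $\mathscr F$ --- in particular the favourable sign of the $f^{ij}(h^2)_{ij}$ term produced by $\p_tg^{ij}=-2\mathscr Fh^{ij}$ --- is correct, so the argument goes through and yields a constant with the stated dependence. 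What the paper's normalization buys is that the entire lemma reduces to the single sign observation $\a+\delta+\beta>1$; you could recover that cleanliness within your tensorial computation by applying the identical argument to $\mathscr F/u$ instead of $\mathscr F$.
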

\begin{proof}
The proof is almost identical to the proof of Lemma \ref{l3.3}, the only difference is that this time we omit $-\eta$ in evolution equation (\ref{3.3}). By Lemma \ref{l5.4} we have the bounds of $u$ if $0<r_1\le\rho\le r_2$. Note that at this time $\g_t>0$. By (\ref{f}), we have
$$\p_t\g_t-\frac{\beta\rho^{\a+\delta+\beta-1}}{\om^{\a-\beta-1}}F^{-\beta-1}F^j_i(\delta^{ik}-\frac{1}{\om^2}\g^i\g^k)(\g_t)_{kj}\ge0.$$
By the maximum principle, we have the lower bound of $\g_t$ and the upper bound of $F$.
\end{proof}
\begin{lemma}\label{l5.6}
	Let $\a+\beta+\delta>1$ and let $M(t)$ be a solution of the flow (\ref{1.1}). Let $0<T<T^*$ be arbitrary and assume
	$$0<r_1\le\rho\le r_2 \qquad \forall0\le t\le T,$$
	then there exists a constant $C_{17}$ such that
	$$F\ge C_{17}\qquad \forall0\le t\le T,$$
	where $C_{17}=C(r_1,r_2,\a,\beta,\delta,M_0)$ is independent of $T$.
\end{lemma}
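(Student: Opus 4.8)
The plan is to parallel the proof of Lemma \ref{l5.5}, except that the estimate is now needed at a spatial \emph{maximum} and the reaction term has the opposite sign. Writing, as in Lemma \ref{l3.3}, $\g_t=\partial_t\g=\dfrac{\rho^{\a+\delta+\beta-1}}{\om^{\a-\beta-1}}F^{-\beta}$ in the radial-graph picture, Lemma \ref{l5.4} together with $r_1\le\rho\le r_2$ makes the prefactor $\rho^{\a+\delta+\beta-1}\om^{\beta+1-\a}$ comparable to $1$, so $F\ge C_{17}$ is equivalent to a uniform upper bound for $\g_t$. Dropping the $-\eta$ term, identity \eqref{f} becomes
\[
\p_t\g_t-\frac{\beta\rho^{\a+\delta+\beta-1}}{\om^{\a-\beta-1}}F^{-\beta-1}F^j_i\Big(\delta^{ik}-\tfrac1{\om^2}\g^i\g^k\Big)(\g_t)_{kj}=(\a+\delta+\beta-1)\,\g_t^2 ,
\]
so at a spatial maximum of $\g_t$ one obtains only the Riccati-type differential inequality $\frac{d}{dt}\max_{\mS^n}\g_t\le(\a+\delta+\beta-1)(\max_{\mS^n}\g_t)^2$.

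To turn this into a bound independent of $T$, the first ingredient is that $[0,T]$ has uniformly bounded length: by Corollary \ref{c5.2}, $\rho(\cdot,t)>\Theta(r_1,t)$ with $\Theta(r_1,t)\to\infty$ as $t$ approaches $\dfrac{r_1^{1-\a-\delta-\beta}}{(\a+\delta+\beta-1)\eta}$, hence $\rho\le r_2$ forces
\[
T\le T_1(r_1,r_2):=\frac{r_1^{1-\a-\delta-\beta}-r_2^{1-\a-\delta-\beta}}{(\a+\delta+\beta-1)\eta},
\]
which depends only on $r_1,r_2,\a,\delta,\beta$. If the Riccati solution issued from $\max_{\mS^n}\g_t(0)$ stays finite on $[0,T_1(r_1,r_2)]$ we are done; in general (for a strictly convex $M_0$ with very flat parts $\max_{\mS^n}\g_t(0)$ may be large) one localizes as in Lemma \ref{l3.6}, working with the auxiliary function $\Theta=\log\g_t-N\rho$ (or $\Theta=\log\g_t-Nu$) for $N$ large and examining it at an interior spatial maximum. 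Substituting the second-order identity \eqref{2.16}, $\om^{-1}h_{ij}=-\rho_{;ij}+\tfrac1\rho g_{ij}-\tfrac1\rho\rho_i\rho_j$, the Euler relation $F^j_ih^i_j=F$ and the bounds on $\rho,\om$ extract a strictly negative leading contribution that, for $N$ large, dominates the reaction term; the remaining positive terms are controlled by the concavity inequalities \eqref{3.24}, \eqref{3.25}, the mean-curvature bound $H/n\ge f/\eta$, and convexity, which makes the "Case 1'' alternative of Lemma \ref{l3.6} vacuous. Equivalently one may pass to the support function $u$, whose flow has speed increasing in the principal radii $\l=1/\k$, and bound $\l_{\max}$ from above by the dual of the estimate in Lemma \ref{l3.6}.

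The main obstacle is precisely the unfavorable sign of the reaction term $(\a+\delta+\beta-1)\g_t^2$: unlike in Lemmas \ref{l3.3} and \ref{l5.5} the maximum principle does not close on its own, so one must manufacture a dominating negative contribution — either from the finite-time barrier of Corollary \ref{c5.2}, or, robustly, from the Euler identity together with the convexity of $M_t$ and the concavity of $f$. Inside the auxiliary-function computation the technically delicate point is absorbing the gradient terms weighted by $\operatorname{tr}F^j_i$ (equivalently $\operatorname{tr}F^j_i/F$) while no upper bound on the principal curvatures is yet available; this is the same difficulty already met and resolved in Lemma \ref{l3.6}, and it is handled by the $C^0$ and $C^1$ estimates in hand (here Lemma \ref{l5.4} and $r_1\le\rho\le r_2$) together with the concavity of $f$.
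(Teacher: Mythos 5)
Your overall strategy---a maximum-principle argument for $\log(\text{speed})$ corrected by a multiple of a zeroth-order quantity---is the same as the paper's, and you correctly identify both the obstruction (the reaction term $(\a+\delta+\beta-1)\g_t^2$ has the wrong sign) and the insufficiency of the Riccati/finite-time fallback (the ODE comparison blows up at time $\sim 1/\max\g_t(0)$, which need not exceed $T_1(r_1,r_2)$). However, the mechanism you propose for producing the dominating negative term does not work as stated. The paper sets $Q=u^{\a-1}\rho^\delta F^{-\beta}$ (which equals your $\g_t$) and maximizes $\theta=\log Q+N\rho$ with $N$ \emph{small}, namely $N=\tfrac{1}{2\rho_{\max}\om_{\max}^2}$. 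Feeding (\ref{2.16}) into the second-order term of the operator then produces
$\tfrac{\beta uQ}{F}\big(N^2F^j_i\nn^i\rho\nn_j\rho-\tfrac{N}{\rho\om^2}F^i_i\big)\le -C\,Q/F$
(using $\sum_iF^i_i\ge1$), and since $Q/F\sim Q^{1+1/\beta}$ is superlinear in $Q$ while every other term after dividing by $Q$ is of order $Q$, the maximum principle closes for all $\beta>0$. Your ansatz $\Theta=\log\g_t-N\rho$ with $N$ \emph{large} reverses the sign of precisely this term: the trace contribution $+\tfrac{N\beta uQ}{F\rho\om^2}F^i_i$ becomes a large \emph{positive} quantity of order $NQ^{1+1/\beta}$, and even with the correct sign a large $N$ lets $N^2F^{ij}\rho_i\rho_j$ overwhelm $\tfrac{N}{\rho\om^2}F^i_i$. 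Moreover the negativity does not come from the Euler relation $F^j_ih^i_j=F$ (that term enters with a benign sign $+\tfrac{N\beta uQ}{\om}$, of lower order) nor from concavity of $f$ or the Case 1/Case 2 analysis of Lemma \ref{l3.6}; convexity is used only to discard the cross term $-\tfrac{2\beta NQ}{F}F^j_i\nn^i\rho\nn_ju\le0$.

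Your alternative route---bounding $\l_{\max}$ from above ``by the dual of the estimate in Lemma \ref{l3.6}''---is circular in the paper's logical structure: the curvature estimates of Theorem \ref{t5.3} are carried out \emph{after} Lemmas \ref{l5.5} and \ref{l5.6} and use the resulting two-sided bounds on $F$ (for instance, Case 1 of Lemma \ref{l3.6} relies on $H/n\ge f/\eta\ge C_7$, i.e.\ on the lower bound for $f$, and the remainder terms are controlled using $G=f^{-\beta}\le C$). To repair your proof, keep the auxiliary function but take it in the form $\log\g_t+N\rho$ with $N$ chosen small as above, and identify the superlinear negative term $-CQ/F$ as the quantity that absorbs the Riccati-type reaction.
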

\begin{proof}
Let $0<T<T^*$ be arbitrary. Consider the auxiliary function
$$Q=u^{\alpha-1}\rho^\delta F^{-\beta}.$$
Then $Q=u^{\alpha-1}\rho^\delta G$ and $G$ is homogenous of degree $-\beta$.
$$(u^{\alpha}\rho^\delta G)_{ij}=(Qu)_{ij}=Q_{ij}u+Q_iu_j+Q_ju_i+Qu_{ij}.$$
In order to calculate the evolution equation of $Q$, we need to deduce the evolution equations of $u$ and $F$ first.
	\begin{align}\label{5.5}
	\frac{\p u}{\p t}&=\frac{\p}{\p t}<X,\nu>\notag\\
	&=<\bar{\nabla}_{\frac{\p X}{\p t}}X,\nu>+<X,\frac{\p \nu}{\p t}>\notag\\
	&=Qu-<X,\nn(Qu)>\notag\\
	&=Qu-Q<X,\nn u>-u<X,\nn Q>.
\end{align}
\begin{equation}\label{5.6}
	\begin{split}
		\frac{\p F}{\p t}=&F^{j}_i\frac{\p h^i_j}{\p t}\\
		=&F^{j}_i\(-\nn^i\nn_j(Qu)+Qu(h^2)^i_j-2Qu(h^2)^i_j\)\\
		=&-F^{j}_i\(u\nn^i\nn_jQ+2\nn^iQ\nn_ju+Q\nn^i\nn_ju+Qu(h^2)^i_j\).
	\end{split}
\end{equation}
By Lemma \ref{l2.2}, (\ref{2.18}), (\ref{5.5}), and (\ref{5.6}), we can deduce the evolution equation of $Q$.
\begin{equation}
\begin{split}\label{5.7}
\p_tQ=&\p_t(u^{\alpha-1}\rho^\delta G)\\
=&(\alpha-1)\frac{Q}{u}\frac{\p u}{\p t}+\delta\frac{Q}{\rho}\frac{\p\rho}{\p t}-\beta \frac{Q}{F}\frac{\p F}{\p t}\\
=&(\alpha+\delta-1)Q^2-(\a-1)\frac{Q^2}{u}<X,\nn u>-(\a-1)Q<X,\nn Q>\\
&+\beta\frac{Q}{F}F^{j}_i\(u\nn^i\nn_jQ+2\nn^iQ\nn_ju+Q\nn^i\nn_ju+Qu(h^2)^i_j\)\\
=&(\alpha+\delta+\beta-1)Q^2-(\a-1)\frac{Q^2}{u}<X,\nn u>-(\a-1)Q<X,\nn Q>\\
&+\beta\frac{Q}{F}\(uF^{j}_i\nn^i\nn_jQ+2F^{j}_i\nn^iQ\nn_ju+Q<X,\nn F>\).
\end{split}
\end{equation}
Looking at the spatial maximum point $p$ for the function
$$\theta=\log Q+N\rho.$$
We have
\begin{align}
\nn\theta=\frac{\nn Q}{Q}+N\nn\rho=&(\a-1)\frac{\nn u}{u}+\delta\frac{\nn\rho}{\rho}-\beta\frac{\nn F}{F}+N\nn\rho=0,\label{5.8}\\
\nn^2_{ij}\theta=&\frac{\nn^2_{ij}Q}{Q}-\frac{\nn_iQ\nn_jQ}{Q^2}+N\nn^2_{ij}\rho.\label{5.9}
\end{align}
By (\ref{2.16}), (\ref{5.7}), (\ref{5.8}) and (\ref{5.9}), we have
\begin{equation}
\begin{split}
(\p_t-\beta\frac{uQ}{F}F^{j}_i\nn^i\nn_j)\theta=&(\alpha+\delta+\beta-1+N\rho)Q-(\a-1)<X,\nn Q>\\
&+(\delta+N\rho)Q|\nn\rho|^2-\frac{2\beta NQ}{F}F^{j}_i\nn^i\rho\nn_ju\\
&+\frac{\beta u}{F}(F^{j}_i\frac{\nn^i Q\nn_j Q}{Q}-NQF^{j}_i\nn^i\nn_j\rho)\\
\le&(\alpha+\delta+\beta-1+N\rho)Q-(\a-1)<X,\nn Q>\\
&+(\delta+N\rho)Q|\nn\rho|^2-\frac{2\beta NQ}{F}F^{j}_i\nn^i\rho\nn_ju\\
&+\frac{\beta uQ}{F}(N^2F^{j}_i\nn^i\rho\nn_j\rho-N(\frac{1}{\rho\om^2}F^{i}_i-\frac{u}{\rho}F)),
\end{split}
\end{equation}
where we have used $F^{j}_i\nn^i\rho\nn_j\rho\leq F^{i}_i\vert\nn\rho\vert^2=(1-\frac{1}{\om^2})F^{i}_i$ in the last inequality. $-\frac{2\beta NQ}{F}F^{j}_i\nn^i\rho\nn_ju$ is nonpositive by Lemma \ref{l2.2} since the leaves $M(t)$ are supposed to be strictly convex. Since we want to get a lower bound of $F$, the terms involving $\frac{Q}{F}$ is crucial. In fact, $\frac{Q}{F}$ is of the order $F^{-(\beta+1)}$, and $Q$ is of the order $F^{-\beta}$. By assumption, $\rho$, $u$ and $|\nn\rho|^2$ have uniform bounds. Note that
$$-(\a-1)<X,\nn Q>=(\a-1)NQ\rho|\nn\rho|^2.$$
Therefore we shall choose $N$ to make $\frac{N\beta uQ}{F}(NF^{j}_i\nn^i\rho\nn_j\rho-\frac{1}{\rho\om^2}F^{i}_i)$ negative.
\begin{align*}
NF^{j}_i\nn^i\rho\nn_j\rho-\frac{1}{\rho\om^2}F^{i}_i\le& F^{i}_i(N|\nn\rho|^2-\frac{1}{\rho\om^2})\\
\le&F^{i}_i(N-\frac{1}{\rho\om^2}).
\end{align*}
We can let $N=\frac{1}{2\rho_{\max}\om^2_{\max}}$, then $\frac{N\beta uQ}{F}(NF^{j}_i\nn^i\rho\nn_j\rho-\frac{1}{\rho\om^2}F^{i}_i)\le -C\frac{Q}{F}$ by $\sum_{i}F_i^i\ge1$. By the maximum principle we can obtain the lower bound of $F$.
\end{proof}
Next, we can prove Theorem \ref{t5.3}.

\begin{proof of theorem 5.3}

The proof is similar to the proof of Lemma \ref{l3.6}. The auxiliary functions, etc., are the same as Lemma \ref{l3.6}.
\begin{align}\label{5.11}
	\frac{\p}{\p t}h_j^i=&\frac{\p(g^{ik}h_{kj})}{\p t}\notag\\
	=&g^{ik}\frac{\p h_{kj}}{\p t}+h_{kj}\frac{\p g^{ik}}{\p t}\notag\\
	=&g^{ik}(-\nn_j\nn_k(\Psi G)+\Psi Gh^2_{kj})-2\Psi G(h^2)_{j}^i.
\end{align}
We can find that only $\eta\nn_j\nn^iu+\eta u(h^2)_{j}^i$ in (\ref{h}) is not in (\ref{5.11}) . Therefore, by (\ref{3.18}), we have
\begin{equation}\label{5.12}
	\begin{split}
		\cL h_1^1\leq&(-\a \frac{ \Psi G}{u}+\frac{\delta u\Psi G}{\rho^2})h_{11}+(\a-\beta-1)\Psi Gh_{11}^2+(C-\delta)\frac{\Psi G}{\rho^2}\\
		&+\frac{\beta\Psi G}{F}F^{ij}(h^2)_{ij}h_{11}+\beta\frac{\Psi G}{F}(f^n_{m})^s_{r}h^m_{n;1}h^r_{s;1},
	\end{split}
\end{equation}
Note that the difference between $\mathcal{L}$ here and Lemma \ref{l3.6} is the first derivative. Similarly,
\begin{align}\label{5.13}
	\cL u=&(1-\beta)\Psi G+\beta \frac{u\Psi G}{F}F^{ij}(h^2)_{ij}-\delta\Psi G|\nn\rho|^2,\\
	\cL\rho\le&\Psi G\omega-\beta\frac{1}{\rho}\Psi f^{-\beta-1}f^{ij}g_{ij}\frac{1}{\om^2}+\a \frac{\rho\Psi G}{u}\vert\nn\rho\vert^2+\frac{\beta\Psi G}{\omega}.\label{5.14}
\end{align}
By (\ref{5.12}), (\ref{5.13}) and (\ref{5.14}) we have
\begin{equation}\label{5.15}
	\begin{split}
		\cL\theta\leq&c+(\a-\beta-1)\Psi Gh_{11}+\beta(1+p'u)\Psi f^{-\beta-1} f^{ij}(h^2)_{ij}-N\beta\frac{1}{\rho}\Psi f^{-\beta-1}f^{ij}g_{ij}\frac{1}{\om^2}\\
		&+\frac{\beta\Psi f^{-\beta-1}f^{ij,mn}h_{ij;1}h_{mn;1}}{h_{11}}+\beta\Psi f^{-\beta-1}f^{ij}(\nn_i(\log h_1^1)\nn_j(\log h_1^1)-p''\nn_iu\nn_ju),
	\end{split}
\end{equation}
The rest is the same as Lemma \ref{l3.6}. Note that $\Gamma=\Gamma_+$ here. Therefore the principal curvatures satisfy
$$0<C_{14}\le\k_i\le C_{15}\qquad \forall1\le i\le n.$$
Thus (\ref{1.1}) can be rewritten as a uniformly parabolic equation and the estimates of  Safonov,  Evans, Schauder and Krylov's and Nirenberg's theory \cite{KNV,LN}, imply uniform a priori estimates up to $t=T^*$; for details can see e.g. \cite{GC2} Chapter 2.6. This allows to continue the solution $M_t$ smoothly past $t=T^*$, a contradiction. Note finally, that (\ref{1.1}) is an expanding flow, so $\sup_{M_t}|X|$ is monotone in $t$ and the limit exists.
\end{proof of theorem 5.3}

At the end of this section, we need to prove (\ref{1.3}).
\begin{lemma}\label{l5.7}
Let $\a,\delta\le0<\beta$ and $M_0$ be uniformly convex. Let $u(\cdot,t)$ be a positive, smooth and uniformly convex solution to (\ref{2.24}). Then there exists a positive constant $C_{18}$ depending only on $u_0,\a,\beta,\delta$, such that
\begin{equation}\label{5.16}
\rho(x,t)-C_{18}\le\Theta(\rho_0,t)\le \rho(x,t)+C_{18}\qquad \forall x\in\mS^n,
\end{equation}
where $\rho_0$ is defined by (\ref{5.3}). Hence
\begin{equation}\label{5.17}
\lim_{t\rightarrow T^*}\rho(x,t)\Theta^{-1}(\rho_0,t)=1\qquad \forall x\in\mS^n.
\end{equation}
\end{lemma}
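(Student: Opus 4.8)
The plan is to deduce the two-sided estimate (\ref{5.16}) from three ingredients: the crude barrier comparison of Corollary \ref{c5.2}; the finiteness of $T^*$ together with the genuine blow-up of Theorem \ref{t5.3}; and, crucially, a uniform oscillation bound $\osc_{\mS^n}\rho(\cdot,t)\le C$ with $C=C(u_0,\a,\beta,\delta)$ valid on all of $[0,T^*)$, proved by an Aleksandrov reflection argument. Once (\ref{5.16}) is in hand, (\ref{5.17}) is immediate, since $\Theta(\rho_0,t)\to\infty$ as $t\to T^*$. To begin I would pin down $\rho_0$: with $r_1=\min_{\mS^n}\rho(\cdot,0)$ and $r_2=\max_{\mS^n}\rho(\cdot,0)$, Corollary \ref{c5.2} gives $\Theta(r_1,t)<\rho(x,t)<\Theta(r_2,t)$ on $[0,T^*)$, and comparing this with the fact that $\sup_{M_t}|X|\to\infty$ exactly as $t\to T^*$ (Theorem \ref{t5.3}) forces $T^*(r_2)\le T^*\le T^*(r_1)$, hence $r_1\le\rho_0\le r_2$ and $\Theta(r_1,t)\le\Theta(\rho_0,t)\le\Theta(r_2,t)$ for $t<T^*$. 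This already yields $\rho(x,t)\to\infty$ uniformly; the real content of (\ref{5.16}) is the uniform additive closeness.

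The heart of the matter is the oscillation bound, and this is exactly where the hypotheses $\a,\delta\le0$ and the uniform convexity of $M_t$ (preserved by Theorem \ref{t5.3}) are used, as noted after Theorem \ref{t1.3}. For $e\in\mS^n$ and $\lambda\ge0$ let $M_t^{e,\lambda}$ denote the reflection in the hyperplane $\{\langle\cdot,e\rangle=\lambda\}$ of the cap $\{X\in M_t:\langle X,e\rangle>\lambda\}$, a hypersurface lying on the same side of that hyperplane as the origin. Reflection in $\{\langle\cdot,e\rangle=\lambda\}$ with $\lambda\ge0$ moves points strictly toward the origin while leaving the principal curvatures unchanged; since $u^\a$ and $\rho^\delta$ are nonincreasing in $u$ and $\rho$ when $\a,\delta\le0$, the reflected cap is a subsolution of (\ref{1.1}), and convexity makes it graphical over the relevant spherical region so that the comparison principle for (\ref{1.1}) applies. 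Hence, as in \cite{CG,GC,KS,CL}, once $M_0^{e,\lambda}\subset\overline{\Om_0}$ this inclusion persists, so $\lambda_e(t):=\inf\{\lambda\ge0:M_t^{e,\lambda}\subset\overline{\Om_t}\}$ is nonincreasing in $t$ and bounded by $\Lambda:=\sup_e\lambda_e(0)<\infty$, a quantity controlled by the geometry of $M_0$. Combining the resulting bound $|\rho(e,t)-\rho(-e,t)|\le 2\Lambda$ for every $e$ with the convexity of $M_t$ and the curvature estimates of Theorem \ref{t5.3} yields $\osc_{\mS^n}\rho(\cdot,t)\le C$.

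With the oscillation bound I would close the argument by ODE comparison. At a spatial maximum of $\rho(\cdot,t)$ one has $D\rho=0$, $D^2\rho\le0$, so $\omega=1$, $u=\rho_{\max}(t)$ and $h^i_j\ge\rho_{\max}^{-1}\delta^i_j$, whence by the monotonicity and $1$-homogeneity of $f$ one gets $f^{-\beta}\le\eta\rho_{\max}^\beta$ and therefore, from (\ref{2.18}), $\tfrac{d}{dt}\rho_{\max}\le\eta\rho_{\max}^{\a+\delta+\beta}$; using the oscillation bound to replace $\rho_{\min}$ by $\rho_{\max}-C$ in the companion lower estimate at a spatial minimum gives $\tfrac{d}{dt}\rho_{\max}\ge\eta(\rho_{\max}-C)^{\a+\delta+\beta}$ as well, and likewise two-sided differential inequalities for $\rho_{\min}$. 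Comparing with the spherical ODE $\dot\rho=\eta\rho^{\a+\delta+\beta}$, whose solution blowing up at time $T^*$ is precisely $\Theta(\rho_0,\cdot)$, and exploiting that $\rho_{\max}$ and $\rho_{\min}$ blow up at the same time $T^*$ (by the oscillation bound and Theorem \ref{t5.3}), one obtains $|\rho_{\max}(t)-\Theta(\rho_0,t)|\le C_{18}$ and $|\rho_{\min}(t)-\Theta(\rho_0,t)|\le C_{18}$; the oscillation bound then upgrades this to (\ref{5.16}) for every $x\in\mS^n$, and dividing by $\Theta(\rho_0,t)$ gives (\ref{5.17}). I expect the reflection step to be the main obstacle: verifying rigorously that the reflected cap is a genuine subsolution of (\ref{1.1}) — which is exactly where $\a,\delta\le0$ and the convexity of the flowing hypersurfaces are indispensable — and carrying out the moving-plane and boundary analysis so that the parabolic maximum principle applies cleanly, following \cite{CG,GC,KS}; once the oscillation is controlled, the remaining ODE comparison is routine.
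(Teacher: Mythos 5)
Your overall strategy is the same as the paper's: an Aleksandrov reflection argument, using exactly the fact that for $\a,\delta\le0$ the reflected piece sits closer to the origin so the factors $u^{\a}\rho^{\delta}$ compare favorably at a touching point while the curvature term is handled by $D^2u_\l\ge D^2u$ there, yields a uniform oscillation bound; this is then combined with the spherical barriers and the characterization of $T^*$ to pin $\rho(\cdot,t)$ to $\Theta(\rho_0,t)$ up to an additive constant. (The paper runs the reflection on the support-function equation (\ref{2.24}) and uses the intermediate-value argument of \cite{SO} to produce a point $x_t$ with $\rho(x_t,t)=\Theta(\rho_0,t)$; your two-sided ODE comparison for $\rho_{\min},\rho_{\max}$ is an equivalent way to finish.)

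The one step that, as written, does not work is your passage from the reflection to the oscillation bound. The inequality $|\rho(e,t)-\rho(-e,t)|\le 2\Lambda$ for all $e$ is far too weak: an origin-centred ellipsoid satisfies $\rho(e)=\rho(-e)$ for every $e$ while having arbitrarily large oscillation, and the curvature estimates of Theorem \ref{t5.3} cannot rescue this since they are uniform only while the flow stays in a fixed compact set and so degenerate as $t\to T^*$. What the reflection actually delivers (and what the paper extracts) is the directional Lipschitz estimate on the support function: for any $x_1\ne x_2$, taking $\vec a=(x_2-x_1)/|x_2-x_1|$, the persistence of $u_{\l}\ge u$ on $\mS^n_+$ gives
$$u(x_2,t)-u(x_1,t)\le \l\,\langle x_2,\vec a\rangle=\tfrac{\l}{2}|x_2-x_1|,$$
hence $\osc_{\mS^n}u\le\l$, and then $\osc_{\mS^n}\rho\le C$ follows because $\max_{\mS^n}\rho=\max_{\mS^n}u$ and $\min_{\mS^n}\rho=\min_{\mS^n}u$ (Lemma \ref{l2.5}). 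With that correction your argument closes in the same way as the paper's.
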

\begin{proof}
Firstly, we shall apply the Aleksandrov's reflection principle (see, e.g. \cite{CG,CL}) to derive $\osc u\le C_{19}.$

Fixing a direction $\vec{a}\in\mS^n$, let us consider the reflection
\begin{equation}\label{5.18}
\widehat{u}(x,t)=u(\widehat{x},t),\qquad \text{where }\widehat{x}=x-2<x,\vec{a}>\vec{a}.
\end{equation}
Let $\Om_t$ and $\widehat{\Om}_t$ be the convex bodies whose support functions are respectively $u(\cdot,t)$ and $\widehat{u}(\cdot,t)$. Given $\l>0$, we define $u_\l(\cdot,t):\mS^n\rightarrow \mR$ via
\begin{equation}\label{5.19}
u_\l(x,t)=\widehat{u}(x,t)+\l<x,\vec{a}>.
\end{equation}
This also means $X_\l(x,t)=\widehat{X}(x,t)+\l\vec{a}.$ Denote by $\Om_t^\l$ the convex body whose support function is $u_\l(\cdot,t)$. Then $\Om_t^\l$ is a translation of $\widehat{\Om}_t$. Clearly $\Om_t^\l$ and $\Om_t$ are symmetric with respect to the hyperplane $P_\l=\{z\in\mR^{n+1}:z\cdot\vec{a}=\l/2\}$. Let
$$P^+_\l=\{z\in\mR^{n+1}:z\cdot\vec{a}\ge\l/2\}\quad\text{ and      }\quad P^-_\l=\{z\in\mR^{n+1}:z\cdot\vec{a}\le\l/2\}.$$
Since the initial data $\Om_0$ is compact, there is a $\l=\l_{u_0}>0$, depending only on $u_0$ (independent of $\vec{a}$), such that
\begin{equation}\label{o}
\Om_0\subset\text{Int}P^-_\l\quad\text{ and      }\quad\Om_0^\l\subset\text{Int}P^+_\l.
\end{equation}
Consequently
$$u_{\l}(x,0)\ge u(x,0),\quad\forall x\in\mS^n_+:=\{y\in\mS^n:\langle y,\vec{a}\rangle \ge0\},$$
and the equality holds only on $\p\mS^n_+$. We claim that
\begin{equation}\label{5.20}
u_{\l}(x,t)\ge u(x,t),\quad\forall(x,t)\in\mS^n_+\times[0,T).
\end{equation}
Note that $u_{\l}(x,t)=u(x,t)$ for all $(x,t)\in\p\mS^n_+\times[0,T).$

We give the proof of claim (\ref{5.20}). Let $o'=\l\vec{a}$. Denote by $u_{\l,o'}(\cdot,t)$ the support function of $\Om_t^\l$ with respect to the centre $o'$. Then
\begin{equation}\label{5.21}
u_{\l,o'}(\cdot,t)=\widehat{u}(\cdot,t).
\end{equation}
By (\ref{5.18}), (\ref{5.19}) and (\ref{5.21}), it is not hard to check that
\begin{equation}\label{5.22}
\begin{split}
\p_tu_\l(x,t)=&\p_t\widehat{u}(x,t)=\widehat{u}^\a\left(\sqrt{\widehat{u}^2+|D\widehat{u}|^2}\right)^\delta F^\beta(D^2\widehat{u}+\widehat{u}\Rmnum{1})\\
=&u_{\l,o'}^\a\left(\sqrt{u_{\l,o'}^2+|Du_{\l,o'}|^2}\right)^\delta F^\beta(D^2u_{\l,o'}+u_{\l,o'}\Rmnum{1}) \quad \text{ on }\mS^n\times[0,T).
\end{split}
\end{equation}
In fact, $\Om_t^\l$ is a translation of $\widehat{\Om}_t$. Therefore $D\widehat{u}=Du_{\l,o'}$ and $D^2\widehat{u}=D^2u_{\l,o'}$. Let $\mathcal{M}^+_t=\nu^{-1}_{\Om_t}(\mS^n_+)$ and $\mathcal{M}^{\l,+}_t=\nu^{-1}_{\Om^\l_t}(\mS^n_+)$. For $x_0\in\text{Int}\mS^n_+$, let $t_0\in[0,T)$  be the time such that
\begin{align*}
(\rmnum{1})\quad & \nu^{-1}_{\Om_{t_0}}(x_0)=\nu^{-1}_{\Om_{t_0}^\l}(x_0)=:z_0;\\
(\rmnum{2})\quad & \nu^{-1}_{\Om_{t}}(x_0)\neq\nu^{-1}_{\Om_{t}^\l}(x_0)\quad \forall t\in[0,t_0), \text{ and } u_\l(\cdot,t_0)\ge u(\cdot,t_0) \text{ near }x_0.
\end{align*}
If for each $x_0\in\text{Int}\mS^n_+$, no such $t_0$ exists, then one infers by (\ref{o}) that $\text{Int}\mathcal{M}^+_t\cap\text{Int}\mathcal{M}^{\l,+}_t=\emptyset$ remains for all $t\in[0, T)$, and therefore (\ref{5.20}) follows immediately. Suppose $t_0$ exists. Then
\begin{equation}\label{5.24}
D^2u_\l(x_0,t_0)\ge D^2u(x_0,t_0).
\end{equation}
By the symmetry, it is easy to see that $z_0\in P^+_\l$. Hence
\begin{equation}\label{5.25}
|z_0-o|\ge|z_0-o'|,
\end{equation}
where $o$ is the origin and $o'=\l\vec{a}$ as given above. Since $\a,\delta\le0$, we have by using (\ref{5.22})-(\ref{5.25}),
\begin{equation*}
\begin{split}
\p_tu_\l(x_0,t_0)=&u_{\l,o'}^\a\rho_{\l,o'}^\delta F^\beta(D^2u_{\l,o'}+u_{\l,o'}\Rmnum{1})\\
=&u_{\l,o'}^\a|z_0-o'|^\delta F^\beta(D^2u_{\l,o'}+u_{\l,o'}\Rmnum{1})\\
\ge&u^\a|z_0-o|^\delta F^\beta(D^2u+u\Rmnum{1})\\
=&\p_tu(x_0,t_0).
\end{split}
\end{equation*}
where we have used
$$D^2_{ij}x+x\delta_{ij}=-\bar{h}_{ij}x+x\delta_{ij}=-\delta_{ij}x+x\delta_{ij}=0,$$
and $\bar{h}_{ij}$ here is the second fundamental form of the unit sphere. This implies (\ref{5.20}).

Then given any two points $x_1,x_2\in\mS^n,x_1\neq x_2,$ let
$$\vec{a}=\frac{x_2-x_1}{|x_2-x_1|}.$$
Then $\widehat{x}_2=x_2-2<x_2,\vec{a}>\vec{a}=x_1,$ therefore by (\ref{5.19}) and (\ref{5.20}),
\begin{equation}\label{5.26}
u(x_1,t)+\l<x_2,\vec{a}>=u_\l(x_2,t)\ge u(x_2,t).
\end{equation}
It is easy to see that $<x_2,\vec{a}>=|x_2-x_1|/2$. Hence (\ref{5.26}) implies
$$\frac{u(x_2,t)-u(x_1,t)}{|x_2-x_1|}\le\l/2.$$
Note that $x_1,x_2\in\mS^n$. Therefore we have proved $\osc u\le C_{19}.$ Since $u(x,t)=\rho(x,t)$ when $x$ is an extremal point, we have
$$\osc\rho\le C_{20}.$$
By the argument in the proof of \cite{SO} Lemma 5.1, it follows that for any $t\in[0,T^*)$, there exists $x_t$ such that
$$\rho(x_t,t)=\Theta(\rho_0,t).$$
Conclusions (\ref{5.16}) and (\ref{5.17}) follow directly by combining the facts $\osc\rho\le C_{20}$ and $\rho(x_t,t)=\Theta(\rho_0,t).$ This completes the proof.
\end{proof}
\begin{proof of (1.3)}
(1.3) is a consequence of Theorem \ref{t5.3} and Lemma \ref{l5.7}.
\end{proof of (1.3)}

\section{Proof of Theorem \ref{t1.3}}
In this section, we shall establish the a priori estimates for normalized flow (\ref{1.7}) to show the long time existence and convergence of this flow when $\a+\beta+\delta>1$, $\a,\delta\le0$.

By (\ref{5.3}) and the expression of $\Theta(r,t)$ in Corollary \ref{c5.2}, we can find that $\varphi(t)=\Theta(\rho_0,t)$ when $\a+\beta+\delta>1$. Therefore by Lemma \ref{l5.7}, we have the $C^0$-estimate of normalized flow (\ref{1.7}).
\begin{lemma}\label{l6.1}
	Let $\rho(x,t)$, $t\in[0,T)$, be a smooth, strictly convex solution to (\ref{2.20}). If $\a,\delta\le0<\beta$, $\a+\delta+\beta>1$, then there is a positive constant $C_{21}$ depending only on $\alpha,\delta,\beta$ and the lower and upper bounds of $\rho(\cdot,0)$ such that
	\begin{equation*}
		\frac{1}{C_{21}}\leq \rho(\cdot,t)\leq C_{21}.
	\end{equation*}
\end{lemma}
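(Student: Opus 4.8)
The plan is to obtain this $C^0$ bound as a direct consequence of the estimates for the unnormalized flow proved in Section~5, via the correspondence recorded at the start of this section. Since $\a+\delta+\beta>1$ the rescaling factor is $\varphi(t)=\Theta(\rho_0,t)$, so if $\rho$ denotes the radial function of the original flow (\ref{1.1}) with a given initial hypersurface, the radial function of the corresponding normalized flow is $\widetilde\rho(x,t)=\Theta^{-1}(\rho_0,t)\,\rho(x,t)$ (this is simply $\widetilde X=\varphi^{-1}X$ read off on radial graphs); since solutions are unique it suffices to bound $\widetilde\rho$, equivalently $\rho(x,t)/\Theta(\rho_0,t)$, from above and below uniformly for $t\in[0,T^*)$. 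Two elementary observations will be used. First, because $1-\a-\delta-\beta<0$, for every $r>0$ the function $t\mapsto\Theta(r,t)$ is increasing, hence $\Theta(r,t)\ge\Theta(r,0)=r$. Second, with $r_1:=\min_{\mS^n}\rho(\cdot,0)$, Corollary \ref{c5.2} gives $\rho(x,t)>\Theta(r_1,t)\ge r_1$ on $[0,T^*)$; since $\Theta(r_1,\cdot)$ blows up at the finite time $T^*(r_1)$ this forces $T^*\le T^*(r_1)$, and as $T^*$ is a decreasing function of $\rho_0$ through (\ref{5.3}) we get $\rho_0\ge r_1$.

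For the upper bound, (\ref{5.16}) of Lemma \ref{l5.7} gives $\rho(x,t)\le\Theta(\rho_0,t)+C_{18}$, hence $\widetilde\rho(x,t)\le1+C_{18}/\Theta(\rho_0,t)\le1+C_{18}/\rho_0\le1+C_{18}/r_1$. For the lower bound I would split on the size of $\Theta(\rho_0,t)$: if $\Theta(\rho_0,t)\ge2C_{18}$ then (\ref{5.16}) gives $\widetilde\rho(x,t)\ge1-C_{18}/\Theta(\rho_0,t)\ge\frac12$, while if $\Theta(\rho_0,t)<2C_{18}$ then $\widetilde\rho(x,t)=\rho(x,t)/\Theta(\rho_0,t)>r_1/(2C_{18})$. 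Hence $\widetilde\rho(x,t)\ge\min\{\frac12,\,r_1/(2C_{18})\}$ throughout $[0,T^*)$, and with $C_{21}:=\max\{2,\,1+C_{18}/r_1,\,2C_{18}/r_1\}$ the lemma follows; since $C_{18}$ in Lemma \ref{l5.7} depends only on $\a,\delta,\beta$ and the initial hypersurface, so does $C_{21}$, the dependence tracing back to the upper and lower bounds of $\rho(\cdot,0)$.

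The point worth stressing is that there is essentially no new difficulty here: all the analytic content — the bound $\osc u\le C_{19}$ derived from the Aleksandrov reflection principle and the resulting uniform closeness (\ref{5.16})--(\ref{5.17}) of the original radial function to the spherical profile $\Theta(\rho_0,t)$ — is already contained in Lemma \ref{l5.7}. The only mild subtlety is the lower bound for $t$ near $0$, where the asymptotic relation (\ref{5.17}) is not yet in force; there one falls back on the inner-barrier estimate $\rho>\Theta(r_1,\cdot)\ge r_1$ of Corollary \ref{c5.2}, combined with the finiteness and positivity of $\Theta(\rho_0,\cdot)$ on compact time intervals. Assembling the two regimes as above yields the uniform two-sided bound for $\widetilde\rho$, that is, for the radial function of the normalized flow (\ref{1.7}), which is exactly the assertion of the lemma.
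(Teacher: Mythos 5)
Your proposal is correct and follows essentially the same route as the paper: the paper's proof of this lemma is precisely the remark preceding it, namely that $\varphi(t)=\Theta(\rho_0,t)$ when $\a+\delta+\beta>1$, so the two-sided closeness $|\rho-\Theta(\rho_0,t)|\le C_{18}$ from Lemma \ref{l5.7} immediately bounds $\widetilde\rho=\rho/\Theta(\rho_0,t)$ from above and below. Your extra details (that $\Theta(\rho_0,t)\ge\rho_0\ge r_1$, and the split according to the size of $\Theta(\rho_0,t)$, where the small-$\Theta$ regime can also be handled simply by noting $\partial_t\rho>0$ so $\rho\ge\min_{\mS^n}\rho(\cdot,0)$) just make explicit what the paper leaves implicit.
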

The $C^1$-estimate of normalized flow (\ref{1.7}) is deduced by original flow (\ref{1.1}). Therefore in Lemma \ref{l6.2}, we distinguish between the terms in (\ref{1.7}) and (\ref{1.1}) through ``tilde''.
\begin{lemma}\label{l6.2}
Let $\a,\delta\le0<\beta$, $\a+\beta+\delta>1$, we have
\begin{equation}\label{6.1}
\om-1\le C_{22}\Theta^{-1},
\end{equation}
where $\Theta=\Theta(\rho_0,t)$ as in (\ref{5.16}), i.e.,
$$|D\widetilde{\rho}|\le C_{23}\quad \text{    and    }\quad \lim_{t\rightarrow T^*}|D\widetilde{\rho}|=0.$$
\end{lemma}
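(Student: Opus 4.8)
I want to bound $\omega - 1$ on the original flow (\ref{1.1}) in terms of the spherical scale $\Theta(\rho_0,t)^{-1}$, and then transfer the estimate to the normalized flow by the rescaling $\widetilde\rho = \Theta^{-1}\rho$ (so that $|D\widetilde\rho| = \Theta^{-1}|D\rho|$ and $\omega$ is scale-invariant). Recall from Lemma \ref{l2.3} that $|\nabla\rho|^2 = 1 - \omega^{-2}$, hence $|D\widetilde\rho|^2 = \Theta^{-2}|\nabla\rho|^2_{\text{Euclidean-graph sense}}$ is controlled once $\omega - 1$ is controlled, because $\omega^2 - 1 = \omega^2|\nabla\rho|^2$ and $\omega$ stays bounded by Lemma \ref{l5.4} (using the $C^0$ bounds $0 < r_1 \le \rho \le r_2$ valid on any compact time interval, together with strict convexity preserved by Theorem \ref{t5.3}). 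So the whole game is the differential inequality (\ref{6.1}) on $\omega$, or equivalently on $|D\gamma|^2$ where $\gamma$ is the logarithmic radial variable with $d\gamma/d\rho = 1/\rho$ and $\omega = \sqrt{1+|D\gamma|^2}$.

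First I would compute the evolution of $O := \tfrac12|D\gamma|^2$ along the \emph{original} flow (\ref{2.19}), i.e. $\partial_t\gamma = u^\alpha\rho^\delta f^{-\beta}\,\omega/\rho = \tfrac{\rho^{\alpha+\delta+\beta-1}}{\omega^{\alpha-\beta-1}}G$ with $G = F^{-\beta}([\delta^i_j - (e^{ik}-\gamma^i\gamma^k/\omega^2)\gamma_{kj}])$ — exactly the computation already carried out in Lemma \ref{l3.4}, except that now there is no $-\eta$ term and, crucially, $\alpha+\delta+\beta-1 > 0$. At a spatial maximum of $O$ one gets, following (\ref{3.6})–(\ref{3.8}) verbatim,
\begin{equation*}
\partial_t O_{\max} \le \frac{\rho^{\alpha+\delta+\beta-1}}{\omega^{\alpha-\beta-1}}\Big((\alpha+\delta+\beta-1)|D\gamma|^2 G - \beta f^{-\beta-1}F^j_i\gamma^{li}\gamma_{lj} + \beta f^{-\beta-1}F^j_i\gamma^i\gamma_j - \beta|D\gamma|^2 f^{-\beta-1}\textstyle\sum_i F^{ii}\Big).
\end{equation*}
The bad term is now $(\alpha+\delta+\beta-1)|D\gamma|^2 G \ge 0$. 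The remedy is that we do not want $O$ itself bounded but $O$ decaying like $\Theta^{-2}$. Observe $|D\gamma|^2 = \omega^2 - 1 = (2O+1) - 1 = 2O$ and, using the $C^0$ and $F$-bounds from Lemmas \ref{l6.1}, \ref{l5.5}, \ref{l5.6} on the original flow, $\tfrac{\rho^{\alpha+\delta+\beta-1}}{\omega^{\alpha-\beta-1}}G$ is comparable to $\rho^{\alpha+\delta+\beta-1}$, which is comparable to $\dot\Theta/\Theta$ up to constants since $\Theta$ solves $\dot\Theta = \eta\Theta^{\alpha+\delta+\beta}$. Dropping the two manifestly good (negative) curvature terms, the inequality reduces to $\partial_t O_{\max} \le C\,\rho^{\alpha+\delta+\beta-1} O_{\max}$; comparing with the spherical ODE and absorbing, one finds $\Theta^2 O_{\max}$ is bounded — more precisely I would check that $\partial_t(\Theta^2 O_{\max}) \le 0$ modulo lower-order terms, which gives $O_{\max} \le C_{22}^2\Theta^{-2}$, hence $\omega^2 - 1 \le 2C_{22}^2\Theta^{-2}$ and $\omega - 1 \le C_{22}\Theta^{-1}$. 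Translating: $|D\widetilde\rho|^2 = \widetilde\rho^2|\nabla\gamma|^2 \cdot(\text{bounded}) \le C_{23}^2\Theta^0$ on the normalized flow — wait, more carefully, $|D\gamma|$ is scale-invariant so $|D\widetilde\rho| = \widetilde\rho|D\gamma|/\omega \le r_2|D\gamma| \le C_{23}$, and since $|D\gamma|^2 = 2O_{\max} \to 0$ as $\Theta\to\infty$ (i.e. $t\to T^*$), we get $|D\widetilde\rho| \to 0$.

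\textbf{Main obstacle.} The delicate point is the sign bookkeeping in the Ricci-identity step (the analogue of (\ref{3.7})): when $\alpha+\delta+\beta-1>0$ one can no longer simply discard terms as in Lemma \ref{l3.4}, and one must verify that the genuinely dangerous term $(\alpha+\delta+\beta-1)|D\gamma|^2 G$ is exactly of the magnitude that the rescaling factor $\Theta^2$ is designed to kill — i.e. that its coefficient matches $\dot\Theta/\Theta$ up to a constant — rather than producing a residual positive term that would force $O$ to grow. This requires pinning down that $\eta = f^{-\beta}(1,\dots,1)$ is precisely the constant appearing in $\dot\Theta = \eta\Theta^{\alpha+\delta+\beta}$, and controlling the ratio $\tfrac{\rho^{\alpha+\delta+\beta-1}}{\omega^{\alpha-\beta-1}}G \big/ \rho^{\alpha+\delta+\beta-1}$ uniformly from above and below using the already-established $C^0$, $C^1$(for $\omega$, via Lemma \ref{l5.4}), and $F$-estimates. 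The curvature terms $-\beta f^{-\beta-1}F^j_i\gamma^{li}\gamma_{lj} \le 0$ and the control $F^j_i\gamma^i\gamma_j \le \max_i F^{ii}|D\gamma|^2 \le \sum_i F^{ii}|D\gamma|^2$ against $-\beta|D\gamma|^2 f^{-\beta-1}\sum_i F^{ii}$ are handled exactly as in Lemma \ref{l3.4} and present no new difficulty.
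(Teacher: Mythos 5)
Your proposal takes a genuinely different route from the paper, and unfortunately the route has a fatal gap exactly at the point you flag as the ``main obstacle''. The paper does not run a maximum principle on $O=\tfrac12|D\gamma|^2$ at all here: it deduces (\ref{6.1}) purely algebraically from the Aleksandrov-reflection oscillation estimate of Lemma \ref{l5.7}. Since $\osc\rho\le C_{20}$ and $\min_{\mS^n}u=\min_{\mS^n}\rho$, one has $\om-1=(\rho-u)u^{-1}=(\widetilde\rho-1)\widetilde u^{-1}+(1-\widetilde u)\widetilde u^{-1}\le C_{22}\Theta^{-1}$, using $|\widetilde\rho-1|\le c\Theta^{-1}$ from (\ref{5.16}); then $|D\widetilde\g|^2=\om^2-1\le(\om+1)C_{22}\Theta^{-1}$. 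No curvature bound and no evolution equation is needed. The $C^1$ estimate is a soft consequence of the $C^0$ oscillation estimate precisely because the parabolic argument you propose does not close when $\a+\delta+\beta>1$.

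Concretely, three steps in your argument fail. First, the coefficient comparison: $\Theta^{\a+\delta+\beta-1}=\bigl((\a+\delta+\beta-1)\eta(T^*-t)\bigr)^{-1}$, which blows up like $(T^*-t)^{-1}$ and is \emph{not} integrable on $[0,T^*)$; hence Gronwall applied to $\p_tO_{\max}\le c_1\Theta^{\a+\delta+\beta-1}O_{\max}$ yields only $O_{\max}\le C\Theta^{c_1/\eta}$, i.e.\ possible polynomial growth, not decay. Second, the weight $\Theta^2$ works against you: since $\Theta$ is increasing, $\p_t(\Theta^2O_{\max})=2\Theta\dot\Theta O_{\max}+\Theta^2\p_tO_{\max}$ acquires an \emph{additional} positive term $2\eta\Theta^{\a+\delta+\beta-1}(\Theta^2O_{\max})$, so ``$\p_t(\Theta^2O_{\max})\le0$ modulo lower-order terms'' cannot hold; matching the bad term to $\dot\Theta/\Theta$ ``up to a constant'' is not enough, because a positive multiple of $\dot\Theta/\Theta$ integrates to $+\infty$. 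To get decay you would need the net coefficient to be strictly negative, and the negative terms you discard, namely $-\beta|D\g|^2f^{-\beta-1}\bigl(\sum_iF^{ii}-\max_iF^{ii}\bigr)$, dominate $(\a+\delta+\beta-1)|D\g|^2F^{-\beta}$ only under an extra structural condition such as $\a+\delta+\beta-1<\beta F^{-1}\sum_{i\ne i_0}F^{ii}$, which is not assumed. Third, the bound you need on the speed, namely $\rho f(\k)\ge c>0$ uniformly up to $T^*$, does not follow from Lemmas \ref{l5.5} and \ref{l5.6}: those constants depend on the slab $r_1\le\rho\le r_2$ and degenerate as $\rho\to\infty$; the correct scale-invariant statement is Lemma \ref{l6.3} for the normalized flow, which in the paper's logical order comes after the present lemma. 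I recommend replacing the parabolic argument by the two-line computation from Lemma \ref{l5.7} described above.
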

\begin{proof}
We deduce from (\ref{5.16}), (\ref{5.17}) and $\om=\frac{\rho}{u}$ that
\begin{align}
\om-1=&(\rho-u)u^{-1}=(\widetilde{\rho}-\widetilde{u})\widetilde{u}^{-1}\notag\\
=&(\widetilde{\rho}-1)\widetilde{u}^{-1}+(1-\widetilde{u})\widetilde{u}^{-1}\notag\\
\le& C_{22}\Theta^{-1},\notag
\end{align}
where we have used that $u(x,t)=\rho(x,t)$ when $x$ is an extremal point and $|\widetilde{\rho}-1|\le c\Theta^{-1}$. We mention that
$$D\widetilde{\g}=\frac{D\widetilde{\rho}}{\widetilde{\rho}}=\frac{D\rho}{\rho}=D\g.$$
 By (\ref{6.1}), we have
\begin{align*}
|D\widetilde{\g}|^2\le(\om+1)C_{22}\Theta^{-1}\le C_{23}\Theta^{-1}.
\end{align*}
Note that $\lim_{t\rightarrow T^*}\Theta^{-1}=0$. We have completed the proof by Lemma \ref{l6.1}.
\end{proof}
\textbf{Remark}: In fact, by Lemma \ref{l6.2}, we have the exponential convergence of the normalized flow. By $|D\widetilde{\g}|^2\le C_{23}\Theta^{-1}$, (\ref{5.3}) and (\ref{1.5}), we have
$$|D\widetilde{\rho}|^2\le C_{23}\widetilde{\rho}^2((\a+\delta+\beta-1)\eta T^*)^\frac{1}{1-\a-\delta-\beta}e^{-\eta\tau}\le ce^{-\eta\tau}.$$

Next we can derive the lower bound of $F$.
\begin{lemma}\label{l6.3}
	Let $\a,\delta\le0<\beta$, $1<\a+\beta+\delta$ and let $M(t)$ be a solution of the flow (\ref{1.7}). There exists a constant $C_{24}$ such that
	$$F\ge C_{24}>0.$$
\end{lemma}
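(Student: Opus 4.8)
The plan is to run the maximum-principle argument of Lemma~\ref{l5.6} on the auxiliary quantity $Q:=u^{\alpha-1}\rho^\delta F^{-\beta}$, now for the normalized flow (\ref{1.7}) in place of the original flow (\ref{1.1}). By the $C^0$ and $C^1$ estimates already in hand (Lemmas~\ref{l6.1} and~\ref{l6.2}) one has $0<c\le u\le\rho\le C$, $1\le\om\le C$ and $|\nn\rho|^2=1-\om^{-2}\le1$; hence the desired bound $F\ge C_{24}>0$ is equivalent to an \emph{upper} bound $Q\le C$. Moreover the leaves of (\ref{1.7}) are strictly convex on $[0,T^*)$, since they differ only by a rescaling from the leaves of (\ref{1.1}), which are strictly convex by Theorem~\ref{t5.3}; this convexity will be used exactly as in Lemma~\ref{l5.6}.

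The first step is the evolution equation of $Q$ along (\ref{1.7}). Writing the speed as $\Psi G-\eta u=u(Q-\eta)$, one repeats the computation (\ref{5.5})--(\ref{5.7}): the contribution of $uQ$ is identical to that in Lemma~\ref{l5.6}, while the extra contribution of $-\eta u$ in $\p_tu$, $\p_t\rho$ and $\p_tF$ collapses, after the identity $F^j_i(\nn^i\nn_ju+u(h^2)^i_j)=\langle X,\nn F\rangle+F$ (from Lemma~\ref{l2.2} and Euler's identity for $F$), into the zeroth-order term $-(\alpha+\delta+\beta-1)\eta Q$ together with first-order terms $\eta\big((\alpha-1)u^{-1}\langle X,\nn u\rangle-\beta F^{-1}\langle X,\nn F\rangle\big)Q$. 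Two features make this harmless: the second-order operator $\cL=\p_t-\beta\tfrac{uQ}{F}F^j_i\nn^i\nn_j$ is unchanged, and, because $\alpha+\delta+\beta>1$, the new zeroth-order term carries a favorable (negative) sign.

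Now set $\theta=\log Q+N\rho$ and evaluate $\cL\theta$ at a spatial maximum. Using $\nn\theta=0$, i.e. $\nn Q=-NQ\,\nn\rho$ and $\beta F^{-1}\nn F=(\alpha-1)u^{-1}\nn u+(\delta\rho^{-1}+N)\nn\rho$, every term carrying the factor $\langle X,\nn u\rangle$ (the only first-order quantity not controlled by $|\nn\rho|\le1$, since $\langle X,\nn u\rangle=h^{ik}\nn_i\Phi\nn_k\Phi$ involves the curvature) cancels — both in the part inherited from Lemma~\ref{l5.6} and in the new $\eta$-part, where the cancellation leaves only the bounded remainder $-(\alpha+\delta+\beta-1)\eta-\eta(\delta\rho^{-1}+N)\langle X,\nn\rho\rangle$. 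The concavity of $f$, via (\ref{3.24}), discards the $f^{ij,mn}\,\nn h\,\nn h$ term; strict convexity of the leaves makes $-2N\beta\,\tfrac QF\,F^j_i\nn^i\rho\,\nn_ju\le0$ as in Lemma~\ref{l5.6}; and the Hessian term of $N\cL\rho$, computed from (\ref{2.16}), contributes $-N\beta\tfrac{uQ}{F}F^i_i\big(\tfrac1{\rho\om^2}-N|\nn\rho|^2\big)$, which for $N<(\rho_{\max}\om^2_{\max})^{-1}$ and $\sum_iF^i_i\ge1$ is bounded above by $-c\,Q/F$ with $c>0$. Since $Q/F$ is of order $Q^{1+1/\beta}$ while all remaining terms are $O(Q)+O(1)$, we get $\cL\theta<0$ once $Q$ is large; the maximum principle then bounds $\theta$, hence $Q$, from above, giving $F\ge C_{24}>0$ with $C_{24}$ depending only on $\alpha,\delta,\beta$ and $M_0$. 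The one genuinely new point to check with care is the bookkeeping of the $\eta$-terms and, in particular, the cancellation of $\langle X,\nn u\rangle$; the rest is verbatim Lemma~\ref{l5.6}.
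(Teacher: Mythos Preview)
Your proposal is correct and follows essentially the same route as the paper: use $\theta=\log Q+N\rho$ with $Q=u^{\alpha-1}\rho^\delta F^{-\beta}$, track the extra $-\eta u$ in the speed, and apply the maximum principle exactly as in Lemma~\ref{l5.6}. The paper packages the bookkeeping slightly differently, writing the evolution inequality for $\theta$ with $Q$ replaced by $Q-\eta$ in the zeroth-order places,
\[
(\p_t-\beta\tfrac{uQ}{F}F^j_i\nn^i\nn_j)\theta\le(\alpha+\delta+\beta-1+N\rho)(Q-\eta)+(\alpha-1)NQ\rho|\nn\rho|^2+(\delta+N\rho)(Q-\eta)|\nn\rho|^2+\cdots,
\]
but this is equivalent to your separate tracking of the $\eta$-contributions.

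One small correction: your reference to ``the concavity of $f$, via (\ref{3.24}), discards the $f^{ij,mn}\,\nn h\,\nn h$ term'' is out of place here. No such term ever appears in this computation; the evolution of $Q$ goes through $\p_tF=F^j_i\p_th^i_j$ and never requires second derivatives of $F$ in the curvature argument. (That device is used in the $C^2$ estimate, Lemma~\ref{l3.6}, not here.) This does not affect your argument, since you are merely discarding a term that was never present, but you should remove that sentence.
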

\begin{proof}
The proof is similar to the proof of Lemma \ref{l5.6}. The auxiliary functions, etc. are the same as Lemma \ref{l5.6}. The calculation is similar to Theorem \ref{t5.3}. We directly give the evolution equation of $\theta$.
\begin{equation}
	\begin{split}
		(\p_t-\beta\frac{uQ}{F}F^{j}_i\nn^i\nn_j)\theta
		\le&(\alpha+\delta+\beta-1+N\rho)(Q-\eta)+(\a-1)NQ\rho|\nn\rho|^2\\
		&+(\delta+N\rho)(Q-\eta)|\nn\rho|^2-\frac{2\beta NQ}{F}F^{j}_i\nn^i\rho\nn_ju\\
		&+\frac{\beta uQ}{F}(N^2F^{j}_i\nn^i\rho\nn_j\rho-N(\frac{1}{\rho\om^2}F^{i}_i-\frac{u}{\rho}F)),
	\end{split}
\end{equation}
The rest is the same as Lemma \ref{l5.6}.
\end{proof}
Finally we will deduce the $C^2$-estimate of normalized flow (\ref{1.7}). Note that we don't have the upper bound of $F$ at this time. Therefore we need to classify all items in the evolution equations with respect to $F$.
\begin{lemma}\label{l6.4}
	Let $\alpha,\delta\leq0<\beta$, and $X(\cdot,t)$ be a smooth, closed and strictly convex solution to the flow (\ref{1.7}) which encloses the origin for $t\in[0,T)$. Then there is a positive constant $C_{25}$ depending on the initial hypersurface and $\alpha,\delta,\beta$,  such that the principal curvatures of $X(\cdot,t)$ are uniformly bounded from above $$\k_i(\cdot,t)\leq C_{25} \text{ \qquad } \forall1\le i\le n,$$
	and hence, are compactly contained in $\Gamma_+$, in view of Lemma \ref{l6.3}.
\end{lemma}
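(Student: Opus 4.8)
The plan is to follow the template of Lemma \ref{l3.6} and Theorem \ref{t5.3}, adapting the argument to the normalized flow (\ref{1.7}) in the regime $\alpha,\delta\le 0<\beta$, $\alpha+\delta+\beta>1$, where crucially we no longer have an upper bound on $F$ (only the lower bound from Lemma \ref{l6.3}). First I would record the evolution equation for $h_1^1$ obtained from (\ref{x1}), together with the evolution equations for $u$ and $\rho$ of the type (\ref{3.21}), (\ref{3.22}); these are identical in form to the ones in Lemma \ref{l3.6} because the normalized flow (\ref{1.7}) is the same equation used there. Then I would introduce the same auxiliary function $\theta=\log h_1^1+p(u)+N\rho$ with $p(u)=-\log(u-\tfrac12\min u)$ and $1+p'u<0$, and the same operator $\mathcal L$ from (\ref{3.17}), localize at a spatial-temporal maximum $(x_0,t_0)$, reduce $W$ to the scalar $h_1^1$ by the parallel-vector-field trick, and choose normal coordinates so that $h^i_j=\kappa_i\delta_{ij}$ with $\kappa_1\ge\cdots\ge\kappa_n$.

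The key point where I must be careful is the bookkeeping in powers of $F$. Since $\Psi G=\Psi f^{-\beta}$ and $\Psi f^{-\beta-1}f^{ij}(h^2)_{ij}$ appear, and since $F$ has no upper bound, I would keep every term grouped so that the dominant bad terms and the dominant good terms carry the same factor $\Psi f^{-\beta-1}f^{ij}g_{ij}$ or $\Psi f^{-\beta}$. The decisive structural facts are unchanged: the concavity inequality (\ref{3.24})–(\ref{3.25}) for $f$, the bound $H/n\ge f/\eta\ge C_7$ from \cite{UJ} Lemma 3.3 (which here, with $\Gamma=\Gamma_+$, also gives $\kappa_i>0$ eventually), the gradient bound $|D\gamma|\le C$ and $|\nabla\rho|^2=1-\omega^{-2}\le 1$ from Lemma \ref{l6.2} and Lemma \ref{l2.3}, the $C^0$ bound from Lemma \ref{l6.1}, and the coefficient sign conditions $\alpha\le 0$, $1-\alpha+\beta>2\beta>0$, $\alpha-\beta-1<-2\beta<0$, $\delta\le 0$. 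The first-derivative terms are handled exactly as in estimate (\ref{r1}): the quantity $\mathrm{I}$ defined there is $\le C(\alpha,\beta,\delta)\Psi G(\log\rho)_{;1}^2$ by Cauchy–Schwarz, and $(\log\rho)_{;1}$ is bounded. Then, just as in Lemma \ref{l3.6}, I would split into Case 1: $\kappa_n<-\varepsilon_1\kappa_1$ with $0<\varepsilon_1<\tfrac12$, where $f^{ij}(h^2)_{ij}\ge\tfrac1n\varepsilon_1^2\kappa_1^2 f^{ij}g_{ij}$ kills the leading term via $1+p'u<0$ and the gradient cross-term $2N\beta p'\Psi f^{-\beta-1}f^{ij}u_{;i}\rho_{;j}$ is controlled using $\kappa_i\ge C_7-(n-1)\kappa_1$ as in (\ref{3.30}); and Case 2: $\kappa_n\ge-\varepsilon_1\kappa_1$, where the concavity term (\ref{3.24}) combines with $f^{ij}\nabla_i(\log h_1^1)\nabla_j(\log h_1^1)$ (using $\nabla\theta=0$) to yield, as in (\ref{3.32})–(\ref{3.38}), an estimate dominated by $\beta(1+p'u)\Psi f^{-\beta-1}f^{11}\kappa_1^2$ plus lower-order-in-$\kappa_1$ terms and the strictly negative term $-N\beta\rho^{-1}\Psi f^{-\beta-1}f^{ij}g_{ij}\omega^{-2}$. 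In both cases $\mathcal L\theta<0$ at $(x_0,t_0)$ once $\kappa_1$ is large, giving the upper bound $\kappa_i\le C_{25}$.

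The main obstacle I anticipate is precisely the absence of the upper bound on $F$ that was available (via Lemma \ref{l3.3} / Corollary \ref{c3.5}) in the $\alpha+\delta+\beta\le1$ case: in Lemma \ref{l3.6} one freely wrote $\Psi G\le C$ in several places. Here I must instead verify that every place where such a bound was used either still holds because the lower bound of $F$ suffices (e.g. the term $(C-\delta)\Psi G/\rho^2$ in (\ref{3.18}) is bounded since $G=f^{-\beta}$ is bounded above by the lower bound on $f$, not below), or else is dominated by one of the manifestly negative leading terms carrying $f^{-\beta-1}f^{ij}g_{ij}\kappa_1^2$. Concretely, the terms $(\alpha-\beta-1)\Psi G\kappa_1$ and $\tfrac{\beta\Psi G}{F\kappa_1}(f^n_m)^s_r h^m_{n;1}h^r_{s;1}$ must be compared against $\beta(1+p'u)\Psi f^{-\beta-1}f^{11}\kappa_1^2<0$; the first is lower order in $\kappa_1$ relative to the leading term precisely because of the extra $f^{-1}$ weight, and the second is absorbed after using (\ref{3.24}) exactly as in the proof of Lemma \ref{l3.6}. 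Once the sign analysis is arranged so that no term secretly needs an upper bound on $F$, the maximum principle closes the argument and, combined with Lemma \ref{l6.3} and $\Gamma=\Gamma_+$, shows $\kappa_i$ stays in a compact subset of $\Gamma_+$ independent of $t$.
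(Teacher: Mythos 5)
Your strategy coincides with the paper's: the same test function $\theta=\log h_1^1+p(u)+N\rho$, the same operator $\cL$ from (\ref{3.17}), the same reduction of $W$ to $h_1^1$, the same two-case split on $\k_n$ versus $-\eps_1\k_1$, the treatment of first-derivative terms via (\ref{r1}) and of the concavity term via (\ref{3.24}), and the same guiding principle that, lacking an a priori upper bound on $F$, every contribution must be tracked together with its power of $f$ so that positive terms are absorbed by negative ones carrying the same weight. This is exactly how the paper argues (it groups the curvature-generated remainder as $G\big(c-(\beta-\a+1)\Psi h_{11}\big)$, which is negative for large $h_{11}$ no matter how small $G$ is).

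There is, however, one family of terms your grouping does not cover: those produced by the normalization $-\eta u$ in (\ref{1.7}), which carry \emph{no} power of $f$ at all. From (\ref{3.18}), $\cL h_1^1$ contains $+\eta h_{11}$, hence $\cL h_1^1/h_1^1$ contributes the constant $+\eta$; and $p'\cL u$ contributes $-p'\eta u>0$ from (\ref{3.21}). One is therefore left with the strictly positive, $f$-free constant $\eta(1-p'u)$. Before the $C^2$ estimate is established, $G=f^{-\beta}$ and $f^{-\beta-1}$ may be arbitrarily small, so neither your ``manifestly negative leading terms carrying $f^{-\beta-1}f^{ij}g_{ij}\k_1^2$'' nor the bracket $G\big(c-(\beta-\a+1)\Psi h_{11}\big)$ can dominate this constant; the same remark applies to any term you choose to estimate by a constant after invoking $\Psi G\le C$. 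The paper disposes of it using the other $f$-free negative term available, namely $-N\eta u\om$ coming from $N\cL\rho$ in (\ref{3.22}), by first fixing $N_0$ so large that $\eta(1-p'u)-N_0\eta u\om\le0$ (possible since $u\om=\rho$ is bounded below by Lemma \ref{l6.1} and $1-p'u$ is bounded by the $C^0$ estimates). Your plan already contains the ingredients — the $N\rho$ term in $\theta$ and the full expression for $\cL\rho$ — but as written it only exploits the $f^{-\beta-1}$-weighted negative piece of $N\cL\rho$; you must separate out and use the $-N\eta u\om$ piece as well. With that adjustment the maximum principle closes the argument exactly as you describe.
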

\begin{proof}
The proof is similar to the proof of Lemma \ref{l3.6}. The auxiliary functions, etc. are the same as Lemma \ref{l3.6}. We mention that $\a+\beta+\delta>1$ or $\a+\beta+\delta\le1$ are irrelevant to the proof of Lemma \ref{l3.6}. We only need to pay attention to whether $F$ has influence in this proof. By $C^0$ and $C^1$ estimate, we have
\begin{equation}
\begin{split}
\mathcal{L}\theta\le&\frac{\beta\Psi G}{F}F^{ij}(h^2)_{ij}+\beta\Psi f^{-\beta-1}f^{ij}(\nn_i(\log h_1^1)\nn_j(\log h_1^1)-p''\nn_iu\nn_ju)\\
&+\beta\frac{\Psi G}{F\k_1}(f^n_{m})^s_{r}h^m_{n;1}h^r_{s;1}-N\beta\frac{1}{\rho}\Psi f^{-\beta-1}f^{ij}g_{ij}\frac{1}{\om^2}+\eta(1-p'u)-N\eta u\om\\
&-N\eta\rho|\nn\rho|^2+G\(c-(\beta-\a+1)\Psi h_{11}\),
\end{split}
\end{equation}
which is negative by the calculation in Lemma \ref{l3.6} for large $\k_1$ after fixing $N_0$ large enough to ensure that
$$\eta(1-p'u)-N_0\eta u\om\le0.$$
Hence in this case any $N\geq N_0$ yields an upper bound for $\k_1$.

In conclusion, $\k_i\leq C_{25}$, where $C_{25}$ depends on the initial hypersurface, $\a$, $\delta$ and $\beta$. Since $F$ is uniformly continuous on the convex cone $\overline{\Gamma}_+$, and $F$ is bounded from below by a positive constant. Lemma \ref{l6.3} and Assumption \ref{a1.1} imply that $\k_i$ remains in a fixed compact subset of $\Gamma_+$, which is independent of $t$.
\end{proof}
\textbf{Remark}: Lemma \ref{l6.4} implies that
$$F\le C_{26}.$$
\begin{proof}
Follows immediately from the monotonicity and homogeneity of $F$.
\end{proof}
The estimates obtained in Lemma \ref{l6.1}, \ref{l6.2}, \ref{l6.3} and \ref{l6.4} depend on $\a$, $\delta$, $\beta$ and the geometry of the initial data $M_0$. They are independent of $T$. By Lemma \ref{l6.1}, \ref{l6.2}, \ref{l6.3} and \ref{l6.4}, we conclude that the equation (\ref{2.20}) is uniformly parabolic. By the $C^0$ estimate (Lemma \ref{l6.1}), the gradient estimate (Lemma \ref{l6.2}), the $C^2$ estimate (Lemma \ref{l6.4}) and the Krylov's and Nirenberg's theory \cite{KNV,LN}, we get the H$\ddot{o}$lder continuity of $D^2\rho$ and $\rho_t$. Then we can get higher order derivative estimates by the regularity theory of the uniformly parabolic equations. Hence we obtain the long time existence and $C^\infty$-smoothness of solutions for the normalized flow (\ref{1.7}). The uniqueness of smooth solutions also follows from the parabolic theory. In summary, we have proved the following theorem.
\begin{theorem}\label{t6.5}
	Let $M_0$ be a smooth, closed and strictly convex hypersurface in $\mathbb{R}^{n+1}$, $n\geq2$, which encloses the origin. If $\alpha,\delta\leq0<\beta,\a+\beta+\delta\ge1$, the normalized flow (\ref{1.7}) has a unique smooth, closed and strictly convex solution $M_t$ for all time $t\geq0$. Moreover, the radial function of $M_t$ satisfies the a priori estimates
	$$\parallel\rho\parallel_{C^{k,\beta}(\mS^n\times[0,\infty))}\leq C,$$
	where the constant $C>0$ depends only on $k,\a,\beta$ and the geometry of $M_0$.
\end{theorem}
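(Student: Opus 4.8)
The plan is to obtain Theorem~\ref{t6.5} by assembling the a priori estimates already established in this section and then invoking the standard regularity theory for uniformly parabolic equations. First I would recall that, writing $M_t$ as a radial graph of $\rho(\cdot,t)$ over $\mS^n$, the normalized flow (\ref{1.7}) is equivalent to the scalar quasilinear equation (\ref{2.20}) for $\rho$; since $f$ satisfies Assumption~\ref{a1.1}, the matrix $[F^j_i]$ is positive definite, so (\ref{2.20}) is parabolic, and the usual short-time existence theory for quasilinear parabolic equations on the compact manifold $\mS^n$ produces a unique smooth solution on a maximal time interval $[0,T)$ with $0<T\le\infty$. It then remains to show $T=\infty$ and to produce estimates uniform in $t$.

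For $\alpha,\delta\le0<\beta$ with $\alpha+\beta+\delta>1$, on any $[0,T')\subset[0,T)$ I would collect the $C^0$ bound $C_{21}^{-1}\le\rho\le C_{21}$ from Lemma~\ref{l6.1}; the gradient bound $|D\gamma|\le C_{23}$ (hence a bound on $|D\rho|$ and on $\omega$) from Lemma~\ref{l6.2}; the lower bound $F\ge C_{24}>0$ from Lemma~\ref{l6.3}; and the curvature bound $\kappa_i\le C_{25}$ from Lemma~\ref{l6.4}, which together with Lemma~\ref{l6.3} confines the principal curvatures to a fixed compact subset of $\Gamma_+$ and, by the Remark after Lemma~\ref{l6.4}, also gives $F\le C_{26}$. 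The key point to stress is that all these constants depend only on $\alpha,\delta,\beta$ and the geometry of $M_0$, not on $T'$. Consequently the eigenvalues $\partial f/\partial\kappa_i$ of $[F^j_i]$ are pinched between two positive constants, so (\ref{2.20}) is uniformly parabolic with bounded coefficients and the full $C^2$-norm of $\rho$ is controlled on $[0,T)$.

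With uniform parabolicity in hand, the Krylov--Safonov Harnack inequality and the Evans--Krylov theorem for concave fully nonlinear parabolic equations \cite{KNV,LN} apply --- using the concavity of $f$ from Assumption~\ref{a1.1} --- to give H\"older continuity of $D^2\rho$ and $\rho_t$, and then parabolic Schauder theory together with a bootstrap yields higher-order derivative estimates, all uniform in $t$; this is exactly the argument used in the proof of Theorem~\ref{t3.7}. In particular $\parallel\rho\parallel_{C^{k,\beta}(\mS^n\times[0,\infty))}\le C$ with $C$ depending only on $k,\alpha,\beta,\delta$ and $M_0$. Since these bounds are independent of $T$, short-time existence allows the solution to be continued past any finite $T$, whence $T=\infty$; strict convexity persists because $\kappa$ stays in $\Gamma_+$; and uniqueness follows from the parabolic theory. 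The boundary exponent $\alpha+\beta+\delta=1$ is covered as well: then $\beta\le1-\alpha-\delta$, so the long-time existence and estimates of Section~3 apply (a strictly convex hypersurface is in particular star-shaped), while the curvature estimate of Lemma~\ref{l6.4}, whose proof does not use $\alpha+\beta+\delta>1$, shows that strict convexity is preserved.

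Since the genuinely hard estimates --- notably the curvature bound of Lemma~\ref{l6.4}, obtained with no a priori upper bound on $F$, and the reflection-principle $C^0$ bound behind Lemma~\ref{l6.1} --- are already in place, I expect no essentially new difficulty. The only points needing care are bookkeeping ones: checking that every constant above is genuinely $T$-independent so that the estimates glue into a uniform bound on the whole half-line, verifying that the Evans--Krylov hypotheses are met by the particular scalar form of (\ref{2.20}) (this is where the concavity of $f$ enters), and handling the degenerate exponent $\alpha+\beta+\delta=1$ by cross-referencing Section~3 rather than redoing the blow-up analysis of Section~5.
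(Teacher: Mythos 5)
Your proposal is correct and follows essentially the same route as the paper: collect the $T$-independent estimates of Lemmas \ref{l6.1}--\ref{l6.4}, conclude uniform parabolicity of (\ref{2.20}), apply Krylov--Nirenberg/Evans--Krylov plus Schauder bootstrap for the $C^{k,\beta}$ bounds, and continue the solution past any finite time. Your explicit treatment of the borderline exponent $\alpha+\beta+\delta=1$ by falling back on the Section~3 estimates is a point of care the paper's own proof leaves implicit, but it is not a different argument.
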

\begin{proof of theorem 1.3}
By Lemma \ref{l6.2}, we have that $\vert D\rho\vert\to0$ exponentially as $t\to\infty$. Hence by the interpolation and the a priori estimates, we can get that $\rho$ converges exponentially to $1$ in the $C^\infty$ topology as $t\to\infty$.
\end{proof of theorem 1.3}

\section{Proof of Theorem \ref{xt1.4}}

In this section, we consider the flow (\ref{x1.9}). We study a convex hypersurface parametrized by the inverse Gauss map in this section. For convenience we still use $t$ instead of $\tau$ to denote the time variable if no confusions arise.

We shall derive the $C^0$ and $C^1$ estimates of the normalized flow (\ref{x1.9}). Firstly we derive an important lemma.
\begin{lemma}\label{l7.3}
Let $n\ge2$, $1\le k\le n$, $0<\psi\in C^\infty(\mS^n)$, $\a+\delta+\beta\le1$, $\beta>0$ and $u_0\in C^\infty(\mS^n)$ be positive and uniformly convex. Let $u(\cdot,t)$ be the solution of the normalized flow (\ref{x1.9}) for $t\in[0,T)$. Then there is a positive constant $C_{27}$ depending on the initial hypersurface and $\psi,\a,\delta,\beta,$ such that
$$C_{27}^{-1}\le u^{\a-1}\rho^{\delta}\sigma_{k}^\frac{\beta}{k}\le C_{27}.$$
\end{lemma}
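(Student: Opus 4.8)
The plan is to mimic the structure of Lemma~\ref{l3.3}, which established two-sided bounds for the quantity $u^{\a-1}\rho^\delta F^{-\beta}$ along the normalized flow in the first part of the paper; here the curvature function is $f(\k)=(\sigma_n/\sigma_{n-k})^{1/k}(\k)=\sigma_k^{-1/k}(\l)$, so that $f^{-\beta}=\sigma_k^{\beta/k}(\l)$ and the relevant quantity is exactly $Q:=u^{\a-1}\rho^\delta\sigma_k^{\beta/k}(\l)$. First I would set up the parabolic equation satisfied by $Q$ along the normalized flow (\ref{x1.9}). Recall that along (\ref{x1.9}) the support function satisfies $\p_\tau u=\psi u^\a\rho^\delta\sigma_k^{\beta/k}(\l)-\eta u$ with $\eta=(C_n^k)^{\beta/k}$, and $\rho=\sqrt{u^2+|Du|^2}$, while $\l_i$ are the eigenvalues of $[D^2u+u\,\mathrm{I}]$. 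Differentiating the evolution equation in the spirit of (\ref{5.6})--(\ref{5.7}) (using $\p_\tau b_{ij}=D_iD_j(\p_\tau u)+(\p_\tau u)\delta_{ij}$ and the homogeneity of $\sigma_k^{\beta/k}$ of degree $\beta$ in $\l$), one obtains a uniformly parabolic equation for $Q$ of the schematic form
\begin{equation*}
\p_\tau Q = a^{ij}D_iD_jQ + b^iD_iQ + (\a+\delta+\beta-1)\eta\,Q + R,
\end{equation*}
where $a^{ij}=\beta\frac{uQ}{\psi}\,\dot\sigma_k^{ij}$ is positive definite (using uniform convexity and $\sigma_k>0$), $b^i$ is a first-order coefficient involving $Du$, and $R$ collects the lower-order inhomogeneous terms coming from the $\rho$-factor and from $-\eta u$; crucially the leading zero-order coefficient carries the sign of $\a+\delta+\beta-1\le0$.

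The key step is then the maximum principle argument. At a spatial maximum (resp. minimum) of $Q$ one has $D_iQ=0$ and $D^2Q\le0$ (resp. $\ge0$), so the second- and first-order terms drop out or have favorable sign, leaving an ODE-type differential inequality for $Q_{\max}(\tau)$ (resp. $Q_{\min}(\tau)$). I would isolate the term $\eta u\cdot(\text{something})$ carefully: rewriting $R$ in terms of $Q$ itself and the already-controlled quantities, one should arrive at something like $\frac{d}{d\tau}Q_{\max}\le c_1 Q_{\max}(Q_{\max}^{(\a+\delta+\beta-1)/\beta\cdot(\dots)}-c_2)$ — more precisely, the structure should force $Q$ to be pushed toward the value $\eta$ (the soliton value) from both sides, exactly as $\gamma_t$ was pushed toward $0$ (equivalently $u^{\a-1}\rho^\delta F^{-\beta}$ toward $\eta$) in Lemma~\ref{l3.3}. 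The sign condition $\a+\delta+\beta\le1$ is precisely what makes this zero-order feedback term restoring rather than destabilizing. Since one does not yet have $C^0$ bounds on $u$ here (unlike in Section~3, where Lemma~\ref{l3.1} was available first), I would either (a) first establish that $Q$ bounded implies, via the flow equation and comparison with spheres as in Corollary~\ref{c5.2}, that $u$ stays in a compact interval, bootstrapping the two estimates together; or (b) incorporate the $C^0$ bound into the same maximum-principle computation by working with an auxiliary function such as $Q + N u$ or $\log Q + p(u)$ analogous to (\ref{3.12}), so that the $-\eta u$ term is exploited directly. Approach (b) is cleaner and parallels the paper's method: the bound on $Q$ and the bound on $u,\rho$ are genuinely coupled for $k<n$ because $\rho$ appears in $Q$ with a nontrivial power $\delta$.

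The main obstacle I anticipate is handling the $\rho^\delta$ factor when $\delta\neq0$ and $k<n$. In the $k=n$ case (Gauss curvature) one has the clean identity (\ref{2.26}), $\frac{\p_\tau\rho}{\rho}(\xi)=\frac{\p_\tau u}{u}(x)$, which lets $\rho$ and $u$ be treated symmetrically; for $k<n$ this identity still holds (it only uses the relation between the radial and Gauss parametrizations), so the evolution of $\rho$ along the flow is $\p_\tau\rho = \rho(\psi u^{\a-1}\rho^\delta\sigma_k^{\beta/k}-\eta)$, and one can use it to control the $\rho$-terms in $R$. The delicate point is that the first-order term $b^iD_iQ$ and the terms generated by differentiating $\rho^\delta=(u^2+|Du|^2)^{\delta/2}$ involve $D^2u$, hence $\sigma_k$-derivatives, and must be absorbed into $a^{ij}D_iD_jQ$ using the ellipticity and concavity of $\sigma_k^{1/k}$ together with a Cauchy--Schwarz splitting exactly as in the estimate (\ref{r1}) for term $\Rmnum{1}$ in Lemma~\ref{l3.6}. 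Once that absorption is carried out, the sign condition $\a+\delta+\beta\le1$ (with $\a\le0$, so $\alpha(\alpha-1)\ge0$ and the Cauchy--Schwarz constants are admissible) closes the argument and yields the two-sided bound $C_{27}^{-1}\le Q\le C_{27}$ with $C_{27}$ depending only on $M_0,\psi,\a,\delta,\beta$.
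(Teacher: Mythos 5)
Your overall strategy --- derive a parabolic equation for $Q=\psi u^{\a-1}\rho^\delta\sigma_k^{\beta/k}$ and apply the maximum principle, exploiting the sign of $\a+\delta+\beta-1$ --- is exactly the paper's. However, the two obstacles you spend most of the proposal on are not real, and the machinery you reach for to handle them would be a detour. In the support-function parametrization the flow reads $\p_tu=(Q-\eta)u$ \emph{exactly}, so $\p_t\rho=\rho^{-1}\bigl(u\,\p_tu+u_k(\p_tu)_k\bigr)=(Q-\eta)\rho+\rho^{-1}uu_kQ_k$: the $\rho^\delta$ factor never produces spatial second derivatives of $u$, because only a time derivative of $\rho$ is ever taken, not $D^2\rho$. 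Likewise $\sigma_k^{ij}\bigl((\p_tu)_{ij}+(\p_tu)\delta_{ij}\bigr)=u\sigma_k^{ij}Q_{ij}+2\sigma_k^{ij}Q_iu_j+(Q-\eta)k\sigma_k$ by homogeneity. Summing the three contributions one gets the closed equation
\begin{equation*}
\p_tQ=(\a+\delta+\beta-1)(Q-\eta)Q+\frac{\beta uQ}{k\sigma_k}\sigma_k^{ij}Q_{ij}+\frac{2\beta Q}{k\sigma_k}\sigma_k^{ij}Q_iu_j+\frac{\delta uQ}{\rho^2}u_kQ_k,
\end{equation*}
in which every term except the first vanishes or has a favorable sign at a spatial extremum of $Q$. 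Hence $\frac{d}{dt}Q_{\max}\le(\a+\delta+\beta-1)(Q_{\max}-\eta)Q_{\max}$, which is nonpositive whenever $Q_{\max}\ge\eta$, and symmetrically for $Q_{\min}$; the two-sided bound follows at once from $\a+\delta+\beta-1\le0$. No prior $C^0$ estimate, no auxiliary function $Q+Nu$ or $\log Q+p(u)$, and no Cauchy--Schwarz absorption is needed; in particular the condition $\a\le0$ you invoke to make the Cauchy--Schwarz constants admissible is not a hypothesis of the lemma and must not be smuggled in. As written, your proposal leaves the decisive point unverified: that the remainder $R$ consists only of terms proportional to $DQ$ and $D^2Q$, and that the zero-order coefficient is precisely $(\a+\delta+\beta-1)(Q-\eta)$ rather than $(\a+\delta+\beta-1)\eta$ plus uncontrolled lower-order junk. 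That cancellation is the entire content of the lemma, so you should carry out the computation above explicitly rather than gesture at its expected structure.
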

\begin{proof}
Consider the auxiliary function
$$Q=\psi u^{\a-1}\rho^{\delta}\sigma_{k}^\frac{\beta}{k}.$$
It suffices to prove that $Q$ has a uniform bound. Firstly,
$$(\psi u^{\a}\rho^{\delta}\sigma_{k}^\frac{\beta}{k})_{ij}=Q_{ij}u+Q_iu_j+Q_ju_i+Qu_{ij}.$$
Note that
\begin{equation*}
\p_t\rho=\p_t\sqrt{u^2+|Du|^2}=\dfrac{u(Qu-\eta u)+u_k(Qu-\eta u)_k}{\rho}=\rho (Q-\eta)+\dfrac{uu_kQ_k}{\rho}.
\end{equation*}

 we get
\begin{equation}\label{x7.1}
\begin{split}
\p_tQ=&(\a-1)\frac{Q}{u}\p_tu+\delta\frac{Q}{\rho}\p_t\rho+\frac{\beta Q}{k\sigma_{k}}\sigma_{k}^{ij}\((\p_tu)_{ij}+\p_tu\delta_{ij}\)\\
=&(\a+\delta-1)(Q-\eta)Q+\dfrac{\delta uQ}{\rho^2}u_kQ_k\\
&+\frac{\beta Q}{k\sigma_{k}}\sigma_{k}^{ij}(Q_{ij}u+Q_iu_j+Q_ju_i+(Q-\eta)h_{ij})\\
=&(\a+\delta+\beta-1)(Q-\eta)Q+\frac{\beta uQ\sigma_{k}^{ij}}{k\sigma_{k}}Q_{ij}+2\frac{\beta Q}{k\sigma_{k}}\sigma_{k}^{ij}Q_iu_j+\dfrac{\delta uQ}{\rho^2}u_kQ_k.
\end{split}
\end{equation}
If $\a+\delta+\beta-1=0$, by the strong maximum principle we can derive $C_{27}^{-1}\le Q\le C_{27}.$
If $\a+\delta+\beta-1<0$, the sign of the coefficient of the highest order term $Q^2$ is negative and the sign of the coefficient of the lower order term $Q$ is positive. So by the maximum principle we can derive $C_{27}^{-1}\le Q\le C_{27}.$
\end{proof}

Next we give an interesting lemma, although it is not used in this paper if $1\le k<n$. We hope this lemma can be widely used to prove $C^0$ estimates.
\begin{lemma}\label{l7.4}
Let $n\ge2$, $\beta>0$, $0<\psi\in C^\infty(\mS^n)$, Assume

(\rmnum{1}) $1\le k\le n-1$, $\a+\delta+\beta<1$ or $1-\delta-\beta=\a<1$;

or (\rmnum{2}) $k=n$, $\a+\delta+\beta<1$ or $1-\delta-\beta=\a\ne\frac{\beta}{n}+1$.

Let $u_0\in C^\infty(\mS^n)$ be positive and uniformly convex and $u(\cdot,t)$ be the solution of flows (\ref{x1.8}) or (\ref{x1.9}) or (\ref{x1.13}) for $t\in[0,T)$. Then there is a positive constant $C_{28}$ depending on the initial hypersurface and $\psi,\a,\delta,\beta,k$ such that
$$\max_{\mS^n}\frac{\vert Du\vert}{u}(\cdot,t)\le C_{28}, \quad \forall t\in[0,T).$$
\end{lemma}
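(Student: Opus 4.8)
The plan is to bound the gradient quantity $w := \frac{|Du|^2}{u^2}$ (equivalently $\omega^2 - 1$, since $\omega = \rho/u = \sqrt{u^2+|Du|^2}/u$) by a parabolic maximum principle argument on $\mathbb S^n$, using the normalized flow equation $\partial_t u = \psi u^\alpha \rho^\delta \sigma_k^{\beta/k} - \eta u$ (and its unnormalized counterpart $\partial_t u = \psi u^\alpha \rho^\delta \sigma_k^{\beta/k}$, which differs only by the $-\eta u$ term). I would first record the evolution of $u$, $|Du|^2$, and $\rho = \sqrt{u^2+|Du|^2}$: writing $Q = \psi u^{\alpha-1}\rho^\delta \sigma_k^{\beta/k}$ (so $\partial_t u = (Q-\eta)u$ along the normalized flow), differentiating and commuting derivatives on the sphere via the Ricci identity $u_{;ijk} = u_{;ikj} + (\text{curvature terms with } e_{ij})$, one gets a parabolic equation for $w$ whose principal part is $\frac{\beta u Q}{k\sigma_k}\sigma_k^{ij} w_{;ij}$ — the same operator appearing in Lemma \ref{l7.3}. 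The key structural point, exactly as in the proofs of Lemma \ref{l3.4} and Lemma \ref{l6.2}, is that at a spatial maximum of $w$ the gradient terms $D w = 0$ force the troublesome first-order cross terms to drop out, and the zeroth-order terms come with a favorable sign precisely when $\alpha+\delta+\beta \le 1$.

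The order of steps I would carry out: (1) compute $\partial_t w$ along the flow, using $\partial_t u = (Q-\eta)u$, $\partial_t u_{;i} = ((Q-\eta)u)_{;i}$, and hence $\partial_t|Du|^2 = 2u_{;i}((Q-\eta)u)_{;i}$; (2) express everything through the operator $\mathcal{L} := \partial_t - \frac{\beta u Q}{k\sigma_k}\sigma_k^{ij}\nabla_{ij}^2$ plus a controlled drift term, so that the Laplacian-type term in $Q_{;ij}$ (which carries third derivatives of $u$) is absorbed; (3) at an interior spatial maximum $(x_0,t_0)$ of $w$, use $Dw=0$ to eliminate the cross terms $\sigma_k^{ij} w_{;i} u_{;j}$ and related expressions, and use $D^2 w \le 0$ together with the positive-definiteness of $[\sigma_k^{ij}]$ (valid since the hypersurface is uniformly convex, so $\lambda \in \Gamma_k$); (4) read off that $\mathcal{L} w \le C_1 w - C_2 w^2$ or $\mathcal{L}w \le C w(1-\text{something})$ with the sign of the leading coefficient controlled by $\alpha+\delta+\beta-1 \le 0$, which via the maximum principle yields $w \le C_{28}$. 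Here the $C^0$ bounds on $u$ and $\rho$ — which in the setting of this lemma are available from Lemma \ref{l7.3} combined with the expanding nature of the flow (or directly for the flow \eqref{x1.13}) — and the homogeneity-one normalization $\sigma_k^{\beta/k}(1,\dots,1)$-type bounds from Lemma \ref{l7.3} are what make the coefficients $C_1, C_2, C$ uniform.

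The main obstacle will be handling the $k=n$ borderline case $\alpha = 1-\delta-\beta \ne \frac{\beta}{n}+1$ and, more generally, the equality case $\alpha+\delta+\beta = 1$, where the $w^2$ damping term degenerates and one cannot simply appeal to $\alpha+\delta+\beta-1<0$. In that case I expect to need the extra algebraic room coming from $\alpha \ne \frac{\beta}{n}+1$ (i.e.\ $p \ne 0$ in the notation of \eqref{x1.15}): one tracks the coefficient of $w$ more carefully, using that along the normalized flow $Q$ is pinched between two positive constants (Lemma \ref{l7.3}) and that $\eta$ enters with a definite sign, so that the net zeroth-order coefficient of $w$ in $\mathcal{L}w \le (\cdots) w$ is still strictly negative for $w$ large — this is the analogue of the dichotomy used in Lemma \ref{l3.4} versus Lemma \ref{l6.2}. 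A secondary technical point is the correct bookkeeping of the spherical curvature (Ricci) terms generated when commuting $\nabla^2$ past $\sigma_k^{ij}$ in the third-derivative term $\sigma_k^{ij} Q_{;ij}$; these are lower order and absorb into the constants, but must be written out once to confirm no bad sign appears. For $1 \le k < n$ with $\alpha+\delta+\beta<1$ strict, none of this is needed and the argument closes immediately from step (4).
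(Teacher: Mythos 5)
Your overall framework --- a maximum-principle argument for the scale-invariant quantity $|Du|^2/u^2$ at its spatial maximum --- matches the paper's, which works with $\theta=\log u$ and $Q=\tfrac12|D\theta|^2$ and, rather than assembling the full parabolic operator $\mathcal L$, simply divides $\partial_tQ$ by $\theta_t>0$ along the unnormalized flow (\ref{x1.8}) (legitimate since $|Du|/u$ is scaling-invariant), so that no bounds on $u$, $\rho$ or $\sigma_k$ are needed to make the coefficients uniform. But there are two genuine gaps. First, your mechanism for the equality case $\alpha+\delta+\beta=1$ does not work. You propose to extract the missing negativity from the $-\eta u$ term and the pinching of $Q$; however $\partial_t\log u=Q-\eta$, so the constant $\eta$ is annihilated the moment you take a spatial derivative and contributes nothing to the evolution of $|Du|/u$ --- and in any case the lemma must hold for the unnormalized flow (\ref{x1.8}), where there is no $\eta$ at all. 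The actual argument in the borderline case is different in kind: one retains the negative Hessian terms $-\frac{\beta}{k\sigma_k}\sigma_k^{ii}\theta_{ii}^2$ and $-\frac{\beta}{k\sigma_k}(\sum_i\sigma_k^{ii})\theta_1^2$ produced by the Ricci identity and the second-derivative test, converts them into powers of $\theta_1=|D\theta|$ via the scale-invariant pinching $C^{-1}\le u^{\alpha-1}\rho^\delta\sigma_k^{\beta/k}\le C$ of Lemma \ref{l7.3}, and then invokes Newton--Maclaurin ($\sigma_k^{ii}\ge c\,\sigma_k^{(k-1)/k}$) when $k<n$, respectively the product structure $\sigma_n^{ii}a_{ii}=\sigma_n$ when $k=n$. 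This is exactly where the hypotheses $\alpha<1$ (giving the exponent $2+\delta/\beta>1$) and $\alpha\ne\frac{\beta}{n}+1$ (i.e.\ $\delta\ne-\frac{(n+1)\beta}{n}$, forcing the relevant exponents past $\tfrac12$) are consumed; your proposal never uses these hypotheses, which is the symptom of the missing step.

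Second, your reliance on $C^0$ bounds for $u$ and $\rho$ is both unnecessary and, for the flow (\ref{x1.13}), circular: in the paper the gradient-ratio estimate of this lemma is proved first, with no $C^0$ input (one works on the original expanding flow, where $u\to\infty$), and it is then fed into Lemma \ref{l8.6} to obtain the Harnack-type bound $\max_{\mS^n}u/\min_{\mS^n}u\le C$ and hence the $C^0$ estimates. A proof of the gradient bound that presupposes $C^0$ control would therefore not suffice to close the argument for Theorem \ref{xt1.5}.
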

\begin{proof}
Since our original flow (\ref{x1.8}) and the normalized flow (\ref{x1.9}) or (\ref{x1.13}) differs a time-dependent dilation, $\frac{\vert Du\vert}{u}$ is scaling-invariant. It suffices to show
$$\max_{\mS^n}\frac{\vert Du\vert}{u}(\cdot,t)\le C_{28},$$
where $u$ is the solution to original flow (\ref{x1.8}). Therefore we can also use this calculation in the next section.

Let $\theta=\log u$. By $h_{ij}=u_{ij}+u\delta_{ij}=u(\theta_{ij}+\theta_i\theta_j+\delta_{ij})$, it is straightforward to see
\begin{equation}\label{7.12}
\frac{\p\theta}{\p t}(x,t)=\psi e^{\iota\theta}(1+\vert D\theta\vert)^{\delta/2}\sigma_{k}^\frac{\beta}{k}([D_iD_j\theta+D_i\theta D_j\theta+\delta_{ij}]),
\end{equation}
where $\iota=\a+\delta+\beta-1$. We use the following notations for convenience
$$a_{ij}=D_iD_j\theta+D_i\theta D_j\theta+\delta_{ij}.$$
Note that, by our assumption, $\iota\le0.$

Let $Q=\frac{1}{2}\vert D\theta\vert^2$. Then we only need to derive the upper bound of $Q$. Suppose $Q(\cdot,t)$ attains its spatial maximum at some $x_t\in\mS^n$. Then at $(x_t,t)$,
\begin{align}
0=D_iQ=\sum\theta_k\theta_{ki},\label{7.13}\\
0\ge D^2_{ij}Q=\theta_{kij}\theta_k+\theta_{ki}\theta_{kj}.\label{7.14}
\end{align}
By a rotation, we assume $\theta_1=\vert D\theta\vert$. Then (\ref{7.13}) implies $\theta_{1k}=0,\forall k$. Therefore we can assume by a further rotation that $\{\theta_{ij}\}$ is diagonal at $(x_t,t)$, and so
$$\{a_{ij}\}=\text{diag}(1+\theta_1^2,1+\theta_{22},\cdots,1+\theta_{nn}).$$
We denote $\sigma_k^{ij}=\frac{\p\sigma_k}{\p a_{ij}}$. Then by (\ref{7.12}) and (\ref{7.13}) we have
\begin{equation}\label{7.15}
\begin{split}
\frac{\p_tQ}{\theta_t}=&\frac{\theta_k\theta_{kt}}{\theta_t}\\
=&\theta_k\(\frac{\psi_k}{\psi}+\iota\theta_k+\frac{\beta\sigma_k^{ij}}{k\sigma_k}\theta_{ijk}\).
\end{split}
\end{equation}
By the Ricci identity,$$D_k\theta_{ij}=D_j\theta_{ki}+\delta_{ik}\theta_j-\delta_{ij}\theta_k.$$
Substituting (\ref{7.14}) into (\ref{7.15}), we derive
\begin{equation}\label{7.16}
	\begin{split}
\frac{\p_tQ}{\theta_t}=&\theta_k\(\frac{\psi_k}{\psi}+\iota\theta_k+\frac{\beta\sigma_k^{ij}}{k\sigma_k}(D_j\theta_{ki}+\delta_{ik}\theta_j-\delta_{ij}\theta_k)\)\\
\le&C_{29}\theta_1+\iota\theta_1^2-\frac{\beta\sigma_k^{ii}}{k\sigma_k}\theta_{ii}^2+\frac{\beta\sigma_k^{11}}{k\sigma_k}\theta_1^2-\frac{\beta\sigma_k^{ii}}{k\sigma_k}\theta_1^2.
	\end{split}
\end{equation}
Note that
\begin{equation}\label{7.17}
\begin{split}
-\sigma_k^{ii}\theta_{ii}^2=&-\sum_{i>1}\sigma_k^{ii}(a_{ii}-1)^2\\
=&-\sum_{i>1}\sigma_k^{ii}a_{ii}^2+2\sum_{i>1}\sigma_k^{ii}a_{ii}-\sum_{i>1}\sigma_k^{ii}\\
\le&-\sum_{i>1}\sigma_k^{ii}a_{ii}^2+2k\sigma_k-\sum_{i>1}\sigma_k^{ii},
\end{split}
\end{equation}
\begin{equation}\label{7.18}
\begin{split}
\frac{\beta\sigma_k^{11}}{k\sigma_k}\theta_1^2\le\frac{\beta\sigma_k^{11}}{k\sigma_k}a_{11}\le \beta.
\end{split}
\end{equation}
Substituting (\ref{7.17}) and (\ref{7.18}) into (\ref{7.16}), we derive
\begin{equation}\label{7.19}
	\begin{split}
		\frac{\p_tQ}{\theta_t}
		\le&C_{29}(1+\theta_1)+\iota\theta_1^2-\frac{\beta}{k\sigma_k}\sum_{i>1}\sigma_k^{ii}a_{ii}^2-\frac{\beta}{k\sigma_k}\sum_{i>1}\sigma_k^{ii}-\frac{\beta\sigma_k^{ii}}{k\sigma_k}\theta_1^2.
	\end{split}
\end{equation}
When $\iota<0$, the RHS of (\ref{7.19}) is negative provided $\theta_1$ is sufficiently large. Since $\theta_t>0$, we deduce from (\ref{7.19}) that $Q$ has a upper bound.

 If $\iota=0$, note that at this case we have $C_{27}^{-1}\le u^{\a-1}\rho^\delta\sigma_{k}^\frac{\beta}{k}\le C_{27}$ by Lemma \ref{l7.3} and $u^{\a-1}\rho^\delta\sigma_{k}^\frac{\beta}{k}$ is scaling-invariant.

 (\rmnum{1}) $1\le k\le n-1$. By Lemma \ref{l7.3} we have $(1+\theta_1^2)^\frac{\delta}{2}\sigma_{k}^\frac{\beta}{k}\le C_{27}$. By Newton-Maclaurin inequality, we can get
 \begin{equation}\label{7.20}
\sigma_{k}^{ii}=\frac{C_n^k\times k}{C_n^{k-1}}\sigma_{k-1}\ge C(n,k)\sigma_{k}^{\frac{k-1}{k}}.
 \end{equation}
Substituting (\ref{7.20}) into (\ref{7.19}), we have
\begin{align*}
\frac{\p_tQ}{\theta_t}\le&C_{29}(1+\theta_1)-\frac{\beta}{k} C(n,k)\sigma_{k}^{-\frac{1}{k}}\theta_1^2\\
\le&C_{29}(1+\theta_1)-C_{30}(1+\theta_1^2)^\frac{\delta}{2\beta}\theta_1^2\\
\le&C_{29}(1+\theta_1)-C_{31}\theta_1^{2+\frac{\delta}{\beta}}.
\end{align*}
By $\a<1$ and $\delta=1-\a-\beta$, we have $2+\frac{\delta}{\beta}>1$. Thus, when $\iota=0$, one sees again that the RHS of (\ref{7.19}) is negative provided $\theta_1$ is sufficiently large.

(\rmnum{2}) $k=n$. By Lemma \ref{l7.3} we have $\frac{1}{C_{27}}\le(1+\theta_1^2)^\frac{\delta}{2}\sigma_{n}^\frac{\beta}{n}\le C_{27}$. Note that $\sigma_{n}^{ii}a_{ii}=\sigma_{n}$ for $\forall i$. We have
\begin{equation}\label{u}
\begin{split}
-\frac{\beta}{n\sigma_n}\sum_{i>1}\sigma_n^{ii}a_{ii}^2&=-\frac{\beta}{n}\sum_{i>1}a_{ii}\le-C(n,\beta)\(\prod_{i>1}a_{ii}\)^\frac{1}{n-1}=-C(n,\beta)\frac{\sigma_n^\frac{1}{n-1}}{a_{11}^\frac{1}{n-1}}\\
&\le-C(n,\beta)(1+\theta_1^2)^{-\frac{1}{n-1}(1+\frac{n\delta}{2\beta})},\\
-\frac{\beta\sigma_n^{ii}}{n\sigma_n}\theta_1^2&\le-\frac{\beta\sum_{i>1}\sigma_n^{ii}}{n\sigma_n}\theta_1^2=-\frac{\beta}{n}\sum_{i>1}\frac{1}{a_{ii}}\theta_1^2\le-C(n,\beta)\(\prod_{i>1}\frac{1}{a_{ii}}\)^\frac{1}{n-1}\theta_1^2\\
&=-C(n,\beta)\(\frac{a_{11}}{\sigma_{n}}\)^\frac{1}{n-1}\theta_1^2\le
-C(n,\beta)(1+\theta_1^2)^{1+\frac{1}{n-1}(1+\frac{n\delta}{2\beta})}.
\end{split}
\end{equation}
Note that $-\frac{1}{n-1}(1+\frac{n\delta}{2\beta})$ or $1+\frac{1}{n-1}(1+\frac{n\delta}{2\beta})$ are strictly larger than $\frac{1}{2}$ if $\delta\ne\frac{-(n+1)\beta}{n}$. By (\ref{u}), one sees again that the RHS of (\ref{7.19}) is negative provided $\theta_1$ is sufficiently large and the proof is completed.
\end{proof}
By the above lemmas, we can derive the $C^0$ and $C^1$ estimates.
\begin{lemma}\label{l7.5}
	Let $n\ge2$, $\a,\delta,\beta\in \mR^1$,  $\beta>0$ and $\a+\delta+\beta<1$, $k$ is an integer, $\psi>0$ is a smooth positive function on $\mS^n$. 
Let $u_0\in C^\infty(\mS^n)$ be positive and uniformly convex and $u(\cdot,t)$ be the solution of the normalized flow (\ref{x1.9}) for $t\in[0,T)$. Then there are positive constants $C_{31}$, $C_{32}$, $C_{33}$ depending on the initial hypersurface and $\psi,\a,\delta,\beta,k$ such that
	$$C_{31}^{-1}\le u\le C_{31},\quad \vert Du\vert\le C_{32},\quad C_{33}^{-1}\le\sigma_{k}\le C_{33}.$$
\end{lemma}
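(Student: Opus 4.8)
The statement collects the $C^0$, gradient, and curvature-quotient bounds for the normalized flow (\ref{x1.9}) in the regime $\a+\delta+\beta<1$, and I would assemble it directly from the three preceding lemmas together with a short comparison argument. The strategy is: (i) obtain the two-sided bound on $u$ by comparing with the spherical solutions of the normalized flow; (ii) obtain $|Du|\le C_{32}$ from Lemma \ref{l7.4}, which already gives $|Du|/u\le C_{28}$, combined with the upper bound on $u$ from step (i); (iii) obtain the two-sided bound on $\sigma_k$ by combining the bound $C_{27}^{-1}\le u^{\a-1}\rho^\delta\sigma_k^{\beta/k}\le C_{27}$ of Lemma \ref{l7.3} with the already established bounds on $u$ and $\rho$ (noting $\rho=\sqrt{u^2+|Du|^2}$, so $\rho$ is controlled from both sides once $u$ and $|Du|$ are).

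For step (i), the support function of a centered sphere of radius $r$ evolving under (\ref{x1.9}) satisfies the ODE $\dot r = \psi' r^{\a+\delta+\beta} - \eta r$ in the constant-$\psi$ idealization, or more precisely one compares with sub/super-solutions built from $r_{\pm}$ where one uses $\min_{\mS^n}\psi$ and $\max_{\mS^n}\psi$ in place of $\psi$. Since $\a+\delta+\beta-1<0$, the function $g(r)=cr^{\a+\delta+\beta}-\eta r$ is positive for small $r$ and negative for large $r$, so the ODE has a globally attracting fixed point and in particular $r(t)$ stays in a fixed compact subinterval of $(0,\infty)$ determined by $r(0)$, $\psi$, $\a,\delta,\beta,k$ (recall $\eta=(C_n^k)^{\beta/k}$). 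The maximum principle applied to the scalar parabolic equation for $u$ (equivalently for $\rho$, via the graph representation of Section 2) then yields $C_{31}^{-1}\le u\le C_{31}$; more precisely one runs the barrier argument at the spatial max and min of $u$ exactly as in the proof of Lemma \ref{l3.1}, using $\omega=1$, $u=\rho$ at those points and $\sigma_k^{\beta/k}\le\eta\,u^{-\beta}$ there by homogeneity and the Newton–Maclaurin / $f|_{\partial\Gamma}=0$ normalization.

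For step (ii), Lemma \ref{l7.4} applies under hypothesis $\a+\delta+\beta<1$ (its case with $\iota<0$), giving $|Du|\le u\cdot\max_{\mS^n}|Du|/u\le C_{31}C_{28}=:C_{32}$. Then $\rho=\sqrt{u^2+|Du|^2}$ satisfies $C_{31}^{-1}\le\rho\le\sqrt{C_{31}^2+C_{32}^2}$, so $\rho$ is bounded above and below by positive constants. For step (iii), rewrite Lemma \ref{l7.3} as $\sigma_k^{\beta/k}=Q\,\psi^{-1}u^{1-\a}\rho^{-\delta}$ with $C_{27}^{-1}\le Q\le C_{27}$; since $\psi$, $u$, $\rho$ are all bounded between positive constants and the exponents $1-\a$, $-\delta$ are fixed reals, $\sigma_k^{\beta/k}$ is squeezed between positive constants, and raising to the power $k/\beta>0$ gives $C_{33}^{-1}\le\sigma_k\le C_{33}$.

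The main obstacle, and the only point needing care, is step (i): one must verify that the barrier argument of Lemma \ref{l3.1} genuinely goes through for the normalized flow with a \emph{nonconstant} positive factor $\psi$, i.e.\ that at the spatial extrema of $u$ the inequality $\psi\,u^{\a-1}\rho^\delta\sigma_k^{\beta/k}\lessgtr\eta$ propagates correctly once $\psi$ is replaced by $\min\psi$ or $\max\psi$; this is where the sign condition $\a+\delta+\beta<1$ is used to guarantee the attracting fixed point, and where $\a\le 0$ (so that $u^{\a-1}$ has the right monotonicity) enters. The remaining steps are bookkeeping of already-proven estimates. I would therefore write the proof of Lemma \ref{l7.5} as: ``By comparing $u$ with spherical solutions of (\ref{x1.9}) and arguing as in the proof of Lemma \ref{l3.1}, we obtain $C_{31}^{-1}\le u\le C_{31}$. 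The bound $|Du|\le C_{32}$ then follows from Lemma \ref{l7.4} and the upper bound on $u$. Finally, $\rho=\sqrt{u^2+|Du|^2}$ is bounded between positive constants, and the two-sided bound on $\sigma_k$ follows from Lemma \ref{l7.3}.''
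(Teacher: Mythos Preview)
Your proof is correct and follows essentially the same scheme as the paper for steps (i) and (iii). Two remarks are in order.

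First, your concern about needing $\a\le 0$ in step (i) is unfounded: at a spatial extremum of $u$ one has $Du=0$, hence $\rho=u$ there, so the barrier inequality reads $\frac{d}{dt}u_{\min}\ge \eta u_{\min}\bigl(\psi(x_t)\,u_{\min}^{\a+\delta+\beta-1}-1\bigr)$ (and the reverse at the maximum). Only the sign of $\a+\delta+\beta-1<0$ matters; no separate monotonicity of $u^{\a-1}$ is invoked, and indeed Lemma \ref{l7.5} carries no restriction $\a\le 0$. (Also a slip: at the minimum the principal \emph{radii} satisfy $\l_i\ge u$, so $\sigma_k^{\beta/k}(\l)\ge \eta u^{\beta}$, not $\le \eta u^{-\beta}$; you have mixed the $\k$- and $\l$-conventions.)

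Second, for step (ii) the paper takes a shorter route than your appeal to Lemma \ref{l7.4}: since the hypersurface is uniformly convex, Lemma \ref{l2.5} gives $\max_{\mS^n}\rho=\max_{\mS^n}u$, and then $|Du|^2=\rho^2-u^2\le \rho_{\max}^2\le C_{31}^2$ directly. Your route via Lemma \ref{l7.4} is also valid under $\a+\delta+\beta<1$ and yields the same conclusion, but it is not needed here.
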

\begin{proof}
Let $u_{\min}(t)=\min_{x\in \mS^n}u(\cdot,t)=u(x_t,t)$. For fixed time $t$, at the point $x_t$,  we have $$D_iu=0 \text{ and } D^2_{ij}u\geq0.$$
Note that $h_{ij}=u_{ij}+u\delta_{ij}\geq u\delta_{ij}$, we have  $\sigma_k^\frac{\beta}{k}(h_{ij})\geq\eta u^\beta$, then (\ref{x1.9}) implies
$$\frac{d}{dt}u_{\min}\geq\eta u_{\min}(\psi(x_t)u_{\min}^{\alpha+\delta+\beta-1}-1).$$
Hence $u_{\min}\geq \min\{(\frac{1}{\min_{\mS^n}\psi})^\frac{1}{\alpha+\delta+\beta-1},u_{\min}(0)\}$. Similarly, we have the upper bound of $u$. By Lemma \ref{l2.5} and $\rho^2=u^2+|Du|^2$, we have the upper bound of $|Du|$. By Lemma \ref{l7.3} and the bounds of $u$ and $\rho$, we have the bounds of $\sigma_{k}$.
\end{proof}
Next we derive the uniform bounds of the principal curvature radii $\l_i$, which are the eigenvalues of matrix $[D^2u+u\Rmnum{1}]$. For $k=n$, the $C^2$ estimates for a class of inverse curvature flows  have been derived by  Lemma 8.2 in \cite{CL}.
\begin{lemma}\cite{CL}\label{l7.6}
Let $\beta>0$. Let $u(\cdot,t)$ be a positive, smooth and uniformly convex solution to
\begin{equation*}
\frac{\p u}{\p t}=\Psi(t,x,u,Du){\det}^\frac{\beta}{n}(D^2u+u\Rmnum{1})-\eta(t)u \qquad \text{on }\mS^n\times[0,T),
\end{equation*}
where $\Psi(t,x,u,Du):\mR_{\ge0}\times\mS^n\times\mR_{\ge0}\times\mR^n\rightarrow\mR_{\ge0}$ is a smooth function. Suppose that $\Psi(t,x,u,Du)>0$ whenever $u>0$. If
\begin{align*}
\frac{1}{C_0}\le u(x,t)\le C_0,\qquad &\forall(x,t)\in\mS^n\times[0,T),\\
\vert Du\vert(x,t)\le C_0,\qquad &\forall(x,t)\in\mS^n\times[0,T),\\
0\le\eta(t)\le C_0,\qquad&\forall t\in[0,T),
\end{align*}
for some constant $C_0>0$, then
\begin{equation*}
(D^2u+u\Rmnum{1})(x,t)\ge C^{-1}\Rmnum{1},\qquad \forall(x,t)\in\mS^n\times[0,T),
\end{equation*}
where $C$ is a positive constant depending only on $n,\beta,C_0,u(\cdot,0)$, $\vert\Psi\vert_{L^\infty(U)}$, $\vert1/\Psi\vert_{L^\infty(U)}$, $\vert\Psi\vert_{C^1_{x,u,Du}(U)}$ and $\vert\Psi\vert_{C^2_{x,u,Du}(U)}$ where $U=[0,T)\times\mS^n\times[1/C_0,C_0]\times B^n_{C_0}$ ($B^n_{C_0}$ is the ball
centered at the origin with radius $R$ in $\mR^n$).
\end{lemma}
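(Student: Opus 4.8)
\medskip
\noindent\textbf{Proof proposal.} The statement is a uniform lower bound for the least eigenvalue of $b_{ij}:=D_iD_ju+u\delta_{ij}$, equivalently a uniform upper bound for the largest principal curvature of $M_t$. I would prove it by a Tso-type maximum-principle argument on $\mS^n$, following the scheme of Lemma \ref{l3.6} in the support-function parametrization (this is Lemma 8.2 of \cite{CL}). Write the flow as $\p_tu=\Psi F-\eta u$ with $F=F(b):=\det^{\frac\beta n}(b)$; then $F^{ij}:=\p F/\p b_{ij}=\tfrac\beta nFb^{ij}$ (with $b^{ij}$ the inverse of $b_{ij}$), $F$ is monotone, and — this is the crucial structural point — although $\det^{\beta/n}$ need not be concave when $\beta>1$, the function $\log F=\tfrac\beta n\log\det b$ is concave, i.e.\ $(\log F)^{ij,kl}\xi_{ij}\xi_{kl}=-\tfrac\beta nb^{ik}b^{jl}\xi_{ij}\xi_{kl}\le 0$ for every symmetric $\xi$. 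Let $\cL:=\p_t-\Psi F^{ij}D_iD_j$ and let $W$ be the largest eigenvalue of $(b_{ij})^{-1}$. Differentiating $\p_tu=\Psi F-\eta u$ twice and commuting covariant derivatives on $\mS^n$ (whose curvature tensor is $\delta\delta-\delta\delta$, hence produces only terms linear in $b$ and $u$) yields an evolution equation of the form $\cL b_{ij}=\Psi F^{kl,rs}D_ib_{kl}D_jb_{rs}+\tfrac\beta n\Psi F(\sum_m\k_m)b_{ij}-\eta b_{ij}+\Psi F\cdot O(1)+FD_iD_j\Psi+2D_{(i}\Psi D_{j)}F$, in which one should keep in mind the \emph{exact} Codazzi identity $D_kb_{ij}=D_ib_{kj}$ valid on $\mS^n$ for $b_{ij}=D_iD_ju+u\delta_{ij}$.

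\medskip
As test function I would take $\theta=\log W+p(u)+N\rho$ with $p(u)=-\log(u-\tfrac12\min u)$ (so that $1+p'u<0$ and, crucially, $p''=p'{}^2$) and $N$ a large constant, exactly as in $(\ref{3.12})$; the role of $p$ is that at a critical point of $\theta$ the term $\Psi F^{kl}D_k(\log W)D_l(\log W)$ produced by $\log W$ is exactly cancelled by $-p''\Psi F^{kl}D_kuD_lu$, and the role of $N\rho$ is to supply an extra negative term of the size needed to absorb the $\Psi$-derivative contributions. Suppose $\theta$ attains a spatial maximum at $(x_0,t_0)$ with $W$ large. Regularize $W$ by a parallel vector field as in Lemma \ref{l3.6}, choose normal coordinates diagonalizing $b_{ij}$ so that $b_{11}=\lambda_{\min}=W^{-1}$, and treat $W=\k_1=b^{11}$ as a scalar, so that $\cL\k_1=-\k_1^2\cL b_{11}-2\Psi F^{kl}\k_1^2\sum_m\k_m D_kb_{1m}D_lb_{1m}$. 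Feeding $\cL b_{11}$ into $\cL\theta$, the genuinely dangerous contribution is the part of $\Psi F^{kl,rs}D_1b_{kl}D_1b_{rs}$ that is unbounded in $\k_1$; splitting $F^{kl,rs}=F\big[(\log F)^{kl}(\log F)^{rs}+(\log F)^{kl,rs}\big]$ and using the concavity of $\log F$ together with the exact Codazzi symmetry, this is re-routed into the manifestly nonpositive term $-2\Psi F^{kl}\k_1^2\sum_m\k_m(D_kb_{1m})^2$. All remaining terms are of lower order in $\k_1$: by the a priori bounds $1/C_0\le u\le C_0$, $|Du|\le C_0$, $0\le\eta\le C_0$ and $c_0\le\Psi\le C_0$, $|\Psi|_{C^2(U)}\le C_0$, together with the observation that at $(x_0,t_0)$ the eigenvalue $\lambda_{\min}$ is small so that the momentum contractions $\Psi_{Du_m}D_1D_mu$ and $\Psi_{Du_mDu_l}D_1D_mu\,D_1D_lu$ see only the bounded quantity $\lambda_{\min}-u$, each such term is at worst $\le C\Psi F\k_1$, while the commutator term $-\tfrac\beta n\Psi F\sum_m\k_m$, the term $p'\Psi u\tfrac\beta n F\sum_m\k_m$ inside $p'\cL u$, and the strongly negative term in $N\cL\rho$ dominate once $N$ is fixed large enough (the case split $\k_n<-\eps_1\k_1$ / $\k_n\ge-\eps_1\k_1$ of Lemma \ref{l3.6} is vacuous here since all $\k_i>0$, so one always lands in the favourable case). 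Hence $\cL\theta<0$ as soon as $W$ is large, forcing $W\le C$ at $(x_0,t_0)$ and therefore everywhere, i.e.\ $b_{ij}\ge C^{-1}I$.

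\medskip
The main obstacle is exactly the failure of concavity of $\det^{\beta/n}$ when $\beta>1$: one cannot discard $\Psi F^{kl,rs}D_ib_{kl}D_jb_{rs}$ by concavity as in Lemma \ref{l3.6}, but must instead work with $\log W$, use that $\log\det$ \emph{is} concave, and exploit the exact Codazzi identity on $\mS^n$ to fold the surviving gradient term into a sign-definite quantity; a secondary, softer point is to verify that the dependence of $\Psi$ on $(u,Du)$ generates only lower-order terms, which works precisely because $\lambda_{\min}$ is small where $W$ is large. Once $b_{ij}\ge C^{-1}I$ is established, the equation for $u$ becomes uniformly parabolic in combination with the $C^0$ and $C^1$ bounds, and the Krylov--Safonov and Schauder estimates then give the higher-order regularity with $C$ depending only on the quantities listed in the statement; this reproduces \cite{CL} Lemma 8.2.
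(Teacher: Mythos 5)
First, a point of record: the paper does not prove this lemma — it is quoted from \cite{CL} (Lemma 8.2 there) — so your argument must stand on its own. Its architecture (maximum principle on $W=\lambda_{\max}(b^{ij})$, Codazzi symmetry of $b_{ij;k}$ on $\mS^n$, and the cancellation against the inverse-Hessian term $-2\Psi F^{kl}\k_1^2\sum_m\k_m(D_kb_{1m})^2$) is the right one. But the mechanism you cite for the gradient terms is inverted: after multiplying by $-\k_1^2$, the \emph{concave} part $(\log F)^{kl,rs}$ contributes the \emph{positive} quantity $+\tfrac{\beta}{n}\Psi F\k_1^2\,b^{kr}b^{ls}b_{kl;1}b_{rs;1}$, so concavity of $\log F$ cannot be "used" to discard anything; what saves the computation is that, by Codazzi, this positive term equals exactly half of the nonpositive inverse-Hessian term. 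You point at the right cancellation but attribute it to the wrong mechanism, which suggests the computation was not carried through.

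The genuine gap is the auxiliary function. In the support-function gauge one has $F^{kl}b_{kl;ij}=F^{kl}b_{ij;kl}+(\mathrm{tr}F')b_{ij}-\beta F\delta_{ij}$ with $\mathrm{tr}F'=\tfrac{\beta}{n}F\sum_m\k_m$, so the zeroth-order terms in $\p_tb_{ij}$ carry the coefficient $(1-\beta)\Psi F\delta_{ij}$, which produces $+(\beta-1)\Psi F\k_1$ in $\cL\log W$; the terms from $\Psi_u$ and $\Psi_{Du}$ produce further contributions of size $C(\Psi)F\k_1$. The only negative term of comparable size is $-\tfrac{\beta}{n}\Psi F\sum_m\k_m\le-\tfrac{\beta}{n}\Psi F\k_1$ plus what the lower-order pieces of $\theta$ supply. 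Your $p(u)=-\log(u-\tfrac12\min u)$ gives the \emph{fixed} multiplier $-2\le p'u<-1$, so $p'\cL u$ adds at most $-\tfrac{2\beta}{n}\Psi F\sum_m\k_m$, and the total coefficient $\tfrac{3\beta}{n}$ does not dominate $\beta-1$ once $\beta\ge n/(n-3)$ (e.g. $n=5$, $\beta=4$). Worse, with $\rho=\sqrt{u^2+|Du|^2}$ one computes $\rho\,\cL\rho=|Du|^2\tfrac{\beta}{n}\Psi F\sum_m\k_m+O(\Psi F)+O(1)$: the $N\rho$ term enters with the \emph{wrong} sign, because its helpful role in Lemma \ref{l3.6} comes from $-\Psi F^{ij}\nabla_i\nabla_j\rho$ in the radial-graph gauge via (\ref{2.16}), which has no analogue here. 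What is needed is a term with a large \emph{tunable} coefficient — e.g. $-Au$ with $A=A(n,\beta,C_0,\Psi)$ large, so that $-A\cL u\ni-\tfrac{A\beta}{n}u\Psi F\sum_m\k_m$ swamps every $O(\Psi F\k_1)$ term, supplemented by $+B|Du|^2/2$ to absorb the $\Psi_{Du}$ contributions; this is exactly the choice made in \cite{CL}. With your $\theta$ the inequality $\cL\theta<0$ fails for large $\beta$, so the proof as written does not close.
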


\begin{lemma}\label{l7.7}
Let $n\ge2$, $\a,\delta,\beta\in \mR^1$, $\beta>0$, $k$ is an integer, $\psi>0$ is a smooth positive function on $\mS^n$. If $\frac{1}{C}\le u\le C$ and $\frac{1}{C}\le\sigma_{k}\le C$ for some constant $C>0$, and

(\rmnum{1}) $1\le k<n$, $\a\le0$ and $(D_iD_j\psi^{\frac{1}{1+\beta-\a}}+\delta_{ij}\psi^{\frac{1}{1+\beta-\a}})$ is positive definite;

or (\rmnum{2}) $1\le k<n$, $\beta+1<\a$ and $(D_iD_j\psi^{\frac{1}{1+\beta-\a}}+\delta_{ij}\psi^{\frac{1}{1+\beta-\a}})$ is negative definite;

or (\rmnum{3}) $k=n$.

Let $u_0\in C^\infty(\mS^n)$ be positive and uniformly convex and $u(\cdot,t)$ be the solution of the normalized flow (\ref{x1.9}) for $t\in[0,T)$. Then there is a positive constant $C_{39}$ depending on the initial hypersurface and $\psi,\a,\delta,\beta,k$ such that the principal curvature radii of $X(\cdot,t)$ are bounded from above and below
	$$C_{39}^{-1}\le\l_i(\cdot,t)\le C_{39}, \quad \forall t\in[0,T) \text{    and    }i=1,\cdots,n.$$
\end{lemma}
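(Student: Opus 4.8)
The plan is to reduce the whole statement to the lower bound $\lambda_i\ge C^{-1}$, i.e. to a quantitative preservation of uniform convexity, and then obtain the upper bound for free: as long as $M_t$ stays uniformly convex all radii $\lambda_i>0$, and picking the index set consisting of the largest radius together with $k-1$ of the remaining ones gives $\sigma_k(\lambda)\ge\lambda_{\max}\lambda_{\min}^{k-1}$; hence by the bound $\sigma_k\le C_{33}$ of Lemma \ref{l7.5} one gets $\lambda_{\max}\le C_{33}\lambda_{\min}^{-(k-1)}$, so a uniform lower bound $\lambda_{\min}\ge C^{-1}$ immediately yields $\lambda_i\le C_{33}C^{k-1}$. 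Thus everything rests on the lower bound.

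For $k=n$ (case (\rmnum{3})) this lower bound is precisely Lemma \ref{l7.6}: I would apply it with $\Psi(t,x,u,Du)=\psi(x)u^{\alpha}(u^2+|Du|^2)^{\delta/2}$ and $\eta(t)\equiv\eta$, whose positivity and $C^0$, $C^1$ bounds in $x,u,Du$ are exactly what Lemma \ref{l7.5} provides, concluding $[D^2u+u\,\mathrm{I}]\ge C^{-1}\mathrm{I}$. For $1\le k<n$ (cases (\rmnum{1}), (\rmnum{2})) Lemma \ref{l7.6} does not apply, and I would argue by the maximum principle. Write $b_{ij}=D_iD_ju+u\delta_{ij}$, $F=\sigma_k^{\beta/k}(b)$, $\Psi=\psi u^{\alpha}\rho^{\delta}$, so that $\partial_t b_{ij}=D_iD_j(\Psi F)-\eta b_{ij}+\Psi F\delta_{ij}$, and set $\cL=\partial_t-\Psi F^{pq}D_pD_q-(\text{first-order terms})$, which is parabolic since $[F^{pq}]>0$. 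Let $\mu_{\min}(\cdot,t)$ be the smallest eigenvalue of $[b_{ij}]$ and consider the test function $\Theta=-\log\mu_{\min}+\varphi(u,|Du|^2)$ with a bounded correction $\varphi$ (and possibly a term $N\rho$) to be fixed later; the goal is a uniform upper bound for $\sup_{\mS^n\times[0,T)}\Theta$. At a spatial maximum I would rotate so that $[b_{ij}]$ is diagonal with $b_{nn}=\mu_{\min}$, regularize $\mu_{\min}$ by $b_{nn}$ on a fixed parallel frame in the standard way, commute covariant derivatives in $D_iD_jb_{kl}-D_kD_lb_{ij}$ via the Ricci identity on $\mS^n$, and use $D\Theta=0$ to eliminate the first derivatives of $b_{nn}$.

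The only terms in $\cL\Theta$ with no obviously favorable sign are those coming from $D_pD_q\Psi$, and within them the genuinely dangerous one is the $\psi$-Hessian part $u^{\alpha}\rho^{\delta}\,D_pD_q\psi$ paired with $F=\sigma_k^{\beta/k}$; everything else (contributions of $Du$, $D\rho$, $D\psi$, the $F^{pq,rs}$-term, and the zeroth-order terms) is either nonpositive by convexity of $M_t$ and concavity of $\sigma_k^{1/k}$, or controlled by Lemma \ref{l7.5} and absorbed by the good term $-\Psi F^{pq}D_pD_q\log\mu_{\min}$ together with the Newton-Maclaurin inequalities (exactly as in the proof of Lemma \ref{l7.4}), after a suitable choice of $\varphi$ and $N$. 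To tame the $\psi$-Hessian, write $m=1+\beta-\alpha$ and substitute $\psi=(\psi^{1/m})^m$; a direct computation gives
$$D_pD_q\psi=m\,\psi^{1-\frac1m}\big(D_pD_q\psi^{\frac1m}+\psi^{\frac1m}\delta_{pq}\big)-m\,\psi\,\delta_{pq}+Q_{pq},$$
where $Q_{pq}$ is quadratic in $D\psi^{1/m}$ with bounded coefficient. The hypotheses are designed so that $m\big(D_pD_q\psi^{1/m}+\psi^{1/m}\delta_{pq}\big)$ is positive definite in both cases (case (\rmnum{1}): $m=1+\beta-\alpha>0$ and the bracket is positive definite; case (\rmnum{2}): $m<0$ and the bracket is negative definite), while the sign of $\alpha$ ($\alpha\le0$, resp. $\alpha>\beta+1$) fixes the remaining algebraic factors so that the whole $\psi$-Hessian contribution to $\cL\Theta$ has the sign that lets it be dominated at the maximum point. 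Then $\cL\Theta\le0$ forces $\mu_{\min}^{-1}$ bounded, i.e. $\lambda_i\ge C^{-1}$, and combined with the first paragraph the two-sided bound $C_{39}^{-1}\le\lambda_i\le C_{39}$ follows. I expect the main obstacle to be precisely this bookkeeping: one must simultaneously juggle the concavity of $\sigma_k^{1/k}$, the Newton-Maclaurin inequalities, the $u^{\alpha}\rho^{\delta}$ factors, and the sign condition on $\psi^{1/(1+\beta-\alpha)}$ — this being the one and only place where that condition is consumed, which is why it cannot be dropped for $k<n$.
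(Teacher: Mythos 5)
Your proposal is correct and follows essentially the same route as the paper: the lower bound is obtained exactly as in the paper's proof (Lemma \ref{l7.6} for $k=n$; for $k<n$ a maximum-principle argument on the logarithm of the largest eigenvalue of $[h^{ij}]$ with a first-order correction — the paper takes $\varphi=-\log u$, which is what makes $Dh_{11}$ expressible through $Du$ at the critical point — and your identity for $D_pD_q\psi$ in terms of $D_pD_q\psi^{1/(1+\beta-\a)}+\psi^{1/(1+\beta-\a)}\delta_{pq}$ is precisely the paper's completion of the square in $D_1\log u$ and $D_1\log\psi$), and the upper bound then follows from the two-sided bound on $\sigma_k$ just as you describe. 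No substantive gap.
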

\begin{proof}
First, we shall prove that $\l_i$  is bounded from below by a positive constant. The principal radii of curvatures of $M_t$ are the eigenvalues of $\{h_{il}e^{lj}\}$. To derive a positive lower bound of principal curvatures radii, it suffices to prove that the eigenvalues of $\{h^{il}e_{lj}\}$ are bounded from above.

(\rmnum{1}) and (\rmnum{2}) $1\le k\le n-1$. Similar to Lemma \ref{l3.6}, suppose the maximum eigenvalue of $\theta=\log h^{ij}-\log u$ at time $t$ is attained at the point $x_t$ with unit eigenvector $\xi_t\in T_{x_t}\mS^n$, where $h^{ij}$ is the inverse of $h_{ij}=D^2_{ij}u+u\delta_{ij}$. By a rotating the frame $e_1,\cdots,e_n$ at $x_t$, assume that at $x_t$ we have $\xi_t=e_1$. Then at $(x_t,t)$, $\theta=\log h^{11}-\log u$. We mention that we calculate the evolution equation of $\theta$ by tensor property. Meanwhile, at $(x_t,t)$, $[h_{ij}]$ and $[\sigma_{k}^{ij}]$ are diagonal.

Let $\Phi=\psi u^\a\rho^\delta$ and $G=\sigma_k^\frac{\beta}{k}=F^\beta$, the evolution equation of $h^{11}$ can be found in Lemma 3.4 in \cite{DL}. By below (3.8) in \cite{DL}, we have
\begin{align}
	\p_th^{11}\leq&\Phi G^{kl}D_kD_lh^{11}-(\beta+1)\Phi G(h^{11})^2+\Phi \sum_iG^{ii}h^{11}+\eta h^{11}\notag\\
	&+\frac{\beta}{\beta+1}G\frac{(D_1\Phi)^2}{\Phi}(h^{11})^2-GD_1D_1\Phi(h^{11})^2.\label{7.30}
\end{align}
We denote
$$\cL=\p_t-\Phi G^{kl}D_kD_l.$$
By (\ref{x1.9}), we have
\begin{equation}\label{7.31}
\begin{split}
\cL u=&\Phi G-\eta u-\Phi G^{kl}D_kD_lu\\
=&\Phi G-\eta u-\beta\Phi\frac{G}{F} F^{kl}(h_{kl}-u\delta_{kl})\\
=&(1-\beta)\Phi G-\eta u+u\Phi\sum_{i}G^{ii}.
\end{split}
\end{equation}
By (\ref{7.30}) and (\ref{7.31}) we can derive the evolution equation of $\theta$.
\begin{equation}\label{7.33}
\begin{split}
\cL\theta=&\frac{\cL h^{11}}{h^{11}}-\frac{\cL u}{u}\\
\le&c+\Phi G\(-(\beta+1)h^{11}+\frac{\beta}{\beta+1}(D_1\log\Phi)^2h^{11}-\frac{D_1D_1\Phi}{\Phi}h^{11}\)\\
=&c+\Phi Gh^{11}\(-(\beta+1)+\frac{\beta}{\beta+1}(D_1\log\psi+\a D_1\log u+\delta D_1\log\rho)^2\\
&-\frac{\psi_{11}}{\psi}-2\a D_1\log\psi D_1\log u-2\delta D_1\log\psi D_1\log \rho-\a\frac{h_{11}-u}{u}\\
&-\a(\a-1)(D_1\log u)^2-2\a\delta D_1\log uD_1\log \rho-\delta\frac{\rho_{11}}{\rho}-\delta(\delta-1)(D_1\log \rho)^2\).
\end{split}
\end{equation}
By Lemma \ref{l7.4}, we have $\vert D_1\log u\vert\le C_{28}$. Note that at $(x_t,t)$ we have
$D\theta=\frac{Dh^{11}}{h^{11}}-\frac{Du}{u}=0.$ It also means $Dh_{11}=-h_{11}\frac{Du}{u}$. Thus it is direct to calculate
\begin{equation}\label{7.34}
\begin{split}
\rho_1=&\frac{uu_1+u_ku_{k1}}{\rho}=\frac{u_1h_{11}}{\rho},\\
\rho_{11}=&\frac{u_{11}u+u_1^2+u_{k1}^2+u_ku_{k11}}{\rho}-\frac{(uu_1+u_ku_{k1})(uu_1+u_lu_{l1})}{\rho^3}\\
=&\frac{(h_{11}-u)u+u_1^2+(h_{11}-u)^2+u_k(D_kh_{11}-u_1\delta_{1k})}{\rho}-\frac{(u_kh_{k1})(u_lh_{l1})}{\rho^3}\\
=&\frac{-uh_{11}+h_{11}^2-h_{11}\frac{\vert Du\vert^2}{u}}{\rho}-\frac{(u_1h_{11})^2}{\rho^3}.
\end{split}
\end{equation}
By $(\ref{7.34})$, we can find $h^{11}\rho_1$ and $h^{11}\rho_{11}$ have uniform bounds provided $h^{11}$ sufficiently large. Thus, by (\ref{7.33}) and (\ref{7.34}), we have
\begin{equation}\label{7.28}
\begin{split}
\cL\theta\le&c+\Phi Gh^{11}\(-(\beta-\a+1)+\frac{\beta}{\beta+1}(D_1\log\psi+\a D_1\log u)^2\\
&-\frac{\psi_{11}}{\psi}-2\a D_1\log\psi D_1\log u-\a(\a-1)(D_1\log u)^2\)\\
=&c+\Phi Gh^{11}\(\a\(\sqrt{\dfrac{1+\beta-\a}{1+\beta}}D_1\log u-\sqrt{\dfrac{1}{(1+\beta)(1+\beta-\a)}}D_1\log\psi\)^2\\
&-(\beta-\a+1)+\frac{\beta-\a}{1+\beta-\a}(D_1\log\psi)^2-\frac{\psi_{11}}{\psi}\)
\end{split}
\end{equation}
if $\a\le0$.
Since $D_1D_1\psi^{\frac{1}{1+\beta-\a}}+\psi^{\frac{1}{1+\beta-\a}}>0$, we have
$$1-\frac{\beta-\a}{(1+\beta-\a)^2}(D_1\log\psi)^2+\frac{1}{1+\beta-\a}\frac{\psi_{11}}{\psi}>0.$$
Thus, there exists a positive constant $C_{40}$, s.t.
$$1-\frac{\beta-\a}{(1+\beta-\a)^2}(D_1\log\psi)^2+\frac{1}{1+\beta-\a}\frac{\psi_{11}}{\psi}\ge C_{40}$$
by $\psi$ defined on $\mS^n$. In fact, $1-\frac{\beta-\a}{(1+\beta-\a)^2}(D_1\log\psi)^2+\frac{1}{1+\beta-\a}\frac{\psi_{11}}{\psi}$ is also defined on a compact set. Thus it can arrive the minimum $C_{40}>0$ on $\mS^n$.

If $\a>1+\beta$, by (\ref{7.28}) we have
\begin{equation*}
\begin{split}
\cL\theta\le&c+\Phi Gh^{11}\(-\a\(\sqrt{\dfrac{\a-1-\beta}{1+\beta}}D_1\log u+\sqrt{\dfrac{1}{(1+\beta)(\a-1-\beta)}}D_1\log\psi\)^2\\
&-(\beta-\a+1)+\frac{\beta-\a}{1+\beta-\a}(D_1\log\psi)^2-\frac{\psi_{11}}{\psi}\).
\end{split}
\end{equation*}
Since $D_1D_1\psi^{\frac{1}{1+\beta-\a}}+\psi^{\frac{1}{1+\beta-\a}}<0$, we have
$$1-\frac{\beta-\a}{(1+\beta-\a)^2}(D_1\log\psi)^2+\frac{1}{1+\beta-\a}\frac{\psi_{11}}{\psi}<0.$$
Thus, there exists a positive constant $C_{40}$, s.t.
$$1-\frac{\beta-\a}{(1+\beta-\a)^2}(D_1\log\psi)^2+\frac{1}{1+\beta-\a}\frac{\psi_{11}}{\psi}\le -C_{40}$$
by $\psi$ defined on $\mS^n$. In fact, $1-\frac{\beta-\a}{(1+\beta-\a)^2}(D_1\log\psi)^2+\frac{1}{1+\beta-\a}\frac{\psi_{11}}{\psi}$ is also defined on a compact set. Thus it can arrive the maximum $-C_{40}<0$ on $\mS^n$. Note that $1+\beta-\a<0$ at this case.

Combining the two cases, we have
$$\p_t\theta\leq-C_{40}\theta+c.$$
That is, $\theta\leq \frac{1}{C_{39}}$ or $\frac{1}{\l_i}\leq \frac{1}{C_{39}}$ by $C^0$ estimates, where $C_{39}$ depends on the initial hypersurface, $\psi$, $\a$, $\delta$ and $\beta$. Thus $\l_i\ge C_{39}$ for $i=1,\cdots,n$.

(\rmnum{3}) and (\rmnum{4}): $k=n$. By Lemma \ref{l7.5} and Lemma \ref{l7.6}, we can derive the lower bound of $\l_i$.

 Since $F_*(\frac{1}{\l_1},\cdots,\frac{1}{\l_n})=\frac{1}{F(\l_1,\cdots,\l_n)}$ is uniformly continuous on $\{\l\in\overline{\Gamma}^+\vert\l_i\geq C_{39}$ for all $i \}$ and $F_*$ is bounded from below by a positive constant, $F^*\vert_{\p\Gamma_+}=(\frac{\sigma_n}{\sigma_{n-k}})^\frac{1}{k}\vert_{\p\Gamma_+}=0$ implies that $\l_i$ remains in a fixed compact subset of $\bar\Gamma^+$, which is independent of $t$. That is $$\frac{1}{C_{39}}\leq\l_i(\cdot,t)\leq C_{39}.$$
\end{proof}
The estimates obtained in Lemma \ref{l7.5} and \ref{l7.7} depend on $\psi$, $\a$, $\delta$, $\beta$ and the geometry of the initial data $M_0$. They are independent of $T$. By Lemma \ref{l7.5} and \ref{l7.7}, we conclude that the equation (\ref{x1.9}) is uniformly parabolic. By the $C^0$ estimate, the gradient estimate (Lemma \ref{l7.5}), the $C^2$ estimate (Lemma \ref{l7.7}), Cordes and Nirenberg type estimates \cite{B2,CO,LN} and the Krylov's theory \cite{KNV}, we get the H$\ddot{o}$lder continuity of $D^2u$ and $u_t$. Then we can get higher order derivation estimates by the regularity theory of the uniformly parabolic equations. Hence we obtain the long time existence and $C^\infty$-smoothness of solutions for the normalized flow (\ref{x1.9}). The uniqueness of smooth solutions also follows from the parabolic theory.

We give a uniqueness result with $\psi$ in case $\a+\delta+\beta=1$. This lemma states that our method can't be used to solve the case $\a+\delta+\beta=1$. In fact, there is no difference between $\psi$ and $\psi'=c\psi$ as we deal with this problem. Thus it will cause a contradiction if this method can handle the case $\a+\delta+\beta=1$. We assert that monotonic quantities along some one flow must be found  to handle this case.
\begin{lemma}
Let $\a+\delta+\beta=1$ and there exists a solution to $\psi u^{\a-1}\rho^\delta\sigma_{k}^\frac{\beta}{k}=\eta$. If $\psi_1>\psi$ for all $x\in\mS^n$, then there is no solution to this equation with $\psi$ replaced by $\psi_1$; Similarly if $\psi>\psi_2$ for all $x\in\mS^n$, then there is no solution to this equation with $\psi$ replaced by $\psi_2$.
\end{lemma}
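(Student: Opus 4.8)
The hypothesis $\a+\delta+\beta=1$ makes the equation $\psi\,u^{\a-1}\rho^\delta\sigma_{k}^{\beta/k}(D^2u+u\delta_{ij})=\eta$ invariant under the dilation $u\mapsto tu$, $t>0$: under this scaling $\rho=\sqrt{u^2+|Du|^2}\mapsto t\rho$ and $\sigma_{k}(D^2u+u\delta_{ij})\mapsto t^{k}\sigma_{k}(D^2u+u\delta_{ij})$, so the left-hand side is multiplied by $t^{\a-1+\delta+\beta}=t^{0}=1$. Hence, if $u$ is a positive, smooth, uniformly convex solution for a given $\psi$, then so is $tu$ for every $t>0$. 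The plan is to couple this exact scaling invariance with a first-contact comparison between the two purported solutions.

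Fix the given solution $u$ for $\psi$ and suppose, for contradiction, that $v$ solves the equation for $\psi_1$ with $\psi_1>\psi$ on $\mathbb S^n$. Since $u$ and $v$ are positive and continuous on the compact sphere, $t_0:=\inf\{t>0:\ tv\ge u\ \text{on }\mathbb S^n\}$ is finite and positive, and $w:=t_0v$ satisfies $w\ge u$ with equality at some point $x_0\in\mathbb S^n$. At $x_0$ the function $w-u$ attains an interior minimum equal to $0$, so $Dw(x_0)=Du(x_0)$ and $D^2w(x_0)\ge D^2u(x_0)$; combined with $w(x_0)=u(x_0)$ this gives $\rho_w(x_0)=\rho_u(x_0)$ and $(D_iD_jw+w\delta_{ij})(x_0)\ge(D_iD_ju+u\delta_{ij})(x_0)>0$. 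Because $0<A\le B$ implies $\sigma_{k}(A)\le\sigma_{k}(B)$ for positive definite symmetric matrices and $\beta>0$, we obtain $\sigma_{k}^{\beta/k}(D^2w+w\delta_{ij})(x_0)\ge\sigma_{k}^{\beta/k}(D^2u+u\delta_{ij})(x_0)$. Since $w=t_0v$ is still a solution for $\psi_1$ by the scaling invariance, evaluating at $x_0$ and using $\psi_1(x_0)>\psi(x_0)>0$ yields
\begin{align*}
\eta&=\psi_1(x_0)\,w^{\a-1}\rho_w^{\delta}\,\sigma_{k}^{\beta/k}(D^2w+w\delta_{ij})\big|_{x_0}\\
&\ge\psi_1(x_0)\,u^{\a-1}\rho_u^{\delta}\,\sigma_{k}^{\beta/k}(D^2u+u\delta_{ij})\big|_{x_0}\\
&>\psi(x_0)\,u^{\a-1}\rho_u^{\delta}\,\sigma_{k}^{\beta/k}(D^2u+u\delta_{ij})\big|_{x_0}=\eta,
\end{align*}
a contradiction. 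The case $\psi>\psi_2$ is handled in exactly the same way after exchanging the roles: one dilates the $\psi$-solution $u$ upward until it first touches the $\psi_2$-solution from above, and the identical chain of inequalities (now using $\psi(x_0)>\psi_2(x_0)$ at the contact point) gives $\eta>\eta$.

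The only point that requires care is the \emph{direction} of the comparison: one must scale up the solution attached to the \emph{larger} prescribed function, so that at the first-contact point the second-order inequality cooperates with the strict inequality between the two prescribed functions; scaling up the other solution only produces a vacuous inequality. Everything else is routine — the scaling invariance is immediate from $\a+\delta+\beta=1$, the contact point exists by compactness of $\mathbb S^n$, and the monotonicity $A\le B\Rightarrow\sigma_{k}(A)\le\sigma_{k}(B)$ on positive definite matrices is standard — and, notably, no use of the curvature flow, of uniqueness, or of any a priori estimate is needed. This is precisely why the flow-based argument cannot reach the borderline exponent $\a+\delta+\beta=1$: the equation then admits no $\psi$-normalisation, so the same reasoning applied to $\psi$ and $c\psi$ would forbid solutions whenever $c\neq1$, which is exactly the obstruction recorded in this lemma.
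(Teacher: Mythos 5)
Your proof is correct and follows essentially the same route as the paper's: dilate the solution attached to the larger weight (using the scaling invariance from $\a+\delta+\beta=1$) until first contact from above, then compare at the touching point via the monotonicity of $\sigma_k$ on positive definite matrices and the strict inequality between the two prescribed functions. The only cosmetic difference is that the paper isolates $\sigma_k^{\beta/k}$ on one side of the equation before comparing, whereas you compare the full expressions; the substance is identical.
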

\begin{proof}
We assume that $u_0$ is a solution to $\psi u^{\a-1}\rho^\delta\sigma_{k}^\frac{\beta}{k}=\eta$, and show that this equation can't admit a solution if $\psi$ is replaced by $\psi_1$. Otherwise, suppose that $u_1$ satisfies
\begin{equation}\label{x7}
\psi_1 u_1^{\a-1}(u_1^2+|Du_1|^2)^\frac{\delta}{2}\sigma_{k}^\frac{\beta}{k}(D^2u_1+u_1\Rmnum{1})=\eta.
\end{equation}
Let $u_1^\l=\l u_1$, and $\l_*=\inf\{\l\ge0,u_1^\l>u_0\}$. Since $\a+\delta+\beta=1$, $u_1^\l$ satisfies (\ref{x7}) as well. Let $x_0\in\mS^n$ be a point such that $u_1^{\l_*}(x_0)=u_0(x_0)$. By the choice of $\l_*$, we also have, at the point $x_0$, $Du_1^{\l_*}=Du_0$ and $D^2u_1^{\l_*}\ge D^2u_0$. Consequently
\begin{align*}
\sigma_{k}^\frac{\beta}{k}(D^2u_1^{\l_*}+u_1^{\l_*}\Rmnum{1})(x_0)=&\eta\psi_1^{-1}(x_0)(u_1^{\l_*})^{1-\a}((u_1^{\l_*})^2+|Du_1^{\l_*}|^2)^{-\frac{\delta}{2}}(x_0)\\
<&\eta\psi^{-1}(x_0)u_0^{1-\a}(u_0^2+|Du_0|^2)^{-\frac{\delta}{2}}(x_0)\\
=&\sigma_{k}^\frac{\beta}{k}(D^2u_0+u_0\Rmnum{1})(x_0)\\
\le&\sigma_{k}^\frac{\beta}{k}(D^2u_1^{\l_*}+u_1^{\l_*}\Rmnum{1})(x_0),
\end{align*}
thus arriving a contradiction. Similarly one sees that the equation $\psi u^{\a-1}\rho^\delta\sigma_{k}^\frac{\beta}{k}=\eta$ doesn't admit a solution if
$\psi$ is replaced by a function $\psi_2<\psi$ for all $x\in\mS^n$.
\end{proof}

Finally, we complete the proof of Theorem \ref{xt1.4}.

\begin{proof of theorem 1.4}
Firstly, we give the convergence of this flow. By the choice of $C_0$ in Page 6, we have the initial hypersurface $M_0$ satisfied $\psi u^{\a-1}\rho^\delta\sigma_{k}^\frac{\beta}{k}\vert_{M_0}>\eta$. We shall derive this condition preserved along this flow. Let $Q=\psi u^{\a-1}\rho^\delta\sigma_{k}^\frac{\beta}{k}$, by (\ref{x7.1}), we can find the condition $\psi u^{\a-1}\rho^\delta\sigma_{k}^\frac{\beta}{k}\vert_{M_t}>\eta$ is preserved along the normalized flow (\ref{x1.9}) for all $t\ge0$, i.e. $\p_tu>0$ is preserved along the normalized flow (\ref{x1.9}) for all $t\ge0$. Moreover, due to the $C^0$ estimates,
$$\vert u(x,t)-u(x,0)\vert=\vert\int_{0}^t\(\psi u^{\a-1}\rho^\delta\sigma_{k}^\frac{\beta}{k}-\eta\)u(x,s)ds\vert<\infty.$$
Therefore, in view of the monotonicity of $u$, the limit $u(x,\infty):=\lim_{t_i\rightarrow\infty}u(x,t)$ exists and is positive, smooth and $D^2u(x,\infty)+u(x,\infty)\Rmnum{1}>0$. Thus the hypersurface with support function $u(x,\infty)$ is our desired solution to $$\psi u^{\a-1}\rho^\delta\sigma_{k}^\frac{\beta}{k}=\eta.$$

Finally, it suffices to show that the solution of $\psi u^{\a-1}\rho^\delta\sigma_{k}^\frac{\beta}{k}=c$ is unique.

Let $u_1$, $u_2$ be two smooth solutions, i.e.
\begin{equation*}
\psi u_1^{\a-1}\rho_1^\delta\sigma_{k}^\frac{\beta}{k}(D^2u_1+u_1\Rmnum{1})=c,\qquad \psi u_2^{\a-1}\rho_2^\delta\sigma_{k}^\frac{\beta}{k}(D^2u_2+u_2\Rmnum{1})=c.
\end{equation*}
Suppose $M=\frac{u_1}{u_2}$ attains its maximum at $x_0\in\mS^n$, then at $x_0$,
\begin{align*}
0=D\log M=&\frac{Du_1}{u_1}-\frac{Du_2}{u_2},\\
0\ge D^2\log M=&\frac{D^2u_1}{u_1}-\frac{D^2u_2}{u_2}.
\end{align*}
Hence at $x_0$, by $\rho=u\sqrt{1+\vert D\log u\vert^2}$ we get
\begin{equation*}
1=\frac{u_1^{\a-1}\rho_1^\delta\sigma_{k}^\frac{\beta}{k}(D^2u_1+u_1\Rmnum{1})}{u_2^{\a-1}\rho_2^\delta\sigma_{k}^\frac{\beta}{k}(D^2u_2+u_2\Rmnum{1})}=\frac{u_1^{\a+\delta+\beta-1}\sigma_{k}^\frac{\beta}{k}(\frac{D^2u_1}{u_1}+\Rmnum{1})}{u_2^{\a+\delta+\beta-1}\sigma_{k}^\frac{\beta}{k}(\frac{D^2u_2}{u_2}+\Rmnum{1})}\le M^{\a+\delta+\beta-1}.
\end{equation*}
Since $\a+\delta+\beta<1$, $M(X_0)=\max_{\mS^n}M\le1$. Similarly one can show $\min_{\mS^n}M\ge1$. Therefore $u_1\equiv u_2$.

\end{proof of theorem 1.4}

\section{Proof of Theorem 1.5}
In this section, we use another scaling to increase the range of $\a$ and $\delta$. Meanwhile, we derive some results with even prescribed data. The lemma below justifies that (\ref{x1.13}) is the normalized flow of (\ref{x1.8}).
\begin{lemma}\label{l8.1}
Let $\beta>0$. Let $X(\cdot,t)$ be a smooth solution to the flow (\ref{x1.8}) with $t\in[0,T)$, and for each $t\ge0$, $M_t=X(\mS^n,t)$ be a smooth, closed and uniformly convex
hypersurface. Suppose that the origin lies in the interior of convex body $\Om_{t}$ enclosed by $M_t$ for all $t\in[0,T)$. Then $\widetilde{X}(\cdot,\tau)$, given by (\ref{x1.12}), satisfies the normalized flow (\ref{x1.13}).
\end{lemma}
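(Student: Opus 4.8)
The plan is to differentiate $\widetilde X(\cdot,\tau)=e^{-\tau}X(\cdot,t(\tau))$ directly, using only the flow equation (\ref{x1.8}) with $k=n$ (so that $\sigma_{n}^{\beta/n}(\l)=K^{-\beta/n}$), the homogeneity of $u,\rho,K,\nu$ under dilations about the origin, and the definition (\ref{x1.12}) of $\tau(t)$. Since $M_{t}$ is parametrized by the inverse Gauss map, $\nu(x)=x$, and since dilation preserves normal directions the same holds for $\widetilde M_\tau=e^{-\tau}M_{t(\tau)}$. Differentiating and using $\partial_tX=\psi u^\alpha\rho^\delta K^{-\beta/n}\,x$ gives
\[
\partial_\tau\widetilde X=-\widetilde X+e^{-\tau}\,\frac{dt}{d\tau}\,\psi u^\alpha\rho^\delta K^{-\beta/n}\,x,
\]
where $u,\rho,K$ on the right are the quantities of $M_{t(\tau)}$. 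Under $X\mapsto e^{-\tau}X$ one has $\widetilde u=e^{-\tau}u$, $\widetilde\rho=e^{-\tau}\rho$, $\widetilde K=e^{n\tau}K$, hence $\psi u^\alpha\rho^\delta K^{-\beta/n}=e^{(\alpha+\delta+\beta)\tau}\psi\widetilde u^\alpha\widetilde\rho^\delta\widetilde K^{-\beta/n}$, and therefore
\[
\partial_\tau\widetilde X=-\widetilde X+e^{(\alpha+\delta+\beta-1)\tau}\,\frac{dt}{d\tau}\,\psi\widetilde u^\alpha\widetilde\rho^\delta\widetilde K^{-\beta/n}\,\nu .
\]

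Next I would compute $d\tau/dt$ from (\ref{x1.12}). In every case (including $q=0$, which is the limiting case) differentiation gives $\frac{d\tau}{dt}=\big(\int_{\mS^n}\rho^{q-1}\partial_t\rho\,d\xi\big)\big(\int_{\mS^n}\rho^{q}\,d\xi\big)^{-1}$, where $\rho=\rho_{\Om_t}$ is the radial function and the integrals are in the radial variable $\xi$. By (\ref{2.17}) and $u=\rho/\omega$, the radial function of the unnormalized flow satisfies $\partial_t\rho=\psi u^\alpha\rho^\delta K^{-\beta/n}\,\omega=\psi u^{\alpha-1}\rho^{\delta+1}K^{-\beta/n}$, so the numerator equals $\int_{\mS^n}\psi u^{\alpha-1}\rho^{q+\delta}K^{-\beta/n}\,d\xi$. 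Changing variables to the Gauss image $x$ via (\ref{2.27})--(\ref{2.28}), i.e. $d\xi=u(x)\big(\rho^{n+1}(x)K(x)\big)^{-1}dx$ (with the usual abuse of writing $\rho(x),K(x)$ for the values at $\sA^*(x)$, $\nu^{-1}(x)$), and using $q=n+1+\tfrac{n\delta}{\beta}$ so that $q+\delta-(n+1)=\delta+\tfrac{n\delta}{\beta}$, this becomes exactly $\int_{\mS^n}\psi u^\alpha\rho^{\delta+n\delta/\beta}K^{-\beta/n-1}\,dx$. Hence $\frac{dt}{d\tau}=\phi(t)$ with $\phi$ the functional (\ref{x1.14}) formed from the data of $M_{t}$.

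Finally I would match the two expressions. The functional $\phi$ is not dilation invariant; the same exponent count shows that $\phi$ formed from $\widetilde u,\widetilde\rho,\widetilde K$ equals $e^{(\alpha+\delta+\beta-1)\tau}$ times $\phi$ formed from $u,\rho,K$. Combining this with $\frac{dt}{d\tau}=\phi(t)$ and the last display of the first paragraph yields $\partial_\tau\widetilde X=-\widetilde X+\widetilde\phi\,\psi\widetilde u^\alpha\widetilde\rho^\delta\widetilde K^{-\beta/n}\,\nu$, where $\widetilde\phi$ is the functional (\ref{x1.14}) of $\widetilde M_\tau$; after renaming $\tau$ as $t$ and dropping tildes this is exactly the normalized flow (\ref{x1.13}).

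I expect the only delicate step to be the change of variables: one must keep track of which parametrization --- the radial $\xi$ or the Gauss $x$ --- each of $u$, $\rho$, $K$, $\psi$ lives in, and apply the Jacobian identities (\ref{2.27}), (\ref{2.28}) in the correct direction; after that everything is bookkeeping of the exponents $\alpha,\delta,\beta$ against the definition of $q$. The one genuinely structural point is that for $k=n$ the anisotropic speed $\sigma_{n}^{\beta/n}(\l)$ is $K^{-\beta/n}$, which is precisely what makes $\partial_t\rho$ and the Jacobian (\ref{2.28}) recombine into the denominator of $\phi$.
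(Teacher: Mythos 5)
Your proposal is correct and follows essentially the same route as the paper's proof: both differentiate the reparametrization (\ref{x1.12}), use the evolution of the radial function together with the Jacobian identities (\ref{2.27})--(\ref{2.28}) to identify $d\tau/dt$ with $1/\phi$, and then apply the chain rule to $\widetilde X=e^{-\tau}X$, with the scaling factor $e^{(\alpha+\delta+\beta-1)\tau}$ of $\phi$ cancelling exactly as in the paper's equation (\ref{8.1}). The only cosmetic difference is that you obtain $\partial_t\rho$ from (\ref{2.17}) and $u=\rho/\omega$ where the paper invokes (\ref{2.26}); these are equivalent.
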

\begin{proof}
Let $\rho(\cdot,t)$ and $\widetilde{\rho}(\cdot,\tau)$ be the radial function of $M_t=X(\mS^n,t)$ and $\widetilde{M}_\tau=\widetilde{X}(\cdot,\tau)$ respectively, and let $\widetilde{\Om}_\tau$ be the convex body enclosed by $\widetilde{M}_\tau$. By virtue of (\ref{x1.12}), (\ref{2.26}) and (\ref{2.27}) and using the change of variables formula, we have
\begin{equation*}
	\begin{split}
\frac{\p\tau}{\p t}=&
\begin{cases}
	{\int\hspace{-0.9em}-}_{\mS^n} \rho^{q-1}\frac{\p\rho}{\p t}(\xi,t)d\xi/{\int\hspace{-0.9em}-}_{\mS^n} \rho^q(\xi,t)d\xi,&\quad q\ne0,\\
	{\int\hspace{-0.9em}-}_{\mS^n} \rho^{-1}\frac{\p\rho}{\p t}(\xi,t)d\xi,&\quad q=0,
\end{cases}\\
=&{\int\hspace{-1.05em}-}_{\mS^n}\dfrac{\p_tu(x,t)}{\rho^{n+1-q}(\sA^*_{\Om_t}(\xi),t)K(\nu^{-1}_{\Om_t}(x))}dx\({\int\hspace{-1.05em}-}_{\mS^n} \rho^q(\xi,t)d\xi\)^{-1}.
	\end{split}
\end{equation*}
It then follows from (\ref{x1.12}), (\ref{x1.15}) and $\p_tu=\psi u^\a\rho^\delta/K^\frac{\beta}{n}$ that
\begin{equation}\label{8.1}
\begin{split}
\frac{\p t}{\p\tau}=&\({\int\hspace{-1.05em}-}_{\mS^n}\dfrac{\psi u^\a\rho^{\delta+n\delta/\beta}(\sA^*_{\Om_{t(\tau)}}(\xi),t(\tau))}{K^{\frac{\beta}{n}+1}(\nu^{-1}_{\Om_{t(\tau)}}(x))}dx\)^{-1}{\int\hspace{-1.05em}-}_{\mS^n} \rho^q(\xi,t)d\xi\\
=&e^{(1-\a-\delta-\beta)\tau}\phi(\tau),
\end{split}
\end{equation}
where $\phi(\tau)$ is given by (\ref{x1.14}) which involves the geometric quantities of $\widetilde{\Om}_\tau$.

In view of (\ref{x1.12}) and (\ref{8.1}), one infers that
\begin{equation*}
\frac{\p\widetilde{X}}{\p\tau}=e^{-\tau}\frac{\p X}{\p t}\frac{\p t}{\p\tau}-\widetilde{X}=\frac{\psi \phi(\tau)\widetilde{u}^\alpha\widetilde{\rho}^\delta}{\widetilde{K}^\frac{\beta}{n}}\nu-\widetilde{X},
\end{equation*}
thus we complete the proof.
\end{proof}
Similar to the above section, $V_q(\Om_t)$ defined in (\ref{x1.16}) is unchanged under the flow (\ref{x1.13}) by scaling.
\begin{lemma}\label{l8.2}
Let $\beta>0$. Let $X(\cdot,t)$ be a smooth solution to the flow (\ref{x1.13}) with $t\in[0,T)$, and for each $t\ge0$, $M_t=X(\mS^n,t)$ be a smooth, closed and uniformly convex
hypersurface. Suppose that the origin lies in the interior of convex body $\Om_{t}$ enclosed by $M_t$ for all $t\in[0,T)$. Then
\begin{equation*}
V_q(\Om_t)=V_q(\Om_0),\qquad \forall t\in[0,T),
\end{equation*}
where $q=n+1+n\delta/\beta$ as in (\ref{x1.15}).
\end{lemma}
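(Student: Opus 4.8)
The plan is to show that the quantity $V_q(\Om_t)$ is constant by differentiating it along the flow (\ref{x1.13}) and checking that the time derivative vanishes identically, just as was done for $U_p$ or for the analogous scaling-invariant quantities in earlier sections. Since $\widetilde X(\cdot,\tau)=e^{-\tau}X(\cdot,t(\tau))$ is obtained from the unnormalized flow (\ref{x1.8}) by a time-dependent dilation, and $V_q$ is homogeneous of degree $q$ in the radial function, the cleanest route is to note first that the unnormalized quantity $\int_{\mS^n}\rho_{\Om_{t}}^q\,d\xi$ (with $\rho$ the radial function before rescaling) changes only through the rescaling factor built into the definition (\ref{x1.12}) of $\tau$. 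Indeed, the substitution $\tau=\tau(t)$ was chosen precisely so that $\frac{1}{q}\log\fint_{\mS^n}\rho^q_{\Om_t}\,d\mu_{\mS^n}$ (resp.\ $\fint_{\mS^n}\log\rho_{\Om_t}\,d\mu_{\mS^n}$ when $q=0$) advances at unit rate in $\tau$ up to the additive constant coming from $t=0$; this is exactly the content of the first display in the proof of Lemma \ref{l8.1}.

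Concretely, I would first treat the case $q\ne 0$. Write $W(\tau)=\fint_{\mS^n}\widetilde\rho^q(\xi,\tau)\,d\xi$, where $\widetilde\rho=e^{-\tau}\rho(\cdot,t(\tau))$, so that $W(\tau)=e^{-q\tau}\fint_{\mS^n}\rho^q(\xi,t(\tau))\,d\xi$. Differentiating and using $\partial_t\rho/\rho=\partial_t u/u$ from (\ref{2.26}) together with the change-of-variables formula (\ref{2.28}) to express $\fint_{\mS^n}\rho^{q-1}\partial_t\rho\,d\xi$ as the integral appearing in the $\partial\tau/\partial t$ formula of Lemma \ref{l8.1}, one finds that $\frac{d}{d\tau}\log\fint_{\mS^n}\rho^q(\xi,t(\tau))\,d\xi = q\,\frac{d\tau}{dt}\cdot\frac{dt}{d\tau}\cdot\big(\text{exactly the rate built into (\ref{x1.12})}\big)=q$. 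Hence $\frac{d}{d\tau}\log W(\tau)=-q+q=0$, so $W$ is constant, i.e.\ $V_q(\Om_\tau)=\frac1q W(\tau)=\frac1q W(0)=V_q(\Om_0)$. The case $q=0$ is identical after replacing $\rho^q$ by $\log\rho$ and the multiplicative normalization by the additive one in (\ref{x1.12}); there $\frac{d}{d\tau}\fint_{\mS^n}\log\widetilde\rho\,d\xi = -1 + \frac{d}{d\tau}\fint_{\mS^n}\log\rho(\xi,t(\tau))\,d\xi = -1+1=0$.

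The one genuine point requiring care — the main (if modest) obstacle — is the bookkeeping in the change of variables between the radial parametrization (integration in $\xi\in\mS^n$) and the Gauss parametrization (integration in $x\in\mS^n$), i.e.\ making sure the Jacobian factors (\ref{2.27})–(\ref{2.28}) and the identity $\partial_tu=\psi u^\a\rho^\delta/K^{\beta/n}$ combine exactly to reproduce the expression $\phi(\tau)$ in (\ref{x1.14}) and the exponent $q=n+1+n\delta/\beta$ in (\ref{x1.15}). This is precisely where the specific value of $q$ is forced, and it is already carried out inside the proof of Lemma \ref{l8.1}; so in the write-up I would simply invoke that computation and observe that the net rate of change of $\log\fint_{\mS^n}\rho^q_{\Om_t}d\mu_{\mS^n}$ in $\tau$ equals $q$ (resp.\ that of $\fint_{\mS^n}\log\rho_{\Om_t}d\mu_{\mS^n}$ equals $1$), which cancels the $-q\tau$ (resp.\ $-\tau$) coming from the dilation $\widetilde\rho=e^{-\tau}\rho$. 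Everything else is a one-line consequence. No convexity or sign conditions on $\a,\delta$ are needed beyond $\beta>0$ and the standing hypothesis that the origin stays interior, which guarantees the Jacobians are finite and positive.
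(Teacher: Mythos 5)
Your proof is correct and is essentially the paper's own argument: the definition (\ref{x1.12}) of $\tau$ gives $e^{q\tau}=\int_{\mS^n}\rho^q_{\Om_t}\,d\xi\big/\int_{\mS^n}\rho^q_{\Om_0}\,d\xi$, so $\int_{\mS^n}\widetilde\rho^{\,q}\,d\xi=e^{-q\tau}\int_{\mS^n}\rho^q\,d\xi$ is constant (the paper substitutes this directly rather than differentiating, and its remark explicitly notes your evolution-equation variant as an equivalent alternative). The only cosmetic difference is that the Jacobian bookkeeping you flag as the main obstacle is not actually needed for this lemma---it belongs to Lemma \ref{l8.1}---since $\frac{d}{d\tau}\log\int_{\mS^n}\rho^q(\cdot,t(\tau))\,d\xi=q$ follows purely from the definition of $\tau$.
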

\begin{proof}
If $q\ne0$, by (\ref{x1.12}), we have
\begin{equation}
\widetilde{X}=\(\dfrac{\int_{\mS^n}\rho^q_{\Om_0}d\xi}{\int_{\mS^n}\rho^q_{\Om_t}d\xi}\)^\frac{1}{q}X.
\end{equation}
Thus,
\begin{equation*}
\widetilde{V}_q(\Om_{t})=\dfrac{\int_{\mS^n}\rho^q_{\Om_0}d\xi}{\int_{\mS^n}\rho^q_{\Om_t}d\xi}\times\frac{1}{q}\int_{\mS^n}\rho^q_{\Om_t}d\xi=\frac{1}{q}\int_{\mS^n}\rho^q_{\Om_0}d\xi=V_q(\Om_{0})=\widetilde{V}_q(\Om_{0}).
\end{equation*}
If $q=0$, the proof is similar to above.
\end{proof}
\textbf{Remark}: We can also calculate the evolution equation of $\widetilde{V}_q(\Om_{t})$ to prove this lemma.

The next lemma shows that $\cJ_{p,q}$ is monotone under the flow (\ref{x1.13}).
\begin{lemma}\label{l8.3}
Let $\beta>0$. Let $X(\cdot,t)$, $M_t$ and $\Om_{t}$ be as in Lemma \ref{l8.2}. Let $p$ and $q$ be as in (\ref{x1.15}). Then functional $\cJ_{p,q}$ given by (\ref{x1.16}) is non-increasing, that is
\begin{equation*}
\frac{d}{dt}\cJ_{p,q}(X(\cdot,t))\le0,\qquad \forall t\in[0,T),
\end{equation*}
and the equality holds if and only if the support function $u(\cdot,t)$ of $\Om_t$ is a solution to (\ref{1.11}) with $\psi$ replaced by $(\psi\phi(t))^{-n/\beta}$.
\end{lemma}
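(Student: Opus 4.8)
The plan is to compute the time derivative of $\cJ_{p,q}(X(\cdot,t)) = U_p(\Om_t) - V_q(\Om_t)$ directly along the normalized flow (\ref{x1.13}), using the parametrization of $M_t$ by the inverse Gauss map so that the support function $u(\cdot,t)$ evolves by
$$\frac{\p u}{\p t} = \frac{\phi(t)\psi u^\a \rho^\delta}{K^{\beta/n}} - u$$
(this is read off from (\ref{x1.13}) by taking the inner product with $x$, using $\p_t x = 0$ for the inverse Gauss map parametrization and $X = Du + ux$). By Lemma \ref{l8.2} the term $V_q(\Om_t)$ is constant, so $\frac{d}{dt}\cJ_{p,q} = \frac{d}{dt}U_p(\Om_t)$, and it suffices to show this quantity is $\le 0$ with equality characterized as claimed. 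I would treat the case $p\ne 0$ in detail and note that $p=0$ is identical after replacing $\frac1p u^p$ by $\log u$.

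\textbf{Key steps.} First I would differentiate $U_p(\Om_t) = \frac1p \fint_{\mS^n} u^p \, d\mu_{\psi,\beta}$, obtaining (the denominator $\int d\mu_{\psi,\beta}$ is a fixed constant)
$$\frac{d}{dt}U_p(\Om_t) = \frac{1}{\int d\mu_{\psi,\beta}}\int_{\mS^n} u^{p-1}\Bigl(\frac{\phi\psi u^\a\rho^\delta}{K^{\beta/n}} - u\Bigr)\psi^{-n/\beta}\, dx.$$
Next I would recall the definition $q = n+1+n\delta/\beta$ and $p = 1 + n(1-\a)/\beta$, so that $p - 1 + \a = n(1-\a+\a)/\beta\cdots$ — more to the point, the exponents are chosen exactly so that $u^{p-1+\a}\psi^{1-n/\beta}\rho^\delta K^{-\beta/n}$, after the change of variables $x = \sA_{\Om_t}(\xi)$ with Jacobian (\ref{2.27}) $|dx/d\xi| = \rho^{n+1}K/u$, becomes a power of $\rho$ times $d\xi$, namely a constant multiple of $\rho^q$; this is the computation that produces $\int_{\mS^n}\rho^q d\xi$ in the time-rescaling (\ref{x1.12}) and the factor $\phi(t)$ in (\ref{x1.14}). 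Carrying this out, the positive part of $\frac{d}{dt}U_p$ equals $\phi(t)\int_{\mS^n}\rho^q d\xi$ divided by normalizing constants, while $\phi(t)$ itself (from (\ref{x1.14})) equals $[\int \rho^q d\xi][\int \psi u^\a\rho^{\delta+n\delta/\beta}K^{-\beta/n-1}dx]^{-1}$; a second application of (\ref{2.27}) identifies the second bracket with $\int_{\mS^n} u^{p-1}d\mu_{\psi,\beta}$ up to the same constants. Combining, I would arrive at an expression of the form
$$\frac{d}{dt}U_p(\Om_t) = C\Bigl(\frac{\bigl(\fint u^{p-1}g\, d\mu\bigr)\bigl(\fint \rho^q d\xi\bigr)}{\fint u^{p-1}g\, d\mu} - \fint u^p\, d\mu_{\psi,\beta}\Bigr)$$
for suitable nonnegative weights, which after regrouping reduces to a Hölder/Jensen-type inequality between $\fint u^p$ and a product of lower-order averages; monotonicity then follows, with equality precisely when $u^{p-1}g$ is proportional to $u^p$, i.e. when $\phi(t)\psi u^{\a-1}\rho^\delta K^{-\beta/n} \equiv \mathrm{const}$, which is exactly equation (\ref{1.11}) with $\psi$ replaced by $(\psi\phi(t))^{-n/\beta}$.

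\textbf{Main obstacle.} The delicate point is bookkeeping the two changes of variables via $\sA_{\Om_t}$ and $\sA^*_{\Om_t}$ together with the exact exponent matching $p = 1 + n(1-\a)/\beta$, $q = n+1+n\delta/\beta$: one must verify that after transporting the $U_p$-integral from $x$-space to $\xi$-space the integrand becomes exactly $\rho^q$ (not $\rho^q$ times a stray power of $u$ or $K$), and symmetrically that $\phi(t)$'s denominator is exactly $\fint u^{p-1}d\mu_{\psi,\beta}$. Once these identifications are in place, the inequality $\frac{d}{dt}\cJ_{p,q}\le 0$ is a one-line consequence of the AM–GM (or Cauchy–Schwarz / Hölder) inequality applied to the two probability measures $d\mu_{\psi,\beta}/\int d\mu_{\psi,\beta}$ and $d\xi/|\mS^n|$, and the equality case is the standard rigidity statement for that inequality, yielding the soliton equation. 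I would also record that the overall constant $C$ is strictly positive (using $\phi(t)>0$, $\psi>0$, convexity of $M_t$) so that the equality case is genuine.
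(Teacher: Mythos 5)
Your skeleton matches the paper's (differentiate $U_p$ along (\ref{x1.13}), use Lemma \ref{l8.2} to discard $V_q$, convert between $x$- and $\xi$-integrals via (\ref{2.27})--(\ref{2.28}), and finish with H\"older), but the two change-of-variables identifications on which your argument hinges are false as identities, and each one is in fact equivalent to the soliton equation itself, so the route is circular. Set $d\sigma=u^{p}\psi^{-n/\beta}dx$ and $h=\psi u^{\alpha-1}\rho^{\delta}K^{-\beta/n}$. Transporting the integrand of the positive part of $\frac{d}{dt}U_p$, namely $u^{p-1+\alpha}\psi^{1-n/\beta}\rho^{\delta}K^{-\beta/n}\,dx$, to $\xi$-space with $|dx/d\xi|=\rho^{n+1}K/u$ yields $\rho^{q}\,h^{1-n/\beta}\,d\xi$, \emph{not} a constant multiple of $\rho^{q}d\xi$: using $p-2+\alpha=(1-\alpha)\frac{n-\beta}{\beta}$ one checks that the stray factor is exactly $h^{1-n/\beta}$, which is constant only when $h$ is constant (or $\beta=n$). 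Likewise the denominator of $\phi(t)$ in (\ref{x1.14}) equals $\int_{\mS^n}h^{1+n/\beta}d\sigma$ (since $p+(1+\tfrac n\beta)(\alpha-1)=\alpha$), which is not $\int_{\mS^n}u^{p-1}d\mu_{\psi,\beta}$ in general. Your displayed formula for $\frac{d}{dt}U_p$ also carries the same factor in numerator and denominator, so as written it reduces to $\int\rho^qd\xi-\int u^pd\mu_{\psi,\beta}$ (suitably normalized), for which no inequality is available.

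The identities that actually make the proof work are that all four integrals occurring in $\frac{d}{dt}\cJ_{p,q}$ are moments of the single function $h$ against the single measure $d\sigma$: $\int h\,d\sigma=\int\psi^{1-n/\beta} u^{p-1+\alpha}\rho^{\delta}K^{-\beta/n}dx$ (the positive part), $\int h^{n/\beta}d\sigma=\int_{\mS^n}\rho^{q}d\xi$ (using $p+\tfrac n\beta(\alpha-1)=1$ and (\ref{2.28})), $\int h^{0}d\sigma=\int u^{p}d\mu_{\psi,\beta}$, and $\int h^{1+n/\beta}d\sigma=\int\psi u^{\alpha}\rho^{\delta+n\delta/\beta}K^{-\beta/n-1}dx$. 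With these,
\begin{equation*}
\frac{d}{dt}\cJ_{p,q}=\frac{\int h\,d\sigma\int h^{n/\beta}d\sigma-\int d\sigma\int h^{1+n/\beta}d\sigma}{\int_{\mS^n}d\mu_{\psi,\beta}\,\int h^{1+n/\beta}d\sigma},
\end{equation*}
and the numerator is $\le0$ because H\"older with exponents $\frac{n+\beta}{n}$ and $\frac{n+\beta}{\beta}$ applied separately to $\int h\,d\sigma$ and to $\int h^{n/\beta}d\sigma$, after multiplying the two bounds, gives exactly $\int h\,d\sigma\int h^{n/\beta}d\sigma\le\int d\sigma\int h^{1+n/\beta}d\sigma$. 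Equality forces $h(\cdot,t)\equiv c(t)$, and one must then still verify from (\ref{x1.14}) that $c(t)=1/\phi(t)$ (a step your proposal omits) to land on (\ref{1.11}) with $\psi$ replaced by $(\psi\phi(t))^{-n/\beta}$. Without this four-moment structure the concluding appeal to ``a H\"older/Jensen-type inequality'' cannot be completed.
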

\begin{proof}
By Lemma \ref{l8.1}, we have the evolution equation of $u$,
\begin{equation}\label{8.3}
\p_tu=\frac{\psi \phi(t)u^\alpha\rho^\delta}{K^\frac{\beta}{n}}-u.
\end{equation}
By Lemma \ref{l8.2}, $V_q(\Om_{t})$ is a constant along the flow. It then follows from (\ref{8.3})
\begin{equation}\label{8.4}
\begin{split}
\frac{d}{dt}\cJ_{p,q}(X(\cdot,t))=&\dfrac{1}{\int_{\mS^n}d\mu_{\psi,\beta}}\int_{\mS^n}u^{p-1}\p_tu(x,t)d\mu_{\psi,\beta}\\
=&\[\int_{\mS^n}d\mu_{\psi,\beta}\int_{\mS^n}\dfrac{\psi u^\a\rho^{\delta+n\delta/\beta}}{K^{\frac{\beta}{n}+1}}dx\]^{-1}\\
&\{\int_{\mS^n}\dfrac{\psi u^{p-1+\a}\rho^\delta}{K^\frac{\beta}{n}}d\mu_{\psi,\beta}\int_{\mS^n}\rho^qd\xi-\int_{\mS^n}u^pd\mu_{\psi,\beta}\int_{\mS^n}\dfrac{\psi u^\a\rho^{\delta+n\delta/\beta}}{K^{\frac{\beta}{n}+1}}dx\},
\end{split}
\end{equation}
where $\rho:=\rho\circ\sA^*_{\Om_{t}}=\rho(\sA^*_{\Om_{t}}(x),t)$ and $K$ is the Gauss curvature of $M_t$ at the point $\nu^{-1}_{\Om_{t}}(x)$. Let
\begin{equation*}
\Rmnum{1}=\int_{\mS^n}\dfrac{\psi u^{p-1+\a}\rho^\delta}{K^\frac{\beta}{n}}d\mu_{\psi,\beta}\int_{\mS^n}\rho^qd\xi-\int_{\mS^n}u^pd\mu_{\psi,\beta}\int_{\mS^n}\dfrac{\psi u^\a\rho^{\delta+n\delta/\beta}}{K^{\frac{\beta}{n}+1}}dx.
\end{equation*}
By (\ref{x1.15}) and (\ref{2.28}), we further deduce that
\begin{equation*}
\Rmnum{1}=\int_{\mS^n}h^\frac{n}{\beta}d\sigma\int_{\mS^n}hd\sigma-\int_{\mS^n}d\sigma\int_{\mS^n}h^{1+\frac{n}{\beta}}d\sigma,
\end{equation*}
where the measure $d\sigma$ and function $h$ are given by
$$d\sigma=u^pd\mu_{\psi,\beta}=u^p\psi^{-\frac{n}{\beta}}dx\quad\text{    and    }\quad h(x,t)=\frac{\psi u^{\a-1}\rho^\delta}{K^\frac{\beta}{n}}.$$
It follows by Hölder inequality that
\begin{align*}
\int_{\mS^n}hd\sigma\le\(\int_{\mS^n}d\sigma\)^\frac{n}{n+\beta}\(\int_{\mS^n}h^{1+\frac{n}{\beta}}d\sigma\)^\frac{\beta}{n+\beta},\\
\int_{\mS^n}h^\frac{n}{\beta}d\sigma\le\(\int_{\mS^n}d\sigma\)^\frac{\beta}{n+\beta}\(\int_{\mS^n}h^{1+\frac{n}{\beta}}d\sigma\)^\frac{n}{n+\beta}.
\end{align*}
Thus,
$$\frac{d}{dt}\cJ_{p,q}(X(\cdot,t))\le0,$$
with the equality if and only if
\begin{equation}\label{8.5}
h(x,t)=c(t)
\end{equation}
for some function $c(t)$. We still need to show $c(t)=\frac{1}{\phi(t)}$ provided (\ref{8.5}) holds. By (\ref{x1.14}) and using the change of variables formula via (\ref{2.27}),
\begin{equation*}
\frac{1}{\phi(t)}=[\int_{\mS^n}h(\sA_{\Om_t}(\xi),t)\rho^q(\xi,t)d\xi][\int_{\mS^n}\rho^q(\xi,t)d\xi]^{-1}=c(t).
\end{equation*}
\end{proof}
Since our original flow (\ref{x1.8}) and the normalized flow (\ref{x1.9}) or (\ref{x1.13}) differs a time-dependent dilation, $\frac{\vert Du\vert}{u}$ is scaling-invariant. By Lemma \ref{l7.4}, we have the following lemma.
\begin{lemma}\label{l8.4}
	Let $n\ge2$, $\beta>0$, $0<\psi\in C^\infty(\mS^n)$, $\a+\delta+\beta<1$ or $\a=1-\delta-\beta\ne\frac{\beta}{n}+1$ and $u_0\in C^\infty(\mS^n)$ be positive and uniformly convex. Let $u(\cdot,t)$ be the solution of the normalized flow (\ref{8.3}) for $t\in[0,T)$. Then there is a positive constant $C_{28}$ depending on the initial hypersurface and $\psi,\a,\delta,\beta,n$ such that
	$$\max_{\mS^n}\frac{\vert Du\vert}{u}(\cdot,t)\le C_{28}, \quad \forall t\in[0,T).$$
\end{lemma}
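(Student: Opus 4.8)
The plan is to reduce this estimate to the gradient bound already obtained in Lemma~\ref{l7.4}. The key observation, indicated just above, is that the quantity $\frac{|Du|}{u}$ is scaling-invariant: the normalized flow (\ref{8.3}) (equivalently (\ref{x1.13})) differs from the original flow (\ref{x1.8}) only by a time-dependent dilation $X\mapsto\varphi^{-1}(t)X$, which multiplies both $u$ and its spherical gradient $Du$ by the same positive factor, leaving the ratio unchanged. Hence it suffices to bound $\max_{\mS^n}\frac{|Du|}{u}$ along the un-normalized flow (\ref{x1.8}) with $f(\k)=K^{1/n}$, i.e. in the case $k=n$, and then transfer the bound back.

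First I would note that (\ref{8.3}) is exactly the support-function form of (\ref{x1.13}) with Gauss curvature speed, so case~(\rmnum{2}) of Lemma~\ref{l7.4} applies verbatim: under the hypothesis $\a+\delta+\beta<1$, or $1-\delta-\beta=\a\ne\frac{\beta}{n}+1$, that lemma already yields $\max_{\mS^n}\frac{|Du|}{u}\le C_{28}$ for the relevant solution, with $C_{28}$ depending only on $M_0$ and $\psi,\a,\delta,\beta,n$; combined with the scaling invariance this is precisely the asserted bound. If one instead wishes to argue directly on (\ref{8.3}), the proof mirrors Lemma~\ref{l7.4}: set $\theta=\log u$, $Q=\tfrac12|D\theta|^2$, differentiate $Q$ at a spatial maximum, diagonalize $\{\theta_{ij}\}$ and apply the Ricci identity $D_k\theta_{ij}=D_j\theta_{ki}+\delta_{ik}\theta_j-\delta_{ij}\theta_k$ to obtain an inequality of the form $\frac{\p_t Q}{\theta_t}\le C(1+\theta_1)+\iota\,\theta_1^2-(\text{negative curvature terms})$, where $\iota=\a+\delta+\beta-1\le0$ and $\theta_1=|D\theta|$; since $\theta_t=\p_t\log u>0$ along (\ref{8.3}), one then runs the maximum principle once the right-hand side is shown to be negative for $\theta_1$ large.

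The hard part is the borderline case $\iota=0$, i.e. $\a+\delta+\beta=1$ (the case $\a=1-\delta-\beta\ne\frac{\beta}{n}+1$ in the statement), where the good term $\iota\theta_1^2$ vanishes. Here I would bring in the a priori two-sided bound $C_{27}^{-1}\le u^{\a-1}\rho^\delta\sigma_n^{\beta/n}\le C_{27}$ from Lemma~\ref{l7.3} (valid exactly when $\a+\delta+\beta\le1$) together with the Newton--Maclaurin and AM--GM estimates used in display~(\ref{u}) of Lemma~\ref{l7.4}; combining these extracts from the curvature terms a contribution of the form $-C(1+\theta_1^2)^{\gamma}$ with exponent $\gamma$ strictly greater than $\tfrac12$. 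It is precisely at this point that the exclusion $\a\ne\frac{\beta}{n}+1$, equivalently $\delta\ne-\frac{(n+1)\beta}{n}$, is used: it guarantees $\gamma\ne\tfrac12$, so the negative term still dominates $C(1+\theta_1)$ once $\theta_1$ is large. With this, $Q$ is bounded at its maximum, which yields the desired bound on $\frac{|Du|}{u}$ and completes the proof.
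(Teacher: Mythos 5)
Your primary argument---the scale invariance of $\frac{|Du|}{u}$ under the time-dependent dilation relating (\ref{x1.13}) to (\ref{x1.8}), combined with case (\rmnum{2}) of Lemma \ref{l7.4}---is exactly the paper's proof, which obtains Lemma \ref{l8.4} as an immediate corollary of Lemma \ref{l7.4}, including the role of $\delta\ne-\frac{(n+1)\beta}{n}$ in forcing one of the exponents in the curvature terms strictly above $\tfrac12$ in the borderline case $\iota=0$. The only caveat concerns your optional sketch of arguing directly on (\ref{8.3}): there $\p_t\log u$ need not be positive because of the $-u$ term, which is precisely why the paper (and your main route) passes to the un-normalized flow (\ref{x1.8}), where $\p_t u>0$ and the sign of $\theta_t$ can be used in the maximum-principle step.
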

To prove the $C^0$ estimates, we should use the following generalized Blaschke-Santal$\acute{o}$ inequality. It was first proved in \cite{CHD}. In \cite{CCL} they gave a simple proof.
\begin{lemma}(Blaschke-Santal$\acute{o}$-type inequality \cite{CHD})\label{l8.5} Given $q>0$, let $q^*>0$ be the number given by (\ref{x1.15}). For $s\in(0,q^*]$, $s\ne+\infty$, there is a constant $C_{n,q,s}>0$ such that
\begin{equation*}
\({\int\hspace{-1.05em}-}_{\mS^n}\rho^q_\Om d\sigma_{\mS^n}\)^\frac{1}{q}\({\int\hspace{-1.05em}-}_{\mS^n}\rho^s_{\Om^*} d\sigma_{\mS^n}\)^\frac{1}{s}\le C_{n,q,s} \quad \text{ for all } \Om\in\cK^e_0,
\end{equation*}
where $\cK^e_0$ is the set of all origin-symmetric convex bodies containing the origin in their interiors.
\end{lemma}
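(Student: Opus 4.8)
The plan is to reduce the inequality to the case of ellipsoids by John's theorem and then to identify the exponent $q^{*}$ as the critical threshold in a scale-invariant estimate on the ellipsoid parameters; this is essentially the argument of \cite{CHD}, simplified as in \cite{CCL}. Throughout one uses the polar relations $\rho_{\Om^{*}}(\xi)=1/h_{\Om}(\xi)$ and $h_{\Om^{*}}(\xi)=1/\rho_{\Om}(\xi)$ on $\mS^{n}$, $h$ denoting the support function. Given $\Om\in\cK^{e}_{0}$, let $E\subseteq\Om\subseteq\sqrt{n+1}\,E$ be its John ellipsoid (the dilation factor $\sqrt{n+1}$ being the one valid for origin-symmetric bodies in $\mR^{n+1}$); taking polars yields $\tfrac{1}{\sqrt{n+1}}E^{*}\subseteq\Om^{*}\subseteq E^{*}$, hence $\rho_{E}\le\rho_{\Om}\le\sqrt{n+1}\,\rho_{E}$ and $\tfrac{1}{\sqrt{n+1}}\rho_{E^{*}}\le\rho_{\Om^{*}}\le\rho_{E^{*}}$ pointwise on $\mS^{n}$. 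Consequently
\begin{equation*}
\Big({\int\hspace{-1em}-}_{\mS^{n}}\rho^{q}_{\Om}\,d\sigma_{\mS^{n}}\Big)^{\frac{1}{q}}\Big({\int\hspace{-1em}-}_{\mS^{n}}\rho^{s}_{\Om^{*}}\,d\sigma_{\mS^{n}}\Big)^{\frac{1}{s}}\le\sqrt{n+1}\,\Big({\int\hspace{-1em}-}_{\mS^{n}}\rho^{q}_{E}\,d\sigma_{\mS^{n}}\Big)^{\frac{1}{q}}\Big({\int\hspace{-1em}-}_{\mS^{n}}\rho^{s}_{E^{*}}\,d\sigma_{\mS^{n}}\Big)^{\frac{1}{s}},
\end{equation*}
so it suffices to bound the right-hand side over all origin-centred ellipsoids $E$.

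Writing such an $E$ in principal axes as $E=\{x:\sum_{i=0}^{n}x_{i}^{2}/a_{i}^{2}\le1\}$, so that $\rho_{E}(\xi)^{-2}=\sum_{i}\xi_{i}^{2}/a_{i}^{2}$ and $\rho_{E^{*}}(\xi)^{-2}=\sum_{i}a_{i}^{2}\xi_{i}^{2}$, the right-hand side above becomes a continuous, strictly positive function $\Phi(a_{0},\dots,a_{n})$ on $(0,\infty)^{n+1}$ that is invariant under the simultaneous scaling $a_{i}\mapsto ta_{i}$. Thus I would only need to verify that $\Phi$ stays bounded as the axes degenerate, i.e.\ as some $a_{i}$ tend to $0$ and others to $\infty$; to that end one splits the indices accordingly and computes the Laplace-type asymptotics of $\int_{\mS^{n}}(\sum_{i}\xi_{i}^{2}/a_{i}^{2})^{-q/2}d\sigma$ and $\int_{\mS^{n}}(\sum_{i}a_{i}^{2}\xi_{i}^{2})^{-s/2}d\sigma$ under each degeneration. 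The two model cases — one axis sent to $\infty$ with the rest fixed, and one axis sent to $0$ with the rest fixed — already produce the constraints $s\le q/(q-n)$ (binding only when $q>n$) and $s\le nq/(q-1)$ (binding only when $q>1$); one then checks that the smaller of these, together with the fact that in the remaining ranges $\Phi$ stays bounded with no restriction on $s$, is exactly the piecewise quantity $q^{*}$ of (\ref{x1.15}), and that a general mixed degeneration only interpolates between the model ones and gives no stricter threshold.

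As a consistency check, the ``bulk'' range $q,s\le n+1$ can be treated without the ellipsoid computation: by the power-mean inequality $\big({\int\hspace{-1em}-}_{\mS^{n}}\rho^{q}_{\Om}\,d\sigma\big)^{1/q}\le\big({\int\hspace{-1em}-}_{\mS^{n}}\rho^{n+1}_{\Om}\,d\sigma\big)^{1/(n+1)}$ for $q\le n+1$, and likewise for $\Om^{*}$, so in this range the left-hand side is at most $\big({\int\hspace{-1em}-}_{\mS^{n}}\rho^{n+1}_{\Om}\,d\sigma\big)^{1/(n+1)}\big({\int\hspace{-1em}-}_{\mS^{n}}\rho^{n+1}_{\Om^{*}}\,d\sigma\big)^{1/(n+1)}=\big(V(\Om)V(\Om^{*})\big)^{1/(n+1)}\,\omega_{n+1}^{-2/(n+1)}\le1$ by the classical Blaschke--Santal\'{o} inequality for origin-symmetric bodies. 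The genuinely delicate step is the ellipsoid asymptotics of the previous paragraph: one must confirm that no configuration of simultaneously vanishing and exploding axes violates the admissible range $s\le q^{*}$, and that the $q^{*}$ in (\ref{x1.15}) is sharp — this is carried out in \cite{CHD}, with the streamlined proof in \cite{CCL} that we follow.
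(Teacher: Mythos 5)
The paper offers no proof of Lemma \ref{l8.5}: it is quoted from \cite{CHD}, with a pointer to the simplified proof in \cite{CCL}, and is used as a black box in the $C^0$ estimate around (\ref{8.15}). So there is no in-text argument to compare against; judged on its own terms, your outline is the right one and the quantitative details you do supply check out. The John-ellipsoid sandwich $E\subseteq\Om\subseteq\sqrt{n+1}\,E$ for origin-symmetric bodies in $\mR^{n+1}$, the induced polar sandwich, and the reduction to centred ellipsoids at the cost of a factor $\sqrt{n+1}$ are all valid (correctly used as a sandwich, not as affine invariance — the functional is only dilation-invariant, which is exactly why you must then control all ellipsoids rather than just balls). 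The two model degenerations do produce the two branches of $q^*$: one axis $a_0\to\infty$ gives $\big(\int_{\mS^n}\rho_E^q\big)^{1/q}\sim a_0^{1-n/q}$ and $\big(\int_{\mS^n}\rho_{E^*}^s\big)^{1/s}\sim a_0^{-\min(1,1/s)}$, hence $s\le q/(q-n)$ when $q>n$; one axis $a_0\to0$ gives the dual constraint $s\le nq/(q-1)$ when $q>1$; the minimum of the two is precisely the piecewise $q^*$ of (\ref{x1.15}), the branches agreeing at $q=n+1$. The Blaschke--Santal\'o consistency check for $q,s\le n+1$ is also correct, since $q^*\ge n+1$ whenever $q\le n+1$.

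The one genuine gap is the sentence claiming that a general mixed degeneration "only interpolates between the model ones and gives no stricter threshold"; this is asserted, not proved, and it is the technical heart of the lemma. Even the homogeneous intermediate degenerations ($m$ axes equal to $R\to\infty$, the remaining $n+1-m$ equal to $1$) produce a one-parameter family of constraints $s\le mq/\big(q-(n+1-m)\big)$ for $1\le m\le n$, and one must verify that this family is minimized at $m=1$ when $q>n+1$ and at $m=n$ when $q<n+1$ (true, because $m\mapsto mq/(q-n-1+m)$ is monotone with sign governed by $q-n-1$); beyond that, axes degenerating at different rates require a uniform two-sided estimate of both spherical integrals in terms of all the $a_i$, or a compactness argument on the normalized axes. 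Since you explicitly defer exactly this step to \cite{CHD} and \cite{CCL}, your proposal is an accurate road map rather than a self-contained proof — though that still goes further than the paper, which defers the entire lemma.
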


With the help of Lemma \ref{l8.4} and Lemma \ref{l8.5}, we obtain the following estimates.
\begin{lemma}\label{l8.6}
Let $\beta>0$. Let $\psi$, $u_0$ and $u(\cdot,t)$ be as in Lemma \ref{l8.4}. Then there are positive constants $C_{41}$, $C_{42}$ depending on the initial hypersurface and $\psi,\a,\delta,\beta,n$ such that
$$C_{41}^{-1}\le u\le C_{41},\quad \vert Du\vert\le C_{42},\quad C_{41}^{-1}\le\rho\le C_{41}, \quad \forall(x,t)\in\mS^n\times[0,T),$$
if (\rmnum{1}) $\a+\delta+\beta<1$ or $\a=1-\delta-\beta\ne\frac{\beta}{n}+1$;

or (\rmnum{2}) $\a=\frac{\beta}{n}+1$, $\delta=-\frac{(n+1)\beta}{n}$,  $\int_{\mS^n}  c_0\psi^{-\frac{n}{\beta}}=\vert\mS^n\vert$ and $\int_\om  c_0\psi^{-\frac{n}{\beta}}<\vert\mS^n\vert-\vert\om^*\vert$ for some  positive constant $c_0$;

 or (\rmnum{3})  $\psi$ and $u_0$ are in addition even function with (1) $\a\le\frac{\beta}{n}+1$ (i.e. $p\ge0$); or (2) $\delta\le-\frac{(n+1)\beta}{n}$ (i.e. $q\le0$); or (3) $q>0$ and $-q^*<p<0$.
\end{lemma}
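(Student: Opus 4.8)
The plan is to reduce the statement to a two-sided $C^0$ bound $C^{-1}\le u\le C$ on $\mS^n\times[0,T)$; given this, $\rho^2=u^2+|Du|^2$ together with (\ref{2.29}) yields $|Du|^2=\rho^2-u^2\le(\max_{\mS^n}u)^2$, hence $|Du|\le C$ and $C^{-1}\le\rho\le\sqrt 2\,C$, so it suffices to produce $\max u\le C$ and $\min u\ge C^{-1}$. Two ingredients will be used in every case, independently of the hypotheses: the conservation law $V_q(\Om_t)=V_q(\Om_0)$ of Lemma \ref{l8.2} — so that $\int_{\mS^n}\rho^q\,d\xi$ (or $\int_{\mS^n}\log\rho\,d\xi$ when $q=0$) is a fixed positive number along the flow — and the monotonicity of $\cJ_{p,q}$ from Lemma \ref{l8.3}, which combined with the conservation law gives $U_p(\Om_t)\le U_p(\Om_0)$ for all $t$.

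In case (i), and more generally whenever $\a+\delta+\beta<1$ or $\a=1-\delta-\beta\ne\frac{\beta}{n}+1$, Lemma \ref{l8.4} provides the gradient bound $|D\log u|\le C_{28}$, hence $\osc_{\mS^n}\log u\le\pi C_{28}$, so $\max u\le e^{\pi C_{28}}\min u=:\Lambda\min u$, and by (\ref{2.29}) the same oscillation bound holds for $\log\rho$. Feeding this into the conservation law closes the argument: when $q\ne0$ the fixed value of $\int_{\mS^n}\rho^q\,d\xi$ together with $\Lambda^{-1}\max\rho\le\rho\le\max\rho$ forces $\max\rho$, hence $\max u$ and $\min u$, to lie between two positive constants depending only on $\Lambda$, $q$ and $V_q(\Om_0)$; when $q=0$ the fixed value of $\int_{\mS^n}\log\rho\,d\xi$ and the oscillation bound do the same. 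This disposes of case (i) and of the part of case (iii) that falls in this parameter range.

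The substantive new content is case (iii) when $\a+\delta+\beta\ge1$, where Lemma \ref{l8.4} is unavailable, and case (ii). For case (iii) I would first observe that evenness of $\psi$ and of $u_0$ is preserved by (\ref{8.3}): if $u(\cdot,t)$ is even then $\Om_t$ is origin-symmetric, so $\rho$ and $K$ are even and $\phi(t)$ is a number, whence the right-hand side of (\ref{8.3}) is even, and therefore $\Om_t\in\cK^e_0$ for all $t$. The upper bound $\max u\le C$ is then obtained from $U_p(\Om_t)\le U_p(\Om_0)$ together with (\ref{2.30}) when $p>0$, and from the conserved $V_q$ together with the origin-symmetric geometry when $q\le0$. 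For the lower bound — and for the upper bound in the remaining regime $q>0,\ -q^*<p<0$ — the generalized Blaschke--Santal\'o inequality of Lemma \ref{l8.5} is the key tool: taking $s=-p\in(0,q^*]$ and using $\rho_{\Om_t^*}=1/u(\cdot,t)$ (the polar duality companion of (\ref{2.32})), the conserved value of $\int_{\mS^n}\rho^q_{\Om_t}\,d\xi$ bounds $\int_{\mS^n}u^p\,d\xi$ from above, while $U_p(\Om_t)\le U_p(\Om_0)$ — rearranged for $p<0$ — bounds it from below; combining these two-sided $L^p$-integral bounds with the conservation of $V_q$ and the origin-symmetry of $\Om_t$ then yields $C^{-1}\le u\le C$. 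This is also where the hypothesis $-q^*<p$ enters, since Lemma \ref{l8.5} requires $s=-p\le q^*$.

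Finally, case (ii) is the Aleksandrov regime: there $p=q=0$ and $\a+\delta+\beta=1$, and, by (\ref{2.28}), $u^\a\rho^\delta K^{-\beta/n}=u\,\bigl(u\rho^{-(n+1)}K^{-1}\bigr)^{\beta/n}=u\,|\mathrm{Jac}\,\sA^*|^{\beta/n}$, so (\ref{8.3}) reads $\p_tu=\bigl(\psi\phi(t)|\mathrm{Jac}\,\sA^*|^{\beta/n}-1\bigr)u$. Under the stated compatibility conditions $\int_{\mS^n}c_0\psi^{-n/\beta}=|\mS^n|$ and $\int_\om c_0\psi^{-n/\beta}<|\mS^n|-|\om^*|$ the classical Aleksandrov problem admits a generalized solution $\bar u$, with associated convex body $\bar\Om$ \cite{AA}; I would use suitable dilates $\lambda_\pm(t)\bar\Om$ as inner and outer barriers for the flow in order to trap $\Om_t$ and obtain $C^{-1}\le u\le C$, whence all the claimed estimates. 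I expect the two hard points to be precisely these: in case (ii), making the comparison with the merely generalized — a priori non-smooth — Aleksandrov solution rigorous and controlling the nonlocal factor $\phi(t)$; and in the $q>0,\ -q^*<p<0$ subcase of (iii), organizing the interplay of Blaschke--Santal\'o, $V_q$-conservation, $\cJ_{p,q}$-monotonicity and the origin-symmetric geometry so as to keep the origin uniformly in the interior of $\Om_t$ up to the extinction time $T^*$ (which is finite when $p<q$).
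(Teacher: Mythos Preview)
Your plan is broadly aligned with the paper's, and in case~(\rmnum{1}) your argument is actually a bit cleaner: once Lemma~\ref{l8.4} gives $\osc\log u\le C$, conservation of $V_q$ alone (Lemma~\ref{l8.2}) pins down $\max u$ and $\min u$, so there is no need to split on the signs of $p$ and $q$ as the paper does. In case~(\rmnum{2}) you have exactly the paper's idea (Aleksandrov solution as barrier), and your worry about $\phi(t)$ is well placed; the clean way around it is to compare at the level of the \emph{unnormalized} flow (\ref{x1.8}), which is local: since $\a-1+\delta+\beta=0$ here, any dilate $s\mathcal N$ of the Aleksandrov body evolves by pure dilation $e^{ct}s\mathcal N$, so standard comparison traps $\Om_t^{\mathrm{unnorm}}$ between $e^{ct}s_0\mathcal N$ and $e^{ct}s_1\mathcal N$, and the ratio survives normalization.

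There is, however, a genuine misassembly in case~(\rmnum{3}). Blaschke--Santal\'o (Lemma~\ref{l8.5}) does \emph{not} furnish the lower bound on $u$; it is the engine for the \emph{upper} bound when $p<0$, and it is used not as a direct integral bound but inside a contradiction argument. Concretely, from $U_p\le U_p(\Om_0)$ with $p<0$ one gets the \emph{lower} bound $\int_{\mS^n}u^{p}\,d\mu_{\psi,\beta}\ge c_1$. If $\rho_{\max}(t_j)\to\infty$ then, by origin-symmetry, $u\ge\rho_{\max}|x\cdot x_0|$ forces $|S_1|\to0$ where $S_1=\{u\le 1/\varepsilon\}$; on $S_2=\{u>1/\varepsilon\}$ one has $u^{p}\le\varepsilon^{-p}$, while on $S_1$ one writes $u^{p}=(\rho^*)^{-p}$, applies H\"older with exponent $q'\in(-p,q^*)$, and only then invokes Lemma~\ref{l8.5} to bound $\int(\rho^*)^{q'}$. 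This drives $\int u^{p}\to0$, contradicting $\int u^{p}\ge c_1$. The direct inequality you wrote, namely $\int u^{p}\le C$ from Blaschke--Santal\'o, is true but useless for a pointwise lower bound on $u$, since $u^{p}$ is small where $u$ is large.

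Separately, your sketch does not provide the lower bound on $u$ in the subcase $p\ge0$, $q>0$ of (\rmnum{3})(1), where Blaschke--Santal\'o is inapplicable ($s=-p\le0$). The paper obtains it from a volume/cube argument that uses only $V_q$-conservation and origin-symmetry: when $0<q\le n+1$ H\"older gives $\mathrm{Vol}(\Om_t)\ge c$, while origin-symmetry yields $\Om_t\subset[-\rho_{\max},\rho_{\max}]^{n}\times[-\rho_{\min},\rho_{\min}]$, hence $\rho_{\min}\ge c\,\rho_{\max}^{-n}$ (and a variant for $q>n+1$). This same cube argument is also what gives the lower bound in your case (\rmnum{3})(3). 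Finally, your concern about ``keeping the origin in the interior'' is not an issue here: evenness forces $\Om_t$ to be origin-symmetric, so the origin is its center and stays interior as long as $\min u>0$, which is exactly what the lower bound establishes.
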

\begin{proof}
It suffices to prove $u$ and $\rho$ have uniform bounds by $\rho=\sqrt{u^2+\vert Du\vert^2}$. Let $\rho_{\min}=\min_{\mS^n}\rho=u_{\min}$ and $u_{\max}=\max_{\mS^n}u=\rho_{\max}$.

(\rmnum{1}) $\a+\delta+\beta<1$ or $\a=1-\delta-\beta\ne\frac{\beta}{n}+1$. It means that  $p>q$ or $p=q\ne0$. By Lemma \ref{l8.4}, we have $\frac{\vert Du\vert}{u}\le C_{28}$. Thus
\begin{equation*}
\max_{\mS^n}\log u(\cdot,t)-\min_{\mS^n}\log u(\cdot,t)\le C_{43}\max_{\mS^n}\frac{\vert Du\vert}{u}(\cdot,t)\le C_{44}.
\end{equation*}
In other words,
\begin{equation}\label{8.6}
\frac{\max_{\mS^n}u}{\min_{\mS^n}u}\le C_{45}.
\end{equation}
(1) $\a<\frac{\beta}{n}+1$ (i.e. $p>0$). By Lemma \ref{l8.2} and \ref{l8.3}, $\cJ_{p,q}(X(\cdot,t))\le\cJ_{p,q}(X(\cdot,0))$ and $V_q(\Om_{t})=V_q(\Om_{0})$ for all $t$. Hence by (\ref{2.30}),
\begin{equation}\label{8.7}
C_{46}\ge\int_{\mS^n}u^p\psi^{-\frac{n}{\beta}}dx\ge\int_{\{x\in\mS^n:x\cdot x_t>0\}}(x\cdot x_tu(x_t,t))^p\psi^{-\frac{n}{\beta}}dx\ge\max_{\mS^n}u^p/C_{47},
\end{equation}
where $x_t$ is a point at where $u(\cdot,t)$ attains its spatial maximum. Thus we obtain the upper bound of $u$ and $\rho$ by Lemma \ref{l2.5}.

 If $\delta<-\frac{(n+1)\beta}{n}$ (i.e. $q<0$), by a rotation, we may assume that $\rho_\min=\rho(\xi_0)$, where $\xi_0=(0,\cdots,0,1)\in\mS^n$. By (\ref{2.31}), we have
\begin{equation}\label{8.8}
\int_{\mS^n}\rho^qd\xi\ge\int_{\{\xi\in\mS^n:\xi\cdot\xi_0>0\}}\rho^qd\xi\ge\rho_{\min}^q\int_{\{\xi\in\mS^n:\xi_{n+1}>0\}}(\xi_{n+1})^{-q}d\xi\ge\rho_{\min}^q/C_{48}.
\end{equation}
Hence we have the lower bound of $u$ and $\rho$.

If $\delta=-\frac{(n+1)\beta}{n}$ (i.e. $q=0$), by (\ref{2.31}), we have
\begin{equation}\label{8.9}
\begin{split}
{\int\hspace{-1.05em}-}_{\mS^n} \log\rho(\xi,t)d\xi=&\frac{1}{\vert\mS^n\vert}\int_{\{\xi\in\mS^n:\xi_{n+1}\le0\}}\log\rho d\xi+\frac{1}{\vert\mS^n\vert}\int_{\{\xi\in\mS^n:\xi_{n+1}>0\}}\log\rho d\xi\\
\le&\frac{1}{2}\log\rho_{\max}+\frac{1}{2}\log\rho_{\min}-\frac{1}{\vert\mS^n\vert}\int_{\{\xi\in\mS^n:\xi_{n+1}>0\}}\log\xi_{n+1} d\xi\\
\le&\frac{1}{2}\log(\rho_{\max}\rho_{\min})+C_{49}.
\end{split}
\end{equation}
By the upper bound of $\rho$ and (\ref{8.9}) we have the lower bound of $\rho$.

If $\delta>-\frac{(n+1)\beta}{n}$ (i.e. $q>0$), then
\begin{equation*}
\max_{\mS^n}u(\cdot,t)\ge\({\int\hspace{-1.05em}-}_{\mS^n}\rho^q(\xi,t)d\xi\)^\frac{1}{q}=\({\int\hspace{-1.05em}-}_{\mS^n}\rho^q(\xi,0)d\xi\)^\frac{1}{q}.
\end{equation*}
By (\ref{8.6}), we can derive the lower bound of $u$. In summary, if $p>0$, we have the uniform bounds of $u$ and $\rho$.

(2) $\a\ge\frac{\beta}{n}+1$ (i.e. $p\le0$). At this time, $q<0$. By (\ref{8.8}), we have the lower bound of $u$ and $\rho$. We also have
\begin{equation*}
	\min_{\mS^n}\rho(\cdot,t)\le\({\int\hspace{-1.05em}-}_{\mS^n}\rho^q(\xi,t)d\xi\)^\frac{1}{q}=\({\int\hspace{-1.05em}-}_{\mS^n}\rho^q(\xi,0)d\xi\)^\frac{1}{q}.
\end{equation*}
By (\ref{8.6}), we can derive the upper bound of $u$. In summary, we have proved the case (\rmnum{1}).

(\rmnum{2}) $\a=1-\delta-\beta=\frac{\beta}{n}+1$. It means that $p=q=0$. let $M_t$ be a smooth convex solution to the normalized flow (\ref{8.3}) and let $\mathcal{N}$ be a smooth convex solution to $\psi u^{\a-1}\rho^\delta\sigma_n^\frac{\beta}{n}=c$ by the existence of Aleksandrov's problem, where $\a=\frac{\beta}{n}+1$, $\delta=-(n+1)\frac{\beta}{n}$. Let $\mathcal{N}_0=s_0\mathcal{N}$ and $\mathcal{N}_1=s_1\mathcal{N}$, where the constants $s_1>s_0>0$ are so chosen that $\mathcal{N}_0$ is strictly contained in $M_0$ and $M_0$ is strictly contained in $\mathcal{N}_1$. It is easily seen that $\mathcal{N}_0$ and $\mathcal{N}_1$ are invariant along the flow (\ref{8.3}). Applying the comparison principle, we can derive that $\mathcal{N}_0$ is strictly contained in $M_t$ and $M_t$ is strictly contained in $\mathcal{N}_1$. In other words, we derive the uniform bounds of $\rho$ and $u$.

(\rmnum{3}) By Case (\rmnum{1}) above, we let $p<q$ or $p=q=0$. In this case, $f$ and $u_0$ are in addition even function. It is easily seen that $\Om_t$ is origin-symmetric.

 (1) $\a\le\frac{\beta}{n}+1$ (i.e. $p\ge0$). In this case, $q>0$ or $p=q=0$.

 If $\a<\frac{\beta}{n}+1$ (i.e. $p>0$), we have the upper bound of $u$ and $\rho$ by (\ref{8.7}).

  If $\a=\frac{\beta}{n}+1$ (i.e. $p=0$), by a rotation of coordinates we may assume that $\rho_{\max}(t)=\rho(e_1,t).$ Since $\Om_{t}$ is origin-symmetric, the points $\pm\rho_{\max}(t)e_1$ are in $M_t$. Hence
 $$u(x,t)=\sup\{p\cdot x:p\in M_t\}\ge\rho_{\max}(t)\vert x\cdot e_1\vert,\quad \forall x\in\mS^n.$$
 Therefore by Lemma \ref{l8.3},
 \begin{equation}\label{8.10}
 	\begin{split}
 C_{50}\ge\int_{\mS^n}\log u(x,t)dx\ge&\vert\mS^n\vert\log\rho_{\max}(t)+\int_{\mS^n}\log \vert x\cdot e_1\vert dx\\
 \ge&\vert\mS^n\vert\log\rho_{\max}(t)-C_{51}.
 	\end{split}
 \end{equation}
Thus by (\ref{8.10}) we have the upper bound of $u$ and $\rho$.

If $\delta=-\frac{(n+1)\beta}{n}$ (i.e. $q=0$), by (\ref{8.9}) and the upper bound of $\rho$, we have the lower bound of $u$ and $\rho$.  Next we only need to derive the uniform lower bound of $u$.

If $-\frac{(n+1)\beta}{n}<\delta\le0$ (i.e. $0<q\le n+1$), by Lemma \ref{l8.2} and the H$\ddot{o}$lder inequality,
\begin{equation}\label{8.11}
\begin{split}
\({\int\hspace{-1.05em}-}_{\mS^n}\rho^q(\xi,0)d\xi\)^\frac{1}{q}=\({\int\hspace{-1.05em}-}_{\mS^n}\rho^q(\xi,t)d\xi\)^\frac{1}{q}&\le\({\int\hspace{-1.05em}-}_{\mS^n}\rho^{n+1}(\xi,t)d\xi\)^\frac{1}{n+1}\\
&=\(\dfrac{n+1}{\vert\mS^n\vert}\Vol(\Om_t)\)^\frac{1}{n+1},
\end{split}
\end{equation}
where $\Vol(\Om_t)$ denotes the volume of $\Om_t$ and we have used $\vert\frac{d\xi}{d\mu(M)}\vert=\frac{u}{\rho^{n+1}}$ in the last equality. After a rotation if necessary, we can assume $\rho(e_{n+1},t)=\rho_{\min}(t)$. Since $\Om_{t}$ is origin-symmetric, we find that $\Om_{t}$ is contained in the cube
\begin{equation*}
Q_t=\{z\in\mR^{n+1}:-\rho_{\max}(t)\le z_i\le\rho_{\max}(t)\text{ for }1\le i\le n, -\rho_{\min}(t)\le z_{n+1}\le\rho_{\min}(t)\}.
\end{equation*}
Thus, by (\ref{8.11}), we have
\begin{equation}\label{8.12}
C_{52}\le\Vol(\Om_{t})\le2^{n+1}\rho_{\max}^n(t)\rho_{\min}(t).
\end{equation}
By the upper bound of $\rho$ and (\ref{8.12}), we have the lower bound of $u$ and $\rho$.

If $\delta>0$ (i.e. $q>n+1$), by Lemma \ref{l8.2}, we have
\begin{equation}\label{8.13}
\begin{split}
\int_{\mS^n}\rho^q(\xi,0)d\xi=&\rho^q_{\max}(t)\int_{\mS^n}\(\frac{\rho(\xi,t)}{\rho_{\max}(t)}\)^{q}d\xi\le\rho^q_{\max}(t)\int_{\mS^n}\(\frac{\rho(\xi,t)}{\rho_{\max}(t)}\)^{n+1}d\xi\\
=&(n+1)\rho^{q-n-1}_{\max}(t)\Vol(\Om_t)\le C_{53}\rho^{q-1}_{\max}(t)\rho_{\min}(t).
\end{split}
\end{equation}
By the upper bound of $\rho$ and (\ref{8.13}), we have the lower bound of $u$ and $\rho$.

(2) $\delta\le-\frac{(n+1)\beta}{n}$ (i.e. $q\le0$).

If $\delta=-\frac{(n+1)\beta}{n}$ (i.e. $q=0$), by (\ref{2.32}) and Lemma \ref{l8.2}, we have
$$\int_{\mS^n}\log u^*d\xi=C_{54}.$$
Since $\Om_{t}^*$ is origin-symmetric, similar to (\ref{8.10}), we have the upper bound of $u^*$. By (\ref{2.32}) and Lemma \ref{l2.5}, we have the lower bound of $u$ and $\rho$.

If $\delta<-\frac{(n+1)\beta}{n}$ (i.e. $q<0$), by (\ref{8.8}) we have the lower bound of $u$ and $\rho$. Next we only need to derive the uniform upper bound of $u$.

If $\a\le\frac{\beta}{n}+1$ (i.e. $p\ge0$), by (\ref{8.7}) and (\ref{8.10}) we have the upper bound of $u$ and $\rho$. If $\a>\frac{\beta}{n}+1$ (i.e. $p<0$), by Lemma \ref{l8.3}, we have $\int_{\mS^n}u^pd\mu_{\psi,\beta}\ge C_{55}$.  This implies that
\begin{equation*}
\int_{\mS^n}(\rho^*)^{-p}d\mu_{\psi,\beta}\ge C_{55}.
\end{equation*}
Since $p<0$, we have $-p>0$. By (\ref{8.12}) and (\ref{8.13}), we have the lower bound of $\rho^*$. In other words, we have the upper bound of $u$ and $\rho$.

(3)  $q>0$ and $-q^*<p<0$. First, we shall derive the upper bound of $u$. We follow an argument in \cite{CW}. Suppose there are a sequence original-symmetric convex bodies $\Om_{t_j}$ along this flow, and the diameter of $\Om_{t_j}$, $2L(t_j)\rightarrow\infty$ as $t_j\rightarrow T$, where $L(t):=\max_{\mS^n}u(t)$. Let $\eps>0$ be a fixed small constant. Set $S_1(t)=\mS^n\cap\{u(t)\le\frac{1}{\eps}\}$ and $S_2(t)=\mS^n\cap\{u(t)\ge\frac{1}{\eps}\}$. Since $\Om_{t_j}$ is origin-symmetric, we have $u_j(y)\ge L(t_j)|x_0\cdot y|$ for any $y\in\mS^n$, where $u_j$ attains the maximum at $x_0\in\mS^n$. We conclude that
\begin{equation}\label{8.14}
\vert S_1(t_j)\vert\rightarrow0 \quad \text{ as } L(t_j)=\max_{\mS^n}u(t_j)\rightarrow\infty.
\end{equation}
Let $\Om^*$ be the polar set of $\Om$, and $\rho^*(t)=\rho_{\Om_{t}^*}$. Let $s=-p>0$. We have $s<q^*$. Thus by Lemma \ref{l8.3},
\begin{equation}\label{8.15}
	\begin{split}
C_{56}\le&\int_{S_1(t)\cup S_2(t)}u^{-s}dx\\
\le&\int_{S_1(t)}(\rho^*)^sdx+C_{54}\eps^s\\
\le&\(\int_{S_1(t)}(\rho^*)^{q'}dx\)^\frac{s}{q'}\vert S_1\vert^{1-\frac{s}{q'}}+C_{57}\eps^s,
	\end{split}
\end{equation}
for any $s<q'<q^*$. Since by Lemma \ref{l8.5} and \ref{l8.2}, we have
$$\(\int_{S_1(t)}(\rho^*)^{q'}dx\)^\frac{s}{q'}\le C_{58}.$$
If $L(t_j)\rightarrow\infty$, then by (\ref{8.14}),
\begin{equation*}
\int_{S_1(t_j)\cup S_2(t_j)}u^{-s}dx\le C_{59}\eps^s.
\end{equation*}
Thus we arrive a contradiction by letting $\eps\rightarrow0$. By (\ref{8.11}), (\ref{8.12}) and (\ref{8.13}), we can derive the lower bound of $u$. In summary, we derive the uniform bounds of $u$ and $\rho$.
\end{proof}
We show that the Gauss curvature of $M_t$ evolved by (\ref{x1.13}) has a uniform positive
lower bound as long as the flow exists. We shall use the following lemma proved in \cite{CL} to derive a uniform positive lower bound easily.
\begin{lemma}\label{l8.7}\cite{CL}
Let $\beta>0$. Let $u(\cdot,t)$ be a positive, smooth and uniformly convex solution to
\begin{equation*}
\frac{\p u}{\p t}=-\dfrac{\phi(t)G(x,u,Du)}{\det^\frac{\beta}{n}(D^2u+u\Rmnum{1})}+\eta(t)u\quad\text{ on }\mS^n\times[0,T),
\end{equation*}
where $G(x,u,Du):\mS^n\times\mR_{\ge0}\times\mR^n\rightarrow\mR_{\ge0}$ is a smooth function, $\mR_{\ge0}=[0,\infty)$.  Suppose that $G(x,u,Du)>0$ whenever $u>0$. If
\begin{align*}
	\frac{1}{C_0}\le u(x,t)\le C_0,\qquad\qquad &\forall(x,t)\in\mS^n\times[0,T),\\
	\vert Du\vert(x,t)\le C_0,\qquad\qquad &\forall(x,t)\in\mS^n\times[0,T),\\
\vert\eta(t)\vert\le C_0,\qquad\qquad&\forall t\in[0,T),\\
	\phi(t)/(\min_{\mS^n}\det(D^2u+u\Rmnum{1})(\cdot,t))^\frac{\beta}{n}\ge\frac{1}{C_0},\quad&\forall t\in[0,T),
\end{align*}
for some constant $C_0>0$, then
\begin{equation*}
\min_{\mS^n}\det(D^2u+u\Rmnum{1})(\cdot,t)\ge C^{-1},\qquad \forall t\in[0,T),
\end{equation*}
where $C$ is a positive constant depending only on $n,\beta,C_0,u(\cdot,0)$, $\vert G\vert_{L^\infty(U)}$, $\vert1/G\vert_{L^\infty(U)}$, $\vert G\vert_{C^1_{x,u,Du}(U)}$ and $\vert G\vert_{C^2_{x,u,Du}(U)}$ where $U=\mS^n\times[1/C_0,C_0]\times B^n_{C_0}$ ($B^n_{C_0}$ is the ball
centered at the origin with radius $R$ in $\mR^n$).
\end{lemma}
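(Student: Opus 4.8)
The plan is to prove this by a Tso‑type a priori estimate, i.e.\ a maximum principle applied to a carefully chosen auxiliary quantity. First I would recast the statement geometrically: the eigenvalues of $b_{ij}:=D_iD_ju+u\delta_{ij}$ are the principal radii of curvature $\lambda_1,\dots,\lambda_n$ of $M_t$, so $\det(D^2u+u\Rmnum{1})=\prod_i\lambda_i=1/K$ with $K$ the Gauss curvature, and the asserted lower bound for $\det(D^2u+u\Rmnum{1})$ is equivalent to a uniform upper bound for $K$, equivalently — since $G$ and $1/G$ are bounded on $U$ and, by the last hypothesis, $\phi(t)$ is controlled below relative to $\det_{\min}(t)^{\beta/n}$ — for the normal speed $\mathcal{F}:=\phi(t)\,G(x,u,Du)\,\det^{-\beta/n}(D^2u+u\Rmnum{1})$. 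Because $\p_tu=-\mathcal{F}+\eta u$ with $\mathcal{F}$ large exactly where $\det(D^2u+u\Rmnum{1})$ is small, the flow is of contracting Gauss‑curvature type, which is precisely the framework of Tso's estimate.

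The core of the argument runs as follows. Differentiating the equation twice and using the Ricci identities on $\mathbb{S}^n$ gives $\p_tb_{ij}=-D_iD_j\mathcal{F}-\mathcal{F}\delta_{ij}+\eta b_{ij}$, hence, writing $b^{ij}$ for the inverse of $b_{ij}$ and $\mathcal{L}:=\tfrac{\beta}{n}\mathcal{F}\,b^{ij}D_iD_j$ for the (a priori only degenerate‑elliptic) linearization,
\[
(\p_t-\mathcal{L})\mathcal{F}=\tfrac{\beta}{n}\mathcal{F}^{2}\,\mathrm{tr}(b^{-1})-\beta\eta\mathcal{F}+\Big(\tfrac{\phi'}{\phi}+\tfrac{\dot G}{G}\Big)\mathcal{F},
\]
where $\dot G$ collects the $(u,Du)$–derivatives of $G$ paired against $\p_tu$ and $\p_tDu=D(\p_tu)$. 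Fixing $a:=\tfrac12\min_{\mathbb{S}^n}u(\cdot,0)$ so that $u-a$ stays between two positive constants determined by $C_0$, set $v:=(u-a)^{-1}$ and $Q:=\mathcal{F}v$, and evaluate $(\p_t-\mathcal{L})Q$ at a point realizing a new spatial maximum of $Q$. There $DQ=0$ (so that $D_i\mathcal{F}=\mathcal{F}vD_iu$ is controlled) and $\mathcal{L}Q\le0$; the point of dividing by $u-a$ is that $\mathcal{L}$ acting on $v$ produces, via $b^{ij}D_iD_ju=n-u\,\mathrm{tr}(b^{-1})$ and $1-uv=-av<0$, a strictly negative contribution $-\tfrac{\beta a}{n}v^{2}\mathcal{F}^{2}\,\mathrm{tr}(b^{-1})$ that survives after cancelling against the positive $\tfrac{\beta}{n}v\mathcal{F}^{2}\mathrm{tr}(b^{-1})$ coming from $(\p_t-\mathcal{L})\mathcal{F}$ and against the gradient cross term. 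Using $\mathrm{tr}(b^{-1})\ge n\det(b)^{-1/n}$, which is large exactly when $\det(b)$ is small (hence $\mathcal{F}$, hence $Q$, large), together with the hypothesised bounds on $u,Du,\eta,G,1/G,DG,D^2G$ and on $\phi(t)$, all remaining terms are at most $C_1Q+C_2Q^{2}$, so that $\tfrac{d}{dt}Q_{\max}\le C_1Q_{\max}-C_3Q_{\max}^{2+1/\beta}$ whenever $Q_{\max}$ is large. This forces $Q_{\max}(t)\le C$, hence $\mathcal{F}\le C$, and then the hypothesis $\phi(t)/\det_{\min}(t)^{\beta/n}\ge C_0^{-1}$ is exactly what converts the bound on $\mathcal{F}$ into the pointwise lower bound $\det(D^2u+u\Rmnum{1})\ge C^{-1}$.

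The main obstacle is the bookkeeping in the second step: one must organize the full evolution of $Q$ so that the $(x,u,Du)$–dependence of $G$ (the terms $\dot G/G$, $DG$, $D^2G$ and their cross‑terms with $Du$ and $D^2u$) and the time factors $\phi'$, $\eta$ enter only through quantities already controlled by the hypotheses, never through $K$ itself — which is what is being estimated; here I would follow the implementation in \cite{CL}. A convenient variant that lightens this algebra is to run the same argument with the logarithmic auxiliary function $w:=-\tfrac{\beta}{n}\log\det(D^2u+u\Rmnum{1})-A\log(u-a)$ for a large $A=A(n,\beta,C_0)$: the Tso mechanism is identical but the nonlinear $Q^2$–terms become linear in $w$, which is cleaner. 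Either way, once this two‑sided curvature control is combined with the $C^0$ and $C^1$ bounds already at hand, the equation is uniformly parabolic, and the Krylov–Safonov and Schauder estimates with the standard bootstrap (\cite{KNV,LN}) yield the asserted $C^\infty$ bounds and uniqueness.
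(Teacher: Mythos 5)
Your overall strategy --- a Tso-type maximum-principle estimate for the speed divided by $u$ minus a suitable constant --- is the right one and is indeed how this lemma is proved in \cite{CL}; the paper itself gives no proof beyond citing that reference and recording the auxiliary function used there. But the specific quantity you choose, $Q=\mathcal{F}v$ with $\mathcal{F}=\phi(t)\,G\,\det^{-\beta/n}(D^2u+u\Rmnum{1})$, carries the uncontrolled time factor $\phi(t)$, and this breaks the argument in two places. First, $\partial_tQ$ contains the term $(\phi'/\phi)\,Q$, which you write down but cannot absorb: the hypotheses bound $\phi$ only from below, and only relative to $(\min_{\mS^n}\det(D^2u+u\Rmnum{1}))^{\beta/n}$; they say nothing about $\phi'$, nor do they even give an upper bound for $\phi$. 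Second, and more fundamentally, even granting $\mathcal{F}\le C$, your final step does not work: the hypothesis $\phi(t)\ge C_0^{-1}\big(\min_{\mS^n}\det(D^2u+u\Rmnum{1})\big)^{\beta/n}$ points the wrong way. Substituting it at the point where $\det(D^2u+u\Rmnum{1})$ attains its minimum only yields $C\ge\mathcal{F}\ge C_0^{-1}G$, a tautological \emph{lower} bound on $\mathcal{F}$, not the desired lower bound on the determinant. Since $\phi$ may be arbitrarily large, an upper bound on $\phi\,G\,\det^{-\beta/n}$ simply does not control $\det$ from below.

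The repair is exactly the normalization indicated in the remark following the lemma: work with the $\phi$-free quantity $Q=\frac{\eta(t)u-u_t}{\phi(t)(u-\sigma_0)}=\frac{G\,\det^{-\beta/n}(D^2u+u\Rmnum{1})}{u-\sigma_0}$, where $\sigma_0=\frac12\inf_{\mS^n\times[0,T)}u>0$. An upper bound on this $Q$ immediately gives $\det^{-\beta/n}(D^2u+u\Rmnum{1})\le C(u-\sigma_0)/G\le C'$, which is the assertion; and in its evolution $\phi$ enters only multiplicatively in the good Tso term of the form $-c\,\sigma_0\,\phi\,\det^{-\beta/n}\,\mathrm{tr}(b^{-1})\,Q$, which is precisely where the hypothesis on $\phi$ is used (at the spatial maximum it makes this term dominate like $Q^{2+1/\beta}$), while $\phi'$ never appears because $Q$ itself is independent of $\phi$ and $\eta$. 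Two smaller points: your constant $a=\frac12\min_{\mS^n}u(\cdot,0)$ need not satisfy $u-a>0$ at later times (only $u\ge 1/C_0$ is assumed), so it should be $\frac12\inf_{\mS^n\times[0,T)}u$ or $1/(2C_0)$; and the closing paragraph on Krylov--Safonov and Schauder estimates is not part of what this lemma asserts.
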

\textbf{Remark}: Lemma \ref{l8.7} was proved directly in \cite{CL} by the evolution equation of $$Q=\frac{\eta(t)u-u_t}{\phi(t)(u-\sigma_0)}=\frac{GK^\beta}{u-\sigma_0},$$
where $\sigma_0=\frac{1}{2}\inf\{u(x,t):(x,t)\in\mS^n\times[0,T)\}>0$.

\begin{lemma}\label{l8.8}
Let $\beta>0$. Let $\psi$, $u_0$, $u(\cdot,t)$, $\a$, $\delta$ and $\beta$ be as in Lemma \ref{l8.6}. Then there is a  positive constant $C_{60}$ depending on the initial hypersurface and $\psi,\a,\delta,\beta,n$ such that
$$\det(D^2u+u\Rmnum{1})(x,t)\le C_{60}, \quad \forall(x,t)\in\mS^n\times[0,T).$$
\end{lemma}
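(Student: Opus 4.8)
Recall that $\det(D^2u+u\Rmnum{1})=\prod_i\l_i$ is the reciprocal of the Gauss curvature $K$ of $M_t$, so Lemma \ref{l8.8} is a lower bound for $K$, complementary to the lower bound for $\det(D^2u+u\Rmnum{1})$ supplied by Lemma \ref{l8.7}. The plan is to pass to the polar body and reduce to Lemma \ref{l8.7} itself. By (\ref{2.32}), $u^*:=1/\rho$ is the support function of the polar body $\Om_t^*$, and by Lemma \ref{l8.6} together with (\ref{2.32}) the function $u^*$ inherits the $C^0$ and $C^1$ bounds of $u$. Using (\ref{2.34}) to rewrite $\p_tu^*$, then (\ref{2.33}) to replace the Gauss curvature of $\Om_t$ by that of $\Om_t^*$, and finally re-expressing $u$ and $\rho$ through $u^*,Du^*$ by the radial and Gauss-map identities of Section 2.6 (in particular the formula for the unit normal in Section 2.4), one checks that $u^*$ solves a flow of exactly the type covered by Lemma \ref{l8.7},
\begin{equation*}
\p_tu^*=-\frac{\phi(t)\,\widetilde G(x,u^*,Du^*)}{\det^{\beta/n}(D^2u^*+u^*\Rmnum{1})}+u^*,
\end{equation*}
with $\widetilde G$ smooth and strictly positive and with the zeroth-order coefficient $\eta(t)\equiv1$. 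Moreover (\ref{2.33}) and the $C^0$ bounds give the pointwise comparison $\det(D^2u+u\Rmnum{1})\asymp\big(\det(D^2u^*+u^*\Rmnum{1})\big)^{-1}$ along the radial Gauss map, so it suffices to bound $\det(D^2u^*+u^*\Rmnum{1})$ from below.

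To apply Lemma \ref{l8.7} to $u^*$, the only hypothesis needing comment is $\phi(t)\big(\min_{\mS^n}\det(D^2u^*+u^*\Rmnum{1})\big)^{-\beta/n}\ge C_0^{-1}$, and this follows from the $C^0$ bounds alone. Indeed, by Lemma \ref{l8.2} the numerator $\int_{\mS^n}\rho^q\,d\xi$ in (\ref{x1.14}) is constant, hence bounded above and below by Lemma \ref{l8.6}; and after the change of variables (\ref{2.27}) the denominator in (\ref{x1.14}) is comparable to $\int_{\mS^n}\det^{\beta/n+1}(D^2u+u\Rmnum{1})\,dx$. Since $\det^{\beta/n+1}(D^2u+u\Rmnum{1})\le\big(\sup_{\mS^n}\det(D^2u+u\Rmnum{1})\big)^{\beta/n}\det(D^2u+u\Rmnum{1})$ and $\int_{\mS^n}\det(D^2u+u\Rmnum{1})\,dx=\mathrm{Area}(\p\Om_t)\le C$ (as $\Om_t$ lies between two fixed balls by Lemma \ref{l8.6}), this denominator is $\le C\big(\sup_{\mS^n}\det(D^2u+u\Rmnum{1})\big)^{\beta/n}$, whence $\phi(t)\ge c\big(\sup_{\mS^n}\det(D^2u+u\Rmnum{1})\big)^{-\beta/n}\asymp c\big(\min_{\mS^n}\det(D^2u^*+u^*\Rmnum{1})\big)^{\beta/n}$, which is exactly what is required. (The converse Jensen bound $\int_{\mS^n}\det^{\beta/n+1}(D^2u+u\Rmnum{1})\,dx\ge c\big(\int_{\mS^n}\det(D^2u+u\Rmnum{1})\,dx\big)^{\beta/n+1}$ together with $\mathrm{Area}(\p\Om_t)\ge c>0$ likewise gives $\phi(t)\le C$, which keeps $\widetilde G$ and its first and second derivatives under control.) Lemma \ref{l8.7} then yields $\det(D^2u^*+u^*\Rmnum{1})\ge C^{-1}$, and the comparison above gives $\det(D^2u+u\Rmnum{1})\le C_{60}$, with $C_{60}$ depending only on $n,\beta,\a,\delta,\psi$ and $M_0$.

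The main obstacle is the first step: writing the polar flow explicitly and verifying that it meets the hypotheses of Lemma \ref{l8.7} — namely that $\widetilde G(x,u^*,Du^*)$ is a genuine first-order function (not secretly involving $D^2u^*$), is smooth and strictly positive, and is bounded in $C^2_{x,u,Du}$ on the relevant compact set. This amounts to a careful bookkeeping, through the identities (\ref{2.27})--(\ref{2.34}) and Section 2.6, of how the quantities $u_\Om$, $\rho_\Om$ and $K_\Om$ appearing in (\ref{8.3}) transform into $u^*$, $Du^*$ and $\det(D^2u^*+u^*\Rmnum{1})$; once this is in place the $C^0$ and $C^1$ estimates of Lemma \ref{l8.6} translate directly into the control of $\widetilde G$ needed to invoke Lemma \ref{l8.7}. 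Everything after that — the boundedness of $\phi$, the verification of the remaining hypothesis of Lemma \ref{l8.7}, and the return through the duality — is then routine.
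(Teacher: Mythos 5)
Your proposal is correct and follows essentially the same route as the paper: pass to the polar body via (\ref{2.32})--(\ref{2.34}) so that $u^*$ solves a flow of the form covered by Lemma \ref{l8.7} (the paper writes the resulting $G(\xi,u^*,Du^*)$ out explicitly in its equation (8.16)), and then verify the hypothesis $\phi(t)\ge c\big(\min_{\mS^n}\det(D^2u^*+u^*\Rmnum{1})\big)^{\beta/n}$ from Lemma \ref{l8.2}, the $C^0$ bounds of Lemma \ref{l8.6} and the duality relation (\ref{2.33}). The only (immaterial) difference is in that last verification: the paper changes variables to $\xi$ and bounds $1/\phi$ by $C\int_{\mS^n}\det^{-\beta/n}(D^2u^*+u^*\Rmnum{1})\,d\xi\le C|\mS^n|/\min_{\mS^n}\det^{\beta/n}(D^2u^*+u^*\Rmnum{1})$, whereas you stay on the $x$-sphere, use the surface-area bound to factor out $\big(\sup\det(D^2u+u\Rmnum{1})\big)^{\beta/n}$, and then convert to the polar minimum via (\ref{2.33}); both arguments are valid.
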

\begin{proof}
Let $\Om_t$ be the convex body whose support function is $u(\cdot,t)$. Let $\Om_t^*$ be the
polar set of $\Om_t$ and $u^*(\xi,t)$ be the support function of $\Om_t^*$. By (\ref{2.32}), (\ref{2.33}), (\ref{2.34}) and (\ref{8.3}), we have
\begin{equation}\label{8.16}
\begin{cases}
\frac{\p u^*}{\p t}(\xi,t)=-\dfrac{\phi(t)G(\xi,u^*,Du^*)}{\det^\frac{\beta}{n}(D^2u^*+u^*\Rmnum{1})}+u^*\quad\text{ on }\mS^n\times[0,T),\\
u^*(\xi,0)=u^*_{0}(\xi),
\end{cases}
\end{equation}
where $u^*_{0}$ is the support function of $\Om^*_{0}$ and
\begin{equation*}
G(\xi,u^*,Du^*)=(u^{*2}+\vert Du^*\vert^2)^\frac{n-n\a+(n+2)\beta}{2n}u^{*(1-\delta-\frac{(n+2)\beta}{n})}\psi\(\dfrac{Du^*+u^*\xi}{\sqrt{u^{*2}+\vert Du^*\vert^2}}\).
\end{equation*}
Then this lemma can be proved by applying Lemma \ref{l8.7} to (\ref{8.16}). It suffices to verify conditions in Lemma \ref{l8.7}.

Note that $\eta(t)=1$ in the current situation, and $\frac{1}{C_0}\le u,\rho\le C_0$ are consequences of Lemma \ref{8.6}. Finally we only need to verify $$\phi(t)/(\min_{\mS^n}\det(D^2u+u\Rmnum{1})(\cdot,t))^\frac{\beta}{n}\ge\frac{1}{C_0}.$$
Recall that $\phi(t)$ is given by (\ref{x1.14}). Since we have the uniform bounds of $u$ and $\rho$, using (\ref{2.27}), (\ref{2.33}) and Lemma \ref{l8.2}, we have
\begin{equation*}
\frac{1}{\phi(t)}\le C_{61}\int_{\mS^n}\dfrac{d\xi}{\det^\frac{\beta}{n}(D^2u^*+u^*\Rmnum{1})}\le\dfrac{C_{61}\vert\mS^n\vert}{\min_{\mS^n}\det^\frac{\beta}{n}(D^2u^*+u^*\Rmnum{1})(\cdot,t)}.
\end{equation*}
Hence we complete the proof by making use of Lemma \ref{l8.7}.
\end{proof}
\begin{lemma}\label{l8.9}
Let $\beta>0$. Let $\psi$, $u_0$, $u(\cdot,t)$, $\a$, $\delta$ and $\beta$ be as in Lemma \ref{l8.6}. Then there is a  positive constant $C_{62}$ depending on the initial hypersurface and $\psi,\a,\delta,\beta,n$ such that
\begin{equation}\label{8.17}
C_{62}^{-1}\le\phi(t)\le C_{62}, \quad \forall t\in[0,T).
\end{equation}
\end{lemma}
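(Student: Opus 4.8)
The idea is that both integrals making up $\phi(t)$ are controlled purely by the already-established $C^0$ estimates together with the upper $\det$-bound. First I would dispose of the numerator: by Lemma \ref{l8.2} one has $\int_{\mS^n}\rho^q(\xi,t)\,d\xi=\int_{\mS^n}\rho^q(\xi,0)\,d\xi$, a fixed positive constant (since $\rho_0$ is a positive smooth function on the compact sphere). Hence it remains to show that the denominator
$$I(t):=\int_{\mS^n}\frac{\psi u^\alpha\rho^{\delta+n\delta/\beta}}{K^{\frac{\beta}{n}+1}}\,dx$$
is bounded above and below by positive constants depending only on $n,\a,\delta,\beta,\psi$ and $M_0$; then $\phi(t)=\big[\int_{\mS^n}\rho^q d\xi\big]\,I(t)^{-1}$ inherits two-sided bounds and (\ref{8.17}) follows.

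Next I would reduce $I(t)$ to a curvature integral. By Lemma \ref{l8.6} we have $C_{41}^{-1}\le u\le C_{41}$ on $\mS^n\times[0,T)$, and by (\ref{2.29}) the radial function occurring inside $I(t)$ (namely $\rho\circ\sA^*_{\Om_t}$) obeys the same two-sided bound; since $\psi$ is a fixed positive smooth function on $\mS^n$, the factor $\psi u^\alpha\rho^{\delta+n\delta/\beta}$ lies in $[c_1^{-1},c_1]$ for a constant $c_1>0$, irrespective of the signs of the exponents $\alpha$ and $\delta+n\delta/\beta$. Writing $K^{-1}=\det(D^2u+u\Rmnum{1})$ (as a function of the normal direction $x$, via $K=K(\nu^{-1}_{\Om_t}(x))$), this gives
$$c_1^{-1}\int_{\mS^n}\det(D^2u+u\Rmnum{1})^{\frac{\beta}{n}+1}\,dx\ \le\ I(t)\ \le\ c_1\int_{\mS^n}\det(D^2u+u\Rmnum{1})^{\frac{\beta}{n}+1}\,dx .$$
For the upper bound of $I(t)$ I would simply invoke Lemma \ref{l8.8}: $\det(D^2u+u\Rmnum{1})\le C_{60}$, whence the integral is at most $C_{60}^{\frac{\beta}{n}+1}|\mS^n|$, yielding $\phi(t)\ge C_{62}^{-1}$. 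For the lower bound of $I(t)$, since $\frac{\beta}{n}+1>1$ and $t\mapsto t^{\frac{\beta}{n}+1}$ is convex, Jensen's inequality gives ${\int\hspace{-1em}-}_{\mS^n}\det(D^2u+u\Rmnum{1})^{\frac{\beta}{n}+1}\,dx\ge\big({\int\hspace{-1em}-}_{\mS^n}\det(D^2u+u\Rmnum{1})\,dx\big)^{\frac{\beta}{n}+1}$, so it suffices to bound $\int_{\mS^n}\det(D^2u+u\Rmnum{1})\,dx$ from below. But this integral is exactly the surface area of $M_t=\partial\Om_t$ (the total first surface area measure of $\Om_t$), and by Lemma \ref{l8.6} the body $\Om_t$ contains an origin-centred ball of radius $C_{41}^{-1}$; by monotonicity of surface area under inclusion of convex bodies, $\int_{\mS^n}\det(D^2u+u\Rmnum{1})\,dx\ge|\mS^n|\,C_{41}^{-n}>0$. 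This forces $I(t)\ge c>0$ and hence $\phi(t)\le C_{62}$.

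I do not expect a serious obstacle. The two points requiring care are: (i) identifying the correct pre-images under $\sA^*_{\Om_t}$ and $\nu^{-1}_{\Om_t}$ so that the $C^0$ estimates of Lemma \ref{l8.6} apply to every factor in $I(t)$ — this is exactly what (\ref{2.29}) provides, together with the relation $K^{-1}=\det(D^2u+u\Rmnum{1})$; and (ii) the fact that the lower bound on $I(t)$ genuinely requires the integral/surface-area argument above, since a pointwise lower bound on $\det(D^2u+u\Rmnum{1})$ (i.e. an upper bound on the principal curvatures) is not yet available at this stage of the argument. If one prefers, the change-of-variables formula (\ref{2.27}) can be used instead to rewrite $I(t)$ as $\int_{\mS^n}\psi(\sA(\xi))u^{\alpha-1}(\sA(\xi))\rho^{\,\delta+n\delta/\beta+n+1}(\xi)\,\det(D^2u+u\Rmnum{1})^{\beta/n}\,d\xi$, but this still carries a $\det$-factor and offers no real simplification, so I would stick with the direct estimate. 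Everything else is a routine application of Lemmas \ref{l8.2}, \ref{l8.6} and \ref{l8.8} together with Jensen's inequality.
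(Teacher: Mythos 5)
Your proof is correct and follows essentially the same route as the paper: the numerator of $\phi(t)$ is constant by Lemma \ref{l8.2}, the denominator is bounded above via the determinant bound of Lemma \ref{l8.8} together with the $C^0$ estimates, and it is bounded below by Jensen's (power-mean) inequality, which reduces the problem to a lower bound on $\int_{\mS^n}K^{-1}\,dx$. The only, harmless, difference is in that last step: the paper derives $\int_{\mS^n}K^{-1}\,dx\ge c>0$ from the Jacobian identity (\ref{2.28}), namely $\vert\mS^n\vert=\int_{\mS^n}u\,\rho^{-(n+1)}K^{-1}\,dx\le C\int_{\mS^n}K^{-1}\,dx$, whereas you identify $\int_{\mS^n}\det(D^2u+u\Rmnum{1})\,dx$ with the surface area of $M_t$ and invoke monotonicity of surface area under inclusion of convex bodies; both are valid.
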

\begin{proof}
The first inequality in (\ref{8.17}) is a consequence of (\ref{x1.14}), Lemma \ref{l8.2}, \ref{l8.6} and \ref{l8.8}.

On the other hand, by virtue of (\ref{2.28}) and Lemma \ref{l8.6},
\begin{equation}\label{8.18}
\vert\mS^n\vert=\int_{\mS^n}d\xi=\int_{\mS^n}\dfrac{u}{\rho^{n+1}K}dx\le C_{63}\int_{\mS^n}\dfrac{dx}{K}.
\end{equation}
Hence, by using (\ref{x1.14}), Lemma \ref{l8.2} and \ref{l8.6} again,
\begin{align*}
\frac{1}{\phi(t)}\ge C_{64}\int_{\mS^n}\dfrac{dx}{K^{\frac{\beta}{n}+1}}\ge C_{65}\(\int_{\mS^n}\dfrac{dx}{K}\)^{\frac{\beta}{n}+1}\ge C_{66},
\end{align*}
where the last inequality is due to (\ref{8.18}). Therefore the second inequality in (\ref{8.17}) holds.
\end{proof}
With the help of the above lemmas, we can prove Theorem \ref{xt1.5}.

\begin{proof of theorem 1.5}

Since equation (\ref{8.3}) is parabolic, we have the short-time existence. Let $T$ be the
maximal time such that $u(\cdot,t)$ is a positive, smooth and uniformly convex solution to
(\ref{8.3}) for all $t\in[0,T).$

Lemma \ref{l8.6} and {\ref{l8.9}} enable us to apply Lemma \ref{l7.6} to (\ref{8.3}) and thus deduce a uniformly positive lower bound estimate for the smallest eigenvalue of $[u_{ij}+u\delta_{ij}]$. This together with Lemma \ref{l8.8} shows that
$$ C^{-1}\Rmnum{1}\le(D^2u+u\Rmnum{1})(x,t)\le C\Rmnum{1},\qquad \forall(x,t)\in\mS^n\times[0,T),$$
where $C>0$ depends only on $n$, $\psi$, $\a$, $\delta$, $\beta$ and $u_0$. This implies that (\ref{8.3}) is uniformly parabolic. By the same argument as the argument below Lemma \ref{l7.7}, we obtain the long time existence and $C^\infty$-smoothness of solutions for the normalized flow (\ref{8.3}). The uniqueness of smooth solutions also follows from the parabolic theory.

By the monotonicity of $\cJ_{p,q}$ (see Lemma \ref{l8.3}), and noticing that
$$\vert\cJ_{p,q}(X(\cdot,t))\vert\le C,\qquad\forall t\in[0,\infty),$$
we conclude that
$$\int_{0}^\infty\vert\frac{d}{dt}\cJ_{p,q}(X(\cdot,t))\vert dt\le C.$$
Hence there is a sequence of $t_i\rightarrow\infty$ such that $\frac{d}{dt}\cJ_{p,q}(X(\cdot,t))\rightarrow0$. By Lemma \ref{l8.3}, we see that $u(\cdot,t_i)$ converges smoothly to a positive, smooth and uniformly function $u_\infty$ solving (\ref{1.11}) with $\psi$ replaced by $c_0\psi^{-n/\beta}$, where $c_0=\lim_{t_i\rightarrow\infty}(\phi(t_i))^{-n/\beta}$. When $p> q$, we have proved the uniqueness of solutions to (\ref{1.11}) in Proof of Theorem 1.4. If $p=q$, the uniqueness of solutions to (\ref{1.11}) have been proved in \cite{CL} Lemma 4.2. Therefore the convergence of $u(\cdot,t)$ is for any sequence of times.
\end{proof of theorem 1.5}

\section{Proof of Corollary 1.6, 1.7 and Theorem 1.8}
Notice that the solution to $u^{\alpha-1}\rho^\delta F^{\beta}(D^2u+u\Rmnum{1})=\eta$ remains invariant under flow (\ref{1.7}). By Theorem \ref{1.2} and \ref{1.3}, we have the convergence of the flow (\ref{1.7})  if $F(D^2u+u\Rmnum{1})=\sigma_{k}^\frac{1}{k}(D^2u+u\Rmnum{1})$ with (\rmnum{1}) $\forall \a,\delta\le0,\beta>0$; (\rmnum{2}) $\forall \a\le0<\beta\le1-\a-\delta$.
\begin{proof of corollary 1.6 and 1.7}
	
 We can find that if $q=k+1$, (\ref{1.12}) is (\ref{1.9}). Now we consider this equation
$$\sigma_{k}(D^2u+u\Rmnum{1})=c u^\frac{k(1-\a)}{\beta}\rho^\frac{-k\delta}{\beta},$$
for $\forall 1\le k\le n$. Thus $p-1=\frac{k(1-\a)}{\beta}$ and $k+1-q=\frac{-k\delta}{\beta}$. We can find that $\a\le0$ isn't needed in Theorem \ref{t1.2} if $F=\sigma_{n}^{1/n}$ by Lemma \ref{l7.6}. In fact, we only need the condition $\a\le0$ in $C^2$ estimates if $F=\sigma_{n}^{1/n}$.

(\rmnum{1}) If $\forall \a,\delta\le0,\beta>0$, then $\forall p,q,$  $ p-1>0,k+1-q\ge0$ is correct.

(\rmnum{2}) If $\forall0<\beta\le1-\a-\delta$, we can find $k+p-q=\frac{k}{\beta}(1-\a-\delta)\ge k$. Therefore $\forall p,q,$  $p\ge q$ is correct. If $k\ne n$, we have $p>1$ in addition by $\a\le0$.

In summary, Corollary \ref{t1.4} and \ref{t1.5} have been proved.
\end{proof of corollary 1.6 and 1.7}

\begin{proof of theorem 1.8}

By Theorem \ref{xt1.4} and \ref{xt1.5}, we have the existence and partial uniqueness of the solution of
$$\sigma_{k}(D^2u+ug_{\mS^n})=(u^2+|Du|^2)^\frac{k+1-q}{2}u^{p-1}\psi'(x).$$
Note that $\a,\delta,\beta,\psi$ in conditions of Theorem \ref{xt1.4} and \ref{xt1.5} corresponds with $\a,\delta,\beta,\psi$ in equation $\psi u^{\a-1}\rho^\delta\sigma_{k}^\frac{\beta}{k}=c$. At this time, we can find $\sigma_{k}=(c\psi)^{-\frac{k}{\beta}}u^{\frac{k(1-\a)}{\beta}}\rho^{-\frac{k\delta}{\beta}}$. Thus if we let $(D_iD_j\psi^{\frac{1}{1+\beta-\a}}+\delta_{ij}\psi^{\frac{1}{1+\beta-\a}})$ be positive definite or negative definite, then $(D_iD_j\psi'^{\frac{1}{p+k-1}}+\delta_{ij}\psi'^{\frac{1}{p+k-1}})$ be positive definite or negative definite respectively by $p-1=\frac{k(1-\a)}{\beta}$ and $\psi'=(c\psi)^{-\frac{k}{\beta}}$. Similarly, we let $\int_{\mS^n}c_0\psi^{-\frac{n}{\beta}}=\vert\mS^n\vert$ and $\int_\om c_0\psi^{-\frac{n}{\beta}}<\vert\mS^n\vert-\vert\om^*\vert$ in \ref{xt1.4} and \ref{xt1.5} by the existence of solution of Alexsandrov's problem. How to derive the range of $p,q$ is similar to Corollary \ref{t1.4} and \ref{t1.5}.
\end{proof of theorem 1.8}

\section{Reference}
\begin{biblist}

\bib{AA}{article}{
	author={Alexandroff, A.},
	title={Existence and uniqueness of a convex surface with a given integral
		curvature},
	journal={C. R. (Doklady) Acad. Sci. URSS (N.S.)},
	volume={35},
	date={1942},
	pages={131--134},
	review={\MR{0007625}},
}

\bib{B0}{article}{
   author={Andrews B.},
   title={Contraction of convex hypersurfaces in Euclidean space},
   journal={Calc. Var. PDEs},
   volume={2(2)}
   date={1994},
   pages={151-171},
}

\bib{A9}{article}{
   author={Andrews B.},
   title={Contraction of convex hypersurfaces by their affine normal},
   journal={J. Diff. Geom.},
   volume={43}
   date={1996},
   pages={207-230},
}

\bib{B1}{article}{
   author={Andrews B.},
   title={Gauss curvature flow: the fate of the rolling stones},
   journal={Invent. Math.},
   volume={138(1)}
   date={1999},
   pages={151-161},
}

\bib{B5}{article}{
	author={Andrews B.},
	title={Monotone quantities and unique limits for evolving convex
		hypersurfaces},
	journal={Internat. Math. Res. Notices},
	date={1997},
	number={20},
	pages={1001--1031},
	issn={1073-7928},
	review={\MR{1486693}},
	doi={10.1155/S1073792897000640},
}

\bib{B3}{article}{
   author={Andrews B.},
   title={Pinching estimates and motion of hypersurfaces by curvature functions},
   journal={J. Reine Angew. Math.},
   volume={608}
   date={2007},
   pages={17-33},
}

\bib{B2}{article}{
	author={Andrews B.},
	author={ McCoy J.},
	title={Convex hypersurfaces with pinched principal curvatures and flow of convex hypersurfaces by high powers of curvature},
	journal={Trans. Amer. Math. Soc.},
	volume={364(7)}
	date={2012},
	pages={3427-3447},
}

\bib{B4}{article}{
   author={Andrews B.},
   author={ McCoy J.},
   author={ Zheng Y.},
   title={Contracting convex hypersurfaces by curvature},
   journal={Calc. Var. PDEs },
   volume={47}
   date={2013},
   pages={611-665},
}

\bib{BC}{article}{
	author={Berg C.},
	title={Corps convexes et potentiels sph\'{e}riques},
	language={French},
	journal={Mat.-Fys. Medd. Danske Vid. Selsk.},
	volume={37},
	date={1969},
	number={6},
	pages={64 pp. (1969)},
	issn={0023-3323},
	review={\MR{254789}},
}

\bib{BLO}{article}{
	author={Barbosa, J. Lucas M.},
	author={Lira, Jorge H. S.},
	author={Oliker, Vladimir I.},
	title={A priori estimates for starshaped compact hypersurfaces with
		prescribed $m$th curvature function in space forms},
	conference={
		title={Nonlinear problems in mathematical physics and related topics,
			I},
	},
	book={
		series={Int. Math. Ser. (N. Y.)},
		volume={1},
		publisher={Kluwer/Plenum, New York},
	},
	date={2002},
	pages={35--52},
	review={\MR{1970603}},
}

\bib{BF}{article}{
	author={B\"{o}r\"{o}czky K\'{a}roly J.},
	author={Fodor Ferenc},
	title={The $L_p$ dual Minkowski problem for $p>1$ and $q>0$},
	journal={J. Differential Equations},
	volume={266},
	date={2019},
	number={12},
	pages={7980--8033},
	issn={0022-0396},
	review={\MR{3944247}},
	doi={10.1016/j.jde.2018.12.020},
}

\bib{BS}{article}{
   author={Brendle S.},
   author={ Choi K.},
   author={ Daskalopoulos P.},
   title={Asymptotic behavior of flows by powers of the Gauss curvature},
   journal={Acta Math.},
   volume={219(1)}
   date={2017},
   pages={1-16},
   }

\bib{BIS}{article}{
	author={Bryan P.},
	author={Ivaki M. N.},
	author={Scheuer J.},
	title={A unified flow approach to smooth, even $L_p$-Minkowski problems},
	journal={Anal. PDE},
	volume={12},
	date={2019},
	number={2},
	pages={259--280},
	issn={2157-5045},
	review={\MR{3861892}},
	doi={10.2140/apde.2019.12.259},
}

\bib{BIS2}{article}{
	author={Bryan P.},
	author={Ivaki M. N.},
	author={Scheuer J.},
	title={Parabolic approaches to curvature equations},
	journal={Nonlinear Anal.},
	volume={203},
	date={2021},
	pages={Paper No. 112174, 24},
	issn={0362-546X},
	review={\MR{4172901}},
	doi={10.1016/j.na.2020.112174},
}

\bib{CO}{article}{
	author={Cordes H. O.},
	title={$\ddot U$ber die erste Randwertaufgabe bei quasilinearen Differentialgleichungen zweiter Ordnung in mehr als zwei Variablen},
	journal={Math. Ann.},
	volume={131}
	date={1956},
	pages={278-312},
}

\bib{CB1}{article}{
   author={Chow B.},
   title={Deforming convex hypersurfaces by the $n$-th root of the Gaussian curvature},
   journal={J. Diff. Geom.},
   volume={22(1)}
   date={1985},
   pages={117-138},
}
\bib{CB2}{article}{
   author={Chow B.},
   title={Deforming convex hypersurfaces by the square root of the scalar curvature},
   journal={Invent. Math.},
   volume={87(1)}
   date={1987},
   pages={63-82},
}

\bib{CG}{article}{
	author={Chow B.},
	author={Gulliver R.},
	title={Aleksandrov reflection and nonlinear evolution equations. I. The
		$n$-sphere and $n$-ball},
	journal={Calc. Var. Partial Differential Equations},
	volume={4},
	date={1996},
	number={3},
	pages={249--264},
	issn={0944-2669},
	review={\MR{1386736}},
	doi={10.1007/BF01254346},
}

\bib{CHZ}{article}{
	author={Chen C.},
	author={Huang Y.},
	author={Zhao Y.},
	title={Smooth solutions to the $L_p$ dual Minkowski problem},
	journal={Math. Ann.},
	volume={373},
	date={2019},
	number={3-4},
	pages={953--976},
	issn={0025-5831},
	review={\MR{3953117}},
	doi={10.1007/s00208-018-1727-3},
}

\bib{CHD}{article}{
	author={Chen H.},
	title={On a generalised Blaschke–Santalò inequality},
	date={2018},
	number={preprint},
	journal={arXiv:1808.02218},
}

\bib{CCL}{article}{
	author={Chen H.},
	author={Chen S.},
	author={Li Q.},
	title={Variations of a class of Monge-Amp\`ere-type functionals and their
		applications},
	journal={Anal. PDE},
	volume={14},
	date={2021},
	number={3},
	pages={689--716},
	issn={2157-5045},
	review={\MR{4259871}},
	doi={10.2140/apde.2021.14.689},
}

\bib{CL}{article}{
	author={Chen H.},
	author={Li Q.},
	title={The $L_ p$ dual Minkowski problem and related parabolic flows},
	journal={J. Funct. Anal.},
	volume={281},
	date={2021},
	number={8},
	pages={Paper No. 109139, 65},
	issn={0022-1236},
	review={\MR{4271790}},
	doi={10.1016/j.jfa.2021.109139},
}

\bib{CW}{article}{
	author={Chou K.},
	author={Wang X.},
	title={The $L_p$-Minkowski problem and the Minkowski problem in
		centroaffine geometry},
	journal={Adv. Math.},
	volume={205},
	date={2006},
	number={1},
	pages={33--83},
	issn={0001-8708},
	review={\MR{2254308}},
	doi={10.1016/j.aim.2005.07.004},
}

\bib{DL}{article}{
	author={Ding S.},
	author={Li G.},
	title={A class of curvature flows expanded by support function and
		curvature function},
	journal={Proc. Amer. Math. Soc.},
	volume={148},
	date={2020},
	number={12},
	pages={5331--5341},
	issn={0002-9939},
	review={\MR{4163845}},
	doi={10.1090/proc/15189},
}

\bib{DL2}{article}{
	author={Ding S.},
	author={Li G.},
	title={A class of curvature flows expanded by support function and
		curvature function in the Euclidean space and hyperbolic space},
	journal={J. Funct. Anal.},
	volume={282},
	date={2022},
	number={3},
	pages={Paper No. 109305},
	issn={0022-1236},
	review={\MR{4339010}},
	doi={10.1016/j.jfa.2021.109305},
}

\bib{EH}{article}{
	author={Ecker K.},
	author={Huisken G.},
	title={Immersed hypersurfaces with constant Weingarten curvature},
	journal={Math. Ann.},
	volume={283},
	date={1989},
	number={2},
	pages={329--332},
	issn={0025-5831},
	review={\MR{980601}},
	doi={10.1007/BF01446438},
}

\bib{FWJ3}{article}{
	author={Firey W. J.},
	title={$p$-means of convex bodies},
	journal={Math. Scand.},
	volume={10},
	date={1962},
	pages={17--24},
	issn={0025-5521},
	review={\MR{141003}},
	doi={10.7146/math.scand.a-10510},
}

\bib{FWJ}{article}{
  author={Firey W. J.},
     title= {Shapes of worn stones},
 journal={Mathematika},
   volume={21},
     pages={1-11},
     date={1974},
}

\bib{FWJ2}{article}{
	author={Firey W. J.},
	title={The determination of convex bodies from their mean radius of
		curvature functions},
	journal={Mathematika},
	volume={14},
	date={1967},
	pages={1--13},
	issn={0025-5793},
	review={\MR{217699}},
	doi={10.1112/S0025579300007956},
}

\bib{GC5}{article}{
	author={Gerhardt C.},
	title={Curvature flows in the sphere},
	journal={J. Diff. Geom.},
	volume={100},
	date={2015},
	number={2},
	pages={301--347},
	issn={0022-040X},
	review={\MR{3343834}},
}

\bib{GC2}{book}{
	author={Gerhardt C.},
	title={Curvature problems},
	series={Series in Geometry and Topology},
	volume={39},
	publisher={International Press, Somerville, MA},
	date={2006},
	pages={x+323},
	isbn={978-1-57146-162-9},
	isbn={1-57146-162-0},
	review={\MR{2284727}},
}

\bib{GC3}{article}{
	author={Gerhardt C.},
	title={Flow of nonconvex hypersurfaces into spheres},
	journal={J. Diff. Geom.},
	volume={32},
	date={1990},
	number={1},
	pages={299--314},
	issn={0022-040X},
	review={\MR{1064876}},
}

\bib{GC4}{article}{
	author={Gerhardt C.},
	title={Inverse curvature flows in hyperbolic space},
	journal={J. Diff. Geom.},
	volume={89},
	date={2011},
	number={3},
	pages={487--527},
	issn={0022-040X},
	review={\MR{2879249}},
}

\bib{GC}{article}{
   author={Gerhardt C.},
   title={Non-scale-invariant inverse curvature flows in Euclidean space},
   journal={Cal. Var. PDEs},
   volume={49}
   date={2014},
   pages={471-489},
}

\bib{GL}{article}{
	author={Guan P.},
	author={Li J.},
	title={A mean curvature type flow in space forms},
	journal={Int. Math. Res. Not. IMRN},
	date={2015},
	number={13},
	pages={4716--4740},
	issn={1073-7928},
	review={\MR{3439091}},
	doi={10.1093/imrn/rnu081},
}

\bib{GM}{article}{
	author={Guan P.},
	author={Ma X.},
	title={The Christoffel-Minkowski problem. I. Convexity of solutions of a
		Hessian equation},
	journal={Invent. Math.},
	volume={151},
	date={2003},
	number={3},
	pages={553--577},
	issn={0020-9910},
	review={\MR{1961338}},
	doi={10.1007/s00222-002-0259-2},
}

\bib{GX}{article}{
	author={Guan P.},
	author={Xia C.},
	title={$L^p$ Christoffel-Minkowski problem: the case $1<p<k+1$},
	journal={Calc. Var. Partial Differential Equations},
	volume={57},
	date={2018},
	number={2},
	pages={Paper No. 69, 23},
	issn={0944-2669},
	review={\MR{3776359}},
	doi={10.1007/s00526-018-1341-y},
}

\bib{HMS}{article}{
	author={Hu C.},
	author={Ma X.},
	author={Shen C.},
	title={On the Christoffel-Minkowski problem of Firey's $p$-sum},
	journal={Calc. Var. Partial Differential Equations},
	volume={21},
	date={2004},
	number={2},
	pages={137--155},
	issn={0944-2669},
	review={\MR{2085300}},
	doi={10.1007/s00526-003-0250-9},
}

\bib{HG}{article}{
   author={Huisken G.},
   title={Flow by mean curvature of convex surfaces into sphere},
   journal={J. Diff. Geom.},
   volume={20(1)}
   date={1984},
   pages={237-266},
}

\bib{HGC}{article}{
   author={Huisken G.},
   author={Sinestrari C.},
   title={Convexity estimates for mean curvature flow and singularities of mean convex surfaces},
   journal={Acta Math.},
   volume={183}
   date={1999},
   pages={45-70},
}

\bib{HM}{article}{
	author={Hu J.},
	author={Mao J.},
	author={Tu Q.},
	author={Wu D.},
	title={A class of inverse curvature flows in $\mR^{n+1}$, II},
	journal={J. Korean Math. Soc.},
	volume={57},
	date={2020},
	number={5},
	pages={1299--1322},
	issn={0304-9914},
	review={\MR{4169567}},
	doi={10.4134/JKMS.j190637},
}

\bib{HLY2}{article}{
	author={Huang Y.},
	author={Lutwak E.},
	author={Yang D.},
	author={Zhang G.},
	title={Geometric measures in the dual Brunn-Minkowski theory and their
		associated Minkowski problems},
	journal={Acta Math.},
	volume={216},
	date={2016},
	number={2},
	pages={325--388},
	issn={0001-5962},
	review={\MR{3573332}},
	doi={10.1007/s11511-016-0140-6},
}

\bib{HLY}{article}{
	author={Huang Y.},
	author={Lutwak E.},
	author={Yang D.},
	author={Zhang G.},
	title={The $L_p$-Aleksandrov problem for $L_p$-integral curvature},
	journal={J. Differential Geom.},
	volume={110},
	date={2018},
	number={1},
	pages={1--29},
	issn={0022-040X},
	review={\MR{3851743}},
	doi={10.4310/jdg/1536285625},
}

\bib{HZ}{article}{
	author={Huang Y.},
	author={Zhao Y.},
	title={On the $L_p$ dual Minkowski problem},
	journal={Adv. Math.},
	volume={332},
	date={2018},
	pages={57--84},
	issn={0001-8708},
	review={\MR{3810248}},
	doi={10.1016/j.aim.2018.05.002},
}

\bib{IM2}{article}{
   author={Ivaki M.},
   author={Stancu A.},
   title={Volume preserving centro-affine normal flows},
   journal={Commun. Anal. Geom.},
   volume={21}
   date={2013},
pages={671-685},
}

\bib{IM3}{article}{
   author={Ivaki M.},
   title={Deforming a hypersurface by Gauss curvature and support function},
   journal={J. Funct. Anal.},
   volume={271}
   date={2016},
pages={2133-2165},
}

\bib{IM}{article}{
   author={Ivaki M.},
   title={Deforming a hypersurface by principal radii of curvature and support function},
   journal={Calc. Var. PDEs},
   volume={58(1)}
   date={2019},
}

\bib{JL}{article}{
	author={Jin Q.},
	author={Li Y.},
	title={Starshaped compact hypersurfaces with prescribed $k$-th mean
		curvature in hyperbolic space},
	journal={Discrete Contin. Dyn. Syst.},
	volume={15},
	date={2006},
	number={2},
	pages={367--377},
	issn={1078-0947},
	review={\MR{2199434}},
	doi={10.3934/dcds.2006.15.367},
}

\bib{KS}{article}{
	author={Kr\"{o}ner H.},
	author={Scheuer J.},
	title={Expansion of pinched hypersurfaces of the Euclidean and hyperbolic
		space by high powers of curvature},
	journal={Math. Nachr.},
	volume={292},
	date={2019},
	number={7},
	pages={1514--1529},
	issn={0025-584X},
	review={\MR{3982326}},
	doi={10.1002/mana.201700370},
}

\bib{KNV}{book}{
  author={Krylov N. V.},
     title= {Nonlinear elliptic and parabolic quations of the second order},
 publisher={D. Reidel Publishing Co., Dordrecht},
     date={1987. xiv+462pp},

}

\bib{LN}{book}{
  author={Nirenberg L.},
     title= {On a generalization of quasi-conformal mappings and its application to elliptic partial differential equations},
 publisher={Contributions to the theory of partial differential equations, Annals of Mathematics Studies},
     date={ Princeton University Press, Princeton, N. J.,1954, pp. 95C100.}
  }

\bib{LJL}{article}{
	author={Li B.},
	author={Ju H.},
	author={Liu Y.},
	title={A flow method for a generalization of $L_p$ Christofell-Minkowski
		problem},
	journal={Commun. Pure Appl. Anal.},
	volume={21},
	date={2022},
	number={3},
	pages={785--796},
	issn={1534-0392},
	review={\MR{4389600}},
	doi={10.3934/cpaa.2021198},
}

\bib{LW}{article}{
	author={Li H.},
	author={Wang X.},
	author={Wei Y.},
	title={Surfaces expanding by non-concave curvature functions},
	journal={Ann. Global Anal. Geom.},
	volume={55},
	date={2019},
	number={2},
	pages={243--279},
	issn={0232-704X},
	review={\MR{3923539}},
	doi={10.1007/s10455-018-9625-1},
}

\bib{LQ}{article}{
	author={Li Q.},
	title={Surfaces expanding by the power of the Gauss curvature flow},
	journal={Proc. Amer. Math. Soc.},
	volume={138},
	date={2010},
	number={11},
	pages={4089--4102},
	issn={0002-9939},
	review={\MR{2679630}},
	doi={10.1090/S0002-9939-2010-10431-8},
}

\bib{LSW}{article}{
   author={Li Q.},
   author={Sheng W.},
   author={Wang X-J},
   title={Flow by Gauss curvature to the Aleksandrov and dual Minkowski problems},
   journal={Journal of the European Mathematical Society},
   volume={22}
   date={2019},
   pages={893-923},
}

\bib{LE}{article}{
	author={Lutwak E.},
	title={The Brunn-Minkowski-Firey theory. I. Mixed volumes and the
		Minkowski problem},
	journal={J. Differential Geom.},
	volume={38},
	date={1993},
	number={1},
	pages={131--150},
	issn={0022-040X},
	review={\MR{1231704}},
}

\bib{LE2}{article}{
	author={Lutwak E.},
	title={The Brunn-Minkowski-Firey theory. II. Affine and geominimal
		surface areas},
	journal={Adv. Math.},
	volume={118},
	date={1996},
	number={2},
	pages={244--294},
	issn={0001-8708},
	review={\MR{1378681}},
	doi={10.1006/aima.1996.0022},
}

\bib{LO}{article}{
	author={Lutwak E.},
	author={Oliker V.},
	title={On the regularity of solutions to a generalization of the
		Minkowski problem},
	journal={J. Differential Geom.},
	volume={41},
	date={1995},
	number={1},
	pages={227--246},
	issn={0022-040X},
	review={\MR{1316557}},
}

\bib{LYZ2}{article}{
	author={Lutwak E.},
	author={Yang D.},
	author={Zhang G.},
	title={$L_p$ dual curvature measures},
	journal={Adv. Math.},
	volume={329},
	date={2018},
	pages={85--132},
	issn={0001-8708},
	review={\MR{3783409}},
	doi={10.1016/j.aim.2018.02.011},
}

\bib{SJ2}{article}{
	author={Scheuer J.},
	title={Gradient estimates for inverse curvature flows in hyperbolic
		space},
	journal={Geom. Flows},
	volume={1},
	date={2015},
	number={1},
	pages={11--16},
	review={\MR{3338988}},
	doi={10.1515/geofl-2015-0002},
}

\bib{SJ4}{article}{
	author={Scheuer J.},
	author={Xia C.},
	title={Locally constrained inverse curvature flows},
	journal={Trans. Amer. Math. Soc.},
	volume={372},
	date={2019},
	number={10},
	pages={6771--6803},
	issn={0002-9947},
	review={\MR{4024538}},
	doi={10.1090/tran/7949},
}

\bib{SJ3}{article}{
	author={Scheuer J.},
	title={Non-scale-invariant inverse curvature flows in hyperbolic space},
	journal={Calc. Var. PDEs},
	volume={53},
	date={2015},
	number={1-2},
	pages={91--123},
	issn={0944-2669},
	review={\MR{3336314}},
	doi={10.1007/s00526-014-0742-9},
}

\bib{SJ}{article}{
	author={Scheuer J.},
	title={Pinching and asymptotical roundness for inverse curvature flows in
		Euclidean space},
	journal={J. Geom. Anal.},
	volume={26},
	date={2016},
	number={3},
	pages={2265--2281},
	issn={1050-6926},
	review={\MR{3511477}},
	doi={10.1007/s12220-015-9627-1},
}

\bib{SR}{book}{
	author={Schneider R.},
	title={Convex bodies: the Brunn-Minkowski theory},
	series={Encyclopedia of Mathematics and its Applications},
	volume={151},
	edition={Second expanded edition},
	publisher={Cambridge University Press, Cambridge},
	date={2014},
	pages={xxii+736},
	isbn={978-1-107-60101-7},
	review={\MR{3155183}},
}

\bib{SO}{article}{
	author={Schn\"{u}rer O.C.},
	title={Surfaces expanding by the inverse Gauss  curvature flow},
	journal={J. Reine Angew. Math.},
	volume={600},
	date={2006},
	pages={117--134},
	issn={0075-4102},
	review={\MR{2283800}},
	doi={10.1515/CRELLE.2006.088},
}

\bib{SWM}{article}{
   author={Sheng W.},
   author={Yi C.},
   title={A class of anisotropic expanding curvature flows},
   journal={Discrete and Continuous Dynamical Systems},
   volume={40(4)}
   date={2020},
   pages={2017-2035},
   }

\bib{UJ}{article}{
   author={Urbas J.},
   title={An expansion of convex hypersurfaces},
   journal={J. Diff. Geom.},
   volume={33(1)}
   date={1991},
   pages={91-125},
}

\bib{UJ2}{article}{
	author={Urbas J.},
	title={On the expansion of starshaped hypersurfaces by symmetric
		functions of their principal curvatures},
	journal={Math. Z.},
	volume={205},
	date={1990},
	number={3},
	pages={355--372},
	issn={0025-5874},
	review={\MR{1082861}},
	doi={10.1007/BF02571249},
}

\end{biblist}

\end{document}